\def\today{\number\day\space\ifcase\month\or   January\or February\or
   March\or April\or May\or June\or   July\or August\or September\or
   October\or November\or December\fi\   \number\year}
\theoremstyle{definition}
\newtheorem{lma}{Lemma}[section]
\newaliascnt{thmCt}{lma}
\newtheorem{thm}[thmCt]{Theorem}
\newtheorem*{thm*}{Theorem}
\newaliascnt{corCt}{lma}
\newtheorem{cor}[corCt]{Corollary}
\newaliascnt{propCt}{lma}
\newtheorem{prop}[propCt]{Proposition}
\newaliascnt{pgrCt}{lma}
\newaliascnt{dfCt}{lma}
\newtheorem{df}[dfCt]{Definition}
\newaliascnt{remCt}{lma}
\newtheorem{rem}[remCt]{Remark}
\newaliascnt{remsCt}{lma}
\newaliascnt{egCt}{lma}
\newtheorem{eg}[egCt]{Example}
\newaliascnt{egsCt}{lma}
\newaliascnt{qstCt}{lma}
\newaliascnt{pbmCt}{lma}
\newaliascnt{notaCt}{lma}
\newtheorem{nota}[notaCt]{Notation}
\newaliascnt{convCt}{lma}
\newaliascnt{cnjCt}{lma}
\newcounter{theoremintro}
\newtheorem{thmintro}[theoremintro]{Theorem}
\newtheorem{corintro}[theoremintro]{Corollary}
\newcommand{\ph}{\varphi}
\newcommand{\Ph}{\Phi}
\newcommand{\Q}{{\mathbb{Q}}}
\newcommand{\Z}{{\mathbb{Z}}}
\newcommand{\R}{{\mathbb{R}}}
\newcommand{\C}{{\mathbb{C}}}
\newcommand{\N}{{\mathbb{N}}}
\newcommand{\T}{{\mathbb{T}}}
\newcommand{\ind}{{\mathbbm{1}}}
\newcommand{\B}{{\mathcal{B}}}
\newcommand{\Ot}{{\mathcal{O}_2}}
\newcommand{\im}{{\mathrm{im}}}
\newcommand{\supp}{{\mathrm{supp}}}
\newcommand{\andeqn}{\,\,\,\,\,\, {\mbox{and}} \,\,\,\,\,\,}
\newcommand{\ca}{C*-algebra}
\newcommand{\cas}{C*-algebras}
\title[]{Embeddings of $L^p$-operator algebras}
\date{\today}
\thanks{The first named author was supported by a starting grant of 
the Swedish Research Council. \indent The second named author was supported by Kungl. Vetenskapsakademiens stiftelser.
}
\author{Eusebio Gardella}
\author{Jan Gundelach}
\address[Eusebio Gardella and Jan Gundelach]
{Department of Mathematical Sciences, Chalmers University of
Technology and University of Gothenburg, Gothenburg 412 96, Sweden.}
\email{gardella@chalmers.se; jangund@chalmers.se}
\urladdr{www.math.chalmers.se/~gardella}
\urladdr{www.chalmers.se/en/persons/jangund/}
\begin{document}

\begin{abstract}
We study embeddings of $L^p$-operator algebras arising from (twis\-ted) \'etale groupoids, with particular emphasis on rigidity phenomena for $p\neq 2$. Our methods rely on a detailed analysis of core normalizers and their functorial behavior under algebra homomorphisms. Using the notion of actors between groupoids, we show that under natural hypotheses, embeddings between reduced $L^p$-groupoid algebras can be described entirely in terms of morphisms of the underlying groupoids.
We further show that embeddings of $L^p$-groupoid algebras induce embeddings of the associated topological full groups. 
Our results provide new tools for studying embeddability questions in the $L^p$-setting, and are particularly helpful when ruling out the existence of embeddings.\par

As applications, we obtain strong embeddability results both for spatial AF $L^p$-operator algebras and for tensor products of $L^p$-Cuntz algebras. For $p\not \in \{1,2\}$, a reduced $L^p$-groupoid algebra associated with a principal \'etale groupoid embeds into a spatial AF $L^p$-operator algebra if and only if the underlying groupoid is AF. In particular, and in contrast with classical results of Pimsner-Voiculescu, irrational $L^p$-noncommutative tori do not embed into spatial AF $L^p$-operator algebras for $p\neq 2$. Furthermore, if $p\neq 2$, there is no unital contractive homomorphism from $\mathcal{O}_2^p \otimes_p \mathcal{O}_2^p$ into $\mathcal{O}_2^p$, showing that there is no $L^p$-analog of Kirchberg's $\mathcal{O}_2$-embedding theorem.\end{abstract}
\maketitle
\tableofcontents

\section{Introduction}

The study of Banach algebras arising from analytic, geometric, and dynamical constructions has a long and fruitful history. 
Early on in the development of the theory, questions concerning the structure and embeddability of subalgebras of operator algebras have played a central role in the subject. For example, Arveson showed in \cite{Arv_subalgebras_1969} that the study of (not necessarily self-adjoint) subalgebras of C*-algebras already leads to a rich and robust theory.
Within this broad landscape, operator algebras on Hilbert spaces have played a central role, leading to deep classification results and a highly developed structural theory. At the same time, many naturally occurring Banach algebras are not C*-algebras and do not even admit contractive representations on Hilbert spaces, so that their analysis requires techniques that go beyond this setting.

In recent years, there has been growing interest in the systematic study of \emph{$L^p$-operator algebras}, that is, Banach algebras admitting isometric representations on $L^p$-spaces for $1 \leq p < \infty$. These algebras are somewhere in between classical Banach algebras and C*-algebras: They retain enough analytic rigidity to support a meaningful structure theory, while at the same time exhibiting genuinely new phenomena that cannot be expected in the case of operator algebras on Hilbert spaces. Early work in this direction includes $L^p$-analogs of group algebras, crossed products, as well as $L^p$-versions of some very well-studied C*-algebras such as the Cuntz algebras, AF-algebras, and irrational rotation algebras. From a broader perspective, the study of $L^p$-operator algebras fits into a long-standing program concerned with embeddings and representations of non-selfadjoint operator algebras, going back at least to Arveson’s work. In this sense, $L^p$-operator algebras provide a natural testing ground for understanding how far techniques from C*-algebra theory extend beyond Hilbert spaces.

A particularly important class of examples is provided by $L^p$-operator algebras associated with \'etale groupoids. Groupoids offer a flexible framework that simultaneously encodes topology, dynamics, and geometry, and they have proved to be a powerful tool in the theory of C*-algebras. In the $L^p$-context, groupoid algebras provide a large and tractable supply of examples that generalize both group $L^p$-operator algebras and algebras arising from dynamical systems. Moreover, many constructions that are familiar from the C*-theory admit meaningful analogs in this setting.
In particular, the presence of a canonical diagonal subalgebra and its normalizers allows one to associate a Weyl groupoid to any $L^p$-operator algebra (see \cite{ChoGarThi_rigidity_2021}), extending Renault's reconstruction theory in the setting of Cartan subalgebras. These extensions provide a bridge between the associated $L^p$-operator algebras and the underlying dynamics of the groupoid, which in the case $p\neq 2$ tend to be more rigid than what can be expected for C*-algebras.

In order to better understand a given class of Banach algebras, it is natural to study what the possible subalgebras of a given algebra are. This leads to the problem of \emph{embeddability}: Given a Banach algebra $A$, one seeks to understand whether $A$ embeds into a better understood or more rigid algebra, and what such an embedding reveals about the structure of $A$. In the setting of C*-algebras, embedding theorems have had a profound impact, most notably Kirchberg's theorem asserting that every separable, exact, unital C*-algebra embeds unitally into the Cuntz algebra $\mathcal{O}_2$, and the more recent result, due to Schafhauser, characterizing those UCT C*-algebras that embed into an AF-algebra \cite{Sch_AFemb_2020}. These results, together with some predecessors, have influenced large parts of the modern classification program and have inspired analogous questions for other classes of algebras. For example, the failure of a counterpart to Kirchberg's $\Ot$-embedding theorem for $\mathbb{Z}$-algebras has been explored in \cite{BroSor_L2_2016}, where it is shown that the Leavitt algebra $L_{2,\mathbb{Z}}$ does not contain its tensor square $L_{2,\mathbb{Z}}\otimes L_{2,\mathbb{Z}}$ unitally.

In contrast, embedding problems for $L^p$-operator algebras remain far less understood. The absence of adjoints and the absence of techniques available for Hilbert spaces produce significant technical obstacles, and many arguments from the C*-setting simply do not generalize. Nevertheless, recent progress has shown that groupoid methods can be adapted effectively to the $L^p$-framework, allowing one to prove striking rigidity and reconstruction results under suitable hypotheses. This suggests that embeddability questions for $L^p$-operator algebras should be approachable through a careful analysis of the canonical diagonal subalgebra, normalizers, or the topological full group, among others.

This paper is devoted to the study of embeddings of $L^p$-operator algebras associated with (twisted) \'etale groupoids, with a particular emphasis on applications to embeddability questions for some well-studied algebras, such as irrational rotation $L^p$-operator algebras, spatial AF $L^p$-operator algebras, and $L^p$-Cuntz algebras. Our approach is based on a careful study of Weyl twists, core normalizers, and their behavior under algebra homomorphisms. By exploiting the notion of actors between groupoids developed by Meyer and Zhu \cite{MeyZhu_actors_2015}, we are able to analyze embeddings of algebras associated to groupoids directly at the level of the underlying groupoid data. This is surprising at first sight, since in the C*-algebraic setting it is well-known that maps between groupoid C*-algebras do not have to be induced in any way at the groupoid level. We show that, under natural hypotheses, embeddings between certain $L^p$-operator algebras, for $p\neq 2$, are in one-to-one correspondence with free actor morphisms between the associated groupoids. More explicitly, we have the following. For an \'etale, Hausdorff groupoid $\mathcal{G}$, the canonical conditional expectation is denoted by $E_\mathcal{G}\colon F^p_\lambda(\mathcal{G})\to C_0(\mathcal{G}^{(0)})$.

\begin{thmintro}\label{isometric equivalences intro} (See \autoref{isometric equivalences}.)
Let $\mathcal{G}$ and $\mathcal{H}$ be \'etale, effective, Hausdorff groupoids with compact unit spaces, let $p\in (1,\infty)\setminus \{2\}$ and let $\ph\colon F_\lambda^p(\mathcal{G}) \rightarrow F_\lambda^p(\mathcal{H})$ be a unital contractive homomorphism. Then the following are equivalent:
    \begin{enumerate}
        \item $\ph\vert_{C(\mathcal{G}^{(0)})}$ is injective and satisfies $\ph\circ E_{\mathcal{G}} = E_{\mathcal{H}} \circ \ph$.
        \item $\ph$ is isometric and satisfies $\ph\circ E_{\mathcal{G}} = E_{\mathcal{H}}\circ \ph$.
        \item $\varphi$ is induced at the level of the groupoids, in the following sense: There are an \'etale, effective, Hausdorff groupoid $\mathcal{K}$ and groupoid homomorphisms 
        \[ \xymatrix{\mathcal{G} & \mathcal{K} \ar[l]_-{\ \pi} \ar[r]^{\iota \ }& \mathcal{H}}
        \]
        such that $\pi$ is surjective and fiberwise bijective and $\iota$ has open image, is injective and bijective on units, and $\ph(f)= \ind_{\iota(\mathcal{K})}\cdot ( f\circ \pi \circ \iota^{-1})$ for $f\in C_c(\mathcal{G})$.
\end{enumerate}
\end{thmintro}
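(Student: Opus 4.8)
The plan is to prove the cycle of implications $(2)\Rightarrow(1)\Rightarrow(3)\Rightarrow(2)$. The implication $(2)\Rightarrow(1)$ is immediate: an isometric homomorphism is injective on every closed subalgebra, and the intertwining condition is inherited verbatim.

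For $(3)\Rightarrow(2)$, I would factor $\ph$ through the span. Since $\pi\colon\mathcal{K}\to\mathcal{G}$ is surjective and fiberwise bijective, for each $v\in\mathcal{K}^{(0)}$ the induced bijection of source fibres $\mathcal{K}_v\to\mathcal{G}_{\pi(v)}$ yields an isometric isomorphism $\ell^p(\mathcal{K}_v)\cong\ell^p(\mathcal{G}_{\pi(v)})$ intertwining the regular representations; taking suprema over units and using that $\pi$ is onto, the pullback $f\mapsto f\circ\pi$ extends to an isometric homomorphism $F_\lambda^p(\mathcal{G})\to F_\lambda^p(\mathcal{K})$. On the other hand, $\iota$ is injective with open image, so $\iota(\mathcal{K})$ is an open subgroupoid of $\mathcal{H}$ and $\iota$ restricts to an isomorphism of topological groupoids onto it; extension by zero along an open subgroupoid gives an isometric embedding $F_\lambda^p(\iota(\mathcal{K}))\hookrightarrow F_\lambda^p(\mathcal{H})$. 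Composing these isometric maps recovers $\ph$, which is thus isometric. Finally, since $\iota$ is bijective on units, $\iota(\mathcal{K})\cap\mathcal{H}^{(0)}=\mathcal{H}^{(0)}$, and evaluating the formula for $\ph(f)$ at a unit $v$ gives $f\bigl(\pi(\iota^{-1}(v))\bigr)$, which equals $E_\mathcal{G}(f)$ evaluated at the unit $\pi(\iota^{-1}(v))$, i.e.\ $\ph(E_\mathcal{G}(f))(v)$; hence $\ph\circ E_\mathcal{G}=E_\mathcal{H}\circ\ph$.

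The heart of the theorem is $(1)\Rightarrow(3)$. The intertwining together with $E_\mathcal{G}\vert_{C(\mathcal{G}^{(0)})}=\id$ forces $\ph(C(\mathcal{G}^{(0)}))\subseteq C(\mathcal{H}^{(0)})$; being unital and injective, this restriction is dual to a continuous surjection $q\colon\mathcal{H}^{(0)}\to\mathcal{G}^{(0)}$. By the functoriality of core normalizers under $E$-intertwining homomorphisms established earlier, $\ph$ maps core normalizers of $C(\mathcal{G}^{(0)})$ to core normalizers of $C(\mathcal{H}^{(0)})$; here the hypothesis $p\neq 2$ is used decisively, via the rigidity of normalizers in reduced $L^p$-groupoid algebras of effective groupoids, to conclude that the image under $\ph$ of an $f\in C_c(\mathcal{G})$ supported on an open bisection is, in the germ sense, supported on an open bisection of $\mathcal{H}$. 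This allows me to define, for $g\in\mathcal{G}$ and $v\in\mathcal{H}^{(0)}$ with $s(g)=q(v)$, an arrow $\alpha(g,v)\in\mathcal{H}$ with source $v$: take the germ at $v$ of $\ph(f)$ for any normalizer $f$ supported on a bisection through $g$ with $f(g)=1$. Independence of the choices, continuity, and the groupoid identities relating $\alpha(g_1g_2,v)$ to $\alpha(g_1,v')\alpha(g_2,v)$ follow from the germ relation, effectiveness, injectivity of $\ph\vert_{C(\mathcal{G}^{(0)})}$, and multiplicativity of $\ph$. One then sets $\mathcal{K}$ to have unit space $\mathcal{H}^{(0)}$ and arrows $\{(g,v):s(g)=q(v)\}$, with $s(g,v)=v$, $r(g,v)=r(\alpha(g,v))$, product $(g_1,v_1)(g_2,v_2)=(g_1g_2,v_2)$ when $s(g_1)=r(g_2)$ and $v_1=r(\alpha(g_2,v_2))$, and the subspace topology from $\mathcal{G}\times\mathcal{H}^{(0)}$; this is an \'etale, effective, Hausdorff groupoid, $\pi(g,v)=g$ is surjective and fiberwise bijective, $\iota(g,v)=\alpha(g,v)$ is bijective on units, injective, and has open image (the germs of images of bisection-supported normalizers form an open subgroupoid), and the required formula for $\ph$ on $C_c(\mathcal{G})$ holds by construction.

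The main obstacle is the step $(1)\Rightarrow(3)$, concentrated in the control of $\ph$ on normalizers for $p\neq 2$: one must verify that $\ph$ sends a function localised on a bisection to one that is again localised on a bisection in the germ sense, so that $\alpha$ is single-valued, continuous and multiplicative, and that the resulting set of germs is open in $\mathcal{H}$. This is exactly where the $L^p$-rigidity of core normalizers enters, and where the argument would break down for $p=2$. Once $\alpha$ is available, checking that $\mathcal{K}$ is an \'etale, effective, Hausdorff groupoid and that $\pi,\iota$ have the listed properties is bookkeeping with the groupoid identities for $\ph$ restricted to normalizers.
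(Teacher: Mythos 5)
Your overall architecture matches the paper's: the intermediate groupoid $\mathcal{K}=\{(g,v)\colon \mathsf{d}(g)=q(v)\}$ you build by hand is exactly the paper's groupoid $\mathcal{G}\cdot_h\mathcal{H}^{(0)}$ arising from the actor induced by $\ph$ (\autoref{ph induces actor}), the normalizer-rigidity input for $p\neq 2$ is \autoref{sn preserved}, and your factorization $\ph=\iota_*\circ\pi^*$ with the two isometry claims is \autoref{phi ext to Fplam} (the $L^p$-adaptation of Barlak--Li). Your derivation of $\ph(C(\mathcal{G}^{(0)}))\subseteq C(\mathcal{H}^{(0)})$ directly from $\ph\circ E_{\mathcal{G}}=E_{\mathcal{H}}\circ\ph$ is a legitimate shortcut past the core computation.

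There is, however, one concrete under-justified step in $(1)\Rightarrow(3)$: the injectivity of $\iota(g,v)=\alpha(g,v)$, which you assert and then file under ``bookkeeping.'' This is precisely the freeness of the induced actor, and it is the one place where the hypothesis $\ph\circ E_{\mathcal{G}}=E_{\mathcal{H}}\circ\ph$ does work beyond forcing $\ph(C(\mathcal{G}^{(0)}))\subseteq C(\mathcal{H}^{(0)})$: if $\alpha(g_1,v)=\alpha(g_2,v)$ one must show $g_1^{-1}g_2$ is a unit, i.e.\ that a normalizer $f$ supported on a bisection disjoint from $\mathcal{G}^{(0)}$ has $\supp(\ph(f))\cap\mathcal{H}^{(0)}=\emptyset$; this follows from $\supp(E_{\mathcal{H}}(\ph(f)))=\supp(\ph(E_{\mathcal{G}}(f)))=\emptyset$ and from nothing weaker (the paper's final clause of \autoref{isometric equivalences}, treating principal $\mathcal{G}$ or open $\rho$, exists exactly because freeness is otherwise not automatic for effective non-principal groupoids). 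A second, smaller omission: ``the required formula for $\ph$ holds by construction'' only gives that $\ph(f)$ and $\ind_{\iota(\mathcal{K})}(f\circ\pi\circ\iota^{-1})$ have the same (bisection) support; matching the actual values requires a computation such as $j_{\ph(n)}(\alpha(g,v))^2=\ph(j_{n^*n})(v)=j_{n^*n}(q(v))$, using that $\ph$ restricts to a $*$-homomorphism on the core (cf.\ \autoref{free actor recovers ph}). Both points are repairable with the tools you already invoke, but neither is bookkeeping.
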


In (1) and (2), the condition that $\varphi$ intertwines the conditional 
expectations is often automatic, and this allows us to apply the above theorem in many cases of interest. For example, we use this result to obtain different Cartan-type characterizations of embeddings, providing new tools for ruling out the existence of maps between $L^p$-groupoid algebras. These results are $L^p$-analogs of well-known reconstruction theorems in the C*-setting, but their proofs require different techniques.

One of the main applications of our methods is AF-embeddability. In the setting of C*-algebras, AF-embeddability has long been recognized as a strong regularity property, and its relationship with exactness, quasidiagonality, and the UCT has been extensively studied. We investigate AF-embeddability for reduced groupoid $L^p$-operator algebras, and establish a strong rigidity result. Under mild assumptions on $\mathcal{G}$ and for $p\neq 2$, the algebra $F^p_\lambda(\mathcal{G})$ embeds into a spatial AF $L^p$-operator algebra if and only if $\mathcal{G}$ is itself an AF-groupoid. The following consequence of this stands in contrast to the celebrated construction by Pimsner-Voiculescu of an AF-embedding of the irrational rotation algebra:

\begin{corintro}\label{AF rigidity} (See \autoref{AF rigidity text}.)
Let $p\in (1,\infty)\setminus \{2\}$, let $\theta\in \R \setminus \Q$, and let $A_\theta^p := C(\T)\rtimes_{r_\theta}^p \Z$ be the associated $L^p$-irrational rotation algebra, where $r_\theta$ is the rotation homeomorphism by angle $2\pi\theta$. Then there is no unital contractive homomorphism from $A_\theta^p$ into a spatial AF $L^p$-operator algebra. \end{corintro}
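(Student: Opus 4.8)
The plan is to realize $A_\theta^p$ as a reduced $L^p$-groupoid algebra and then feed it into the $L^p$-AF-rigidity theorem \autoref{AF rigidity text} (the precise form of the equivalence ``$F^p_\lambda(\mathcal{G})$ embeds into a spatial $L^p$-AF-algebra if and only if $\mathcal{G}$ is AF'' announced in the introduction). Let $\mathcal{G}_\theta := \T \rtimes_{r_\theta} \Z$ denote the transformation groupoid of the rotation action. Its unit space is $\mathcal{G}_\theta^{(0)} = \T$, which is compact; since $\Z$ is discrete the groupoid $\mathcal{G}_\theta$ is étale, and being a subspace of $\T \times \Z$ it is Hausdorff. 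Because $\theta \in \R\setminus\Q$, the homeomorphism $r_\theta^n$ has no fixed point for $n \neq 0$ (as $n\theta \notin \Z$), so the $\Z$-action is free; hence $\mathcal{G}_\theta$ is principal, and in particular effective. Finally, there is a canonical isometric isomorphism
\[
A_\theta^p = C(\T) \rtimes_{r_\theta}^p \Z \;\cong\; F^p_\lambda(\mathcal{G}_\theta)
\]
restricting to the identity on $C(\T) = C(\mathcal{G}_\theta^{(0)})$ and intertwining the canonical conditional expectations; this is the $L^p$-counterpart of the familiar identification of a reduced crossed product by $\Z$ with the reduced algebra of the associated transformation groupoid.

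Now suppose, towards a contradiction, that there is a unital injective contractive homomorphism $\varphi \colon A_\theta^p \to B$ with $B$ a spatial $L^p$-AF-algebra. Transporting $\varphi$ along the isomorphism above yields a unital injective contractive homomorphism $F^p_\lambda(\mathcal{G}_\theta) \to B$. Since $\mathcal{G}_\theta$ is étale, effective, and Hausdorff with compact unit space and $p \in (1,\infty)\setminus\{2\}$, the hypotheses of \autoref{AF rigidity text} are satisfied, and the theorem forces $\mathcal{G}_\theta$ to be an AF-groupoid.

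It remains to rule this out, which is where the connectedness of the circle enters. An AF-groupoid is, by definition, an increasing union of compact open principal subgroupoids with the same unit space, and is therefore ample; in particular its unit space admits a basis of compact open sets, hence is totally disconnected. But $\mathcal{G}_\theta^{(0)} = \T$ is connected and is not a single point, so its only clopen subsets are $\emptyset$ and $\T$; thus $\T$ does not admit a basis of compact open sets and cannot be the unit space of an ample groupoid. Consequently $\mathcal{G}_\theta$ is not an AF-groupoid, contradicting the previous paragraph, and so no such $\varphi$ exists. The only steps requiring genuine care are the identification $A_\theta^p \cong F^p_\lambda(\mathcal{G}_\theta)$ with matching conditional expectations and the verification that $\mathcal{G}_\theta$ meets the hypotheses of \autoref{AF rigidity text}; once these are in place the argument is purely formal. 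The contrast with the Pimsner--Voiculescu embedding is precisely that, for $p \neq 2$, every spatial $L^p$-AF-embedding of $F^p_\lambda(\mathcal{G}_\theta)$ is rigid enough to be detected at the level of the underlying groupoid, where the connectedness of $\T$ is an insurmountable obstruction.
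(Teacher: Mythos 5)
Your overall strategy is the same as the paper's: model $A_\theta^p$ by the transformation groupoid $\mathcal{G}_\theta=\Z\ltimes_{r_\theta}\T$, observe that it is a principal Weyl groupoid with compact connected unit space, rule out AF-ness, and invoke the AF-rigidity theorem. But there is a genuine gap at the point where you invoke that theorem. The result you need is \autoref{AF emb}, and its hypothesis is a unital \emph{isometric} embedding $F^p_\lambda(\mathcal{G})\hookrightarrow A$, not merely a unital injective contractive homomorphism. You pass directly from ``unital injective contractive homomorphism $F^p_\lambda(\mathcal{G}_\theta)\to B$'' to ``the theorem forces $\mathcal{G}_\theta$ to be AF'', and the hypotheses you list as needing verification concern only the groupoid; the isometry hypothesis on the map is never addressed. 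This is not cosmetic: upgrading injectivity to isometry is precisely the nontrivial rigidity input, and without it \autoref{AF emb} simply does not apply to your $\varphi$. (You also cite \autoref{AF rigidity text}, the corollary itself, where you mean \autoref{AF emb}.)

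The missing step is supplied by the final part of \autoref{isometric equivalences}: since $\mathcal{G}_\theta$ is principal, freeness of the induced actor is automatic, $\varphi$ is an actor homomorphism, and $\varphi$ is isometric if and only if $\varphi\vert_{C(\T)}$ is injective. Injectivity of $\varphi$ (assumed here, and in any case automatic from simplicity of $A_\theta^p$) gives injectivity on $C(\T)$, hence $\varphi$ is isometric, and only then does \autoref{AF emb} yield that $\mathcal{G}_\theta$ is AF. With that sentence inserted, your argument closes and coincides with the paper's proof of \autoref{AF rigidity text}. A secondary point: your justification that AF-groupoids have totally disconnected unit spaces (via ampleness) does not follow from \autoref{AF def}, where the building blocks $X_i$ are arbitrary compact Hausdorff spaces; the correct and sufficient observation is that connectedness of $\T$ forces any elementary groupoid with unit space $\T$ to be the trivial groupoid $\T$, so the only AF groupoid over $\T$ is trivial, which $\Z\ltimes_{r_\theta}\T$ is not.
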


Another direction explored in this paper is that of induced embeddings of topological full groups, at least in the setting of ample groupoids. Topological full groups play an important role in the study of dynamical systems and groupoids, and their interaction with operator algebras has been a source of fruitful connections, systematically studied after the work of Matui \cite{Mat_homology_2012}. 
Since topological full groups capture subtle orbit and isotropy information of the groupoids in question, embeddings between them often reflect strong rigidity phenomena. Understanding how isometric embeddings reflect on the level of full groups thus provides a mechanism for transferring analytic information into a purely group-theoretic setting.
In this direction, we show that embeddings of $L^p$-operator algebras give rise to embeddings of the associated full groups, in the following way. 

\begin{thmintro}\label{intro tfg actor and inj} (See \autoref{tfg actor and inj}.)
    Let $p\in [1,\infty)\setminus \{2\}$ and let $\mathcal{G}$ and $\mathcal{H}$ be ample Weyl groupoids satisfying condition (W). If there exists a unital contractive homomorphism $F_\lambda^p(\mathcal{G})\rightarrow F_\lambda^p(\mathcal{H})$ that is injective on $C(\mathcal{G}^{(0)})$, then $[\![\mathcal{G}]\!]\hookrightarrow [\![\mathcal{H}]\!]$. 
\end{thmintro}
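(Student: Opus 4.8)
The plan is to realize $[\![\mathcal{G}]\!]$ concretely inside $F^p_\lambda(\mathcal{G})$ as characteristic functions of full bisections and to check that the given homomorphism restricts to an injective group homomorphism between the resulting copies of the two full groups. Since $F^p_\lambda(\mathcal{G})$ and $F^p_\lambda(\mathcal{H})$ are unital, both unit spaces are compact, and because $\mathcal{G}$ is ample one has the standard identification of $[\![\mathcal{G}]\!]$ with the group of compact open bisections $U\subseteq\mathcal{G}$ satisfying $s(U)=r(U)=\mathcal{G}^{(0)}$. Writing $N_\mathcal{G}$ for the group of invertible isometric core normalizers of the diagonal $C(\mathcal{G}^{(0)})$, the assignment $U\mapsto\ind_U$ is an injective group homomorphism of $[\![\mathcal{G}]\!]$ into $N_\mathcal{G}$ (with the inverse of $\ind_U$ equal to $\ind_{U^{-1}}$), and composing it with the quotient by the normal subgroup of unitary functions in $C(\mathcal{G}^{(0)})$ yields an isomorphism; the analogous statements hold for $\mathcal{H}$.

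First I would note that $\ph$ maps invertible isometries to invertible isometries: if $\|a\|\le 1$ and $\|a^{-1}\|\le 1$, then $\ph(a)$ and $\ph(a^{-1})$ are mutually inverse contractions and hence invertible isometries; applied to $a=\ind_U$ this gives that $\ph(\ind_U)$ is an invertible isometry of $F^p_\lambda(\mathcal{H})$. The central step is to identify $\ph(\ind_U)$, up to a unitary function in $C(\mathcal{H}^{(0)})$, with the characteristic function $\ind_V$ of a full compact open bisection $V$ of $\mathcal{H}$. For this I would invoke the functorial behaviour of core normalizers developed earlier: under condition (W), $\ph$ sends core normalizers of $C(\mathcal{G}^{(0)})$ to core normalizers of $C(\mathcal{H}^{(0)})$ compatibly with the Weyl twist, hence maps $N_\mathcal{G}$ into $N_\mathcal{H}$ and unitary functions to unitary functions; being an invertible isometric core normalizer, $\ph(\ind_U)$ therefore has the form $d\cdot\ind_V$ with $d\in C(\mathcal{H}^{(0)})$ unitary and $V$ an open bisection with full range and source, which is automatically a full compact open bisection. (For $p\in(1,\infty)$ one could instead feed $\ph$ into \autoref{isometric equivalences} to obtain a span $\mathcal{G}\xleftarrow{\ \pi\ }\mathcal{K}\xrightarrow{\ \iota\ }\mathcal{H}$ with $\pi$ surjective and fiberwise bijective and $\iota$ an open embedding that is bijective on units, and take $V=\iota(\pi^{-1}(U))$; the core-normalizer route, however, also covers $p=1$.)

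Passing to the quotients by unitary functions, $\ph$ then induces a group homomorphism $\Theta\colon[\![\mathcal{G}]\!]\to[\![\mathcal{H}]\!]$ with $\Theta(U)=V$; the homomorphism property follows from $\ind_{U_1U_2}=\ind_{U_1}\ind_{U_2}$ and the uniqueness of the bisection $V$ in the decomposition $d\cdot\ind_V$. For injectivity, assume $\Theta(U)=\mathcal{H}^{(0)}$, so $\ph(\ind_U)$ is a unitary function in $C(\mathcal{H}^{(0)})$; since $\ph$ is injective and respects condition (W), it reflects the diagonal, so $\ind_U\in C(\mathcal{G}^{(0)})$ and therefore $U=\mathcal{G}^{(0)}$. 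This yields the embedding $[\![\mathcal{G}]\!]\hookrightarrow[\![\mathcal{H}]\!]$. I expect the main obstacle to be the central step: controlling $\ph(\ind_U)$ and recognising it, up to a diagonal unitary, as the characteristic function of a full bisection, and knowing that $\ph$ reflects the diagonal. This is precisely where the analysis of core normalizers and their functoriality under homomorphisms, condition (W), and the assumption $p\neq 2$ are all indispensable; once these are in place the remaining steps are formal.
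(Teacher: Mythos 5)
Your overall architecture coincides with the paper's: $[\![\mathcal{G}]\!]$ is realized as $\mathcal{U}_{X}(F_\lambda^p(\mathcal{G}))/\mathcal{U}(C(X))$ via $U\mapsto \ind_U$ (\autoref{tfg}), for $p\neq 2$ every invertible isometry automatically normalizes the core (\autoref{ub=u}), $\ph$ preserves invertible isometries, and the induced map on quotients is the candidate embedding $\psi_0$ (\autoref{tfg emb}). Up to that point your proposal is essentially the paper's argument, and your identification of $\ph(\ind_U)$ as $d\cdot\ind_V$ with $d\in C(\mathcal{H}^{(0)},\T)$ and $V$ a full compact open bisection is correct (it is exactly the statement that the kernel of $\underline{\alpha}\vert_{\mathcal{U}_Y}$ is $\mathcal{U}(C(Y))$).

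The gap is in the injectivity step, which is the actual content of the theorem. You assert that ``since $\ph$ is injective and respects condition (W), it reflects the diagonal, so $\ind_U\in C(\mathcal{G}^{(0)})$.'' Injectivity of a unital contractive homomorphism does \emph{not} imply that it reflects the diagonal, and the paper never claims this; the implication $\ph$ isometric $\Rightarrow$ $\ph^{-1}(C(Y))=C(X)$ (via hermitian elements, \autoref{hermitian preservation}) is available only under the isometry hypothesis, which is not assumed here. What the paper actually does is entirely at the groupoid level: injectivity of $\ph\vert_{C(X)}=\rho^*$ gives surjectivity of the anchor map $\rho\colon Y\to X$; condition (W) guarantees that $[\![\mathcal{G}]\!]_{\textup{loc}}$ is a wide inverse subsemigroup (\autoref{tfg wide}), so the pair $(\psi,\rho)$ assembles into an actor $\mathcal{G}\curvearrowright\mathcal{H}$ (\autoref{tfg actor and inj}); and then \autoref{automatic psi inj} shows $\psi$ is injective by the following mechanism: if $\psi_0(\alpha_U)=\textup{id}_Y$, the equivariance $\rho\circ\alpha_{\psi_0(U)}=\alpha_U\circ\rho$ forces $\alpha_U$ to fix every point of $\rho(Y)=X$, and effectiveness of $\mathcal{G}$ (faithfulness of the bisection action, \autoref{alpha and eff}) then forces $U=X$. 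This equivariance-plus-surjectivity-plus-effectiveness argument is what replaces your unjustified ``reflects the diagonal'' claim; without it the proof does not close. Two smaller points: your parenthetical alternative via \autoref{isometric equivalences} does not apply, since that theorem requires $\ph$ to intertwine the conditional expectations (or be isometric), neither of which is assumed; and condition (W) is not what makes $\ph$ send normalizers to normalizers (that is \autoref{sn preserved}/\autoref{ub=u}), it is what makes the full-group data generate the whole groupoid so that the actor machinery and \autoref{automatic psi inj} can be invoked.
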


The above result allows us to deduce group-theoretical properties from Banach-algebraic data, and we apply this to the study of tensor products of $L^p$-Cuntz algebras. In this direction, we obtain further rigidity results which once again show a striking contrast with the setting of C*-algebras. Among other things, \autoref{cuntz rigidity text} implies the following.

\begin{corintro}\label{O2 rigidity} (See \autoref{non emb text}.)
Let $p\in [1,\infty)\setminus \{2\}$. Then there does not exist a unital contractive homomorphism $\mathcal{O}_2^p\otimes_p \mathcal{O}_2^p\to \mathcal{O}_2^p$.
\end{corintro}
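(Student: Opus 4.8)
The plan is to pass to groupoid models, observe that a unital homomorphism out of a simple algebra is automatically injective, and then feed this into the topological--full--group obstruction \autoref{intro tfg actor and inj}, thereby reducing the statement to a purely group-theoretic non-embedding between Thompson-type groups.

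\textbf{Step 1: groupoid models.} Let $\mathcal{G}_2$ be the Cuntz (Deaconu--Renault) groupoid of the one-sided full shift on two symbols: it is étale, ample, Hausdorff, minimal, effective, has Cantor unit space, and satisfies $\mathcal{O}_2^p\cong F^p_\lambda(\mathcal{G}_2)$. Since a spatial $L^p$-tensor product of regular groupoid representations is again the regular representation of the product groupoid (because $L^p(\mu)\otimes_p L^p(\nu)\cong L^p(\mu\times\nu)$), we get $\mathcal{O}_2^p\otimes_p\mathcal{O}_2^p\cong F^p_\lambda(\mathcal{G}_2\times\mathcal{G}_2)$, and a direct check shows that $\mathcal{G}_2\times\mathcal{G}_2$ is again étale, ample, Hausdorff, minimal and effective with Cantor unit space (for effectiveness one uses that the interior of the isotropy bundle passes to products, the interior of a product of subsets being the product of the interiors). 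In particular both groupoids are ample Weyl groupoids satisfying condition~(W), and $F^p_\lambda(\mathcal{G}_2\times\mathcal{G}_2)$ is simple by the simplicity criterion for reduced $L^p$-groupoid algebras. Hence a unital homomorphism $\mathcal{O}_2^p\otimes_p\mathcal{O}_2^p\to\mathcal{O}_2^p$, being nonzero, is injective, so it suffices to rule out unital, contractive, injective such homomorphisms.

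\textbf{Step 2: reduction to topological full groups.} Suppose such a homomorphism $F^p_\lambda(\mathcal{G}_2\times\mathcal{G}_2)\to F^p_\lambda(\mathcal{G}_2)$ exists. Applying \autoref{intro tfg actor and inj} (here $p\neq 2$ is used in an essential way) yields an embedding $[\![\mathcal{G}_2\times\mathcal{G}_2]\!]\hookrightarrow[\![\mathcal{G}_2]\!]$. Now $[\![\mathcal{G}_2]\!]$ is the Higman--Thompson group $V=V_{2,1}$, i.e.\ Thompson's group $V$, while $[\![\mathcal{G}_2\times\mathcal{G}_2]\!]$ is Brin's higher-dimensional Thompson group $2V$: bisections of a product groupoid are locally products of bisections, so an element of $[\![\mathcal{G}_2\times\mathcal{G}_2]\!]$ is patched together from finitely many ``rectangular'' prefix-replacement maps $Z_{\mu_1}\times Z_{\mu_2}\to Z_{\nu_1}\times Z_{\nu_2}$ on $\mathfrak{C}\times\mathfrak{C}$, which is exactly Brin's description of $2V$. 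Thus we would obtain an embedding $2V\hookrightarrow V$.

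\textbf{Step 3: no embedding $2V\hookrightarrow V$ (the main obstacle).} It remains to show that $2V$ does not embed into $V$; this is the content of \autoref{cuntz rigidity text} and is the heart of the argument. The mechanism is a local rigidity of $V$ that $2V$ fails: over a common fixed point $(x,y)$ with $x,y$ both periodic for the shift, $2V$ contains two commuting elements whose germs generate a group $\cong\Z^2$ (independent prefix re-scalings in the two coordinates), whereas every germ of a $V$-element at a fixed point lies in a group $\cong\{0\}$ or $\cong\Z$ — the isotropy of $\mathcal{G}_2$. The difficulty, and the core of the proof, is to upgrade this incidental feature of the ambient action to a genuine invariant of the abstract subgroup generated (for instance through centralizers and the structure of common fixed-point sets), so that no homomorphic image of it can sit inside $V$; this is an embedding-strength refinement of the Bleak--Lanoue argument that $nV\not\cong mV$. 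Granting it, the embedding $2V\hookrightarrow V$ produced in Step~2 is absurd, which proves the corollary. (For $p=2$ the statement is false, consistently with $\mathcal{O}_2\otimes\mathcal{O}_2\cong\mathcal{O}_2$.) Apart from Step~3, the only remaining points are routine: checking that $\mathcal{G}_2$ and $\mathcal{G}_2\times\mathcal{G}_2$ satisfy condition~(W), and that $\mathcal{O}_2^p\otimes_p\mathcal{O}_2^p\cong F^p_\lambda(\mathcal{G}_2\times\mathcal{G}_2)$ is simple.
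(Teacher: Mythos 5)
Your Steps 1 and 2 reproduce the paper's argument exactly: pass to the groupoid models $\mathcal{O}_2^p\cong F^p_\lambda(\mathcal{G}_2)$ and $\mathcal{O}_2^p\otimes_p\mathcal{O}_2^p\cong F^p_\lambda(\mathcal{G}_2\times\mathcal{G}_2)$, use simplicity to upgrade the unital contractive homomorphism to an injective one, check condition (W) via minimality and stability of (W) under products, and then invoke \autoref{tfg actor and inj} to produce a group embedding $2V\cong[\![\mathcal{G}_2\times\mathcal{G}_2]\!]\hookrightarrow[\![\mathcal{G}_2]\!]\cong V$. All of that is correct and is precisely the route taken in \autoref{cuntz rigidity text}.

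The genuine gap is your Step 3. The statement that $2V$ does not embed into $V$ is the load-bearing group-theoretic input, and you do not prove it: you sketch a heuristic based on germs at common fixed points (commuting elements whose germs generate $\Z^2$ versus isotropy $\Z$ in $\mathcal{G}_2$), explicitly acknowledge that this "incidental feature of the ambient action" must be upgraded to an invariant of the abstract subgroup, and then conclude by "granting it." That upgrade is exactly the hard part; an isomorphism-invariant distinction between $2V$ and $V$ (in the style of the $nV\not\cong mV$ results) does not by itself obstruct embeddings, since subgroups need not inherit such invariants. The paper closes this gap by citing Matte Bon's rigidity theorem (\cite[Corollary 11.19]{MatBon_rigidity_2018}): every group homomorphism $[\![\mathcal{G}_{k_1}\times\dots\times\mathcal{G}_{k_m}]\!]\to[\![\mathcal{G}_{\ell_1}\times\dots\times\mathcal{G}_{\ell_n}]\!]$ with $m>n$ has abelian image, and since $2V$ is nonabelian (two noncommuting prefix transpositions suffice), no embedding exists. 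Unless you either carry out your proposed centralizer/fixed-point analysis in full or cite a result of this strength, your argument does not yet establish the corollary.
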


Besides being a significant strengthening of the result, obtained in \cite{ChoGarThi_rigidity_2021}, that there is no isometric isomorphism
$\mathcal{O}_2^p\otimes_p \mathcal{O}_2^p\cong \mathcal{O}_2^p$ if $p\neq 2$, our result also shows that there is no reasonable analog of Kirchberg's $\Ot$-embedding theorem in the $L^p$-setting. 

\vspace{.2cm} 
\textbf{Acknowledgments:} We would like to thank Bartosz Kwa\'sniewski and Hannes Thiel for their feedback on a preliminary version of this work. 

\section{Preliminaries}\label{prelim chapter}

In this section, we define $L^p$-operator algebras as introduced by Phillips in \cite{Phi_Cuntz_2012}, and we also define reduced twisted groupoid algebras as in \cite[Section 6.2]{HetOrt_twistlp_2022}. Our notation for groupoids follows the one used in \cite[Sections 3 and 4]{Ren_Cartan_2008}, \cite[Section 4]{Tay_Fellactor_2023}, \cite[Sections 3--5]{Exe_invsg_2008}, and \cite[Section 2]{BarKwaMcK_Banach_2023}.

\begin{df}\label{lpdef}
    Let $p\in [1,\infty)$. A Banach algebra $A$ is called an \textit{$L^p$-operator algebra} if there are a measure space $(\Omega,\mathcal{A},\mu)$ and an isometric homomorphism from $A$ into the bounded linear operators $\B(L^p(\mu))$.
\end{df}
A fundamental class of examples of $L^p$-operator algebras is given by reduced (twisted) groupoid algebras. For a proper introduction to groupoids, we refer the reader to \cite[Section 2]{Sims_groupoids_2011}. At this point, we merely recall our standard notation and definitions for \'etale groupoids.

\begin{nota}\label{fibre prod}
    Let $X,\,Y$ and $Z$ be topological spaces and let $f\colon X\rightarrow Z$ and $g\colon Y\rightarrow Z$ be continuous maps. We denote the topological pullback of $f$ and $g$ by $X\times_{f,\,g} Y$, that is, 
\[X\times_{f,\,g} Y= \{(x,y)\in X\times Y\colon f(x)=g(y)\}.\] We endow $X\times_{f,\,g} Y$ with the topology given by the subspace topology in $X\times Y$.
\end{nota}

\begin{df}\label{et grpd def}
    A \textit{topological groupoid} $\mathcal{G}$ is a groupoid, that is, a small category where all morphisms are invertible, equipped with a locally compact topology such that multiplication and inversion are continuous, and such that the \textit{unit space} $\mathcal{G}^{(0)} := \{\gamma\in \mathcal{G}\colon \gamma^2=\gamma\}$ is Hausdorff. We define the \textit{range} and \textit{domain} maps by $\mathsf{r}(\gamma) := \gamma\gamma^{-1}$ and $\mathsf{d}(\gamma) := \gamma^{-1}\gamma$ for all $\gamma\in \mathcal{G}$. 
    The set of \textit{composable pairs} is given by $\mathcal{G}^{(2)}:= \mathcal{G}\times_{\mathsf{d},\,\mathsf{r}} \mathcal{G}$.
    For $x\in \mathcal{G}^{(0)}$, the range and domain \textit{fibers} are $x\mathcal{G}:= \mathsf{r}^{-1}(\{x\})$ and $\mathcal{G}x := \mathsf{d}^{-1}(\{x\})$. We define an \textit{open bisection} as an open subset $S\subseteq \mathcal{G}$ such that $\mathsf{r}\vert_S$ and $\mathsf{d}\vert_S$ are injective.
    
    We say that $\mathcal{G}$ is:
\begin{itemize}
\item \textit{\'etale}, if $\mathsf{r}$ and $\mathsf{d}$ are local homeomorphisms. 
\item \textit{effective}, if the interior of the isotropy bundle $\textup{Iso}(\mathcal{G}):= \{\gamma\in \mathcal{G}\colon \mathsf{r}(\gamma)=\mathsf{d}(\gamma)\}$ is $\mathcal{G}^{(0)}$.
\item \emph{principal}, if the isotropy bundle agrees with the unit space.\end{itemize}
\end{df}

\begin{rem}
We will often use the fact that if $\mathcal{G}$ is an \'etale groupoid, then the unit space $\mathcal{G}^{(0)}$ is open, the fibers are discrete, and the topology of $\mathcal{G}$ is generated by open bisections. Moreover, $\mathcal{G}$ is Hausdorff if and only if $\mathcal{G}^{(0)}$ is closed in $\mathcal{G}$.
\end{rem}


Concrete examples of \'etale groupoids are given by transformation groupoids. In their most general form, which is also the form in which we will need them, they are built out of inverse semigroup actions, as we explain next.

\begin{df}\label{germ grpd}
    Let $X$ be a locally compact Hausdorff space and let $\mathcal{S}$ be an inverse semigroup. Let $\beta\colon \mathcal{S}\rightarrow \textup{Homeo}_{\textup{par}}(X)$ be a semigroup homomorphism such that $\bigcup_{s\in \mathcal{S}}\textup{dom}(\beta_s)=X$, which we abbreviate to $\beta\colon \mathcal{S}\curvearrowright X$. On the set $\{(s,x)\colon s\in \mathcal{S},\,x\in \textup{dom}(\beta_s)\}$ we define an equivalence relation by
    \[(s,x)\sim (s',y) \textup{\quad if and only if\quad} \begin{cases} x=y \,\,\, \textup{and there is an idempotent}\,\,e\in \mathcal{S}\\ \textup{such that}\,\,\, se=s'e \,\, \textup{and}\,\,x\in \textup{dom}(\beta_e).\end{cases}\]     
    The equivalence class of $(s,x)$ is called the \textit{germ} of $(s,x)$, and is denoted by $[s,x]$. The set $\mathcal{S} \ltimes_\beta X := \{[s,x]\colon s\in \mathcal{S},\,x\in \textup{dom}(\beta_s)\}$ admits a groupoid structure via
    \[ [s,\beta_t(y)]\cdot [t,y] := [st,y]\andeqn [s,x]^{-1} := [s^{*},\beta_s(x)]. 
    \]
    We call $\mathcal{S} \ltimes_\beta X$ the \textit{transformation groupoid} of $\beta\colon \mathcal{S}\curvearrowright X$. Unless stated otherwise, we equip the transformation groupoid with the \textit{\'etale topology} generated by basic open subsets of the form $\{[s,x]\colon x\in U\}$ for $s\in \mathcal{S}$ and $U\subseteq\textup{dom}(\beta_s)$ open. In this topology, $\mathcal{S} \ltimes_\beta X$ is an \'etale groupoid with unit space homeomorphic to $\bigcup_{s\in \mathcal{S}}\textup{dom}(\beta_s)=X$. If $\beta$ is injective, then $\mathcal{S}$ is isomorphic to its image under $\beta$ and we refer to $\mathcal{S} \ltimes_\beta X \cong \beta(\mathcal{S}) \ltimes X$ as the \textit{groupoid of germs}.
\end{df}

\begin{df}\label{BG alpha and tfg nota}
    Let $\mathcal{G}$ be an \'etale groupoid. We write $\mathcal{B}(\mathcal{G})$ for the set of open bisections. Given $S,\,T\in \mathcal{B}(\mathcal{G})$, the sets $S^{-1}:= \{\gamma^{-1}\colon \gamma\in S\}$ and $ST:= \{\gamma\tau\colon (\gamma,\tau)\in S\times_{\mathsf{d},\,\mathsf{r}} T\}$ are also open bisections, and thus $\mathcal{B}(\mathcal{G})$ is a unital inverse semigroup with unit $\mathcal{G}^{(0)}$. We further write $\alpha\colon \mathcal{B}(\mathcal{G})\rightarrow \textup{Homeo}_{\textup{par}}(\mathcal{G}^{(0)})$ for the \textit{bisection action} given by $\alpha_S := \mathsf{r}\vert_S\circ \mathsf{d}\vert_S^{-1}$ for all $S\in \mathcal{B}(\mathcal{G})$. By restricting $\alpha$ to bisections with full domain and range, in turn, we obtain the \textit{topological full group} 
    \[[\![\mathcal{G}]\!] := \{\alpha_S\colon S\in \mathcal{B}(\mathcal{G}),\, 
    \mathsf{d}(S)=\mathsf{r}(S)=\mathcal{G}^{(0)}\}.\] 
\end{df}

The following is \cite[Proposition 3.3]{Ren_Cartan_2008}, and we record it here for convenience.

\begin{prop}\label{alpha and eff}
    Let $\mathcal{G}$ be a Hausdorff \'etale groupoid. Then the unital semigroup homomorphism $\alpha$ in \autoref{BG alpha and tfg nota} is injective if and only if $\mathcal{G}$ is effective.
\end{prop}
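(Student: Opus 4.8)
The plan is to prove both implications directly, using throughout two standard features of a Hausdorff \'etale groupoid: the unit space $\mathcal{G}^{(0)}$ is simultaneously open and closed in $\mathcal{G}$, and the open bisections form a basis for the topology. Recall that the partial homeomorphism $\alpha_S = \mathsf{r}|_S\circ \mathsf{d}|_S^{-1}$ has domain $\mathsf{d}(S)$, so the equality $\alpha_S = \alpha_T$ encodes the two pieces of data $\mathsf{d}(S) = \mathsf{d}(T) =: U$ and $\mathsf{r}(\mathsf{d}|_S^{-1}(x)) = \mathsf{r}(\mathsf{d}|_T^{-1}(x))$ for every $x \in U$.

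First I would show that if $\mathcal{G}$ is \emph{not} effective then $\alpha$ is not injective. In that case there is $\gamma \in \mathrm{int}(\textup{Iso}(\mathcal{G})) \setminus \mathcal{G}^{(0)}$; since open bisections form a basis, choose an open bisection $S_0$ with $\gamma \in S_0 \subseteq \mathrm{int}(\textup{Iso}(\mathcal{G}))$ and set $S := S_0 \setminus \mathcal{G}^{(0)}$, which is open precisely because $\mathcal{G}^{(0)}$ is closed. Then $S$ is a nonempty open bisection contained in $\textup{Iso}(\mathcal{G})$ and disjoint from $\mathcal{G}^{(0)}$, so every $\delta \in S$ satisfies $\mathsf{r}(\delta) = \mathsf{d}(\delta)$, whence $\alpha_S = \mathrm{id}_{\mathsf{d}(S)}$. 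Writing $U := \mathsf{d}(S)$, an open subset of $\mathcal{G}^{(0)}$ regarded as an open bisection, we also have $\alpha_U = \mathrm{id}_U$. Thus $\alpha_S = \alpha_U$ although $S \neq U$, since $S$ and $U$ are nonempty and disjoint (the former meets $\mathcal{G}^{(0)}$ trivially, the latter is contained in $\mathcal{G}^{(0)}$). Hence $\alpha$ is not injective, which proves the ``only if'' direction by contraposition.

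For the converse, assume $\mathcal{G}$ is effective and $\alpha_S = \alpha_T$ with common domain $U$. For $x \in U$ put $\gamma_x := \mathsf{d}|_S^{-1}(x)$ and $\tau_x := \mathsf{d}|_T^{-1}(x)$; then $\gamma_x \tau_x^{-1}$ is a well-defined element of $\mathcal{G}$ with $\mathsf{r}(\gamma_x \tau_x^{-1}) = \mathsf{r}(\gamma_x) = \alpha_S(x)$ and $\mathsf{d}(\gamma_x \tau_x^{-1}) = \mathsf{r}(\tau_x) = \alpha_T(x) = \alpha_S(x)$, so it lies in $\textup{Iso}(\mathcal{G})$. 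Identifying the open bisection $ST^{-1}$ with the set $\{\gamma_x \tau_x^{-1} : x \in U\}$, we conclude that $ST^{-1}$ is an open subset of $\textup{Iso}(\mathcal{G})$, hence $ST^{-1} \subseteq \mathrm{int}(\textup{Iso}(\mathcal{G})) = \mathcal{G}^{(0)}$ by effectiveness. So each $\gamma_x \tau_x^{-1}$ is a unit, therefore equals its own range $\alpha_S(x) = \mathsf{r}(\tau_x)$; multiplying on the right by $\tau_x$ and simplifying via $\tau_x^{-1}\tau_x = \mathsf{d}(\tau_x) = x = \mathsf{d}(\gamma_x)$ and $\mathsf{r}(\tau_x)\tau_x = \tau_x$ yields $\gamma_x = \tau_x$ for all $x \in U$, and hence $S = T$. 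This proves that $\alpha$ is injective.

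The argument is essentially bookkeeping once the two topological facts about Hausdorff \'etale groupoids are available; the only genuinely delicate point is the converse, where the equality $S = T$ must be recovered from the a priori weaker hypothesis $\alpha_S = \alpha_T$. The crucial leverage there is that effectiveness forces the open bisection $ST^{-1}$, built out of the ``difference'' of $S$ and $T$, into the unit space, and the main (if mild) obstacle is to make the identification $ST^{-1} = \{\gamma_x \tau_x^{-1} : x \in U\}$ precise and then unwind the groupoid identities for units.
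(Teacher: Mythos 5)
Your proof is correct. The paper does not prove this statement itself (it records it as \cite[Proposition 3.3]{Ren_Cartan_2008}), and your argument is essentially the standard one from that source: a nonempty open bisection $S_0\setminus\mathcal{G}^{(0)}$ inside $\mathrm{int}(\textup{Iso}(\mathcal{G}))$ realizes the identity when $\mathcal{G}$ fails to be effective, and conversely $\alpha_S=\alpha_T$ forces the open bisection $ST^{-1}$ into $\mathrm{int}(\textup{Iso}(\mathcal{G}))=\mathcal{G}^{(0)}$, whence $S=T$.
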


\begin{df}\label{wide}
    Let $\mathcal{G}$ be an \'etale groupoid and let $\mathcal{S}\subseteq \mathcal{B}(\mathcal{G})$ be an inverse subsemigroup. We say that $\mathcal{S}$ is \textit{wide} if $\mathcal{S}$ covers $\mathcal{G}$ and if for every $S,T\in \mathcal{S}$, the intersection $S\,\cap\, T$ is a union of bisections in $\mathcal{S}$. We abbreviate the corresponding restricted bisection action to $\alpha\colon \mathcal{S}\curvearrowright \mathcal{G}^{(0)}$.
\end{df}
The following is \cite[Proposition 5.4]{Exe_invsg_2008} and asserts that all \'etale groupoids are transformation groupoids of their respective bisection actions.

\begin{prop}\label{etale as trafo grpd}
    Let $\mathcal{G}$ be an \'etale groupoid. Let $\mathcal{S}\subseteq \mathcal{B}(\mathcal{G})$ be a wide inverse subsemigroup and let $\alpha\colon \mathcal{S}\curvearrowright \mathcal{G}^{(0)}$ be the associated bisection action. Then the map $\Psi\colon \mathcal{S} \ltimes_\alpha \mathcal{G}^{(0)} \rightarrow \mathcal{G}$ given by $\Psi([S,x]):= Sx$ for all $[S,x]\in \mathcal{S} \ltimes_\alpha \mathcal{G}^{(0)}$ is an isomorphism of topological groupoids.
\end{prop}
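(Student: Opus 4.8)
The plan is to verify directly that $\Psi$ is a well-defined, bijective, continuous, open groupoid homomorphism, invoking the two defining properties of a wide inverse subsemigroup — that $\mathcal{S}$ covers $\mathcal{G}$, and that $S\cap T$ is a union of members of $\mathcal{S}$ for $S,T\in\mathcal{S}$ — exactly where they are needed. Throughout, $Sx$ denotes the unique element of the bisection $S$ with domain $x$, defined whenever $x\in\mathsf{d}(S)=\textup{dom}(\alpha_S)$.

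\emph{Well-definedness and multiplicativity.} Recall that the idempotents of $\mathcal{B}(\mathcal{G})$ are precisely the open subsets of $\mathcal{G}^{(0)}$, and that for an idempotent $E\in\mathcal{S}$ and any $S\in\mathcal{S}$ the product $SE$ equals the restriction $\{\gamma\in S\colon \mathsf{d}(\gamma)\in E\}$. Thus, if $[S,x]=[T,x]$ is witnessed by an idempotent $E$ with $x\in E$ and $SE=TE$, then $Sx\in SE=TE\subseteq T$, and since $Sx$ and $Tx$ both lie in the bisection $T$ with domain $x$, injectivity of $\mathsf{d}\vert_T$ gives $Sx=Tx$; hence $\Psi$ is well defined. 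For multiplicativity, given a composable pair $([S,\alpha_T(y)],[T,y])$, the product $(S\alpha_T(y))(Ty)$ is defined — its left factor has domain $\alpha_T(y)=\mathsf{r}(Ty)$ — lies in $ST\in\mathcal{S}$, and has domain $y$, so it equals $(ST)y=\Psi([ST,y])$. Therefore $\Psi$ respects the partially defined multiplication, which suffices to conclude it is a homomorphism of groupoids, preservation of units and inversion then being automatic.

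\emph{Bijectivity.} Surjectivity is where the covering property enters: any $\gamma\in\mathcal{G}$ lies in some $S\in\mathcal{S}$, so $\gamma=S\mathsf{d}(\gamma)=\Psi([S,\mathsf{d}(\gamma)])$. For injectivity, suppose $Sx=Ty=:\gamma$. Comparing domains yields $x=y$, and $\gamma\in S\cap T$, so by wideness there is $U\in\mathcal{S}$ with $\gamma\in U\subseteq S\cap T$. Set $E:=U^{-1}U\in\mathcal{S}$; this is an idempotent equal to $\mathsf{d}(U)$ (since $\mathsf{r}\vert_U$ is injective), and $x=\mathsf{d}(\gamma)\in E$. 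Now $SE=U$: any $s\in SE$ has $\mathsf{d}(s)\in\mathsf{d}(U)$, hence $\mathsf{d}(s)=\mathsf{d}(u)$ for some $u\in U\subseteq S$, whence $s=u\in U$ by injectivity of $\mathsf{d}\vert_S$, and the reverse inclusion is immediate. Symmetrically $TE=U$, so $SE=TE$ and $[S,x]=[T,x]$, proving injectivity.

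\emph{Homeomorphism.} For $S\in\mathcal{S}$ and open $U\subseteq\mathsf{d}(S)$, the basic open set $\{[S,x]\colon x\in U\}$ of the \'etale topology is carried by $\Psi$ onto $\{\gamma\in S\colon\mathsf{d}(\gamma)\in U\}=\mathsf{d}\vert_S^{-1}(U)$, and the identifications $U\cong\{[S,x]\colon x\in U\}$ and $U\cong\mathsf{d}\vert_S^{-1}(U)$ (via $x\mapsto Sx=\mathsf{d}\vert_S^{-1}(x)$) are homeomorphisms, since $\mathsf{d}\vert_S$ is a homeomorphism onto its image. As $S$ is open in $\mathcal{G}$, the set $\mathsf{d}\vert_S^{-1}(U)$ is open in $\mathcal{G}$, so $\Psi$ is continuous on each member of an open cover — hence continuous — and maps a basis to open sets — hence open. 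Being also a bijection, it is an isomorphism of topological groupoids. I expect the only mildly delicate point to be the identity $SE=TE$ in the injectivity step, where wideness must be combined with the bisection property and the bookkeeping for restrictions to idempotents; everything else is formal.
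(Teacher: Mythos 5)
Your proof is correct. Note that the paper does not prove this proposition at all: it is quoted verbatim from \cite[Proposition 5.4]{Exe_invsg_2008}, so there is no in-text argument to compare against. Your direct verification (well-definedness and multiplicativity via the description of idempotents as open subsets of $\mathcal{G}^{(0)}$ and of $SE$ as a restriction, surjectivity from the covering property, injectivity via a bisection $U\subseteq S\cap T$ with $E=U^{-1}U$, and the homeomorphism claim from the basic open sets of the \'etale topology) is exactly the standard argument underlying Exel's result, and all the points you flag as delicate are handled correctly.
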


\begin{df}\label{twist def}
    Let $\mathcal{G}$ be a topological groupoid. A surjective continuous grou\-poid homomorphism $\pi\colon \Sigma \rightarrow \mathcal{G}$ is called a \textit{groupoid extension} by the groupoid $\ker(\pi):= \{\sigma\in \Sigma\colon \pi(\sigma)\in \mathcal{G}^{(0)}\}$. A \textit{twist} over $\mathcal{G}$ is a groupoid extension $\pi\colon \Sigma \rightarrow \mathcal{G}$ by the group bundle $\T\times \mathcal{G}^{(0)}$ such that 
    \begin{enumerate}[label=(\roman*)]
        \item the homomorphism $\pi$ is locally trivializable, and
        \item for all $\sigma \in \Sigma$ and $\zeta\in \T$, we have $\big(\zeta,\mathsf{r}(\pi(\sigma))\big)\sigma = \sigma \big(\zeta, \mathsf{d}(\pi(\sigma))\big)$, that is, the group bundle $\T\times \mathcal{G}^{(0)}$ is \textit{central}.
    \end{enumerate}
    In this case, $\mathcal{G}$ is called the \textit{base groupoid} and $\Sigma$ is called the \textit{twisted groupoid}. Throughout, we regard the torus as a group under multiplication and use centrality in (ii) to write products in $\Sigma$ by elements in $\T\times \mathcal{G}^{(0)}$ as scalar multiplication. Using this notation, a map $f\colon \Sigma \rightarrow \mathbb{C}$ is called \textit{$\T$-equivariant} if $f(\zeta \sigma) = \overline{\zeta}f(\sigma)$ for all $\zeta\in \T$ and $\sigma\in \Sigma$. 
    \end{df}

\begin{nota}\label{twist notation}
     Let $\pi\colon \Sigma \rightarrow \mathcal{G}$ be a twist. We write $(\mathcal{G},\Sigma)$ for $\pi\colon \Sigma \rightarrow \mathcal{G}$ and identify the unit space $\Sigma^{(0)} = \{(1,x)\colon x\in \mathcal{G}^{(0)}\}$ with $\mathcal{G}^{(0)}$. We use the abbreviation $\sigma_{\bullet}:= \pi(\sigma)\in \mathcal{G}$ for $\sigma \in \Sigma$ and the notation $\widehat{\gamma}$ for an arbitrary lift of $\gamma\in \mathcal{G}$ along $\pi$.
\end{nota}    

\begin{df}\label{twist hom}
    Let $\pi_\mathcal{G}\colon \Sigma \rightarrow \mathcal{G}$ and $\pi_\mathcal{H}\colon \Omega \rightarrow \mathcal{H}$ be twists. A \textit{twist homomorphism} between them is a continuous groupoid homomorphism $\kappa\colon \Sigma\rightarrow \Omega$ that satisfies $\kappa(\zeta \sigma) = \zeta\kappa(\sigma)$ for all $\sigma\in \Sigma$ and $\zeta \in \T$. For a twist homomorphism $\kappa$, the assignment $\kappa_{\textup{b}}(\gamma):= \pi_\mathcal{H}(\kappa(\widehat{\gamma}))$ for $\gamma \in \mathcal{G}$ is a well-defined homomorphism between the base groupoids satisfying $\pi_\mathcal{H}\circ\kappa = \kappa_{\textup{b}}\circ\pi_\mathcal{G}$.
    \end{df}
See \cite{Sims_groupoids_2011} for foundations on the theory of twisted groupoids. We exclusively work with base groupoids $\mathcal{G}$ which are \'etale, and will be mainly concerned with Weyl twists, which we define next.

\begin{df}\label{df weyl twist abbr}
    We define a \textit{Weyl groupoid} to be an effective Hausdorff \'etale groupoid, and a \textit{Weyl twist} to be a twist $(\mathcal{G},\Sigma)$ over a Weyl groupoid $\mathcal{G}$.
\end{df}

\begin{df}\label{fell line bundle df}
    Let $\pi\colon \Sigma \rightarrow \mathcal{G}$ be a twist over an \'etale groupoid. We equip the set $\C\times \Sigma$ with the equivalence relation $(z,\sigma)\sim (\overline{\zeta} z,\zeta \sigma)$ for all $\zeta \in \T$ and denote the equivalence class of $(z,\sigma)\in \C\times \Sigma$ by $[z,\sigma]$. On the set $L$ of equivalence classes we define a bundle map $p_L\colon L \rightarrow \mathcal{G}$ and an involution $L\to L$ given by $p_L([z,\sigma]) := \pi(\sigma) = \sigma_\bullet\in \mathcal{G}$ and $[z,\sigma]^* := [\overline{z},\sigma^{-1}]$ for all $[z,\sigma]\in L$. Given $\gamma\in\mathcal{G}$, we set $L_\gamma=p_L^{-1}(\gamma)$, and define a multiplication 
\[L_\gamma \times_{\mathsf{d},\,\mathsf{r}} L_\eta \rightarrow L_{\gamma\eta}\] 
   by $[z,\sigma]\cdot [z',\sigma'] := [zz',\sigma \sigma']$ for all $([z,\sigma],[z',\sigma']) \in L_\gamma \times_{\mathsf{d},\,\mathsf{r}} L_\eta$.
   Finally, we define range and domain maps on $L$ by 
\[\mathsf{d}([z,\sigma])=\mathsf{d}(\sigma_\bullet) \ \ \mbox{ and } \ \
\mathsf{r}([z,\sigma])=\mathsf{r}(\sigma_\bullet)
\]
for all $[z,\sigma]\in L$.   
   We call $L$ the \textit{Fell line bundle associated to $\Sigma$}.\par Given a Fell line bundle $p_L\colon L\rightarrow \mathcal{G}$, an element $u\in L$ such that $u^*u = 1_{\mathsf{d}(u)}$ and $uu^*= 1_{\mathsf{r}(u)}$ is called a \textit{unitary} and we write $\mathcal{U}(L)$ for the set of unitaries.
\end{df}

By \cite[Example 5.5]{DeaKumBir_Fellmorphism_2008}, for an \'etale groupoid $\mathcal{G}$, there is a ca\-no\-ni\-cal one-to-one correspondence between twists over $\mathcal{G}$ and Fell line bundles over it. We will therefore freely identify a twist $\Sigma$ with the groupoid $\mathcal{U}(L)$ of unitaries of the associated Fell line bundle over $\mathcal{G}$, according to \autoref{fell line bundle df}.



\begin{lma}\label{sections corr T equi}
    Let $(\mathcal{G},\Sigma)$ be a twist over an \'etale groupoid. Then there is a ca\-no\-ni\-cal one-to-one correspondence between sections $\mathcal{G} \rightarrow L$ of the bundle associated to $\Sigma$ and $\T$-equivariant maps $\Sigma \rightarrow \mathbb{C}$. This correspondence identifies a $\T$-equivariant map $f\colon \Sigma \rightarrow \mathbb{C}$ with the section $\widehat{f}\colon \mathcal{G}\to L$ given by $\widehat{f}(\gamma):= [f(\widehat{\gamma}),\widehat{\gamma}]$ for all $\gamma\in\mathcal{G}$.
    \end{lma}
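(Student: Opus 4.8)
The plan is to exhibit the inverse assignment explicitly and then verify that both constructions are well defined and mutually inverse; the substantive content is purely bookkeeping with the two equivalence relations involved, namely the one presenting $\pi^{-1}(\gamma)$ as a $\T$-torsor and the one presenting $L$ as the quotient of $\C\times\Sigma$ from \autoref{fell line bundle df}. First I would check that $\widehat f(\gamma):=[f(\widehat\gamma),\widehat\gamma]$ is independent of the chosen lift: any two lifts of $\gamma$ are of the form $\widehat\gamma$ and $\zeta\widehat\gamma$ for a unique $\zeta\in\T$, and combining $\T$-equivariance of $f$ with the relation $(z,\sigma)\sim(\overline\zeta z,\zeta\sigma)$ yields $[f(\zeta\widehat\gamma),\zeta\widehat\gamma]=[\overline\zeta f(\widehat\gamma),\zeta\widehat\gamma]=[f(\widehat\gamma),\widehat\gamma]$. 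Since $p_L(\widehat f(\gamma))=\pi(\widehat\gamma)=\gamma$, the map $\widehat f$ is indeed a section of $p_L$.

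For the inverse direction, the key elementary observation is that for any $\sigma\in\Sigma$ the map $\C\to L_{\sigma_\bullet}$, $z\mapsto[z,\sigma]$, is a bijection: surjectivity follows by rewriting an arbitrary representative $[w,\tau]$ of an element of $L_{\sigma_\bullet}$ (so $\pi(\tau)=\sigma_\bullet$) as $[\overline\zeta w,\sigma]$, where $\zeta\in\T$ is the unique scalar with $\zeta\tau=\sigma$; injectivity follows because $[z,\sigma]=[z',\sigma]$ forces $\eta\sigma=\sigma$ for the corresponding $\eta\in\T$, hence $\eta=1$ by freeness of the $\T$-action, hence $z=z'$. Given a section $s\colon\mathcal G\to L$, I would then define $f_s\colon\Sigma\to\C$ by letting $f_s(\sigma)$ be the unique scalar with $s(\sigma_\bullet)=[f_s(\sigma),\sigma]$. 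This $f_s$ is $\T$-equivariant: from $s(\sigma_\bullet)=[f_s(\sigma),\sigma]=[\overline\zeta f_s(\sigma),\zeta\sigma]$ together with $s((\zeta\sigma)_\bullet)=s(\sigma_\bullet)=[f_s(\zeta\sigma),\zeta\sigma]$, the uniqueness clause forces $f_s(\zeta\sigma)=\overline\zeta f_s(\sigma)$.

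It then remains to check that the two assignments invert each other. Evaluating $\widehat f$ at $\sigma_\bullet$ via the lift $\sigma$ itself gives $\widehat f(\sigma_\bullet)=[f(\sigma),\sigma]$, so $f_{\widehat f}(\sigma)=f(\sigma)$; conversely, for a section $s$ and any lift $\widehat\gamma$ of $\gamma$, the defining property of $f_s$ gives $\widehat{f_s}(\gamma)=[f_s(\widehat\gamma),\widehat\gamma]=s(\gamma)$. If in addition one wants the correspondence to restrict to \emph{continuous} sections and \emph{continuous} $\T$-equivariant maps — which is the form in which the lemma will be used — this follows from local triviality of the twist: on an open set over which $\pi$ is trivial, the trivializations identify $\Sigma$ and $L$ with $U\times\T$ and $U\times\C$ and turn both constructions into the identity on the $\C$-coordinate up to multiplication by a continuous $\T$-valued function, so continuity transfers in both directions. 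I expect the only mild obstacle to be organizing the manipulations with the two equivalence relations cleanly; there is no conceptual difficulty.
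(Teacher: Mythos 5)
Your proposal is correct and follows essentially the same route as the paper's proof: well-definedness of $\widehat f$ via the relation $(z,\sigma)\sim(\overline\zeta z,\zeta\sigma)$, and recovery of $f$ from a section $s$ by writing $s(\sigma_\bullet)=[f(\sigma),\sigma]$ using that $\pi^{-1}(\sigma_\bullet)=\T\sigma$. You spell out the mutual-inverse check and the continuity remark more explicitly than the paper does, but there is no difference in approach.
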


\begin{proof}
Since $\pi^{-1}(\gamma) = \T\widehat{\gamma}$, the map $\widehat{f}$ in the statement is independent of the lift, and it is easy to see that it is a section. Conversely, given a section $\widehat{f}\colon \mathcal{G} \rightarrow L$ and $\sigma \in \Sigma$, we observe that $\textup{pr}_\Sigma(\widehat{f}(\sigma_{\bullet}))\in \T\sigma$, and we define $\zeta(\sigma)\in \T$ to satisfy the identity $\zeta(\sigma)\sigma = \textup{pr}_\Sigma(\widehat{f}(\sigma_{\bullet}))$. We recover the $\T$-equivariant map by setting $f(\sigma):= \textup{pr}_\C(\widehat{f}(\sigma_{\bullet}))\zeta(\sigma)$ for all $\sigma\in \Sigma$.
\end{proof}

From now on, we treat $\T$-equivariant maps $\Sigma \rightarrow \mathbb{C}$ as sections $\mathcal{G}\to L$ via the correspondence in \autoref{sections corr T equi}.

\begin{df}\label{trivializable}
    Let $(\mathcal{G},\Sigma)$ be a twist over an \'etale groupoid. We set 
    \[\mathcal{B}(\mathcal{G},\Sigma):= \{S\in \mathcal{B}(\mathcal{G})\colon L\vert_S \cong \C\times S\}.\] For each $S\in \mathcal{B}(\mathcal{G},\Sigma)$, we choose a homeomorphism $\iota\colon \T\times S\rightarrow \Sigma\vert_S$ that is the identity on $\{1\}\times S\cap \mathcal{G}^{(0)}$. For $\gamma\in S$, we set $1_\gamma := \iota(1,\gamma)$. Then the assignment $1_{(\cdot)}\colon S\rightarrow \Sigma$ is a continuous unitary section, which is the identity on units. Let $f\colon \mathcal{G}\rightarrow \C$ be a function and set $S:=\{\gamma\in \mathcal{G}\colon f(\gamma)\neq 0\}$. Assume that $S$ belongs to $\mathcal{B}(\mathcal{G},\Sigma)$. We define the \textit{$1_S$-lift} of $f$ as the section $f_{1_S}\colon \Sigma \rightarrow \C$ given by \[f_{1_S}(\sigma):= \begin{cases}\overline{\zeta}f(\sigma_\bullet), \,\,\,\textup{if}\,\,\, \sigma = \zeta 1_{\sigma_\bullet}\in \T1_S,\\ 0,\,\,\,\textup{otherwise}.\end{cases}\]
\end{df}
We proceed to introduce the strict support of $\mathbb{T}$-equivariant maps $\Sigma\to \mathbb{C}$, as well as their convolution. Since the base groupoid might be non-Hausdorff, we use a more general definition of the $C_c$-section algebra in terms of bisection supports. For a more detailed treatment, see also \cite[Definition 3.9]{Exe_invsg_2008}.

\begin{df}\label{section operations}
    Let $(\mathcal{G},\Sigma)$ be a twist over an \'etale groupoid. Let $f\colon \Sigma \rightarrow \C$ be a $\T$-equivariant map. Its \textit{involution} $f^*\colon \Sigma \rightarrow \C$ is given by $f^*(\sigma) := \overline{f(\sigma^{-1})}$ for $\sigma\in \Sigma$. We further define both the \textit{strict support} $\supp(f) := \{\gamma\in \mathcal{G}\colon f(\widehat{\gamma})\neq 0\}$ and the \textit{support} $\overline{\supp(f)}$ as subsets of $\mathcal{G}$. \par A $\T$-equivariant map $f\colon \Sigma \rightarrow \C$ is a \textit{$C_c$-section} if there are finitely many $\T$-equivariant maps $f_1, \ldots, f_n\colon \Sigma \rightarrow \C$ such that $S_k:=\supp(f_k)$ belongs to $\mathcal{B}(\mathcal{G},\Sigma)$, the restriction $f_k\vert_{S_k}\in C_c(S_k,L)$ is continuous and compactly supported, and $f=\sum_{k=1}^n f_k$. We denote the set of $C_c$-sections by $\mathcal{C}_c(\mathcal{G},\Sigma)$ and regard it as a $*$-algebra under the convolution product given by
    \begin{align*}\label{twisted conv}
        \left(f_1*f_2 \right)(\sigma) := \sum_{\tau \in \mathsf{r}(\sigma)\mathcal{G}} f_1(\widehat{\tau})f_2(\widehat{\tau}^{-1}\sigma) \,\,\end{align*}
for all $f_1,f_2\in \mathcal{C}_c(\mathcal{G},\Sigma)$, and all $\sigma \in \Sigma$.\end{df}

Since $\pi$ is locally trivializable, we have that if the base groupoid $\mathcal{G}$ is Hausdorff, then $\mathcal{C}_c(\mathcal{G},\Sigma) = C_c(\mathcal{G},\Sigma)$.

\begin{rem}\label{cocycle twist}
If there exists a global continuous section $1_{(\cdot)}\colon \mathcal{G}\rightarrow \Sigma$, then the twisted groupoid has the form $\Sigma = \bigsqcup_{\gamma\in \mathcal{G}} \T1_\gamma$ and can equivalently be described by a continuous and normalized $2$-cocycle $c \colon \mathcal{G}^{(2)} \rightarrow \T$ via the relation that the multiplication in $\Sigma$ is dictated by $1_\gamma \cdot 1_\eta = c(\gamma,\eta)1_{\gamma \eta}$ for all $(\gamma,\eta)\in \mathcal{G}^{(2)}$. A section $f\colon \Sigma \rightarrow \C$ is then fully determined by its values on $\{1_\gamma \colon \gamma \in \mathcal{G}\}$, and convolution for $f_1,\,f_2\in \mathcal{C}_c(\mathcal{G},\Sigma)$ agrees with twisted convolution on $\mathcal{C}_c(\mathcal{G})$, namely:
\[ (f_1*f_2)(1_\gamma) = \sum_{\tau \in \mathsf{r}(\gamma)\mathcal{G}} f_1(1_\tau)c(\tau,\tau^{-1}\gamma)f_2(1_{\tau^{-1}\gamma}) \,\,\textup{for all}\,\, \gamma\in \mathcal{G}.\] In particular, for the trivial twist $\Sigma = \T \times \mathcal{G}$ over a Hausdorff base groupoid, the $*$-algebras $\mathcal{C}_c(\mathcal{G},\Sigma)$ and $C_c(\mathcal{G})$ are naturally isomorphic.
\end{rem}

\begin{nota}\label{set convention}
Let $(\mathcal{G},\Sigma)$ be a twist over an \'etale groupoid, let $S\in \mathcal{B}(\mathcal{G},\Sigma)$, and let $\sigma\in \Sigma$. Then the set $1_{S}\sigma\subseteq\Sigma$ is either a singleton or the empty set. To lighten the notation, we interpret function evaluations as evaluation at this singleton in the first case and as zero otherwise. That is, for any section $f\colon \Sigma \rightarrow \C$, we write
    \[f(1_{S}\sigma) := \begin{cases}
        f(1_{\tau}\sigma),\,\, \textup{if} \,\, \mathsf{r}(\sigma)\in \mathsf{d}(S) \,\, \textup{and}\,\, S\mathsf{r}(\sigma) = \{\tau\}, \\ 0,\,\, \textup{otherwise}
    \end{cases}\]
for all $\sigma\in\Sigma$. The expression $f(\sigma 1_{S})$ is defined similarly. Note that $1_{S^{-1}}\sigma$ and $1_{S}^{-1}\sigma$, as choices for a lift of $S^{-1}\sigma_{\bullet}$, do not necessarily agree in $\Sigma$. In the untwisted case, this subtlety disappears by \autoref{cocycle twist}. 
\end{nota}

\begin{prop}\label{S convo}
    Let $(\mathcal{G},\Sigma)$ be a twist over an \'etale groupoid. Let $S\in \mathcal{B}(\mathcal{G},\Sigma)$ and interpret $\ind_S$ as the section that is the $1_S$-lift of the indicator function on $1_S\subseteq \Sigma$ as in \autoref{trivializable}. For $f\in \mathcal{C}_c(\mathcal{G},\Sigma)$ and $\sigma \in \Sigma$, we have
    \[(\ind_S * f)(\sigma)= f(1_{S}^{-1}\sigma) \andeqn (f*\ind_S)(\sigma)= f(\sigma 1_{S}^{-1}).
    \]
\end{prop}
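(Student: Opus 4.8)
The plan is to compute both convolutions directly from the defining formula in \autoref{section operations}, unwinding the abbreviations of \autoref{set convention} to identify the nonzero term. Consider first $(\ind_S * f)(\sigma)$ for $\sigma \in \Sigma$. By definition,
\[
(\ind_S * f)(\sigma) = \sum_{\tau \in \mathsf{r}(\sigma)\mathcal{G}} \ind_S(\widehat{\tau})\, f(\widehat{\tau}^{-1}\sigma).
\]
Now $\ind_S$ is the $1_S$-extension of the indicator of $1_S \subseteq \Sigma$, so $\ind_S(\widehat{\tau}) \neq 0$ forces $\tau \in S$ and, choosing the lift $\widehat{\tau} = 1_\tau$, gives $\ind_S(1_\tau) = 1$; for a general lift $\widehat{\tau} = \zeta 1_\tau$ we get $\ind_S(\widehat{\tau}) = \overline{\zeta}$. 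Since $S$ is a bisection and $\tau \in S \cap \mathsf{r}(\sigma)\mathcal{G}$, there is at most one such $\tau$, namely the unique element of $S\mathsf{r}(\sigma)$ when $\mathsf{r}(\sigma) \in \mathsf{d}(S)$, and no such $\tau$ otherwise. In the latter case the sum is empty and the right-hand side $f(1_S^{-1}\sigma)$ is also zero by the convention in \autoref{set convention}, so the identity holds trivially. In the former case, writing $\tau$ for this unique element and $\widehat{\tau} = 1_\tau$, the sum collapses to the single term $f(1_\tau^{-1}\sigma) = f(1_S^{-1}\sigma)$, as claimed. One should check that using a different lift $\widehat{\tau} = \zeta 1_\tau$ gives the same answer: the factor $\ind_S(\zeta 1_\tau) = \overline{\zeta}$ is cancelled by the $\T$-equivariance $f((\zeta 1_\tau)^{-1}\sigma) = f(\zeta(1_\tau^{-1}\sigma)) = \overline{\zeta}^{-1} f(1_\tau^{-1}\sigma) = \zeta f(1_\tau^{-1}\sigma)$ — wait, more carefully, $\widehat{\tau}^{-1} = (\zeta 1_\tau)^{-1} = \overline{\zeta} 1_\tau^{-1}$ by centrality, so $f(\widehat{\tau}^{-1}\sigma) = f(\overline{\zeta} 1_\tau^{-1}\sigma) = \zeta f(1_\tau^{-1}\sigma)$, and the product $\ind_S(\widehat{\tau}) f(\widehat{\tau}^{-1}\sigma) = \overline{\zeta} \cdot \zeta f(1_\tau^{-1}\sigma) = f(1_\tau^{-1}\sigma)$ is indeed lift-independent.

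For the second identity, $(f * \ind_S)(\sigma) = \sum_{\tau \in \mathsf{r}(\sigma)\mathcal{G}} f(\widehat{\tau})\, \ind_S(\widehat{\tau}^{-1}\sigma)$, the argument is symmetric: $\ind_S(\widehat{\tau}^{-1}\sigma) \neq 0$ forces $(\widehat{\tau}^{-1}\sigma)_\bullet = \tau^{-1}\mathsf{r}(\sigma)\cdots \in S$, i.e. $\tau^{-1}\sigma_\bullet \in S$, equivalently $\sigma_\bullet \in \tau S$ — here I would instead index the nonzero term by the condition $\mathsf{d}(\sigma_\bullet)\cdots$; concretely, $\ind_S(\widehat{\tau}^{-1}\sigma)\neq 0$ means there is $\zeta \in \T$ with $\widehat{\tau}^{-1}\sigma = \zeta 1_s$ for some $s \in S$, which pins down $\tau = \sigma_\bullet s^{-1}$ uniquely (again using that $S$ is a bisection), provided $\mathsf{d}(\sigma) \in \mathsf{r}(S^{-1}) = \mathsf{d}(S)$; and then the single surviving term is $f(\widehat{\tau})\ind_S(\widehat{\tau}^{-1}\sigma) = f(\sigma 1_s^{-1}) = f(\sigma 1_S^{-1})$ after again absorbing the $\T$-factor via equivariance of $f$ and the convention for $f(\sigma 1_S)$ in \autoref{set convention}. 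When no such $s$ exists, both sides vanish.

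The only genuinely delicate point — and the step I expect to require the most care — is the bookkeeping of lifts and torus factors: one must consistently distinguish $1_S^{-1}\sigma$ from $1_{S^{-1}}\sigma$ (these need not agree in $\Sigma$, as warned in \autoref{set convention}), verify that the single surviving summand is genuinely independent of the chosen lift $\widehat{\tau}$, and match the resulting expression with the singleton-evaluation convention rather than with the $1_{S^{-1}}$-extension. Once the correct term is isolated, the computation is a one-line collapse of the convolution sum; all the content is in checking that "the unique $\tau$ with $\widehat\tau\in S$" (resp. "with $\widehat\tau^{-1}\sigma \in 1_S\cdot\T$") exists exactly when the right-hand side is nonzero and that the scalar factors cancel. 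I would present this cleanly by first reducing, via the trivialization $1_{(\cdot)}$ on $S$, to the description in \autoref{cocycle twist}-style coordinates locally on $S$, so that the equivariance cancellation becomes transparent.
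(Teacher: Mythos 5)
Your proof is correct and follows essentially the same route as the paper's, which likewise just collapses the convolution sum to the single term indexed by the unique element of $S$ in the relevant fiber (the paper treats only the first identity and declares the second analogous, whereas you additionally spell out the lift-independence of the surviving summand and the simultaneous vanishing of both sides). One small slip worth fixing: for $(\ind_S*f)(\sigma)$ the surviving $\tau$ is the unique element of $S\cap\mathsf{r}(\sigma)\mathcal{G}$, i.e.\ the element of $S$ with \emph{range} $\mathsf{r}(\sigma)$, which exists precisely when $\mathsf{r}(\sigma)\in\mathsf{r}(S)$ --- not ``the unique element of $S\mathsf{r}(\sigma)$ when $\mathsf{r}(\sigma)\in\mathsf{d}(S)$'' as you write; this is consistent with the composability requirement for $1_\tau^{-1}\sigma$ that you do use correctly in the rest of the argument.
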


\begin{proof}
    We show the first identity since the second one follows analogously. Note that the sums below collapse and directly lead to
    \[ (\ind_S*f)(\sigma) = \sum_{\tau\in \mathsf{r}(\gamma)\mathcal{G}} \ind_S(\widehat{\tau})f(\widehat{\tau}^{-1}\sigma) = \sum_{\tau\in \mathsf{r}(\gamma)S} \ind_S(1_{\tau})f(1_\tau^{-1}\sigma) = f(1_S^{-1}\sigma).\qedhere\]
\end{proof}

\begin{df}\label{Fp def}
    Let $(\mathcal{G},\Sigma)$ be a twist over an \'etale groupoid. For every $x\in \mathcal{G}^{(0)}$, we choose a fiberwise lift $\widehat{\mathcal{G}x}$, so that $\pi$ induces a bijection $\widehat{\mathcal{G}x}\to\mathcal{G}x$. For $p\in [1,\infty)$, there is thus an isometric isomorphism $\ell^p(\mathcal{G}x)\cong \ell^p(\widehat{\mathcal{G}x})$ induced by $\pi$, and we identify an element $\xi\in \ell^p(\mathcal{G}x)$ with its $\T$-equivariant extension to $\T \widehat{\mathcal{G}x} = \Sigma x$ whenever convolution identities apply. The \textit{left regular representation} on the domain fiber of $x\in \mathcal{G}^{(0)}$ is the homomorphism $\lambda_x\colon \mathcal{C}_c(\mathcal{G},\Sigma) \rightarrow \B(\ell^p(\mathcal{G}x))$ given by \[\lambda_x(f)(\xi):= (f*\xi)\vert_{\widehat{\mathcal{G}x}}\] for all $f\in \mathcal{C}_c(\mathcal{G},\Sigma)$ and $\xi \in \ell^p(\mathcal{G}x)$. We define the \emph{reduced norm} $\|\cdot\|_\lambda$ on $\mathcal{C}_c(\mathcal{G},\Sigma)$ as $\|f\|_\lambda := \sup\{\|\lambda_x(f)\|\colon x\in \mathcal{G}^{(0)}\}$ for $f\in \mathcal{C}_c(\mathcal{G},\Sigma)$. The completion of $\mathcal{C}_c(\mathcal{G},\Sigma)$ in this norm is called the \textit{reduced twisted groupoid $L^p$-operator algebra} and is denoted by $F_\lambda^p(\mathcal{G},\Sigma)$. 
\end{df}

\begin{rem}\label{Fp is Lp op}
    The reduced twisted groupoid $L^p$-operator algebra is indeed an $L^p$-operator algebra since the extension of $\bigoplus_{x\in \mathcal{G}^{(0)}}\lambda_x$ by continuity provides an isometric embedding into $\bigoplus_{x\in \mathcal{G}^{(0)}} \B(\ell^p(\mathcal{G}x))$, which is isometrically represented on $\bigoplus_{x\in \mathcal{G}^{(0)}} \ell^p(\mathcal{G}x)$. See also \cite[Proposition 5.1]{BarKwaMcK_Banach_2023} for a representation on $\ell^p(\mathcal{G})$. Note that the construction of $F_\lambda^p(\mathcal{G},\Sigma)$ includes reduced groupoid $L^p$-operator algebras for the trivial twist $\Sigma = \T \times \mathcal{G}$ and reduced twisted groupoid C*-algebras $F_\lambda^2(\mathcal{G},\Sigma)= C_\lambda^*(\mathcal{G},\Sigma)$ as special cases.
\end{rem}

\begin{nota}\label{j supp' nota}
     Let $(\mathcal{G},\Sigma)$ be a twist over an \'{e}tale groupoid. For $p\in [1,\infty)$, let $q\in (1,\infty]$ satisfy $p^{-1}+q^{-1}=1$ with the convention that $\infty^{-1}:=0$. For $x\in \mathcal{G}^{(0)}$, define a pairing between $\ell^p(\mathcal{G}x)$ and $\ell^q(\mathcal{G}x)$ by $\langle \xi,\omega\rangle_x := \sum_{\gamma\in \mathcal{G}x} \xi(\gamma)\omega(\gamma)$ for all $\xi \in \ell^p(\mathcal{G}x)$ and $\omega \in \ell^q(\mathcal{G}x)$. As in \cite[Section 4]{ChoGarThi_rigidity_2021} and \cite[Proposition 6.21]{HetOrt_twistlp_2022}, we denote by $j\colon F_\lambda^p(\mathcal{G},\Sigma) \rightarrow C_0(\Sigma)$ the linear injection induced by the identity map $(\mathcal{C}_c(\mathcal{G},\Sigma),\|\cdot\|_\lambda)\rightarrow (\mathcal{C}_c(\mathcal{G},\Sigma),\|\cdot\|_\infty)$. Concretely, by using the $\T$-equivariant functions $\delta_\sigma\colon \Sigma x \rightarrow \T$ defined by $\delta_\sigma(\tau) := \textup{pr}_{\T}(\tau^{-1}\sigma)\delta_{\sigma_{\bullet},\tau_{\bullet}}$ for $\sigma,\tau \in \Sigma x$, the $j$-map is given by $j_a(\sigma) = \langle \lambda_{x}(a)(\delta_{x}),\delta_{\sigma}\rangle_x$ for all $a\in F_\lambda^p(\mathcal{G},\Sigma), \sigma \in \Sigma x$ and $x\in \mathcal{G}^{(0)}$. This map $j$ allows us to consistently extend the notion of \textit{strict supports} from $\mathcal{C}_c(\mathcal{G},\Sigma)$ to $F_\lambda^p(\mathcal{G},\Sigma)$ and for $a\in F_\lambda^p(\mathcal{G},\Sigma)$, we write $\supp(a):= \{\gamma\in \mathcal{G}\colon j_a(\widehat{\gamma})\neq 0\}$.
\end{nota}
Note that the convolution of functions supported on the unit space is simply their pointwise product. Moreover, for such functions, the reduced norm agrees with the supremum norm, as it does on all functions supported on an open bisection. As a result, for all $p\in [1,\infty)$ and $S\in \mathcal{B}(\mathcal{G},\Sigma)$, we obtain an isometric inclusion $C_0(S)\subseteq F_\lambda^p(\mathcal{G},\Sigma)$ given by the $1_S$-lift in \autoref{trivializable}. In particular, $C_0(\mathcal{G}^{(0)})$ is a commutative subalgebra of $F_\lambda^p(\mathcal{G},\Sigma)$.

\begin{df}\label{E expectation}
    Let $(\mathcal{G},\Sigma)$ be a twist over a Hausdorff \'{e}tale groupoid and let $p\in [1,\infty)$. We define the \textit{conditional expectation} $E_{\mathcal{G}}\colon F_\lambda^p(\mathcal{G},\Sigma) \rightarrow C_0(\mathcal{G}^{(0)})$ as $E_{\mathcal{G}}(a) := j_a\vert_{\mathcal{G}^{(0)}}$ for all $a\in F_\lambda^p(\mathcal{G},\Sigma)$.
\end{df}
Note that Hausdorffness is required for the restriction to the unit space to be continuous. In this case, the conditional expectation $E_\mathcal{G}$ is faithful since $F_\lambda^p(\mathcal{G},\Sigma)$ is a reduced Banach algebra in the sense of \cite[Definition 3.1]{BarKwaMcK_BanachII_2024}.

\section{Weyl twists and core normalizers}\label{core pres chapter}

In this section, we study Weyl twists $(\mathcal{G},\Sigma)$ in the sense of Renault \cite{Ren_Cartan_2008} and \autoref{df weyl twist abbr}. The goal of this section is to adapt the C*-algebraic technique to model Weyl twists in terms of a C*-normalizer action to the $L^p$-operator algebraic context. To this end, we introduce spatial normalizers. In contrast to the case of \cas, for $p,q\not \in \{1,2\}$ and for twisted \'etale groupoids $(\mathcal{G},\Sigma)$ and $(\mathcal{H},\Omega)$, we show in \autoref{sn preserved} that unital contractive homomorphisms $F_\lambda^p(\mathcal{G},\Sigma)\rightarrow F_\lambda^q(\mathcal{H},\Omega)$ preserve both the algebra of continuous functions on the unit space and the set of spatial normalizers. For Weyl twists, this results in an induced spatial normalizer action.\par We begin by discussing hermitian elements.

\begin{df}\label{core df}
    Let $A$ be a unital Banach algebra. An element $a\in A$ is said to be \textit{hermitian} if $\|e^{ita}\|\leq 1$ for all $t\in \R$. We denote the set of hermitian elements in $A$ by $A_{\mathrm{h}}$ and define the \textit{core} of $A$ as $\textup{core}(A):= A_{\mathrm{h}} + iA_{\mathrm{h}}$. We further define an involution on $\textup{core}(A)$ by $(a+ib)^*:= a - ib$ for $a,b\in A_{\mathrm{h}}$.
\end{df}
In contrast to general Banach algebras, it was shown in \cite[Section 2]{ChoGarThi_rigidity_2021} that, for unital $L^p$-operator algebras, the core is closed under multiplication and defines a canonical C*-subalgebra in the following sense.

\begin{df}\label{c* subalg}
    Let $A$ be a Banach algebra. We say that a closed subalgebra $B\subseteq A$ is a \textit{C*-subalgebra} of $A$ if, as a Banach algebra, $B$ is isometrically isomorphic to a \ca.
\end{df}
We summarize the properties of the core in the following theorem.

\begin{thm}\label{core}
    Let $p\in [1,\infty)$ and let $A$ be a unital $L^p$-operator algebra. Then $\textup{core}(A)$ is the largest C*-subalgebra of $A$. In particular, if $p=2$ and if $A$ is a \ca, then $\textup{core}(A)=A$. Moreover, if $p\neq 2$, then $\textup{core}(A)$ is automatically a commutative \ca.
\end{thm}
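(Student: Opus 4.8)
The plan is to quote the structural results from \cite[Section 2]{ChoGarThi_rigidity_2021} and assemble them, treating the case $p=2$ and the case $p\neq 2$ somewhat separately, since the latter carries the extra commutativity conclusion. The core claim breaks into three assertions: (a) $\mathrm{core}(A)$ is a C*-algebra under the involution $(a+ib)^*=a-ib$ and the given norm; (b) every C*-subalgebra of $A$ is contained in $\mathrm{core}(A)$; and (c) if $p\neq 2$ then $\mathrm{core}(A)$ is commutative. The $p=2$ statement that $\mathrm{core}(A)=A$ when $A$ is a C*-algebra then follows from (a) and (b): on a C*-algebra the hermitian elements in the Banach-algebra sense of \autoref{core df} are exactly the self-adjoint elements (Sinclair's theorem), so $A_{\mathrm h}+iA_{\mathrm h}=A$, while (b) gives the reverse inclusion.

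For (a), I would first recall that by a theorem of Vidav--Palmer, or its version for $L^p$-operator algebras established in \cite{ChoGarThi_rigidity_2021}, the set $A_{\mathrm h}$ of hermitian elements is a closed real subspace, and for a unital $L^p$-operator algebra the product of two hermitian elements has its "imaginary part" again controlled, so that $A_{\mathrm h}+iA_{\mathrm h}$ is closed under multiplication; this is the nontrivial input and is precisely what \cite{ChoGarThi_rigidity_2021} supplies (it fails for general Banach algebras). Granting that $\mathrm{core}(A)$ is a unital Banach $*$-algebra, one checks the C*-identity $\|x^*x\|=\|x\|^2$: for hermitian $a$ one has $\|a\|=r(a)$ (spectral radius) since $\|e^{ita}\|\le 1$ forces the numerical range, hence the spectrum, to be real and the resolvent growth to be controlled, which by a standard Banach-algebra argument gives $\|a\|=r(a)$; then the C*-identity on $\mathrm{core}(A)$ follows from the corresponding property of the spectral radius. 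Thus $\mathrm{core}(A)$ is a C*-algebra.

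For (b), let $B\subseteq A$ be any C*-subalgebra. Every self-adjoint $b\in B$ satisfies $\|e^{itb}\|_B=1$ for $t\in\R$ by the C*-functional calculus, and since $B$ carries the subspace norm from $A$ this gives $\|e^{itb}\|_A\le 1$, so $b\in A_{\mathrm h}$; writing a general element of $B$ as $b_1+ib_2$ with $b_j$ self-adjoint shows $B\subseteq A_{\mathrm h}+iA_{\mathrm h}=\mathrm{core}(A)$. This is the "largest" part of the statement. For (c), I would invoke the key rigidity phenomenon from \cite{ChoGarThi_rigidity_2021}: when $p\neq 2$ there is no isometric unital representation of $M_2(\C)$ (equivalently of the two-dimensional non-commutative C*-algebra) on an $L^p$-space, so a C*-subalgebra of an $L^p$-operator algebra cannot contain a copy of $M_2$; since a C*-algebra is commutative iff it contains no copy of $M_2$, $\mathrm{core}(A)$ must be commutative, and being commutative and unital it is $C(X)$ for some compact Hausdorff $X$.

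The main obstacle is really in part (a): the fact that $A_{\mathrm h}+iA_{\mathrm h}$ is multiplicatively closed and that the resulting $*$-algebra satisfies the C*-identity is exactly the substantive content borrowed from \cite[Section 2]{ChoGarThi_rigidity_2021}, and I would cite it rather than reprove it; everything else (parts (b), (c), and the $p=2$ reduction) is then a short deduction. I should also make sure the stated involution is well defined, i.e.\ that the decomposition $x=a+ib$ with $a,b\in A_{\mathrm h}$ is unique — this follows because $A_{\mathrm h}\cap iA_{\mathrm h}=\{0\}$, as a nonzero element that is both hermitian and $i$ times hermitian would have spectrum contained in both $\R$ and $i\R$, hence be quasinilpotent and hermitian, hence zero.
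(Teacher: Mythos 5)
The paper itself gives no argument for this theorem: it is stated explicitly as a summary of \cite[Section 2]{ChoGarThi_rigidity_2021}, so your overall strategy of quoting that reference for the substantive facts (multiplicative closure of $A_{\mathrm h}+iA_{\mathrm h}$, the C*-identity via $\|a\|=r(a)$ for hermitian $a$, Vidav--Palmer) and assembling the rest is exactly in the spirit of the paper. Parts (a) and (b), the $p=2$ reduction via Sinclair's theorem, and the well-definedness of the involution via $A_{\mathrm h}\cap iA_{\mathrm h}=\{0\}$ are all fine as sketched. (One small caveat in (b): if $B$ does not contain $1_A$, then $e^{itb}$ lives in $B+\C 1_A$ and it is not immediate that the $A$-norm there agrees with the C*-norm of the unitization of $B$; this needs a word, e.g.\ by reducing to the unital case as in the cited reference.)

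The genuine gap is in part (c). Your deduction rests on the assertion that a C*-algebra is commutative if and only if it contains no copy of $M_2(\C)$, and this is false: $C^*_r(F_2)$ is unital and noncommutative but has no nontrivial projections, hence contains no copy of $M_2$ (any embedded $M_2$ would produce a nontrivial projection, namely the image of $e_{11}$ or of $1_{M_2}$). The equivalence you want holds for von Neumann algebras, not for general C*-algebras, so the step ``no $M_2$ $\Rightarrow$ commutative'' would fail. The mechanism actually used in \cite[Section 2]{ChoGarThi_rigidity_2021} is different and more direct: for $p\neq 2$, every hermitian operator on an $L^p$-space is multiplication by a real-valued $L^\infty$-function (a Lamperti/Fleming--Jamison-type rigidity result). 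Consequently the hermitian elements of $A$ pairwise commute in any isometric representation, hence in $A$, and $\textup{core}(A)=A_{\mathrm h}+iA_{\mathrm h}$ is commutative. If you replace your $M_2$-argument by this statement about hermitian operators on $L^p$-spaces (which you may cite), part (c) goes through.
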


\begin{lma}\label{hermitian preservation}
    Let $\ph\colon A\rightarrow B$ be a unital contractive Banach algebra homomorphism. Then $\ph(A_{\mathrm{h}}) \subseteq B_{\mathrm{h}}$. If $\ph$ is isometric, then $a\in A_{\mathrm{h}}$ if and only if $\ph(a)\in B_{\mathrm{h}}$.
\end{lma}

\begin{proof}
    Let $t\in \R$ and $a\in A_{\mathrm{h}}$. Contractivity of $\ph$ yields $\|\ph(e^{ita})\| \leq \|e^{ita}\|\leq 1$ and the element on the left agrees with $e^{it\ph(a)}$, since functional calculus commutes with (unital) homomorphisms. This shows the first claim. The norm condition in the definition of hermitian elements is, in fact, equivalent to $\|e^{ita}\|= 1$ for all $t\in \R$. Thus, if $\ph$ is isometric, then both norm inequalities above are equalities. This shows the second claim.
\end{proof}

We only define hermitian elements in unital Banach algebras because its definition in terms of exponentials requires the existence of a unit. Beyond this setting, one can consider norms on the minimal unitization to define hermitian elements in greater generality, as in \cite{BlePhi_appid_2019}, but we do not pursue this direction here.

The following is probably known to experts, but it has not appeared in the literature. Accordingly, we include the short argument.

\begin{lma}\label{unital cpt}
    Let $(\mathcal{G},\Sigma)$ be a twist over an \'etale groupoid and let $p\in [1,\infty)$. Then $F_\lambda^p(\mathcal{G},\Sigma)$ is unital if and only if the unit space $\mathcal{G}^{(0)}$ is compact, in which case the unit is given by the $\T$-equivariant extension of $\ind_{\mathcal{G}^{(0)}}$.
\end{lma}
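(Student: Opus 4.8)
The plan is to reduce the statement to the elementary fact that a commutative $C^*$-algebra $C_0(X)$, for $X$ locally compact Hausdorff, is unital precisely when $X$ is compact (in which case its unit is the constant function $\ind_X$). The bridge is the closed isometric inclusion $C_0(\mathcal{G}^{(0)})\subseteq F_\lambda^p(\mathcal{G},\Sigma)$ recorded just after \autoref{j supp' nota} (via the $1_S$-extension of \autoref{trivializable}, applied to $S=\mathcal{G}^{(0)}$, which is an open bisection over which the twist is canonically trivial), together with the net $(h_K)_K$ of Urysohn functions in $C_c(\mathcal{G}^{(0)})$ indexed by the compact subsets $K\subseteq \mathcal{G}^{(0)}$, directed by inclusion, chosen with $0\le h_K\le 1$ and $h_K\equiv 1$ on $K$. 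The one computation driving everything is: for $h\in C_c(\mathcal{G}^{(0)})$, regarded in $F_\lambda^p(\mathcal{G},\Sigma)$ via its $\T$-equivariant extension, and $g\in \mathcal{C}_c(\mathcal{G},\Sigma)$ supported on a single trivializing bisection, one has $h\ast g=(h\circ\mathsf{r})\,g$ and $g\ast h=(h\circ\mathsf{d})\,g$. This is obtained by collapsing the convolution sum of \autoref{section operations}: since the $\T$-equivariant section associated with $h$ vanishes off $\mathcal{G}^{(0)}$, in each convolution only the term indexed by the relevant range (resp.\ domain) unit survives, and at a unit the Fell line bundle is canonically $\C\times \mathcal{G}^{(0)}$ so no cocycle corrections appear. (Alternatively one can deduce this from \autoref{S convo} by writing $h$ in terms of functions of the form $\ind_S$ with $S\subseteq\mathcal{G}^{(0)}$.)

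For the implication ``$\mathcal{G}^{(0)}$ compact $\Rightarrow$ $F_\lambda^p(\mathcal{G},\Sigma)$ unital'': here $\ind_{\mathcal{G}^{(0)}}\in C(\mathcal{G}^{(0)})=C_c(\mathcal{G}^{(0)})\subseteq\mathcal{C}_c(\mathcal{G},\Sigma)$, and applying the identity above with $h=\ind_{\mathcal{G}^{(0)}}$ --- for which $h\circ\mathsf{r}\equiv h\circ\mathsf{d}\equiv 1$ on all of $\mathcal{G}$ --- gives $\ind_{\mathcal{G}^{(0)}}\ast g=g=g\ast\ind_{\mathcal{G}^{(0)}}$ for every $g\in\mathcal{C}_c(\mathcal{G},\Sigma)$. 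Since $\mathcal{C}_c(\mathcal{G},\Sigma)$ is dense in $F_\lambda^p(\mathcal{G},\Sigma)$ and multiplication is continuous, $\ind_{\mathcal{G}^{(0)}}$ is a two-sided identity, and by construction it is the $\T$-equivariant extension of $\ind_{\mathcal{G}^{(0)}}$, as claimed.

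For the converse, I would first upgrade the computation to the assertion that $(h_K)_K$ is a two-sided approximate identity for $F_\lambda^p(\mathcal{G},\Sigma)$. Given $g=\sum_{k=1}^n g_k\in\mathcal{C}_c(\mathcal{G},\Sigma)$ with $g_k$ supported on a trivializing bisection, the set $L:=\bigcup_{k}\big(\mathsf{r}(\supp g_k)\cup\mathsf{d}(\supp g_k)\big)$ is a compact subset of $\mathcal{G}^{(0)}$, and for every $K\supseteq L$ the identity above yields $h_K\ast g=g=g\ast h_K$. As $\|h_K\|_\lambda=\|h_K\|_\infty\le 1$ (the reduced norm restricts to the sup norm on functions supported on a bisection), a routine three-term estimate then gives $h_K b\to b$ and $b\,h_K\to b$ for every $b\in F_\lambda^p(\mathcal{G},\Sigma)$. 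Now suppose $F_\lambda^p(\mathcal{G},\Sigma)$ has a unit $e$. Then $h_K\,e=h_K$ for all $K$, while $h_K\,e\to e$ since $(h_K)_K$ is an approximate identity; hence $h_K\to e$, so $e$ lies in the closure of $C_c(\mathcal{G}^{(0)})$, which by the isometric inclusion is $C_0(\mathcal{G}^{(0)})$. Consequently $e$ is a unit for $C_0(\mathcal{G}^{(0)})$, and since any unit of $C_0(X)$ must be the constant function $1$ (it equals $1$ on the support of every element, and these supports cover $X$), this forces $\mathcal{G}^{(0)}$ to be compact and $e=\ind_{\mathcal{G}^{(0)}}$.

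The only genuinely delicate point is the convolution identity $h\ast g=(h\circ\mathsf{r})\,g$ in the twisted and possibly non-Hausdorff setting: one must confirm that the $\T$-equivariant extension of $h$ is a legitimate $C_c$-section in the sense of \autoref{section operations}, that the defining convolution sum collapses to its single unit-indexed term, and that the scalar/lift bookkeeping with $\widehat{\tau}$ is trivial because that term is indexed by an element of $\mathcal{G}^{(0)}$, over which the twist is canonically trivial. Everything else --- the approximate-identity estimate and the characterization of unitality for $C_0(X)$ --- is entirely standard.
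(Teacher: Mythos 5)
Your proof is correct, and the forward direction coincides with what the paper leaves as ``straightforward''; but your converse takes a genuinely different route. The paper argues in two lines via the $j$-map of \autoref{j supp' nota}: if $u$ is a unit then $\lambda_x(u)$ is the identity on each $\ell^p(\mathcal{G}x)$, so $j_u(\sigma)=\langle \lambda_x(u)\delta_x,\delta_\sigma\rangle_x=\delta_{\sigma,x}$, whence $j_u\vert_{\mathcal{G}^{(0)}}=\ind_{\mathcal{G}^{(0)}}$ must lie in $C_0(\mathcal{G}^{(0)})$, forcing compactness. You instead build a Urysohn net $(h_K)_K$ in $C_c(\mathcal{G}^{(0)})$, verify it is a contractive two-sided approximate identity for $F_\lambda^p(\mathcal{G},\Sigma)$, deduce that any unit $e$ satisfies $h_K=h_Ke\to e$ and hence lies in $\overline{C_c(\mathcal{G}^{(0)})}=C_0(\mathcal{G}^{(0)})$, and conclude by the classical characterization of unital $C_0(X)$. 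Both arguments are valid and both identify the unit as $\ind_{\mathcal{G}^{(0)}}$; the paper's is shorter but leans on the matrix-coefficient formula for $j$, while yours is more elementary and has the side benefit of producing an approximate identity for the non-unital case. Two cosmetic points: in defining your compact set $L$ you should take $\mathsf{r}$ and $\mathsf{d}$ of the \emph{closed} supports of the $g_k$ inside their bisections (the strict supports need not have compact image, though their closures do, and $h_K\circ\mathsf{r}\equiv 1$ on the closed support certainly suffices for $h_K*g=g$); and your care about the twist being canonically trivial over $\mathcal{G}^{(0)}$ is exactly the right check, since it is what makes the collapsed convolution term lift-independent.
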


\begin{proof}
    The if implication is straightforward. Conversely, if $u\in F_\lambda^p(\mathcal{G},\Sigma)$ is a multiplicative unit, then \[j_u(\sigma) = \langle u*\delta_x,\delta_\sigma\rangle_x = \langle \delta_x,\delta_\sigma \rangle_x = \delta_{\sigma,x}\] for $\sigma \in \Sigma x$. Hence $\ind_{\mathcal{G}^{(0)}}=E_{\mathcal{G}}(u)$ is a $C_0$-function, and thus $\mathcal{G}^{(0)}$ is compact.
\end{proof}

Next, we record the computation of the core of the reduced $L^p$-operator algebra of a twisted groupoid. Its proof is almost identical to the untwisted case; see \cite[Proposition 5.1]{ChoGarThi_rigidity_2021}.

\begin{prop}\label{core = CX for p not 2}
    Let $(\mathcal{G},\Sigma)$ be a twist over an \'etale groupoid with compact unit space and let $p\in [1,\infty)\setminus \{2\}$. Then $\textup{core}(F_\lambda^p(\mathcal{G},\Sigma))= C(\mathcal{G}^{(0)})$.
\end{prop}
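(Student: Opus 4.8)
The plan is to leverage \autoref{core}, which already identifies $\textup{core}(A)$ with the largest C*-subalgebra of any unital $L^p$-operator algebra $A$, together with the automatic commutativity of the core when $p\neq 2$. Applying this to $A=F_\lambda^p(\mathcal{G},\Sigma)$, which is unital by \autoref{unital cpt} since $\mathcal{G}^{(0)}$ is compact, it suffices to show two inclusions: that $C(\mathcal{G}^{(0)})$ is a (commutative) C*-subalgebra of $F_\lambda^p(\mathcal{G},\Sigma)$, and that no element outside $C(\mathcal{G}^{(0)})$ can lie in a C*-subalgebra.

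First I would establish that $C(\mathcal{G}^{(0)})\subseteq \textup{core}(F_\lambda^p(\mathcal{G},\Sigma))$. The embedding $C(\mathcal{G}^{(0)})\subseteq F_\lambda^p(\mathcal{G},\Sigma)$ via the $\T$-equivariant extension of $\ind_{\mathcal{G}^{(0)}}$ is isometric (as noted after \autoref{j supp' nota}), and on functions supported on the unit space the convolution is pointwise multiplication and the involution is pointwise conjugation; thus $C(\mathcal{G}^{(0)})$ sits inside $F_\lambda^p(\mathcal{G},\Sigma)$ as a commutative C*-algebra. Since the core is the \emph{largest} such subalgebra, $C(\mathcal{G}^{(0)})\subseteq \textup{core}(F_\lambda^p(\mathcal{G},\Sigma))$. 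Concretely one also checks the real-valued functions are hermitian: for $h\in C(\mathcal{G}^{(0)},\R)$, the element $e^{ith}$ is the $\T$-equivariant extension of a unimodular function on $\mathcal{G}^{(0)}$, and such functions act as isometries under each $\lambda_x$ (the left regular representation restricted to diagonal functions is multiplication by the pullback, which is isometric on $\ell^p(\mathcal{G}x)$), so $\|e^{ith}\|_\lambda=1$.

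For the reverse inclusion, suppose $a\in \textup{core}(F_\lambda^p(\mathcal{G},\Sigma))$; I want to force $\supp(a)\subseteq \mathcal{G}^{(0)}$, which via the $j$-map and faithfulness of $E_{\mathcal{G}}$-type arguments identifies $a$ with an element of $C_0(\mathcal{G}^{(0)})=C(\mathcal{G}^{(0)})$. The key point is that $\textup{core}(F_\lambda^p(\mathcal{G},\Sigma))$ is a commutative C*-algebra, hence every element is normal and the C*-identity holds; in particular, for hermitian $a$ one has $\|a^2\|=\|a\|^2$, and more importantly $a$ generates a commutative C*-subalgebra on which $\|\cdot\|_\lambda$ restricts to a C*-norm. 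One then shows that any such $a$ must be "diagonal": if $\gamma\in\supp(a)$ with $\gamma\notin\mathcal{G}^{(0)}$, one derives a contradiction with the C*-axioms — for instance, by a computation showing $j_{a^*a}$ and $j_{aa^*}$ cannot both be concentrated where the C*-norm would require, or by testing against the spatial partial isometries $\ind_S$ to see that off-diagonal mass violates $\|a\|^2=\|a^*a\|$. This is the step mirroring \cite[Proposition 5.1]{ChoGarThi_rigidity_2021}, and, as the paper remarks, the twisted case is almost identical: one uses the convolution formulas of \autoref{S convo}, the description of $j$ in \autoref{j supp' nota}, and the fact that for $p\neq 2$ the only norm-one idempotents/isometries interacting correctly are the diagonal ones.

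The main obstacle is precisely this last step: ruling out off-diagonal elements of the core. The difficulty is that one cannot use Hilbert-space adjoints, so the argument must proceed through the intrinsic Banach-algebraic characterization of hermitian elements and the C*-structure of $\textup{core}(A)$ supplied by \autoref{core}, combined with careful support tracking via the $j$-map under convolution with the spatial sections $\ind_S$ ($S\in\mathcal{B}(\mathcal{G},\Sigma)$). Concretely, given $a\in\textup{core}(A)$ one would decompose $a$ using a partition of its support into bisections, compress by diagonal idempotents $\ind_U$ for $U\subseteq\mathcal{G}^{(0)}$ open, and use that compressions of core elements remain in the core; an off-diagonal contribution localized on a bisection $S\not\subseteq\mathcal{G}^{(0)}$ would yield, after suitable compression and multiplication by $\ind_S^{-1}$, an element whose spectrum/norm behavior is incompatible with membership in a commutative C*-algebra (e.g.\ producing a non-selfadjoint nilpotent-like element, which cannot live in a commutative C*-algebra). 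Once the support is forced into $\mathcal{G}^{(0)}$, injectivity of $j$ and the identification of $C_0$-sections on the unit space finish the proof, giving $\textup{core}(F_\lambda^p(\mathcal{G},\Sigma))=C(\mathcal{G}^{(0)})$.
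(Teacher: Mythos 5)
Your easy inclusion $C(\mathcal{G}^{(0)})\subseteq \textup{core}(F_\lambda^p(\mathcal{G},\Sigma))$ is fine and matches what the paper (via its citation of \cite[Proposition 5.1]{ChoGarThi_rigidity_2021}) relies on: real-valued functions on the unit space exponentiate to unimodular diagonal multipliers, which are isometric in every $\lambda_x$, so they are hermitian.

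The reverse inclusion is where your argument has a genuine gap. You correctly identify that the crux is forcing $\supp(a)\subseteq\mathcal{G}^{(0)}$ for $a$ in the core, but the mechanisms you offer do not constitute a proof, and at least one is off-base. The claim that an off-diagonal contribution produces ``a non-selfadjoint nilpotent-like element, which cannot live in a commutative C*-algebra'' fails already for $\mathcal{G}=\Z/2$ over a point: $\delta_1$ is an off-diagonal involution generating a perfectly good commutative, $*$-closed two-dimensional subalgebra; the only obstruction to its being a C*-subalgebra is that the $\ell^p$-operator norm on that subalgebra violates the C*-identity, which is a quantitative fact requiring an actual norm estimate, not an algebraic contradiction. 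Similarly, ``$j_{a^*a}$ and $j_{aa^*}$ cannot both be concentrated where the C*-norm would require'' is not an argument. The actual mechanism in \cite[Proposition 5.1]{ChoGarThi_rigidity_2021} --- and the only place where $p\neq 2$ enters quantitatively --- is the Banach--Lamperti theorem applied to the one-parameter group $t\mapsto e^{ith}$ for $h$ hermitian: each $e^{ith}$ is an invertible isometry of $\bigoplus_x\ell^p(\mathcal{G}x)$, hence (for $p\neq 2$) a spatial isometry, i.e.\ a weighted composition operator; norm-continuity of $t\mapsto e^{ith}$ together with $e^{i0h}=1$ forces the underlying set transformation to be trivial for all $t$ (two spatial isometries with distinct set parts are uniformly far apart), so each $e^{ith}$ is a diagonal multiplication operator, whence $h$ itself acts diagonally on every fiber, which translates exactly into $\supp(h)\subseteq\mathcal{G}^{(0)}$. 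Your sketch gestures at spatial partial isometries but never articulates this continuity-plus-rigidity step, and nothing you propose substitutes for it. (The adaptation to the twisted case is indeed routine once this is in place --- that is the content of the paper's one-line proof --- but the step itself cannot be bypassed.)
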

If $p=2$ and if $\mathcal{G}$ is additionally assumed to be Hausdorff and effective, then $C_0(\mathcal{G}^{(0)})\subseteq C^*_\lambda(\mathcal{G},\Sigma)$ is a Cartan subalgebra in the sense of \cite{Ren_Cartan_2008}. In the same paper, Renault showed that for any Cartan pair $(A, B)$, there is a canonical twist over an effective Hausdorff \'etale groupoid $(\mathcal{G},\Sigma)$, called the Weyl twist, such that $(A, B) \cong (C^*_\lambda(\mathcal{G},\Sigma), C_0(\mathcal{G}^{(0)}))$. For $p\neq 2$ and a unital $L^p$-operator algebra $A$, \autoref{core = CX for p not 2} can be taken as a motivation to investigate the rigidity of the inclusion $\textup{core}(A)\subseteq A$ with the methods developed by Renault. Carrying this out requires an analog of the normalizer action on the core spectrum as in \cite[Section 5]{ChoGarThi_rigidity_2021}, which is what we will do next.

\begin{df}\label{realizable homeos}
    Let $A$ be a Banach algebra and let $C_0(X)\subseteq A$ be an abelian C*-subalgebra. Let $\beta\colon U\rightarrow V$ be a partial homeomorphism between open subsets of $X$. We say that $\beta$ is \textit{realizable} if there are $a,b\in A$ satisfying the \textit{normalization conditions}
    \begin{itemize}
        \item[(N1)] $bC_0(X)_+a \subseteq C_0(X)_+ \,\,\textup{and}\,\,aC_0(X)_+b \subseteq C_0(X)_+$,
        \item[(N2)] $\supp(ba) = U \,\,\textup{and}\,\, \supp(ab) = V$,
    \end{itemize}
    as well as the \textit{realization identities}
    \begin{itemize}
        \item[(R1)] $bfa= (f\circ\beta)ba \,\,\, \textup{for all}\,\,\, f\in C_0(X)$, and
        \item[(R2)] $afb = (f\circ\beta^{-1})ab\,\,\, \textup{for all}\,\,\,f\in C_0(X)$.
    \end{itemize} In this case, the pair $(a,b)$ is called an \textit{admissible pair} that realizes $\beta$, and we write $\underline{\alpha}_{(a,b)} = \beta$. We denote the set of realizable partial homeomorphisms on $X$ by $\mathcal{R}_X(A)$ and the set of admissible pairs by $\textup{Adm}_{X}(A)$.
\end{df}

\begin{rem}\label{adm prod flip}
By \cite[Proposition 3.2]{ChoGarThi_rigidity_2021}, the set $\mathcal{R}_X(A)$ is a unital inverse subsemigroup of $\textup{Homeo}_{\textup{par}}(X)$ with $\underline{\alpha}_{(a,b)}\circ \underline{\alpha}_{(c,d)}$ realized by $(ac,db)$ and $\underline{\alpha}_{(a,b)}^{-1}$ realized by $(b,a)$. 
\end{rem}

For a twist $(\mathcal{G},\Sigma)$ over an \'etale groupoid with unit space $X$, there are two canonical inverse semigroup actions on $X$, namely the bisection action $\alpha\colon \mathcal{B}(\mathcal{G},\Sigma) \curvearrowright X$, and the action by realizable partial homeomorphisms $\mathcal{R}_X(F_\lambda^p(\mathcal{G},\Sigma)) \curvearrowright X$. In the following, we compare the corresponding groupoids of germs. Recall the definition of the involution of a section $\mathcal{G}\to \Sigma$ from \autoref{section operations}.

\begin{prop}\label{bis realizable}
    Let $(\mathcal{G},\Sigma)$ be a twist over an \'etale groupoid with unit space $X$. Let $S\in \mathcal{B}(\mathcal{G},\Sigma)$, let $\xi\in C_0(\mathsf{r}(S))$, and let $\eta\in C_0(\mathsf{d}(S))$. Then 
    \begin{equation}\label{ind normal eq}
        \ind_S^* *\xi*\ind_S = 
\xi\circ \alpha_S \in C_0(\mathsf{d}(S))\andeqn \ind_S *\eta*\ind_S^* = \eta\circ \alpha_S^{-1}\in C_0(\mathsf{r}(S)).\tag{3.1}
    \end{equation}
    If the open set $\mathsf{d}(S)$ is the strict support $\supp(h)$ of a positive function $h\in C_0(X)_+$, then, for any $p\in [1,\infty)$, the pair $(\ind_S *h,\,h*\ind_{S}^*)\in F_\lambda^p(\mathcal{G},\Sigma)\times F_\lambda^p(\mathcal{G},\Sigma)$ realizes the partial homeomorphism $\alpha_S\colon \mathsf{d}(S) \rightarrow \mathsf{r}(S)$. In particular, all partial homeomorphisms induced by bisections from $\mathcal{B}(\mathcal{G},\Sigma)$ are locally realizable.
\end{prop}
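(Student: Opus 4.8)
The plan is to prove the two convolution identities in \eqref{ind normal eq} first and then derive the normalization conditions and realization identities from them by purely formal manipulations in the $*$-algebra $\mathcal{C}_c(\mathcal{G},\Sigma)$. For \eqref{ind normal eq}, I would first extract from \autoref{S convo}, by applying the $*$-operation, the companion formulas $(\ind_S^* * f)(\sigma)=f(1_S\sigma)$ and $(f * \ind_S^*)(\sigma)=f(\sigma 1_S)$ for every $\T$-equivariant $f\colon\Sigma\to\C$ and $\sigma\in\Sigma$; these follow from $\ind_S^* * f=(f^* * \ind_S)^*$ together with the identity $(\sigma^{-1}1_S^{-1})^{-1}=1_S\sigma$ for the lifts appearing in \autoref{set convention}. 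Composing with \autoref{S convo} gives $(\ind_S^* * \xi * \ind_S)(\sigma)=\xi(1_S\sigma\,1_S^{-1})$. Since $S$ is an open bisection, the section $\ind_S^* * \xi * \ind_S$ is supported in $S^{-1}S=\mathsf{d}(S)\subseteq\mathcal{G}^{(0)}$, hence is a function on the unit space; evaluating at $x\in\mathsf{d}(S)$ collapses $1_Sx$ to the unique lift $1_{Sx}$ of $Sx\in S$, so that $1_Sx\,1_S^{-1}=1_{Sx}1_{Sx}^{-1}=\mathsf{r}(Sx)=\alpha_S(x)$ and the value is $\xi(\alpha_S(x))$, while $x\notin\mathsf{d}(S)$ gives $0$ by \autoref{set convention}. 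This proves the left identity, and $\xi\circ\alpha_S\in C_0(\mathsf{d}(S))$ because $\alpha_S$ is a homeomorphism; the right identity follows by the same computation with $S$ and $S^{-1}$ interchanged.

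Next, assume $\supp(h)=\mathsf{d}(S)$ with $h\in C_0(X)_+$ and set $a=\ind_S*h$, $b=h*\ind_S^*$ (these lie in $\mathcal{C}_c(\mathcal{G},\Sigma)\subseteq F_\lambda^p(\mathcal{G},\Sigma)$). Using associativity, the identity $\ind_S^* * \ind_S=\ind_{\mathsf{d}(S)}$, the fact that convolution of functions supported on units is pointwise multiplication, and \eqref{ind normal eq} (applied after restricting a function on $X$ to $\mathsf{r}(S)$, respectively $\mathsf{d}(S)$), one computes for every $f\in C_0(X)$
\[bfa=h^2\cdot(f\circ\alpha_S),\qquad afb=(h^2f)\circ\alpha_S^{-1},\]
and in particular $ba=h^2$ and $ab=h^2\circ\alpha_S^{-1}$. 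From these four formulas: (N1) holds since products of nonnegative $C_0$-functions are again nonnegative $C_0$-functions; (N2) reads $\supp(ba)=\supp(h)=\mathsf{d}(S)$ and $\supp(ab)=\alpha_S(\mathsf{d}(S))=\mathsf{r}(S)$; and (R1), (R2) are exactly $bfa=(f\circ\alpha_S)\,ba$ and $afb=(f\circ\alpha_S^{-1})\,ab$, valid for all $f\in C_0(X)$. Hence $(a,b)$ is admissible and $\underline{\alpha}_{(a,b)}=\alpha_S$.

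Finally, a general open set $\mathsf{d}(S)$ need not be a cozero set of $X$, so an $h$ with $\supp(h)=\mathsf{d}(S)$ need not exist globally; but for each $x\in\mathsf{d}(S)$, local compactness of $X$ provides $h\in C_c(X)_+$ with $h(x)>0$ and $\{h\neq 0\}\subseteq\mathsf{d}(S)$. Putting $S':=S\cap\mathsf{d}^{-1}(\{h\neq 0\})\in\mathcal{B}(\mathcal{G},\Sigma)$ we have $\supp(h)=\mathsf{d}(S')$, so the previous step realizes $\alpha_{S'}=\alpha_S\vert_{\mathsf{d}(S')}$; since $x$ was arbitrary, $\alpha_S$ is locally realizable. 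I expect the only genuinely delicate point to be the proof of \eqref{ind normal eq}: one must keep careful track of which lift ($1_S$, $1_S^{-1}$, or $1_{S^{-1}}$) is being used — these need not coincide by \autoref{set convention} — and must allow the base groupoid to be non-Hausdorff, working throughout with $C_c$-sections supported on bisections rather than with literal continuous functions on $\mathcal{G}$. Everything after \eqref{ind normal eq} is formal manipulation inside $\mathcal{C}_c(\mathcal{G},\Sigma)$.
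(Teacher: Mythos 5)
Your proposal is correct and follows essentially the same route as the paper: establish \eqref{ind normal eq} via the convolution formulas of \autoref{S convo} (keeping track of the lifts $1_S$ versus $1_S^{-1}$), and then read off (N1)--(N2) and (R1)--(R2) from the products $ba=h^2$, $ab=h^2\circ\alpha_S^{-1}$, $bfa=h^2\cdot(f\circ\alpha_S)$ and $afb=(h^2f)\circ\alpha_S^{-1}$, with the same care about applying \eqref{ind normal eq} to restrictions of $f\in C_0(X)$ that may fail to be continuous at the boundary of $\mathsf{d}(S)$ before multiplying by $h$. The only quibble is your parenthetical claim that $\ind_S*h$ and $h*\ind_S^*$ lie in $\mathcal{C}_c(\mathcal{G},\Sigma)$: if $h$ is not compactly supported this need not hold, and one instead places them in $F_\lambda^p(\mathcal{G},\Sigma)$ by uniform approximation, using that the reduced norm agrees with the supremum norm on sections supported on a single bisection.
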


\begin{proof}
It suffices to prove the first identity.
For $x\in \mathsf{d}(S)$, it is easy to check that $Sx = \alpha_S(x)S$. For $\xi\in C_0(\mathsf{r}(S))$, using \autoref{S convo} repeatedly, we get 
    \begin{align*}
        (\ind_{S}^* *\xi*\ind_S)(x) = \xi(1_Sx1_S^{-1}) = \xi(\alpha_S(x)1_S1_S^{-1}) = \xi(\alpha_S(x)),
    \end{align*}
proving the identity in \eqref{ind normal eq}. For the remainder of the proof, choose $h\in C_0(X)_+$ with $\supp(h)= \mathsf{d}(S)$ and let $k:= h\circ \alpha_S^{-1}$ denote the resulting positive function with $\supp(k)= \mathsf{r}(S)$. Let $p\in [1,\infty)$. Since multiplication and inversion in $\Sigma$ are continuous, \autoref{S convo} implies that the sections $\ind_S*h$ and $h*\ind_S^*$ are continuous and supported on the open bisections $S$ and $S^{-1}$, respectively. Moreover, $\|\ind_S*h\|_\infty = \|\ind_S*h\|_\lambda = \|h\|_\infty$, and similarly for $h*\ind_{S}^*$. Thus, the pair belongs to $F_\lambda^p(\mathcal{G},\Sigma)\times F_\lambda^p(\mathcal{G},\Sigma)$. Applying the identity in \eqref{ind normal eq} to approximate identities, and taking limits, we deduce that \[\ind_{S}^* *\ind_{S}= \ind_{\mathsf{d}(S)} \andeqn \ind_S*\ind_{S}^*= \ind_{\mathsf{r}(S)}.\] Therefore, $h*\ind_{S}^**\ind_S*h = h\ind_{\mathsf{d}(S)}h= h^2$ is positive with strict support $\mathsf{d}(S)$. Likewise, since $h^2= h^2\ind_{\mathsf{d}(S)}$ and $\ind_{\mathsf{r}(S)}k^2 = k^2$, applying equation \eqref{ind normal eq} at the second step yields \[\ind_S*h*h*\ind_{S}^* = \ind_S*h^2*(\ind_{S}^**\ind_S)*\ind_{S}^*= (h^2\circ \alpha_S^{-1})*\ind_S*\ind_{S}^* = k^2.\] This shows the normalization condition (N2) in \autoref{realizable homeos}. We proceed to establish the realization identities and thereby also (N1). For $f\in C_0(X)$, the identity in \eqref{ind normal eq} still applies, although the function $(f\circ\alpha_S^{-1})\ind_{\mathsf{d}(S)}$ is not necessarily continuous at the boundary of $\mathsf{d}(S)$. Convoluting with $h$ from both sides yields (R1). For the second realization identity (R2), note that we have $hfh\in C_0(\mathsf{d}(S))$ and $(hfh)\circ \alpha_S^{-1} = k(f\circ \alpha_S^{-1})k\in C_0(\mathsf{r}(S))$. Thus, applying equation \eqref{ind normal eq} yields
    \begin{align*}
        (\ind_{S}*h)*f*(h*\ind_{S}^*)
        &= (hfh)\circ \alpha_S^{-1}\\
        &= (f\circ\alpha_S^{-1})k^2\\
        &= (f\circ \alpha_S^{-1})* \ind_{S}*h^2*\ind_{S}^*.
    \end{align*}
    This establishes (R2) and finishes the proof.
\end{proof}

For the next proposition, recall the notion of a wide inverse subsemigroup from \autoref{wide}.

\begin{prop}\label{supp inverse sg}
    Let $(\mathcal{G},\Sigma)$ be a twist over an \'etale groupoid. Set $\mathcal{S}_\Sigma := \{S\in \mathcal{B}(\mathcal{G},\Sigma)\colon S\,\,\textup{is}\,\, F_\sigma\}$. Then $\mathcal{S}_\Sigma$ is a wide unital inverse subsemigroup of $\mathcal{B}(\mathcal{G})$.
\end{prop}

\begin{proof}
Let $S,T\in \mathcal{S}_\Sigma$. For both $S^{-1}$ and $ST$, the domain map defines a homeomorphism onto the $F_\sigma$ sets $\textsf{r}(S)$ and $\alpha_{T^{-1}}(\textsf{r}(T)\cap \textsf{d}(S))$, respectively. Combined with the observation that continuous unitary sections are closed under involution and convolution, this shows that $\mathcal{S}_\Sigma$ is an inverse subsemigroup of $\mathcal{B}(\mathcal{G})$. Since open $F_\sigma$ sets and $\mathcal{B}(\mathcal{G},\Sigma)$ are closed under taking intersections, so is $\mathcal{S}_\Sigma$. Furthermore, $\mathcal{G}^{(0)}\in \mathcal{S}_\Sigma$ serves as a unit. It remains to show that $\mathcal{S}_\Sigma$ covers $\mathcal{G}$. Given $\gamma\in \mathcal{G}$, since $\Sigma$ is locally trivializable, there is an $S\in \mathcal{B}(\mathcal{G},\Sigma)$ such that $\gamma\in S$. Then $S$ is homeomorphic to $\mathsf{d}(S)\subseteq \mathcal{G}^{(0)}$. Since $\mathcal{G}^{(0)}$ is assumed to be locally compact Hausdorff, $\mathsf{d}(S)$ contains an open $F_\sigma$ neighborhood of $\mathsf{d}(\gamma)$. Thus there is a $F_\sigma$ open subset of $S$ containing $\gamma$ that belongs to $\mathcal{S}_\Sigma$ since $\mathcal{B}(\mathcal{G},\Sigma)$ is closed under open subsets. Hence $\mathcal{S}_\Sigma$ covers $\mathcal{G}$.
\end{proof}
\autoref{supp inverse sg} allows us to restrict our attention to bisections as in \autoref{bis realizable} from now on. If we further assume the groupoid $\mathcal{G}$ in \autoref{bis realizable} to be Hausdorff and effective, that is, for Weyl twists as in \autoref{df weyl twist abbr}, all realizable partial homeomorphisms come from the bisection action, as we show next.

\begin{prop}\label{eff realizable have bis support}
    Let $(\mathcal{G},\Sigma)$ be a Weyl twist. Let $p\in [1,\infty)$ and let $(a,b)$ be an admissible pair for $C_0(\mathcal{G}^{(0)})\subseteq F_\lambda^p(\mathcal{G},\Sigma)$. Set $S:= \supp(a)\cap \supp(b)^{-1}$. Then $S$ is an open bisection and we have $\underline{\alpha}_{(a,b)} = \alpha_S$.
\end{prop}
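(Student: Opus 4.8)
The plan is to reduce the statement to one claim about the set $S$ and then exploit effectiveness. Write $\beta:=\underline{\alpha}_{(a,b)}\colon U\to V$ for the partial homeomorphism realized by $(a,b)$, with $U,V\subseteq\mathcal{G}^{(0)}$ open, and note first that $S=\supp(a)\cap\supp(b)^{-1}$ is open in $\mathcal{G}$: strict supports are open since the $j$-map of \autoref{j supp' nota} is continuous and $\pi$ is an open map, and inversion is a homeomorphism. I would reduce to the claim
\[
(\star)\qquad \mathsf{d}(S)\subseteq U \qquad\text{and}\qquad \mathsf{r}(\gamma)=\beta\big(\mathsf{d}(\gamma)\big)\ \text{for all }\gamma\in S,
\]
together with its mirror statement for the admissible pair $(b,a)$, which realizes $\beta^{-1}$ by \autoref{adm prod flip}; the mirror reads $\mathsf{r}(S)\subseteq V$ and $\mathsf{d}(\gamma)=\beta^{-1}(\mathsf{r}(\gamma))$ for $\gamma\in S$. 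Granting $(\star)$, the proof finishes quickly: the reverse inclusion $U\subseteq\mathsf{d}(S)$ holds because for $x\in U$ one has $E_{\mathcal{G}}(ba)(x)\neq 0$, and expanding $E_{\mathcal{G}}(ba)(x)$ through the reduced representation $\lambda_x$ writes it as a sum of terms $a(\eta)b(\eta^{-1})$ over $\eta\in\mathcal{G}x$, so some such $\eta$ lies in $S$; the set $S$ is a bisection, since if $\gamma\neq\gamma'$ in $S$ had $\mathsf{d}(\gamma)=\mathsf{d}(\gamma')$ then $\mathsf{r}(\gamma)=\mathsf{r}(\gamma')$ by $(\star)$, and shrinking open bisections $T\ni\gamma$, $T'\ni\gamma'$ inside $S$ so that $\mathsf{r}(T)=\mathsf{r}(T')$ would make $T^{-1}T'$ a nonempty open bisection lying inside $\mathrm{Iso}(\mathcal{G})$ (apply $(\star)$ pointwise on $T$ and $T'$) and containing the non-unit $\gamma^{-1}\gamma'$, contradicting $\mathrm{int}(\mathrm{Iso}(\mathcal{G}))=\mathcal{G}^{(0)}$; and the same argument applied to $(b,a)$ gives injectivity of $\mathsf{r}|_S$. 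Then $\alpha_S=\mathsf{r}|_S\circ(\mathsf{d}|_S)^{-1}$ has domain $\mathsf{d}(S)=U$ and equals $\beta$ there by $(\star)$, i.e.\ $\underline{\alpha}_{(a,b)}=\alpha_S$. This closes the loop with \autoref{bis realizable} and \autoref{supp inverse sg}, which go the other way.

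To establish $(\star)$ I would argue fiberwise through the reduced representations $\lambda_x\colon F_\lambda^p(\mathcal{G},\Sigma)\to\B(\ell^p(\mathcal{G}x))$. In the canonical basis $\{\delta_\eta\}_{\eta\in\mathcal{G}x}$ the operator $\lambda_x(a)$ has matrix coefficients $a(\zeta\eta^{-1})$ (the coefficient function of $a$ being read as a section as in \autoref{j supp' nota}, and $a(\zeta\eta^{-1})$ understood as $0$ unless $\mathsf{d}\zeta=\mathsf{d}\eta$), while $\lambda_x(f)$, for $f\in C_0(\mathcal{G}^{(0)})$, is diagonal with entry $f(\mathsf{r}\eta)$ at $\delta_\eta$. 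Since (R1) gives $bfa=(f\circ\beta)\,ba\in C_0(\mathcal{G}^{(0)})$, evaluating this equality at $x\in U$ through $\lambda_x$ yields
\[
\sum_{\eta\in\mathcal{G}x}a(\eta)\,b(\eta^{-1})\,f(\mathsf{r}\eta)\;=\;f\big(\beta(x)\big)\,\sum_{\eta\in\mathcal{G}x}a(\eta)\,b(\eta^{-1}) ,
\]
with only $\eta\in S\cap\mathcal{G}x$ contributing nonzero terms; reading both sides as finitely supported measures in the variable $\mathsf{r}\eta$ gives $\sum_{\eta\in S\cap\mathcal{G}x,\;\mathsf{r}\eta=y}a(\eta)b(\eta^{-1})=0$ for $y\neq\beta(x)$, and running the analogue for (R2) and $(b,a)$ sharpens this to $\sum_{\eta\in S,\;\mathsf{r}\eta=y,\;\mathsf{d}\eta=z}a(\eta)b(\eta^{-1})=0$ for $y\in V$, $z\neq\beta^{-1}(y)$. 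Moreover (N1) forces $E_{\mathcal{G}}(ba)$ to be a nonnegative function, which by (N2) has strict support exactly $U$.

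The hard step --- and where I expect the real difficulty to lie --- is upgrading these vanishing sums to the pointwise assertion $(\star)$, that is, ruling out \emph{cancellation}: a priori some $\gamma\in S$ could have $\mathsf{r}(\gamma)=y\neq\beta(\mathsf{d}(\gamma))$, or $\mathsf{d}(\gamma)\notin U$, with its contribution to $E_{\mathcal{G}}(ba)$ cancelled by other elements of $S$ lying over the same unit and with the same range. In the $C^*$-algebra setting this cannot occur for the cheap reason that one may take $b=a^{*}$, so that $a(\eta)b(\eta^{-1})=|a(\eta)|^{2}\geq 0$ and nothing cancels; for general $p$ and general $b$ this positivity is unavailable, and it is exactly here that the full force of the order conditions (N1)--(N2) must be combined with effectiveness. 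The route I would take is by contradiction: given such a $\gamma$, pick small open bisections of $S$ around $\gamma$ and around the remaining elements of $S$ over the same unit with the same range, and use effectiveness --- running the $\mathrm{Iso}(\mathcal{G})$ argument above in reverse --- to produce, in every neighborhood, further elements of $S$ sharing a common range but with pairwise distinct domains; after localizing via suitable $h\in C_0(\mathcal{G}^{(0)})_+$ (allowed since products of admissible pairs are admissible, \autoref{adm prod flip}), this should force the measure identities above into a configuration incompatible with $E_{\mathcal{G}}(ba)$ being a nonnegative function of strict support $U$. Once $(\star)$ and its mirror are proved, the proposition follows as in the first paragraph.
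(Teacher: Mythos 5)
Your reduction is sound and the downstream steps are correct: granting $(\star)$, the effectiveness argument via $T^{-1}T'\subseteq\mathrm{Iso}(\mathcal{G})$ does show that $\mathsf{d}\vert_S$ and $\mathsf{r}\vert_S$ are injective, the inclusion $U\subseteq\mathsf{d}(S)$ follows from $(ba)(x)=\sum_{\eta\in\mathcal{G}x}b(\eta^{-1})a(\eta)\neq 0$, and the fiberwise identities you extract from (R1)--(R2) by testing against $f\in C_0(X)$ are exactly right. But the proof is not complete: the entire content of the proposition sits in the step you yourself flag as ``the hard step'' and then only sketch, namely that every $\gamma\in S$ satisfies $\mathsf{d}(\gamma)\in U$ and $\mathsf{r}(\gamma)=\beta(\mathsf{d}(\gamma))$ --- equivalently, that the vanishing of the fiber sums $\sum_{\eta\in y\mathcal{G}x\cap S}a(\eta)b(\eta^{-1})$ for $y\neq\beta(x)$ cannot be produced by cancellation. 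The closing sentence of your argument (``this should force the measure identities above into a configuration incompatible with\ldots'') is a conjecture, not a deduction, and as sketched it does not obviously close: the only positivity available is the single scalar constraint $E_{\mathcal{G}}(ba)(x')\geq 0$ at each unit $x'$, whereas the cancellations to be excluded occur inside a fixed off-diagonal fiber $y\mathcal{G}x$, which may be infinite; producing ``further elements of $S$ with pairwise distinct domains'' via effectiveness does not visibly interact with a sum taken over one such fiber; you cannot in general localize away infinitely many companions of $\gamma$ in $y\mathcal{G}x$ with a single cutoff $h$; and the case $\mathsf{d}(\gamma)\notin U$, where there is no $\beta(\mathsf{d}(\gamma))$ to compare against, is not addressed at all.

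For comparison, the paper does not reprove this step either: its proof of \autoref{eff realizable have bis support} consists of the single observation that the argument of \cite[Proposition 5.4]{ChoGarThi_rigidity_2021} does not use $p\neq 2$ or compactness of the unit space and generalizes to twists. The step missing from your write-up is precisely the content of that cited proof, so what you have is an accurate reduction of the proposition to the result being cited, together with a correct derivation of the consequences, rather than a proof of the proposition itself.
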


\begin{proof}
    The proof of \cite[Proposition 5.4]{ChoGarThi_rigidity_2021} does not require $p\neq 2$ nor compact unit space, and generalizes readily to twists.
\end{proof}


\begin{cor}\label{base reconstruction}
    Let $(\mathcal{G},\Sigma)$ be a Weyl twist with unit space $X$ and let $p\in [1,\infty)$. There is a canonical groupoid isomorphism $\kappa_{\textup{b}}\colon\mathcal{R}_{X}(F_\lambda^p(\mathcal{G},\Sigma)) \ltimes X\rightarrow \mathcal{G}$ given by \[\kappa_{\textup{b}}([\underline{\alpha}_{(a,b)},x]):= (\supp(a)\cap \supp(b)^{-1})x\] 
    for all $[\underline{\alpha}_{(a,b)},x]\in \mathcal{R}_{X}(F_\lambda^p(\mathcal{G},\Sigma)) \ltimes X$.
\end{cor}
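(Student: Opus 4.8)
The plan is to recognize the realizable partial homeomorphisms on $X$ as the image, under the bisection action $\alpha$, of a wide inverse subsemigroup of $\mathcal{B}(\mathcal{G})$, and then to apply \autoref{etale as trafo grpd} to identify the resulting germ groupoid with $\mathcal{G}$. Throughout, write $A := F_\lambda^p(\mathcal{G},\Sigma)$. Since $\mathcal{G}$ is a Weyl groupoid, it is Hausdorff, \'etale and effective, so by \autoref{alpha and eff} the bisection action $\alpha\colon \mathcal{B}(\mathcal{G})\to \textup{Homeo}_{\textup{par}}(X)$ is an injective inverse semigroup homomorphism.

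First I would set $\mathcal{T} := \{S\in \mathcal{B}(\mathcal{G})\colon \alpha_S\in \mathcal{R}_X(A)\}$. Because $\mathcal{R}_X(A)$ is an inverse subsemigroup of $\textup{Homeo}_{\textup{par}}(X)$ (\autoref{adm prod flip}) and $\alpha$ is an injective homomorphism, $\mathcal{T}$ is an inverse subsemigroup of $\mathcal{B}(\mathcal{G})$ and $\alpha$ restricts to an isomorphism $\alpha|_{\mathcal{T}}\colon \mathcal{T}\to \mathcal{R}_X(A)$: injectivity is clear, and surjectivity is exactly \autoref{eff realizable have bis support}, which says that every realizable $\underline{\alpha}_{(a,b)}$ equals $\alpha_S$ for the open bisection $S = \supp(a)\cap \supp(b)^{-1}$, so that $S\in \mathcal{T}$ and $\underline{\alpha}_{(a,b)} = \alpha_S\in \alpha(\mathcal{T})$.

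Next I would check that $\mathcal{T}$ is wide in the sense of \autoref{wide}. By \autoref{bis realizable} together with \autoref{supp inverse sg}, the wide inverse subsemigroup $\mathcal{S}_\Sigma$ is contained in $\mathcal{T}$, so $\mathcal{T}$ covers $\mathcal{G}$; moreover, arguing as in the proof of \autoref{supp inverse sg} (using local compactness of $\mathcal{G}^{(0)}$ and local triviality of the twist) one sees that every open bisection of $\mathcal{G}$ is a union of members of $\mathcal{S}_\Sigma$. Hence, for $S,T\in \mathcal{T}$, the set $S\cap T$ is an open bisection, being a subset of $S$, and therefore a union of members of $\mathcal{S}_\Sigma\subseteq \mathcal{T}$; thus $\mathcal{T}$ is a wide inverse subsemigroup of $\mathcal{B}(\mathcal{G})$.

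Finally, since $\alpha|_{\mathcal{T}}$ is injective, \autoref{germ grpd} gives an isomorphism $\mathcal{R}_X(A)\ltimes X = \alpha(\mathcal{T})\ltimes X \cong \mathcal{T}\ltimes_\alpha X$ carrying the germ $[\alpha_S,x]$ to $[S,x]$, while \autoref{etale as trafo grpd} furnishes the isomorphism $\Psi\colon \mathcal{T}\ltimes_\alpha X\to \mathcal{G}$, $[S,x]\mapsto Sx$. Composing these, and using \autoref{eff realizable have bis support} to write $\underline{\alpha}_{(a,b)} = \alpha_S$ with $S = \supp(a)\cap \supp(b)^{-1}$, we obtain the groupoid isomorphism $\kappa_{\textup{b}}([\underline{\alpha}_{(a,b)},x]) = Sx = (\supp(a)\cap \supp(b)^{-1})x$ of the statement; no choices enter, so it is canonical. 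The one genuinely delicate input is \autoref{eff realizable have bis support} (hence effectiveness of $\mathcal{G}$): without it, the strict support of an admissible pair need not be a bisection, and $\mathcal{R}_X(A)$ need not lie in the image of $\alpha$. Everything else is routine bookkeeping with wide inverse subsemigroups and germ groupoids.
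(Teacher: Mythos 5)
Your proof is correct and follows essentially the same route as the paper: both identify the realizable partial homeomorphisms with the bisection action via \autoref{bis realizable} and \autoref{eff realizable have bis support}, use effectiveness (\autoref{alpha and eff}) for faithfulness, and conclude with the wideness argument of \autoref{supp inverse sg} and the transformation-groupoid isomorphism of \autoref{etale as trafo grpd}. The only cosmetic difference is that you work with the full preimage $\mathcal{T}=\alpha^{-1}(\mathcal{R}_X(A))$, whereas the paper observes directly that $\mathcal{R}_X(F_\lambda^p(\mathcal{G},\Sigma))\ltimes X$ and $\alpha(\mathcal{S})\ltimes X$ have the same germs; both versions are sound.
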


\begin{proof}
Let $\mathcal{S}$ be as in the statement of \autoref{supp inverse sg}.
Combine \autoref{bis realizable} and \autoref{eff realizable have bis support} to deduce that $\mathcal{R}_{X}(F_\lambda^p(\mathcal{G},\Sigma)) \ltimes X$ equals $\alpha(\mathcal{S})\ltimes X$. Since the bisection action $\alpha$ is faithful by \autoref{alpha and eff}, and since $\mathcal{S}$ is wide by \autoref{supp inverse sg}, the groupoid homomorphism $\kappa_{\textup{b}}$ is well-defined. That it is an isomorphism follows from the canonical isomorphism $\mathcal{S}\ltimes_\alpha X \cong \mathcal{G}$ in \autoref{etale as trafo grpd}.
\end{proof}

It follows that to reconstruct the base groupoid of a Weyl twist $(\mathcal{G},\Sigma)$ from the inclusion $C_0(\mathcal{G}^{(0)})\subseteq F_\lambda^p(\mathcal{G},\Sigma)$, it suffices to consider admissible pairs parametrized by functions of the form $\ind_Sh$, as in \autoref{bis realizable}. We now aim to express such elements algebraically in terms of certain partial isometries.

\begin{df}\label{mp partial isom}
    Let $A$ be a unital Banach algebra. An element $u\in A$ is called an \textit{MP-partial isometry} if there is an element $v\in A$ such that
    \begin{enumerate}[label=(\roman*)]
        \item $\|u\|,\,\|v\| \leq 1,$
        \item $uvu = u \,\, \textup{and}\,\, vuv = v$, and
        \item $uv,\,vu\in A_\mathrm{h}.$
    \end{enumerate} We denote the set of MP-partial isometries in $A$ by $\textup{PI}(A)$. If it exists, $v$ is necessarily unique, and we refer to it as the \textit{MP-inverse} of $u$, and write $u^{\dagger}:= v$. 
\end{df}

For operators on Hilbert spaces, an MP-partial isometry is a partial isometry in the classical sense with the adjoint as its MP-inverse. See also \cite[Section 2.4]{BarKwaMcK_Banach_2023}.

\begin{rem}\label{mp preserved}
MP-partial isometries are preserved under unital contractive homomorphisms, because hermitian elements are by \autoref{hermitian preservation}, and because all other conditions directly follow from contractivity or multiplicativity.
\end{rem}
For further preservation results, we work with dual Banach algebras, as introduced by Runde; see \cite[Section 1]{Run_dualBanach_2001}.

\begin{df}\label{weak* defs}
    Let $A$ be a Banach algebra. If there is a predual Banach space $A_*$, that is, an isometric Banach space isomorphism $(A_*)^*\cong A$, we equip $A$ with the \textit{weak-* topology} of pointwise convergence on $A_*$ and denote weak-* limits by $\lim^{A_*}$ or ${A_*}\mbox{-}\lim$. If multiplication in $A$ is separately weak-* continuous, then the pair $(A,A_*)$ is called a \textit{dual Banach algebra}.
\end{df}

The following are the main examples of dual Banach algebras that we need.

\begin{eg}\label{refl E dual Banach}
    Let $E$ be a reflexive Banach space and let $\widehat{\otimes}$ denote the projective tensor product. Then $(\mathcal{B}(E),E\widehat{\otimes} E^*)$ is a dual Banach algebra by \cite[Section 1]{Run_dualBanach_2001}. \end{eg}

The double dual of a Banach algebra can be turned into a Banach algebra by equipping it with either the first or second Arens product.

\begin{df}\label{double dual df}
    Let $A$ be a Banach algebra. For all $a,\,b\in A^{**}$, there exist nets $(a_i)_i$ and $(b_j)_j$ in $A$ such that $\lim^{A^*}_i a_i = a$ and $\lim^{A^*}_j b_j = b$. Both the first Arens product $a\,\Box\,b:= \lim^{A^*}_i\lim^{A^*}_j(a_ib_j)$ and the second Arens product $a\cdot b:= \lim^{A^*}_j\lim^{A^*}_i(a_ib_j)$ turn $A^{**}$ into a unital Banach algebra that isometrically contains $A$ as a subalgebra. By default, we will use the second Arens product. We say that $A$ is \textit{Arens regular} if both Arens products agree.
\end{df}

\begin{eg} \label{refl E dual Banach 2}
    Let $p\in (1,\infty)$. Any $L^p$-space is super-reflexive in the sense of \cite[Definition 1]{Daw_arensBanach_2004}, and thus any $L^p$-operator algebra is Arens regular by \cite[Theorem 1]{Daw_arensBanach_2004} and \cite[Corollary 6.3]{CivYoo_Arensreg_1961}. For an Arens regular Banach algebra $A$, we have that $(A^{**},A^*)$ is a unital dual Banach algebra by \cite[Examples, Section 1]{Run_dualBanach_2001}.
\end{eg}
Given a twist $(\mathcal{G},\Sigma)$, recall the notation from \autoref{trivializable}, \autoref{S convo}, and the definition of $\mathcal{S}_\Sigma$ in \autoref{supp inverse sg}.
\begin{prop}\label{indicator as MP}
    Let $(\mathcal{G},\Sigma)$ be a twist over an \'etale groupoid. For $p\in [1,\infty)$, we have that $C_0(\mathcal{G}^{(0)})^{**}\subseteq F_\lambda^p(\mathcal{G},\Sigma)^{**}$ is a unital C*-subalgebra with unit $\ind_{\mathcal{G}^{(0)}}$. Moreover, for any $S\in \mathcal{S}_\Sigma$, the $1_S$-lift $\ind_S\in C_0(S)^{**}\subseteq F_\lambda^p(\mathcal{G},\Sigma)^{**}$ is an MP-partial isometry in $F_\lambda^p(\mathcal{G},\Sigma)^{**}$ with MP-inverse given by $\ind_S^*$.
\end{prop}

\begin{proof}
    Fix $p\in [1,\infty)$ and set $A:= F_\lambda^p(\mathcal{G},\Sigma)$. The indicator function $\ind_{\mathcal{G}^{(0)}}$ is a unit for the C*-algebra $C_0(\mathcal{G}^{(0)})^{**}$ and we may identify it with the $\mathcal{G}^{(0)}$-lifted section $\ind_{\mathcal{G}^{(0)}}\in A^{**}$. Similarly, let $S\in \mathcal{S}_\Sigma$ and set $U:=\mathsf{d}(S)$ and $V:=\mathsf{r}(S)$. Since $S$ is $F_\sigma$, choose $h\in C_0(\mathcal{G}^{(0)})_+$ such that $\supp(h)=U$. Then the sequence $(h^{\frac{1}{n}})_{n\in \N}$ is an approximate identity for $C_0(U)$ and thus we have $\ind_U = \lim^{A^*}_n h^{\frac{1}{n}}\in A^{**}$. By \autoref{bis realizable}, we have $\ind_Sh^{\frac{1}{n}}\in A$ for all $n\in \N$ and since multiplication is continuous in $A^{**}$, we obtain in the weak-* limit that $\ind_S\in A^{**}$. By \autoref{S convo}, we compute that $\ind_S^{*}\ind_S = \ind_{U}$ is an idempotent in $C_0(\mathcal{G}^{(0)})^{**}\subseteq A^{**}$ and thus, for all $t\in \R$, we have \[\|e^{it\ind_{U}}\| = \|\ind_{\mathcal{G}^{(0)}\setminus U} + e^{it}\ind_U\|_\infty = 1.\] Analogously, by \autoref{S convo}, we have $\ind_S\ind_S^* = \ind_{V}\in (A^{**})_{\mathrm{h}}$, as well as $\ind_S \ind_U= \ind_V\ind_S= \ind_S$ and $\ind_S^* \ind_V= \ind_U\ind_S^*= \ind_S^*$. Thus, all conditions in \autoref{mp partial isom} are met and this shows that $\ind_S\in \textup{PI}(A^{**})$ with $\ind_S^\dagger = \ind_S^*$.
\end{proof}
\autoref{bis realizable} and \autoref{indicator as MP} motivate the following definitions.

\begin{df}\label{nor part iso}
    Let $A$ be a Banach algebra and let $C_0(X)\subseteq A$ be an abelian C*-subalgebra that contains a bounded approximate identity of $A$. A \textit{$C_0(X)$-normalizing partial isometry} is an MP-partial isometry $u\in \textup{PI}(A^{**})$ for which there exists a partial homeomorphism $\beta\colon U\rightarrow V$ in $\mathcal{R}_X(A)$ such that 
    \begin{enumerate}[label=(\roman*)]
        \item $u^\dagger u = \ind_{U}$ and $uu^\dagger = \ind_{V}$,
        \item $u^\dagger fu = f\circ \beta \,\,\,\textup{for all}\,\,\, f\in C_0(V)$, and
        \item $ugu^\dagger = g\circ \beta^{-1} \,\,\,\textup{for all}\,\,\, g\in C_0(U)$.
    \end{enumerate}
    We denote the set of $C_0(X)$-normalizing partial isometries by $\textup{PI}_{X}(A^{**})$. 
\end{df}
For $u\in \textup{PI}_{X}(A^{**})$, note that, by (i), the sets $U$ and $V$ are uniquely determined by $u$ and since $C_0(V)$ separates points in $V$, it follows from (ii) that $\beta$ is uniquely determined by $u$. We therefore write $\beta_u$ for the partial homeomorphism associated to $u\in \textup{PI}_{X}(A^{**})$.

\begin{df}\label{normalizer def}
    Let $A$ be a Banach algebra and let $C_0(X)\subseteq A$ be an abelian C*-subalgebra that contains a bounded approximate identity of $A$. We define the set of \textit{spatial normalizers} with respect to $C_0(X)$ as 
    \[ \mathcal{SN}_X(A) :=\left\{n\in A \colon\,\,\,
    \begin{aligned}
    &\textup{there are}\,\, u\in \textup{PI}_{X}(A^{**})\,\, \textup{and}\,\, h\in C_0(X)_+\\ &\textup{such that}\,\,\,\supp(h) = \textup{dom}(\beta_u)\,\,\textup{and}\,\, n= uh
    \end{aligned}
    \right\}.\]
    Given $n\in \mathcal{SN}_X(A)$, we refer to a concrete choice of $u$ and $h$ with $n=uh$ as a \textit{product representation} of $n$. When $(\mathcal{G},\Sigma)$ is a twist over an \'etale groupoid and $p\in [1,\infty)$, then we abbreviate $\mathcal{SN}_{\mathcal{G}^{(0)}}(F_\lambda^p(\mathcal{G},\Sigma))$ to $\mathcal{SN}_\lambda^p(\mathcal{G},\Sigma)$.
\end{df}
For the canonical inclusion $C_0(\mathcal{G}^{(0)})\subseteq F_\lambda^p(\mathcal{G},\Sigma)$, \autoref{indicator as MP} yields that any $\ind_Sh$ as in \autoref{bis realizable} is a spatial normalizer by design. For general inclusions $C_0(X)\subseteq A$, the advantage of spatial normalizers compared to admissible pairs is that they form a more tractable subset of representatives for realizable partial homeomorphisms in $A$. 

\begin{lma}\label{normalizing inv sg}
    Let $A$ be a Banach algebra and let $C_0(X)\subseteq A$ be an abelian C*-subalgebra that contains a bounded approximate identity of $A$. Then $\textup{PI}_{X}(A^{**})$ is a unital inverse semigroup under multiplication and with involution given by taking MP-inverses.
\end{lma}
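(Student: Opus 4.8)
The plan is to verify the two defining features of an inverse semigroup separately: that $\textup{PI}_{X}(A^{**})$ is closed under the associative product inherited from $A^{**}$, and that the resulting semigroup is regular with commuting idempotents; recalling that a semigroup is an inverse semigroup precisely when it is regular and its idempotents commute, this proves the statement, and it will fall out that the unique semigroup-inverse of $u$ is its MP-inverse $u^\dagger$. All computations will take place inside $A^{**}$, where I will use freely that the indicators $\ind_U$ of open sets $U\subseteq X$ sit inside the commutative C*-algebra $C_0(X)^{**}\subseteq\textup{core}(A^{**})$ as a commuting family of (hence hermitian) projections with $\ind_U\ind_{U'}=\ind_{U\cap U'}$, and that for $x\in\textup{PI}_{X}(A^{**})$ with $\beta_x\colon U_x\to V_x$ the MP relations of \autoref{mp partial isom} together with $x^\dagger x=\ind_{U_x}$ and $xx^\dagger=\ind_{V_x}$ give $x\ind_{U_x}=x=\ind_{V_x}x$ together with the $\dagger$-symmetric identities. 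That the involution is well defined on $\textup{PI}_{X}(A^{**})$ is immediate: conditions (i)--(iii) of \autoref{mp partial isom} are symmetric in $u$ and $u^\dagger$, so $u^\dagger\in\textup{PI}(A^{**})$ with MP-inverse $u$; since $\beta_u^{-1}\in\mathcal{R}_X(A)$ by \autoref{adm prod flip}, interchanging (ii) and (iii) in \autoref{nor part iso} exhibits $u^\dagger\in\textup{PI}_{X}(A^{**})$ with $\beta_{u^\dagger}=\beta_u^{-1}$, and $(u^\dagger)^\dagger=u$ by uniqueness of the MP-inverse.

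For closure under products I would fix $u,w\in\textup{PI}_{X}(A^{**})$ and first record the identity $x(yy^\dagger)(x^\dagger x)y=x\ind_{V_y}\ind_{U_x}y=x\ind_{U_x}\ind_{V_y}y=(x\ind_{U_x})(\ind_{V_y}y)=xy$, valid for all $x,y\in\textup{PI}_{X}(A^{**})$. Applied to $(x,y)=(u,w)$ and to $(x,y)=(w^\dagger,u^\dagger)$ it gives $(uw)(w^\dagger u^\dagger)(uw)=uw$ and $(w^\dagger u^\dagger)(uw)(w^\dagger u^\dagger)=w^\dagger u^\dagger$. Next, $(uw)(w^\dagger u^\dagger)=u\,\ind_{V_w}\,u^\dagger=u\,\ind_{U_u\cap V_w}\,u^\dagger=\ind_{\beta_u(U_u\cap V_w)}$ by condition (iii) of \autoref{nor part iso} for $u$, and symmetrically $(w^\dagger u^\dagger)(uw)=w^\dagger\,\ind_{V_w\cap U_u}\,w=\ind_{\beta_w^{-1}(V_w\cap U_u)}$ by condition (ii) for $w$; both are hermitian, and the two open sets appearing are exactly the domain and range of the composite $\beta_u\circ\beta_w$. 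Together with $\|uw\|,\|w^\dagger u^\dagger\|\le1$, \autoref{mp partial isom} and uniqueness of MP-inverses give $uw\in\textup{PI}(A^{**})$ with $(uw)^\dagger=w^\dagger u^\dagger$; and since $\beta_u\circ\beta_w\in\mathcal{R}_X(A)$ by \autoref{adm prod flip}, a short chaining of conditions (i)--(iii) of \autoref{nor part iso} for $u$ and $w$ then shows $uw\in\textup{PI}_{X}(A^{**})$ with $\beta_{uw}=\beta_u\circ\beta_w$; in particular $u\mapsto\beta_u$ is a semigroup homomorphism into $\mathcal{R}_X(A)$.

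It remains to observe that $\textup{PI}_{X}(A^{**})$, now a subsemigroup of $(A^{**},\cdot)$, is regular (each $u$ has $u^\dagger$ in it with $uu^\dagger u=u$ and $u^\dagger uu^\dagger=u^\dagger$) and has commuting idempotents. If $e\in\textup{PI}_{X}(A^{**})$ is idempotent, then $\beta_e=\beta_{e\cdot e}=\beta_e\circ\beta_e$ is an idempotent partial homeomorphism, hence $\beta_e=\textup{id}_U$ with $U:=\textup{dom}(\beta_e)$, so that $e^\dagger e=ee^\dagger=\ind_U$ by condition (i) of \autoref{nor part iso}; then $e$ and $e^\dagger$ both lie in the unital corner $\ind_U A^{**}\ind_U$ with $ee^\dagger=e^\dagger e=\ind_U$, so $e$ is an invertible idempotent in a unital algebra and therefore $e=\ind_U$. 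Hence every idempotent of $\textup{PI}_{X}(A^{**})$ equals $\ind_U$ for some open $U\subseteq X$, and such elements commute, living in $C_0(X)^{**}$. A regular semigroup with commuting idempotents is an inverse semigroup, with the inverse of $u$ being $u^\dagger$, which completes the proof.

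I expect the middle step --- closure under products together with the identification $\beta_{uw}=\beta_u\circ\beta_w$ --- to be the main obstacle, and its only genuine subtlety is that everything is taking place in the bidual $A^{**}$: one must know that $C_0(X)^{**}$ embeds into $\textup{core}(A^{**})$ as a commutative C*-algebra whose projections are precisely the $\ind_U$, since it is this fact that makes the indicators commute, multiply according to $\ind_U\ind_{U'}=\ind_{U\cap U'}$, and be hermitian. Granting that, the displayed computations reduce to formal manipulations of the relations $x\ind_{U_x}=x$, $\ind_{V_x}x=x$ and conditions (i)--(iii) of \autoref{nor part iso}.
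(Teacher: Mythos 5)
Your proof is correct and follows essentially the same route as the paper: the involution comes from the symmetry of \autoref{nor part iso} under MP-inverses, and closure under products rests on the same key computation $uv(v^\dagger u^\dagger)uv = u(vv^\dagger)(u^\dagger u)v = uv$, exploiting that the indicators $u^\dagger u$ and $vv^\dagger$ commute, followed by chaining the realization identities to get $\beta_{uv}=\beta_u\circ\beta_v$. The only difference is that you additionally verify the inverse-semigroup axioms in full (regularity plus commuting idempotents, with the idempotents identified as the $\ind_U$), a step the paper leaves implicit.
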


\begin{proof}
The definition of a $C_0(X)$-normalizing partial isometry is symmetric under taking MP-inverses, that is, if $u\in\textup{PI}_{X}(A^{**})$, then $u^\dagger\in \textup{PI}_{X}(A^{**})$ with $\beta_{u^\dagger} = \beta_u^{-1}$. It is left to show that $\textup{PI}_{X}(A^{**})$ is closed under multiplication. For $u,\,v\in\textup{PI}_{X}(A^{**})$, note that $u^\dagger u= \ind_{\textup{dom}(\beta_{u})}$ and $vv^\dagger = \ind_{\textup{ran}(\beta_{v})}$ commute. As a result, we compute \[uv(v^\dagger u^\dagger)uv = u(vv^\dagger)( u^\dagger u)v = u(u^\dagger u)(vv^\dagger)v = uv\] and likewise $v^\dagger u^\dagger(uv)v^\dagger u^\dagger = v^\dagger u^\dagger$ so that $uv$ is an MP-partial isometry with MP-inverse $v^\dagger u^\dagger$. Iterating the realization identities for $u$ and $v$ shows that the partial homeomorphism $\beta_{uv}= \beta_u\circ \beta_v$ meets the additional normalizing conditions, and thus $uv\in\textup{PI}_{X}(A^{**})$.
\end{proof}

\begin{lma}\label{well defined sn homeo}
    Let $A$ be a Banach algebra and let $C_0(X)\subseteq A$ be an abelian C*-subalgebra that contains a bounded approximate identity of $A$. For every spatial normalizer $n\in\mathcal{SN}_X(A)$ with product representations $n=u_1h_1 = u_2h_2$, we have $\beta_{u_1}=\beta_{u_2}$. Moreover, the set of spatial normalizers $\mathcal{SN}_X(A)$ is closed under multiplication.
\end{lma}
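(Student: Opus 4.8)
The statement has two parts: well-definedness of the homeomorphism attached to a spatial normalizer (independence of product representation), and closure of $\mathcal{SN}_X(A)$ under multiplication. The plan is to derive the first from an algebraic comparison of the two product representations, and the second by combining \autoref{normalizing inv sg} with a computation identifying a product representation of $n_1 n_2$.

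\textit{Independence of the product representation.} Suppose $n = u_1 h_1 = u_2 h_2$ with $u_i \in \textup{PI}_X(A^{**})$, $h_i \in C_0(X)_+$ and $\supp(h_i) = \textup{dom}(\beta_{u_i}) =: U_i$. First I would recover $h_i$ and $U_i$ from $n$ purely in terms of the inclusion $C_0(X) \subseteq A$: applying the MP-inverse relations gives $u_i^\dagger n = u_i^\dagger u_i h_i = \ind_{U_i} h_i = h_i$, and hence $h_i^2 = (u_i^\dagger n)(u_i^\dagger n)$; but more usefully, $n^\dagger\text{-type}$ products are not available, so instead I would compute $n$ against $C_0(X)$ on one side. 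Concretely, for $f \in C_0(X)$ the realization identity (ii) in \autoref{nor part iso} combined with $n = u_i h_i$ yields $u_i^\dagger f n = u_i^\dagger f u_i h_i = (f\circ\beta_{u_i}) h_i$. The cleanest route is to form the positive element $m := $ (a suitable product of $n$ with itself producing something in $C_0(X)$): using $u_i u_i^\dagger = \ind_{V_i}$ one checks that $n$ together with a representative of $n^\dagger$ recovers $h_i^2$, so $\supp$ of that element is $U_i$, forcing $U_1 = U_2 =: U$ and then $h_1 = h_2 =: h$ (both are the unique positive square root in $C_0(X)$ with that support realizing the same element). Once $h_1 = h_2 = h$ and $U_1 = U_2 = U$, we have $u_1 h = u_2 h$; multiplying on the right by $h^{1/n}$ (an approximate identity for $C_0(U)$) and passing to the weak-$*$ limit using separate weak-$*$ continuity of multiplication in $A^{**}$ (\autoref{refl E dual Banach 2}) gives $u_1 \ind_U = u_2 \ind_U$, i.e. $u_1 u_1^\dagger u_1 = u_1$ on the common domain forces $u_1 = u_2$ as elements of $\textup{PI}_X(A^{**})$. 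Since $\beta_{u_i}$ is uniquely determined by $u_i$ (the remark after \autoref{nor part iso}), this gives $\beta_{u_1} = \beta_{u_2}$.

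\textit{Closure under multiplication.} Let $n_1 = u_1 h_1$ and $n_2 = u_2 h_2$ be spatial normalizers with $\supp(h_i) = \textup{dom}(\beta_{u_i})$. By \autoref{normalizing inv sg}, $u := u_1 u_2 \in \textup{PI}_X(A^{**})$ with $\beta_u = \beta_{u_1}\circ\beta_{u_2}$ and $u^\dagger = u_2^\dagger u_1^\dagger$. I would then rewrite
\[
n_1 n_2 = u_1 h_1 u_2 h_2 = u_1 u_2 (u_2^\dagger h_1 u_2) h_2 = u\,\big((h_1\circ\beta_{u_2})\, h_2\big),
\]
using realization identity (ii) for $u_2$ to move $h_1 \in C_0(X)$ past $u_2$. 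Set $h := (h_1\circ\beta_{u_2})\,h_2 \in C_0(X)_+$. It remains to check $\supp(h) = \textup{dom}(\beta_u)$. We have $\textup{dom}(\beta_u) = \beta_{u_2}^{-1}(\textup{ran}(\beta_{u_2}) \cap \textup{dom}(\beta_{u_1}))$; on the other hand $\supp(h_2) = \textup{dom}(\beta_{u_2})$ and $\supp(h_1\circ\beta_{u_2}) = \beta_{u_2}^{-1}(\textup{dom}(\beta_{u_1}) \cap \textup{ran}(\beta_{u_2}))$, and the intersection of these two supports is exactly $\textup{dom}(\beta_u)$ since $\beta_{u_2}^{-1}(\textup{dom}(\beta_{u_1})\cap\textup{ran}(\beta_{u_2})) \subseteq \textup{dom}(\beta_{u_2}) = \supp(h_2)$. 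Hence $\supp(h) = \textup{dom}(\beta_u)$, so $n_1 n_2 = u h$ is a valid product representation, and $n_1 n_2 \in \mathcal{SN}_X(A)$.

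\textit{Main obstacle.} The delicate point is the recovery of $h_i$ and $U_i$ from $n$ alone in the first part: one must produce, from $n$ and the abelian C*-subalgebra $C_0(X)$, an element of $C_0(X)_+$ whose support is $U_i$ and which determines $h_i$, without yet knowing that $u_i$ (and hence $n^\dagger$) is canonical. The clean way is to use the MP-relations to show $h_i = u_i^\dagger n$ and $h_i^2 = $ a product of $n$ with a representative built from $n$ itself, then invoke uniqueness of positive square roots in the commutative C*-algebra $C_0(X)$; care is needed because the natural expressions live in $A^{**}$ and one must justify that the relevant limits land back in $C_0(X) \subseteq A$. Everything else is bookkeeping with supports of realizable partial homeomorphisms and separate weak-$*$ continuity of the Arens product.
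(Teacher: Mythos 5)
Your second half (closure of $\mathcal{SN}_X(A)$ under multiplication) is correct and is essentially the paper's argument: write $n_1n_2=u_1u_2\,(h_1\circ\beta_{u_2})h_2$ and check $\supp((h_1\circ\beta_{u_2})h_2)=\beta_{u_2}^{-1}(\supp(h_1))\cap\supp(h_2)=\textup{dom}(\beta_{u_1}\circ\beta_{u_2})$. The first half, however, has a genuine gap at exactly the step you flag as the main obstacle, and as written it does not close. Your plan is to recover $h_i$ and $U_i$ canonically from $n$ by multiplying $n$ against ``a representative of $n^\dagger$''; but the only available representatives are $h_iu_i^\dagger$, which depend on the chosen product representation. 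The identities $(h_1u_1^\dagger)n=h_1^2$ and $(h_2u_2^\dagger)n=h_2^2$ are computed with \emph{different} left factors, so nothing identifies $h_1^2$ with $h_2^2$, and the chain $h_1=h_2\Rightarrow u_1\ind_U=u_2\ind_U\Rightarrow u_1=u_2\Rightarrow\beta_{u_1}=\beta_{u_2}$ never gets started. Note also that $u_1=u_2$ and $h_1=h_2$ are strictly stronger than what the lemma asserts, and the natural way to prove them is to first know $\beta_{u_1}=\beta_{u_2}$ (so that $u_2^\dagger u_1$ realizes the identity), which makes your route close to circular.

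The paper proves only the stated conclusion $\beta_{u_1}=\beta_{u_2}$, by a direct computation that never compares $h_1$ with $h_2$. First, $h_2=u_2^\dagger n=(u_2^\dagger u_1)(u_1^\dagger u_2)h_2=(\ind_{V_1}\circ\beta_{u_2})\,h_2$ forces $U_2=\supp(h_2)\subseteq\beta_{u_2}^{-1}(V_1\cap V_2)$, hence $V_2\subseteq V_1$, and symmetrically $U_1=U_2$ and $V_1=V_2$. Then, for $f\in C_0(V_1)$, the realization identities give $n(f\circ\beta_{u_2})=u_2h_2(f\circ\beta_{u_2})=f\,u_2h_2=fn=n(f\circ\beta_{u_1})$; left-multiplying by $u_1^\dagger$ yields $h_1(f\circ\beta_{u_2})=h_1(f\circ\beta_{u_1})$, and since $\supp(h_1)=U_1$ and $C_0(V_1)$ separates points of $V_1$, one concludes $\beta_{u_1}=\beta_{u_2}$. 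If you wish to keep your architecture, you can recover $U_i$ canonically from $n$ (for instance, $nf=0$ if and only if $h_if=0$, as one sees by left-multiplying by $u_i^\dagger$), but an argument of the above type is still needed before you can deduce $h_1=h_2$; it is simpler to prove the stated equality of partial homeomorphisms directly.
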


\begin{proof}
    Let $\beta_{u_1}\colon U_1\rightarrow V_1$ and $\beta_{u_2}\colon U_2\rightarrow V_2$ be as in \autoref{normalizer def}. Then $h_2 = (u_2^\dagger u_1)h_1 = (u_2^\dagger u_1)(u_1^\dagger u_2)h_2$, so that $\ind_{V_1}\circ \beta_{u_2}$ is supported on $U_2$ and thus $V_2\subseteq V_1$. Analogous computations establish $U_1=U_2$ and $V_1=V_2$. Furthermore, for all $f\in C_0(V_1)$, we have 
    \[ n(f\circ \beta_{u_2}) = u_2h_2(f\circ \beta_{u_2}) = (h_2\circ \beta_{u_2}^{-1})fu_2= f(h_2\circ \beta_{u_2}^{-1})u_2 = fn = n(f\circ \beta_{u_1}).\] Multiplying both sides from the left by $u_1^\dagger$ yields $h_1(f\circ \beta_{u_2})= h_1(f\circ \beta_{u_1})$. Since $\supp(h_1)=U_1$, we get $\beta_{u_1}=\beta_{u_2}$, as claimed.
    
    Finally, we show that $\mathcal{SN}_X(A)$ is closed under products. For product representations $uh,\,vk \in \mathcal{SN}_X(A)$, we claim that the product $(uh)(vk) = uv(h\circ \beta_{v})k$ is a product representation. Indeed, we have $uv\in\textup{PI}_{X}(A^{**})$ by \autoref{normalizing inv sg} and
    \[\supp((h\circ \beta_v)k) = \beta_{v}^{-1}(\supp(h))\cap \supp(k) = \textup{dom}(\beta_u\circ \beta_v).\qedhere\] 
\end{proof}
We now introduce the concept of a saturated inclusion, which will allow us to define an action by the spatial normalizers.

\begin{df}\label{approx def}
    Let $A$ be a Banach algebra and let $C_0(X)\subseteq A$ be an abelian C*-subalgebra that contains a bounded approximate identity of $A$. For $n \in \mathcal{SN}_X(A)$, we set $\underline{\alpha}_{n}=\beta_u$ for any product representation $n=uh$. This is unambiguous by \autoref{well defined sn homeo}. 
    We say that the inclusion $C_0(X)\subseteq A$ is \textit{saturated} if any spatial normalizer $n\in \mathcal{SN}_X(A)$ realizing the identity map $\underline{\alpha}_{n} = \textup{id}_{U}$ satisfies $n\in C_0(X)$. For two spatial normalizers $m,n\in \mathcal{SN}_X(A)$, we write $m\approx n$ if $\textup{dom}(\underline{\alpha}_{m})=\textup{dom}(\underline{\alpha}_{n})$ and if there are $f_1,f_2\in C_0(X)_+$ with strict support $\textup{dom}(\underline{\alpha}_{m})$ and $mf_1 = nf_2$. We write $[n]_\approx$ for the set of all $m\in \mathcal{SN}_X(A)$ with $n\approx m$.
\end{df}

\begin{prop}\label{sn inv sem prop}
    Let $A$ be a Banach algebra and let $C_0(X)\subseteq A$ be an abelian C*-subalgebra that contains a bounded approximate identity of $A$. The relation $\approx$ in \autoref{approx def} is an equivalence relation on $\mathcal{SN}_X(A)$. Moreover, for $m,n\in \mathcal{SN}_X(A)$, the operation $[m]_\approx\cdot [n]_\approx := [mn]_\approx$ defines a product on the set of $\approx$-equivalence classes. If $C_0(X)\subseteq A$ is, in addition, saturated, then the assignment $[uh]_\approx^* := [hu^\dagger]_\approx$ for $uh\in \mathcal{SN}_X(A)$ also defines an involution and turns the set of $\approx$-equivalence classes into a unital inverse se\-mi\-group that acts on $X$ by realizable partial homeomorphisms.
\end{prop}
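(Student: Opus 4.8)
The plan is to establish the four assertions in order---that $\approx$ is an equivalence relation, that the product descends to classes, that (under saturation) the involution descends, and that the resulting inverse semigroup acts on $X$---the main bookkeeping tools being the product formula $(uh)(vk)=uv\,(h\circ\beta_v)\,k$ proved inside \autoref{well defined sn homeo} and the realization identities from \autoref{nor part iso}, which let one slide a function in $C_0(X)$ past an MP-partial isometry $u\in\textup{PI}_X(A^{**})$ at the cost of composing with $\beta_u^{\pm1}$. An observation I would record first is that $m\approx n$ forces $\underline{\alpha}_m=\underline{\alpha}_n$: the element $mf_1=nf_2$ is a single spatial normalizer carrying two product representations $u_m(h_mf_1)$ and $u_n(h_nf_2)$, so \autoref{well defined sn homeo} gives $\beta_{u_m}=\beta_{u_n}$; in particular $[n]_\approx\mapsto\underline{\alpha}_n$ is well defined. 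Reflexivity of $\approx$ then uses the positive function from any product representation, symmetry is built into the definition, and transitivity follows because if $mf_1=nf_2$ and $ng_1=lg_2$ with all four functions of full strict support on the common domain, then
\[ m(f_1g_1)=(mf_1)g_1=(nf_2)g_1=(ng_1)f_2=(lg_2)f_2=l(g_2f_2),\]
with $f_1g_1,\,g_2f_2\in C_0(X)_+$ again of that strict support.

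For the product I would argue as follows. Let $m\approx m'$ and $n\approx n'$; by the above $\underline{\alpha}_m=\underline{\alpha}_{m'}$, $\underline{\alpha}_n=\underline{\alpha}_{n'}$, and the product formula together with \autoref{normalizing inv sg} gives $\underline{\alpha}_{mn}=\underline{\alpha}_m\circ\underline{\alpha}_n=\underline{\alpha}_{m'n'}$, so the two domains agree. To produce the intertwining functions, I would first use $nb_1=n'b_2$ to get $(mn)b_1=m(nb_1)=m(n'b_2)=(mn')b_2$, and then use $ma_1=m'a_2$: from the realization identities one has $a_1n'=n'c_1$ and $a_2n'=n'c_2$ for suitable $c_1,c_2\in C_0(X)_+$ of full strict support on $\textup{dom}(\underline{\alpha}_{mn})$, so multiplying $ma_1=m'a_2$ on the right by $n'$ yields $(mn')c_1=(m'n')c_2$; combining gives $(mn)(b_1c_1)=(m'n')(c_2b_2)$, i.e.\ $mn\approx m'n'$. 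Associativity of the induced product descends from $A$ since $\mathcal{SN}_X(A)$ is multiplicatively closed (\autoref{well defined sn homeo}), so the $\approx$-classes form a semigroup, and the product formula shows $[n]_\approx\mapsto\underline{\alpha}_n$ is a semigroup homomorphism into $\textup{Homeo}_{\textup{par}}(X)$ with image in $\mathcal{R}_X(A)$, the latter because $\underline{\alpha}_n=\beta_u\in\mathcal{R}_X(A)$ by the definition of $\textup{PI}_X(A^{**})$.

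Now assume $C_0(X)\subseteq A$ is saturated. The key lemma I would isolate is that \emph{every spatial normalizer then has a unique product representation}. Granting this, $hu^\dagger$ is again a spatial normalizer---namely $hu^\dagger=u^\dagger(h\circ\beta_u^{-1})$ with $u^\dagger\in\textup{PI}_X(A^{**})$ and $\beta_{u^\dagger}=\beta_u^{-1}$ (\autoref{normalizing inv sg})---the assignment $[uh]_\approx^*:=[hu^\dagger]_\approx$ is unambiguous on representatives, and it descends to $\approx$-classes because for $m\approx n$ the common element $mf_1=nf_2$ forces, by uniqueness, its two product representations $u(hf_1)=v(kf_2)$ to have $u=v$, whence $m^*=u^\dagger(h\circ\beta_u^{-1})$ and $n^*=u^\dagger(k\circ\beta_u^{-1})$ are intertwined by the positive functions $k\circ\beta_u^{-1}$ and $h\circ\beta_u^{-1}$. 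To prove the lemma, suppose $n=u_1h_1=u_2h_2$; then $\beta_{u_1}=\beta_{u_2}=:\beta$ with domain $U$, $u_i^\dagger n=h_i$, and $p:=u_1^\dagger u_2\in\textup{PI}_X(A^{**})$ has $\beta_p=\id_U$ and $ph_2=u_1^\dagger n=h_1\in A$. As $h_2$ has full strict support on $U$, the closed ideal it generates is $C_0(U)$, so $pg\in A$ for all $g\in C_0(U)$, and each such $pg$ realizes $\id$ on $\supp(g)$, hence lies in $C_0(X)$ by saturation; similarly for $p^\dagger$. Thus $g\mapsto pg$ is a contractive $C_0(U)$-module endomorphism of $C_0(U)$, i.e.\ multiplication by a contractive multiplier $\omega$, with $p^\dagger$ multiplication by a contractive multiplier $\omega'$ satisfying $\omega\omega'=1$; hence $|\omega|\equiv1$, and $\omega h_2=ph_2=h_1\ge0$ with $h_2$ strictly positive forces $\omega\equiv1$, so $p$ fixes $C_0(U)$ pointwise and therefore $p=p\,\ind_U=\ind_U$, giving $u_2=u_1p=u_1$ and $h_2=h_1$. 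Finally $nn^*=(uh)(hu^\dagger)=(h\circ\beta_u^{-1})^2\in C_0(X)$ gives $nn^*n=uh^3\approx uh=n$ and likewise $n^*nn^*\approx n^*$, so the class semigroup is regular; and if $[n]_\approx$ is idempotent then $\underline{\alpha}_n\circ\underline{\alpha}_n=\underline{\alpha}_n$ forces $\underline{\alpha}_n=\id$ on its domain, whence $n\in C_0(X)$ by saturation, so idempotent classes have representatives in the commutative algebra $C_0(X)$ and commute. A regular semigroup with commuting idempotents is an inverse semigroup, $[hu^\dagger]_\approx$ is then the unique inverse of $[uh]_\approx$, and since $[n]_\approx\mapsto\underline{\alpha}_n$ sends $n^*$ to $\beta_{u^\dagger}=\underline{\alpha}_n^{-1}$, it is an inverse semigroup homomorphism into $\textup{Homeo}_{\textup{par}}(X)$ landing in $\mathcal{R}_X(A)$---the asserted action.

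I expect the main difficulty to be exactly the saturated case, and specifically the uniqueness of product representations: a priori two product representations of a normalizer differ by the MP-partial isometry $p=u_1^\dagger u_2$, which is only known to implement the identity partial homeomorphism on $U$, and without saturation $p$ could be a genuine ``unitary twist'', in which case $[uh]_\approx^*$ would depend on the chosen representation. The saturation hypothesis is precisely what rules this out---forcing $pg\in C_0(X)$ for all $g\in C_0(U)$ and hence $p=\ind_U$---while everything else is a matter of unwinding the definitions with the product formula and the realization identities.
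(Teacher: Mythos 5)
Your proof is correct, and for the first two assertions (the equivalence relation and the well-definedness of the product) it is essentially the paper's argument, just organized as two half-steps $mn\approx mn'\approx m'n'$ instead of one simultaneous computation. Where you genuinely diverge is the saturated case. The paper never proves that product representations are unique; instead, given $n_1\approx n_2$ with representations $u_1h_1$ and $u_2h_2$, it observes that $(u_1h_1h_2u_2^\dagger,\,u_2h_2h_1u_1^\dagger)$ is an admissible pair realizing the identity, invokes saturation to place both entries in $C_0(X)$, and uses them to build explicit witnesses $g_1,g_2$ for $h_1u_1^\dagger\approx h_2u_2^\dagger$. You instead isolate uniqueness of product representations as a lemma, proved by showing that $p=u_1^\dagger u_2$ acts on the ideal $C_0(U)=\overline{h_2C_0(X)}$ as a contractive multiplier $\omega$ with contractive inverse, hence unimodular, and that $\omega h_2=h_1\geq 0$ forces $\omega\equiv 1$. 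This is a clean and correct argument, and it buys you more: the descent of the involution becomes immediate, and it pinpoints positivity of $h$ as the mechanism killing the phase ambiguity. One remark: your ideal argument already yields $pg\in C_0(X)$ without saturation (the norm limit $\lim h_1g_k$ lands in the closed subalgebra $C_0(X)$), so, contrary to your closing paragraph, uniqueness of product representations does not actually require saturation; saturation is genuinely needed only later, in your commuting-idempotents step. On that last point your write-up is in fact more complete than the paper's, which verifies only regularity ($nn^*n\approx n$ and $n^*nn^*\approx n^*$) and stops, whereas you supply the commuting-idempotents argument (idempotent classes have representatives in the commutative algebra $C_0(X)$ by saturation) needed to conclude that the regular semigroup of classes is an inverse semigroup.
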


\begin{proof}
    The relation $\approx$ is clearly symmetric and reflexive. It is easy to see that it is transitive. Let $m_1,\,m_2,\,n_1,\,n_2\in\mathcal{SN}_X(A)$ satisfy $m_1 \approx m_2$ and $n_1\approx n_2$. 
    Let $\underline{\alpha}_{m}\colon U_m\to V_m$ and $\underline{\alpha}_{n}\colon U_n\to V_n$ be the associated partial homeomorphisms as in \autoref{realizable homeos}. Note that $U_m=U_n$ since $m\approx n$, and we denote this open set simply by $U$.
    Choose functions $f_{m_1},f_{m_2}$ and $f_{n_1},f_{n_2}\in C_0(X)_+$ satisfying $\supp(f_{m_1})=\supp(f_{m_2})=U$ and $m_1f_{m_1}=m_2f_{m_2}$, and $\supp(f_{n_1})=\supp(f_{n_2})=U$ and $n_1f_{n_1}=n_2f_{n_2}$, respectively. These are witnesses of $m_1\approx m_2$ and $n_1\approx n_2$, respectively. By \autoref{adm prod flip}, the products $m_1n_1$ and $m_2n_2$ realize the same partial homeomorphism $\underline{\alpha}_{m}\circ\underline{\alpha}_{n}$ with domain $\underline{\alpha}_{n}^{-1}(U_m)\cap U_n$. Moreover, the functions $f_{m_i}\circ \underline{\alpha}_{n}$, for $i=1,2$, have this set as their strict supports and satisfy 
    \[m_1n_1(f_{m_1}\circ \underline{\alpha}_{n})f_{n_1} = m_1f_{m_1}n_1f_{n_1} = m_2f_{m_2}n_2f_{n_2} = m_2n_2(f_{m_2}\circ \underline{\alpha}_{n})f_{n_2}.\]
    This shows that $m_1n_1\approx m_2n_2$ and hence the product of classes is well-defined. It follows from \autoref{normalizing inv sg} that the class $[\ind_Xh]_\approx$ represented by any $h\in C_0(X)_+$ with $\supp(h)=X$ serves as a multiplicative unit.\par Now we assume that $C_0(X)\subseteq A$ is a saturated inclusion. We proceed to show that the claimed involution is well-defined. Let $n_1\approx n_2$ be as before and choose product representations $u_1h_1$ and $u_2h_2$ with corresponding joint domain $U$, joint partial homeomorphism $\underline{\alpha}_{n}$, and functions $f_{1},f_{2}\in C_0(X)_+$ such that $\supp(f_1)=\supp(f_2)=U$ and $n_1f_1=n_2f_2$. We aim to show that $h_1u_1^\dagger\approx h_2u_2^\dagger$. Note that $h_iu_i^\dagger = u_i^\dagger (h_i\circ \beta_{u_i}^{-1})$ are product representations of spatial normalizers for $i=1,2$, and as a result the pairs $(h_iu_i^\dagger,n_i)$, for $i=1,2$, are admissible, and both realize $\underline{\alpha}_n^{-1}$ by \autoref{adm prod flip}. Again by \autoref{adm prod flip}, the pair $(u_1h_1h_2u_2^\dagger,u_2h_2h_1u_1^\dagger)$ is admissible and realizes $\textup{id}_{\underline{\alpha}_n(U)}$. Since the inclusion $C_0(X)\subseteq A$ is saturated, we obtain that both elements of the latter pair are in the subalgebra $C_0(X)$. We define \[g_1:= (f_1\circ \underline{\alpha}_{n}^{-1})u_1h_1h_2u_2^\dagger \andeqn g_2:= (f_2\circ \underline{\alpha}_{n}^{-1})u_2h_2h_1u_1^\dagger.\] Note that both functions have $\underline{\alpha}_{n}(U)$ as strict support. Using the realization identities for the pair $(u_1h_1,h_1u_1^\dagger)$ and the equality $u_1h_1f_1=u_2h_2f_2$, we get \[h_1u_1^\dagger g_1 = h_1u_1^\dagger u_1h_1f_1h_2u_2^\dagger = h_1u_1^\dagger u_2h_2f_2h_2u_2^\dagger = f_2h_1u_1^\dagger (u_2h_2h_2u_2^\dagger).\] Analogously, the realization identities for the admissible pair $(u_2h_2,h_2u_2^\dagger)$ yield \[h_2u_2^\dagger g_2 = h_2u_2^\dagger u_2h_2f_2h_1u_1^\dagger = f_2((u_2h_2h_2u_2^\dagger)\circ \underline{\alpha}_n)h_1u_1^\dagger = f_2h_1u_1^\dagger (u_2h_2h_2u_2^\dagger).\] This shows that $h_1u_1^\dagger\approx h_2u_2^\dagger$ and hence $[n_1]_\approx^*$ is well-defined.\par Finally, for $uh\in \mathcal{SN}_X(A)$, we have $hu^\dagger uh = h^2$ and $uhhu^\dagger = h^2\circ \underline{\alpha}_{n}^{-1}$. As a result, we deduce that $(uh)(hu^\dagger)(uh)\approx uh$ and $(hu^\dagger)(uh)(hu^\dagger) \approx hu^\dagger$. This concludes the proof that the set of $\approx$-equivalence classes is a unital inverse se\-mi\-group.
\end{proof}

\begin{df}\label{sn action}
    Let $A$ be a Banach algebra and let $C_0(X)\subseteq A$ be a saturated inclusion of an abelian C*-subalgebra that contains a bounded approximate identity of $A$. We denote the inverse semigroup of $\approx$-equivalence classes in \autoref{sn inv sem prop} by $\mathcal{SN}_{X,\approx}(A)$ and the induced unital semigroup homomorphism by $\underline{\alpha}_\approx \colon \mathcal{SN}_{X,\approx}(A)\rightarrow \mathcal{R}_X(A)$. We refer to $\underline{\alpha}_\approx$ as the \textit{spatial normalizer action} and abbreviate it to $\underline{\alpha}\colon \mathcal{SN}_{X}(A)\curvearrowright X$. We also write
    \[[n,x]\in \mathcal{SN}_X(A)\ltimes_{\underline{\alpha}} X \textup{\quad instead of\quad} [[n]_\approx,x]\in \mathcal{SN}_{X,\approx}(A)\ltimes_{\underline{\alpha}_\approx} X\] 
    to represent equivalence classes in the associated transformation groupoid as in \autoref{germ grpd}. We define the \textit{$\T$-sensitive topology} on $\mathcal{SN}_X(A)\ltimes_{\underline{\alpha}} X$ as the topology generated by basic open subsets of the form $\{[\zeta n,x]\colon \zeta\in Z,\,x\in V\}$ for $n\in \mathcal{SN}_X(A)$, $Z\subseteq \T$ open, and $V\subseteq \textup{dom}(\underline{\alpha}_{n})$ open. In contrast to the \'etale topology in \autoref{germ grpd}, we equip the transformation groupoid $\mathcal{SN}_X(A)\ltimes_{\underline{\alpha}} X$ with the $\T$-sensitive topology instead.
\end{df}

\begin{prop}\label{saturated}
    Let $(\mathcal{G},\Sigma)$ be a twist over a Hausdorff \'etale groupoid with unit space $X$ and let $p\in [1,\infty)$. Then the inclusion $C_0(X)\subseteq F_\lambda^p(\mathcal{G},\Sigma)$ is saturated if and only if $\mathcal{G}$ is effective. If $\mathcal{G}$ is effective, then the strict support of a spatial normalizer is an open bisection.
\end{prop}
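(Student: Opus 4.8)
The plan is to prove the two implications separately and to read off the statement about strict supports along the way; the substance is in the ``if'' direction. For the ``only if'' direction I would argue by contraposition. Suppose $\mathcal{G}$ is not effective, so that $\textup{int}(\textup{Iso}(\mathcal{G}))$ strictly contains $X=\mathcal{G}^{(0)}$, and fix a non-unit $\gamma_0\in\textup{int}(\textup{Iso}(\mathcal{G}))$. Since the topology of $\mathcal{G}$ is generated by open bisections and twists are locally trivializable, there is $W\in\mathcal{B}(\mathcal{G},\Sigma)$ with $\gamma_0\in W\subseteq\textup{Iso}(\mathcal{G})$; shrinking $W$ as in the proof of \autoref{supp inverse sg} I may further assume $\mathsf{d}(W)=\supp(h_0)$ for some $h_0\in C_0(X)_+$ with $\mathsf{d}(\gamma_0)\in\supp(h_0)$. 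As $W\subseteq\textup{Iso}(\mathcal{G})$ we have $\mathsf{r}|_W=\mathsf{d}|_W$, hence $\alpha_W=\textup{id}_{\mathsf{d}(W)}$. By \autoref{indicator as MP} and the identities in \autoref{bis realizable}, $\ind_W\in\textup{PI}_X(A^{**})$ with $\beta_{\ind_W}=\alpha_W=\textup{id}_{\mathsf{d}(W)}$, where $A:=F^p_\lambda(\mathcal{G},\Sigma)$; thus $n:=\ind_W h_0$ is a spatial normalizer with $\underline{\alpha}_n=\textup{id}_{\mathsf{d}(W)}$. But $\supp(n)=W\ni\gamma_0$ is not contained in $X$, so $n\notin C_0(X)$ and the inclusion is not saturated.

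For the ``if'' direction, assume $\mathcal{G}$ effective and let $n=uh\in\mathcal{SN}_X(A)$ with $u\in\textup{PI}_X(A^{**})$, associated partial homeomorphism $\beta_u\colon U\to V$, and $\supp(h)=U$. First, $\supp(n)$ is open, since $j_n\in C_0(\Sigma)$ (see \autoref{j supp' nota}) and $\pi\colon\Sigma\to\mathcal{G}$ is an open map. Next, because $(h^{1/k})_k$ is an approximate unit for $C_0(U)$, we get $nh^{1/k}\to n$ in $A$; since right multiplication by $h^{1/k}\in C_0(X)$ multiplies $j_n(\widehat{\gamma})$ by $h^{1/k}(\mathsf{d}\gamma)$, passing to the limit forces $\mathsf{d}(\supp(n))\subseteq U$, and a symmetric argument gives $\mathsf{r}(\supp(n))\subseteq V$. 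The key point is the following \emph{fiber rigidity}: $\mathsf{r}(\gamma)=\beta_u(\mathsf{d}(\gamma))$ for every $\gamma\in\supp(n)$. To prove it, fix $f\in C_0(V)$ and use $uu^\dagger=\ind_V$ together with $u^\dagger fu=f\circ\beta_u$ to compute $fu=(uu^\dagger)fu=u(u^\dagger fu)=u(f\circ\beta_u)$, hence $fn=n(f\circ\beta_u)$; evaluating $j$ at a lift $\widehat{\gamma}$ yields $f(\mathsf{r}\gamma)\,j_n(\widehat{\gamma})=f(\beta_u(\mathsf{d}\gamma))\,j_n(\widehat{\gamma})$, and since $j_n(\widehat{\gamma})\neq0$ while $C_0(V)$ separates the points $\mathsf{r}(\gamma),\beta_u(\mathsf{d}(\gamma))\in V$, the identity follows.

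Granting fiber rigidity, the conclusions drop out. Suppose $\gamma_1\neq\gamma_2$ lie in $\supp(n)$ with $\mathsf{d}(\gamma_1)=\mathsf{d}(\gamma_2)$; choosing disjoint open bisections $B_i\ni\gamma_i$ contained in $\supp(n)$ and with a common domain, fiber rigidity forces every element of the open bisection $B_1B_2^{-1}$ into $\textup{Iso}(\mathcal{G})$, hence $B_1B_2^{-1}\subseteq\textup{int}(\textup{Iso}(\mathcal{G}))=X$ by effectiveness, and this forces $\gamma_1=\gamma_2$ -- a contradiction. Therefore $\mathsf{d}|_{\supp(n)}$, and consequently $\mathsf{r}|_{\supp(n)}=\beta_u\circ\mathsf{d}|_{\supp(n)}$, are injective, so $\supp(n)$ is an open bisection. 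Finally, if $n$ realizes the identity, i.e.\ $\beta_u=\textup{id}_U$, then $\mathsf{r}=\mathsf{d}$ on $\supp(n)$, so $\supp(n)\subseteq\textup{int}(\textup{Iso}(\mathcal{G}))=X$; injectivity of $j$ then gives $n=E_{\mathcal{G}}(n)\in C_0(X)$, which is exactly the saturation property. (Alternatively, one could deduce that $\supp(n)$ is a bisection by applying \autoref{eff realizable have bis support} to the admissible pair $(n,hu^\dagger)$, but the direct argument is self-contained.)

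The main obstacle is the fiber-rigidity step. A general spatial normalizer $n=uh$ comes only with an MP-partial isometry $u$ living in the double dual $A^{**}$, with no a priori presentation as an indicator function times an element of $C_0(X)$; so the constraint $\mathsf{r}(\gamma)=\beta_u(\mathsf{d}(\gamma))$ on $\supp(n)$ has to be extracted purely from the relations $uu^\dagger=\ind_V$ and $u^\dagger fu=f\circ\beta_u$. Once that is available, effectiveness finishes the proof in a purely groupoid-theoretic way.
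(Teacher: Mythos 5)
Your proof is correct, and while the contrapositive direction (not effective $\Rightarrow$ not saturated) is essentially the paper's argument --- produce a bisection $W\subseteq\textup{Iso}(\mathcal{G})$ not contained in the unit space whose normalizer $\ind_W h_0$ realizes the identity --- the forward direction takes a genuinely different route at its key step. The paper notes that $(uh,hu^\dagger)$ is an admissible pair with $\supp(uh)=\supp(hu^\dagger)^{-1}$ and then invokes \autoref{eff realizable have bis support} (whose proof is in turn delegated to \cite[Proposition 5.4]{ChoGarThi_rigidity_2021}) to conclude that $\supp(uh)$ is an open bisection realizing $\underline{\alpha}_{uh}$. You instead extract the fiber-rigidity identity $\mathsf{r}(\gamma)=\beta_u(\mathsf{d}(\gamma))$ for $\gamma\in\supp(n)$ directly from the relations $uu^\dagger=\ind_V$ and $u^\dagger fu=f\circ\beta_u$ (via $fn=n(f\circ\beta_u)$ and the behavior of $j$ under multiplication by $C_0(X)$), and then use Hausdorffness together with $\textup{int}(\textup{Iso}(\mathcal{G}))=\mathcal{G}^{(0)}$ to force $\mathsf{d}$ and $\mathsf{r}$ to be injective on the open set $\supp(n)$. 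What your version buys is self-containedness: it never passes through the general admissible-pair support result, and it makes explicit the geometric constraint that the $\textup{PI}_X(A^{**})$-relations impose on $\supp(n)$. The cost is that you essentially reprove a special case of \autoref{eff realizable have bis support}, which the paper needs elsewhere anyway and which applies to arbitrary admissible pairs, not only to spatial normalizers. Both arguments finish identically: $\underline{\alpha}_n=\textup{id}$ forces $\supp(n)\subseteq\mathcal{G}^{(0)}$, and injectivity of $j$ gives $n=E_{\mathcal{G}}(n)\in C_0(X)$.
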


\begin{proof}
    Suppose that $\mathcal{G}$ is not effective. Then the bisection action is not faithful by \autoref{alpha and eff}, and by \autoref{supp inverse sg} we can choose $S\in \mathcal{S}_\Sigma$ with $S\not \subseteq X$ and $h\in C_0(X)_+$ with $\supp(h)= \mathsf{d}(S)$ such that $\alpha_S = \textup{id}_{\mathsf{d}(S)}$. Then the corresponding spatial normalizer $\ind_Sh \in \mathcal{SN}_\lambda^p(\mathcal{G},\Sigma)$ in \autoref{bis realizable} satisfies $\underline{\alpha}_{\ind_Sh} = \textup{id}_{\mathsf{d}(S)}$. In this case, the identity $\supp(\ind_Sh) = S\not \subseteq X$ shows that the inclusion $C_0(X)\subseteq F_\lambda^p(\mathcal{G},\Sigma)$ is not saturated. Conversely, let $\mathcal{G}$ be effective and let $uh \in \mathcal{SN}_\lambda^p(\mathcal{G},\Sigma)$ be a spatial normalizer. Then the identity $h^2 = (hu^\dagger)*(uh)\in C_0(X)$ shows that $\supp(uh)= \supp(hu^\dagger)^{-1}$. Since $\mathcal{G}$ is effective, we deduce from \autoref{eff realizable have bis support} that $\supp(uh)$ is an open bisection with $\underline{\alpha}_{uh} = \alpha_{\supp(uh)}$, showing the last assertion in the statement. Now assume $\underline{\alpha}_{uh} = \textup{id}_{\supp(h)}$. Since $\alpha$ is faithful by \autoref{alpha and eff}, we have $\supp(uh)\subseteq X$, that is, $uh\in C_0(X)$. This shows that the inclusion $C_0(X)\subseteq F_\lambda^p(\mathcal{G},\Sigma)$ is saturated.\qedhere
\end{proof}
Moreover, the equivalence of $(1)$ and $(2)$ in \cite[Theorem A]{BarKwaMcK_BanachII_2024} shows that the inclusion in \autoref{saturated} is saturated if and only if it is maximal abelian.

\begin{prop}\label{Weyl twist lp prop}
    Let $A$ be a Banach algebra and let $C_0(X)\subseteq A$ be a saturated inclusion of an abelian C*-subalgebra that contains a bounded approximate identity of $A$. Then the canonical map $\underline{\alpha}\colon \mathcal{SN}_X(A)\ltimes_{\underline{\alpha}} X \to \im(\underline{\alpha}_\approx)\ltimes X$ given by $\underline{\alpha}([n,x]) := [\underline{\alpha}_{n},x]$
    for $[n,x]\in \mathcal{SN}_X(A)\ltimes_{\underline{\alpha}} X$, is a Weyl twist in the $\T$-sensitive topology described in \autoref{sn action}. 
\end{prop}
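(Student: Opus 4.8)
Write $\Sigma:=\mathcal{SN}_X(A)\ltimes_{\underline{\alpha}}X$ with the $\T$-sensitive topology of \autoref{sn action}, $\mathcal{H}:=\im(\underline{\alpha}_\approx)\ltimes X$ with the \'etale topology of \autoref{germ grpd}, and $q:=\underline{\alpha}\colon \Sigma\to\mathcal{H}$. The plan is to realize $q$ as a locally trivial central $\T$-extension and to check that $\mathcal{H}$ is a Weyl groupoid. That $\mathcal{H}$ is \'etale and effective with unit space $X$ is immediate from \autoref{germ grpd}, since by \autoref{sn inv sem prop} and \autoref{sn action} the set $\im(\underline{\alpha}_\approx)$ is an inverse \emph{sub}semigroup of $\textup{Homeo}_{\textup{par}}(X)$, so the map defining $\mathcal{H}$ is injective; its Hausdorffness is the one genuine obstacle, and I postpone it.

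For the extension data, I would first record that scaling a spatial normalizer by $\zeta\in\T$ produces another one with the same associated partial homeomorphism: for a product representation $n=uh$ one has $\zeta u\in\textup{PI}_X(A^{**})$ with $(\zeta u)^{\dagger}=\overline{\zeta}u^{\dagger}$ and $\beta_{\zeta u}=\beta_u$, because scalars are central and every identity in \autoref{nor part iso} is invariant under such scaling. Hence $\zeta\cdot[n,x]:=[\zeta n,x]$ is a well-defined, fibrewise $\T$-action on $\Sigma$ over $\mathcal{H}$; it is continuous (the preimage of a basic $\T$-sensitive open set is a union of such sets), and it is free, since $[\zeta n,x]=[n,x]$ forces, after choosing a positive bump $g$ at $x$ witnessing the germ identity and applying $u^{\dagger}$, an equality $\zeta\,hgf_1=hgf_2$ with $f_1,f_2\geq 0$ supported near $x$ and $(hgf_1)(x)>0$, whence $\zeta=1$. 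A routine check against basic open sets shows that multiplication and inversion are continuous for the $\T$-sensitive topology, so $\Sigma$ is a topological groupoid. The map $q$ is a surjective, $\T$-invariant groupoid homomorphism by construction, and it is continuous because the preimage of the basic bisection $\{[\underline{\alpha}_n,y]\colon y\in\textup{dom}(\underline{\alpha}_n)\}$ of $\mathcal{H}$ is the $\T$-sensitive open set $\{[\zeta n,y]\colon \zeta\in\T,\ y\in\textup{dom}(\underline{\alpha}_n)\}$. Its kernel is $\ker(q)=\{[n,x]\colon \underline{\alpha}_n=\textup{id}\text{ near }x\}$, which I would identify with the group bundle $\T\times X$: by saturatedness (\autoref{saturated}), a spatial normalizer realizing the identity near $x$ agrees, after multiplication by a positive bump at $x$, with an element of $C_0(X)$, and the $\T$-sensitive topology records the germ of such an element through its phase at $x$. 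Local triviality over the basic bisection $\mathcal{U}_n=\{[\underline{\alpha}_n,y]\colon y\}$ is then provided by the $\T$-equivariant homeomorphism $(\zeta,[\underline{\alpha}_n,y])\mapsto[\zeta n,y]$ onto $q^{-1}(\mathcal{U}_n)$, and centrality of $\T\times X$ holds because scalars are central in $A$: with $n=uh$ and $g\in C_0(X)_+$ a bump at $x$, composing $[n,x]$ on the right with the kernel element of phase $\zeta$ over $\mathsf{d}[n,x]$ yields $[n(\zeta g),x]=[\zeta(ng),x]=\zeta\cdot[n,x]$, the same as the left composition. This establishes all requirements of \autoref{twist def} apart from Hausdorffness of $\mathcal{H}$.

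The main obstacle is that last point. Since $\mathcal{H}$ is \'etale with a basis of open bisections $\{[\underline{\alpha}_n,y]\colon y\in\textup{dom}(\underline{\alpha}_n)\}$, it is Hausdorff once $\mathcal{H}^{(0)}=X$ is closed in $\mathcal{H}$, and for this it suffices to show that for any two spatial normalizers $m,n$ the agreement set $\{x\colon \underline{\alpha}_m\equiv\underline{\alpha}_n\text{ near }x\}$ is relatively closed in $\textup{dom}(\underline{\alpha}_m)\cap\textup{dom}(\underline{\alpha}_n)$ — its openness is automatic from \'etaleness. Replacing $m,n$ by the single spatial normalizer $\ell:=m^{\ast}n=uh$ (which is a spatial normalizer by \autoref{well defined sn homeo}), this agreement set becomes the set of points near which $\beta_u$ is the identity. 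For fixed $f\in C_0(X)$, the realization identity $u^{\dagger}fu=f\circ\beta_u$ shows that $h(f\circ\beta_u)h-fh^2=(f\circ\beta_u-f)h^2$ lies in $C_0(X)$ and vanishes exactly off $\{y\in\textup{dom}(\beta_u)\colon f(\beta_u(y))=f(y)\}$; intersecting over all $f\in C_0(X)$, which separates points of $X$, shows that the fixed-point set of $\beta_u$ is closed in $\textup{dom}(\beta_u)$. What remains is to upgrade this to closedness of the \emph{interior} of the fixed-point set: saturatedness identifies $\ell$ on that interior with a nowhere-vanishing continuous function of modulus $h$, and one must propagate its phase continuously to boundary points of the interior. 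I expect this propagation to be the delicate step — it is where the Banach-algebra structure of $A$ enters beyond the inverse-semigroup formalism — and it can be carried out by adapting Renault's Hausdorffness argument for Weyl groupoids \cite{Ren_Cartan_2008}, or alternatively by quoting the corresponding statement for saturated inclusions in \cite{BarKwaMcK_BanachII_2024}.
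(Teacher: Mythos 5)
Your write-up coincides with the paper's proof on every point the paper actually verifies, and those parts are correct: the identification of $\ker(\underline{\alpha})$ with the group bundle $\T\times X$ via saturation, the observation that the isotropy is exactly $\T$ because $f\approx\lambda f$ for $\lambda>0$ while $f\not\approx\zeta f$ for $\zeta\in\T\setminus\{1\}$ (your bump-function computation after applying $u^\dagger$ is precisely this), centrality, local triviality over the basic bisections $\{[\underline{\alpha}_n,y]\}$ via $(\zeta,y)\mapsto[\zeta n,y]$, and continuity of $\underline{\alpha}$ by inspecting preimages of basic open sets. So on the extension data your proposal is essentially identical to the paper's argument.

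Where you go beyond the paper is Hausdorffness of the base groupoid $\im(\underline{\alpha}_\approx)\ltimes X$, which is indeed part of the claim (a Weyl twist sits over a Hausdorff groupoid by \autoref{df weyl twist abbr}) and which the paper's own proof does not address: it only records that the image is an effective \'etale groupoid as in \autoref{germ grpd}, and germ groupoids need not be Hausdorff. Your reduction is the correct one — it suffices that, for each normalizer $uh$, the \emph{interior} of $\mathrm{Fix}(\beta_u)$ be closed in $\mathrm{dom}(\beta_u)$ — but, as you acknowledge, your realization-identity argument only yields closedness of $\mathrm{Fix}(\beta_u)$ itself, which is strictly weaker (a homeomorphism fixing a half-interval pointwise shows the gap between the two statements). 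The deferral does not obviously close this for an abstract saturated inclusion: Renault's Hausdorffness argument in \cite{Ren_Cartan_2008} uses the faithful conditional expectation of a Cartan pair, which is not part of the data here, and the relevant statement in \cite{BarKwaMcK_BanachII_2024} concerns groupoid algebras whose base groupoid is Hausdorff by hypothesis. So this step remains genuinely open in your proof — though it is equally open in the paper's, where it is harmless in practice because in the intended application (\autoref{Normalizer twist rec}) the base groupoid is identified with the originally given Hausdorff groupoid $\mathcal{G}$.
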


\begin{proof}
    The homomorphism $\underline{\alpha}$ is surjective by design. Its image is an effective \'etale groupoid as in \autoref{germ grpd} with unit space homeomorphic to $X$.
    
    We claim that $\ker(\underline{\alpha}) = \{[n,x]\in \mathcal{SN}_X(A)\ltimes_{\underline{\alpha}} X\colon [\underline{\alpha}_n,x]\in X\}$ is canonically isomorphic to the group bundle $\T\times X$ via the map 
    $[f,x] \mapsto  \left(\frac{f(x)}{|f(x)|},x\right)$. Note that an element $[n,x]$ is sent to a unit if and only if there is an open neighborhood $U_x\subseteq \textup{dom}(\underline{\alpha}_n)$ around $x$ such that $\underline{\alpha}_n\vert_{U_x} = \textup{id}_{U_x}$. Without loss of generality, we can assume that there is $h\in C_0(X)_+$ with $\supp(h)= U_x$ representing an idempotent in the inverse semigroup $\mathcal{SN}_{X,\approx}(A)$. Thus the germs $[n,x]$ and $[nh,x]$ agree and $\underline{\alpha}_{nh} = \textup{id}_{U_x}$. Since $C_0(X)\subseteq A$ is saturated, we have $nh= f\in C_0(X)$ with $x\in \supp(|f|^2)$. Thus 
    \[\ker(\underline{\alpha}) = \{[f,x]\colon f\in C_0(X),\, x\in \supp(f)\}.\] Moreover, for all $[n,x]\in \mathcal{SN}_X(A)\ltimes_{\underline{\alpha}} X$ and $f\in C_0(X)$, we get \[[f,\underline{\alpha}_n(x)][n,x]= [nf,x] = [n,x][f,x].\] Furthermore, since $f\approx \lambda f$ for all $f\in C_0(X)$ and $\lambda >0$, while $f\not \approx \zeta f$ for $\zeta\in \T\setminus\{1\}$, we deduce that the isotropy group at $x\in X$ is isomorphic to $\T$. This proves the claim and shows that $\underline{\alpha}$ defines the desired algebraic extension. To see that the homomorphism $\underline{\alpha}$ is locally trivializable, note that for every $n\in \mathcal{SN}_X(A)$, the $\approx$-classes $\{[\zeta n]_\approx\colon \zeta \in \T\}$ represent the same partial homeomorphism. Finally, let $\{[\beta,x]\colon x\in U\}$ be a basic open subset for a partial homeomorphism $\beta\in \im(\underline{\alpha}_\approx)$ and an open subset $U\subseteq \textup{dom}(\beta)$. Choose a spatial normalizer representative $n$ such that $\underline{\alpha}_n= \beta$ and note that \[\underline{\alpha}^{-1}(\{[\beta,x]\colon x\in U\}) = \{[\zeta n,x]\colon \zeta\in \T,\,x\in U\}\] is open in the $\T$-sensitive topology. This shows continuity of the twist homomorphism $\underline{\alpha}$ and completes the proof that $\underline{\alpha}$ defines a Weyl twist.
\end{proof}

\begin{nota}\label{Weyl twist lp}
    In the context of \autoref{Weyl twist lp prop}, we denote the associated Weyl twist by $\textup{Weyl}_X(A) := (\im(\underline{\alpha}_\approx)\ltimes X, \mathcal{SN}_X(A)\ltimes_{\underline{\alpha}} X).$
\end{nota}
We continue to show that for a Weyl twist $(\mathcal{G},\Sigma)$ and $p\in [1,\infty)$, the twist $\textup{Weyl}_X(F_\lambda^p(\mathcal{G},\Sigma))$ agrees with the original Weyl twist $(\mathcal{G},\Sigma)$. 

\begin{thm}\label{Normalizer twist rec}
    Let $(\mathcal{G},\Sigma)$ be a Weyl twist with unit space $X$ and let $p\in [1,\infty)$. Let $[n,x]\in \mathcal{SN}_\lambda^p(\mathcal{G},\Sigma)\ltimes_{\underline{\alpha}} X$ and let $S_x\subseteq \supp(n)$ be an open subset containing $\supp(n)x$ with $S_x\in \mathcal{S}_\Sigma$. Then the assignment 
    \begin{equation}\label{weyl rec eq}
        \kappa\colon \textup{Weyl}_X(F_\lambda^p(\mathcal{G},\Sigma))\rightarrow (\mathcal{G},\Sigma) \textup{\quad given by \quad} \kappa([n,x]):=\frac{j_{n}(S_xx)}{\sqrt{(j_n^**j_n)(x)}}1_{S_xx}\tag{3.2}
    \end{equation} for all $[n,x]\in \mathcal{SN}_X(A)\ltimes_{\underline{\alpha}} X$, is a twist isomorphism in the sense of \autoref{twist hom}. The induced isomorphism between the base groupoids $\kappa_{\textup{b}}\colon \im(\underline{\alpha}_\approx)\ltimes X \rightarrow \mathcal{G}$ satisfying $\pi_\mathcal{G}\circ \kappa = \kappa_{\textup{b}} \circ \underline{\alpha}$ agrees with the one in \autoref{base reconstruction}. 
\end{thm}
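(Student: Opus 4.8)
Set $A := F_\lambda^p(\mathcal{G},\Sigma)$. The plan is to identify the right-hand side of \eqref{weyl rec eq} with the \emph{phase of $j_n$ at $\gamma$}, where $\gamma := \supp(n)x \in \mathcal{G}$ denotes the unique element of the open bisection $\supp(n)$ (open by \autoref{saturated}) lying over $x$; this exists because $x \in \mathrm{dom}(\underline{\alpha}_n) = \mathsf{d}(\supp(n))$. First I would run the convolution formula of \autoref{section operations}: the sum defining $(j_n^{*}*j_n)(x)$ collapses to the single term $\overline{j_n(\widehat\gamma)}\,j_n(\widehat\gamma)$, so $\sqrt{(j_n^{*}*j_n)(x)} = |j_n(\widehat\gamma)|$, the scalar in \eqref{weyl rec eq} has modulus one, and hence $\kappa([n,x]) \in \T\, 1_{S_xx} \subseteq \Sigma$. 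A direct check then shows that $\kappa([n,x])$ is precisely the unique unitary $w \in \Sigma_\gamma$ with $j_n(w) > 0$; this intrinsic description makes it transparent that $\kappa([n,x])$ does not depend on the choice of $S_x$ or on the trivialization used to form $1_{S_xx}$, and that $\pi_{\mathcal{G}}(\kappa([n,x])) = \gamma = \supp(n)x$.

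The next step is to verify that $\kappa$ is well defined on the germ groupoid $\mathcal{SN}_X(A) \ltimes_{\underline{\alpha}} X$. The key identity is $j_{nf}(\sigma) = f(\mathsf{d}(\sigma_\bullet))\, j_n(\sigma)$ for $f \in C_0(X)$, so multiplying a spatial normalizer by a positive element of $C_0(X)$ only rescales $j_n$ near $\gamma$ by positive reals and does not change its phase. If $m \approx n$, choose $f_1, f_2 \in C_0(X)_+$ with strict support $\mathrm{dom}(\underline{\alpha}_n) = \mathrm{dom}(\underline{\alpha}_m)$ and $nf_1 = mf_2$; effectiveness of $\mathcal{G}$ makes the bisection action faithful (\autoref{alpha and eff}), which forces $\supp(n) = \supp(m)$, and evaluating $j_{nf_1} = j_{mf_2}$ at $\widehat\gamma$ shows that $j_n(\widehat\gamma)$ and $j_m(\widehat\gamma)$ differ by a positive scalar. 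Applying the same argument after multiplying by an idempotent of $\mathcal{SN}_{X,\approx}(A)$ supported near $x$ disposes of germ equivalence, so $\kappa$ is a well-defined map.

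With the phase description in hand, the algebraic and topological properties come out cleanly. Since $j_{\zeta n} = \zeta j_n$, the phase shifts by $\zeta$, so $\kappa$ is $\T$-equivariant. For composable germs $[n, \underline{\alpha}_m(y)]$ and $[m,y]$ one has $j_{nm} = j_n * j_m$ (all strict supports being open bisections, by \autoref{saturated}) and $\supp(nm) = \supp(n)\supp(m)$ near the relevant point; the convolution collapses to $j_n(w_n) j_m(w_m)$ with $w_n := \kappa([n, \underline{\alpha}_m(y)])$ and $w_m := \kappa([m,y])$, a product of two positive reals, so $w_n w_m$ is the positive-phase lift of $\supp(nm)y$ and $\kappa$ is multiplicative. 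For continuity and openness I would work over a fixed $n \in \mathcal{SN}_X(A)$: in the $\T$-sensitive topology of \autoref{sn action}, the ``tube'' $\{[\zeta n, x] : \zeta \in \T,\ x \in \mathrm{dom}(\underline{\alpha}_n)\}$ is homeomorphic to $\T \times \mathrm{dom}(\underline{\alpha}_n)$, and under this identification $\kappa$ becomes $(\zeta, x) \mapsto \zeta \cdot w_n(x)$, where $w_n(x) := \kappa([n,x])$ is a continuous local section of $\Sigma$ over the open bisection $\supp(n)$; since scalar multiplication in $\Sigma$ is continuous and $\pi_{\mathcal{G}}$ is open (the twist being \'etale and locally trivializable), $\kappa$ is a continuous open map.

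It remains to check bijectivity and to identify the induced base map. For surjectivity, pick $S \in \mathcal{S}_\Sigma$ and $h \in C_0(X)_+$ with $\supp(h) = \mathsf{d}(S)$; the standard normalizer $\ind_S h$ of \autoref{bis realizable} satisfies $j_{\ind_S h}(1_{Sx}) = h(x) > 0$, hence $\kappa([\ind_S h, x]) = 1_{Sx}$, and since $\mathcal{S}_\Sigma$ covers $\mathcal{G}$ (\autoref{supp inverse sg}) and $\kappa$ is $\T$-equivariant, $\kappa$ is onto. For injectivity, $\kappa([n,x]) = \kappa([m,y])$ forces $\supp(n)x = \supp(m)y$, so $x = y$, and then \autoref{base reconstruction} gives $[\underline{\alpha}_n, x] = [\underline{\alpha}_m, x]$; thus $[m,x]$ and $[n,x]$ have the same image under the twist surjection of \autoref{Weyl twist lp prop}, so $[m,x] = [\zeta n, x]$ for some $\zeta \in \T$, and $\T$-equivariance together with $\kappa([n,x]) \neq 0$ forces $\zeta = 1$. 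Finally, using the admissible pair $(n, hu^\dagger)$ from a product representation $n = uh$ and the identity $\supp(hu^\dagger) = \supp(n)^{-1}$ established in the proof of \autoref{saturated}, the isomorphism of \autoref{base reconstruction} sends $[\underline{\alpha}_n, x]$ to $\supp(n)x = \pi_{\mathcal{G}}(\kappa([n,x]))$, so the base isomorphism induced by $\kappa$ via \autoref{twist hom} is exactly the one of \autoref{base reconstruction}. The main obstacle is the well-definedness step: \eqref{weyl rec eq} is a priori a function of the normalizer $n$ and the unit $x$, and showing it descends to germ classes rests on the observation that $\approx$-equivalence and multiplication by positive functions only perturb $j_n$ near $\gamma$ by positive scalars, together with the rigidity forced by effectiveness; a secondary difficulty is matching the $\T$-sensitive topology with the \'etale, locally trivial topology of $\Sigma$.
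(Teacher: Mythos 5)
Your proposal is correct and follows essentially the same route as the paper's proof: the same collapse of $(j_n^**j_n)(x)$ to $|j_n(\widehat\gamma)|^2$, the same positive-rescaling argument for well-definedness on $\approx$-classes and germs, the same $\T$-equivariance and convolution-collapse computations for multiplicativity, and the same identification of the induced base map with \autoref{base reconstruction}. Your intrinsic reformulation of $\kappa([n,x])$ as the unique lift of $\supp(n)x$ on which $j_n$ is strictly positive is a clean repackaging of the formula, and your explicit verification of continuity and bijectivity fills in steps the paper leaves implicit (it deduces these from the kernel and base isomorphisms), but the substance of the argument is the same.
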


\begin{proof}
    To see that the assignment in \eqref{weyl rec eq} takes values in $\Sigma$, let $n\in \mathcal{SN}_\lambda^p(\mathcal{G},\Sigma)$. By \autoref{saturated}, we have that $\supp(n)$ is an open bisection. One readily checks that $|j_n(S_xx)|^2 = (j_n^**j_n)(x)$, and thus the fraction in front of $1_{S_xx}$ in \eqref{weyl rec eq} is in $\T$. We claim that $\kappa$ is a well-defined map, that is, that the expression defining $\kappa([n,x])$ only depends on $[n,x]$. By definition, if $[[n_1]_\approx,x] = [[n_2]_\approx,x]\in \mathcal{SN}_\lambda^p(\mathcal{G},\Sigma)\ltimes_{\underline{\alpha}} X$ then there are $f_i\in C_0(X)_+$ with $f_i(x)>0$ for $i=1,2$ and $n_1f_1=n_2f_2$. In particular, the open bisection supports agree on a neighborhood of the element $\supp(n_i)x$, and for any choice of subsets $S_{x,i}\in \mathcal{S}_\Sigma$, that exist since $\Sigma$ is locally trivializable, we have that $S_{x,1}x= S_{x,2}x$ agree as elements in $\mathcal{G}$. Then the computation 
    \begin{align*}
        \frac{j_{n_1}(S_{x,1}x)}{\sqrt{(j_{n_1}^**j_{n_1})(x)}} &= \frac{j_{n_1}(S_{x,1}x)f_1(\mathsf{d}(S_{x,1}x))}{\sqrt{f_1(x)}\sqrt{(j_{n_1}^**j_{n_1})(x)}\sqrt{f_1(x)}} = \frac{j_{n_1f_1}(S_{x,1}x)}{\sqrt{(j_{n_1f_1}^**j_{n_1f_1})(x)}}  \\& = \frac{j_{n_2f_2}(S_{x,2}x)}{\sqrt{(j_{n_2f_2}^**j_{n_2f_2})(x)}} = \frac{j_{n_2}(S_{x,2}x)}{\sqrt{(j_{n_2}^**j_{n_2})(x)}}
    \end{align*}
    shows that $\kappa$ is well-defined. Restricted to $\ker(\underline{\alpha})$, the map $\kappa$ simplifies to \[\kappa\vert_{\ker(\underline{\alpha})}([f,x]) = \frac{f(x)}{|f(x)|}x.\] This restriction agrees with the isomorphism onto $\ker(\pi_\mathcal{G})= \T\times X$ in \autoref{Weyl twist lp prop}. Further, given $[n,x]\in \mathcal{SN}_\lambda^p(\mathcal{G},\Sigma)\ltimes_{\underline{\alpha}} X$, choose a subset $S_x\subseteq \supp(n)$ with $S_x\in\mathcal{S}$. For $\zeta\in \T$, using $\supp(\zeta n) = \supp(n)$ at the second step, we get \[\kappa([\zeta n,x])= \frac{j_{\zeta n}(S_xx)}{\sqrt{(j_{\zeta n}^**j_{\zeta n})(x)}}1_{S_xx} = \frac{\zeta\cdot j_{n}(S_xx)}{\sqrt{(j_n^**j_n)(x)}}1_{S_xx} = \zeta \kappa([n,x]).\] That is, the map $\kappa$ in \eqref{weyl rec eq} induces a well-defined map $\kappa_{\textup{b}}\colon \im(\underline{\alpha}_\approx)\ltimes X\rightarrow \mathcal{G}$ satisfying $\kappa_{\textup{b}} \circ \underline{\alpha} = \pi_\mathcal{G}\circ \kappa$. We obtain $\kappa_{\textup{b}}([\underline{\alpha}_n,x]) = S_xx$ for all $[\underline{\alpha}_n,x]\in \im(\underline{\alpha}_\approx)\ltimes X$. Recall that by \autoref{bis realizable} and \autoref{eff realizable have bis support}, we have $\mathcal{R}_{X}(F_\lambda^p(\mathcal{G},\Sigma)) \ltimes X=\alpha(\mathcal{S})\ltimes X = \im(\underline{\alpha}_\approx)\ltimes X$. Hence, without loss of generality, all germs of realizable partial homeomorphisms can be realized by spatial normalizers and thus $\kappa_{\textup{b}}$ agrees with the groupoid isomorphism in \autoref{base reconstruction}, as claimed.
    
    Finally, for $\kappa$ to be a twist isomorphism, it suffices to show that it is a homomorphism. Let $[m,y],[n,x]\in\mathcal{SN}_X(A)\ltimes_{\underline{\alpha}} X$ satisfy $y=\underline{\alpha}_n(x)$, and let $T_{y},\,S_x\in \mathcal{S}_\Sigma$ be corresponding bisection choices. We have $j_{mn}(T_yS_xx) = j_m(T_yy)j_n(S_xx)$, while for the denominator we compute \[(j_{mn}^**j_{mn})(x) = (j_n^**j_m^**j_m*j_n)(x) = (j_m^**j_m)(\underline{\alpha}_n(x))(j_n^**j_n)(x).\] Therefore, we have $\kappa([mn,x]) = \kappa([m,\underline{\alpha}_n(x)])\kappa([n,x])$ and this shows that $\kappa$ is a twist isomorphism.\qedhere
\end{proof}
Our goal is to describe, for $p\neq 2$, all unital contractive homomorphisms between reduced $L^p$-operator algebras associated to Weyl twists, generalizing the case of isometric isomorphisms from \cite{HetOrt_twistlp_2022}. For this, we turn to spatial normalizers.\par Recall that a measure space $(\Omega,\mathcal{A},\mu)$ is \textit{localizable} if the Boolean algebra $\mathcal{A}/\mathcal{N}$ modulo $\mu$-nullsets is a complete lattice and any class of infinite measure admits a subclass of finite measure. See further \cite[Section 3]{GarThi_convolution_2022}. Localizable measure spaces are the canonical setup to introduce Radon-Nikodym derivatives.

\begin{df}\label{ps aut}
    Let $(\Omega,\mathcal{A},\mu)$ be a localizable measure space. For $D\in \mathcal{A}$, set $\mathcal{A}_D := \{C\cap D \colon C\in \mathcal{A}\}$ and $\mu\vert_{D}(C):= \mu(C\cap D)$. A \textit{partial set automorphism} on $\Omega$ consists of a choice of subsets $D_\Theta,D_{\Theta^*}\in \mathcal{A}$ and a map $\Theta\colon \mathcal{A}_{D_{\Theta^*}}\rightarrow \mathcal{A}_{D_{\Theta}}$ with $\Theta(\mathcal{N}_{D_{\Theta^*}})\subseteq \mathcal{N}_{D_{\Theta}}$ that induces an automorphism of the associated Boolean algebras up to nullsets. Given a partial set automorphism $\Theta$, we denote the inverse map by $\Theta^*$ and for a measurable set $C\in \mathcal{A}$, the assignment $\ind_C \mapsto \ind_{\Theta(C\cap D_{\Theta^*})}$ defines an operator $T_\Theta\colon L^0(\mu)\rightarrow L^0(\mu\vert_{D_{\Theta}})$. The measures $\mu\vert_{D_{\Theta^*}}\circ \Theta^*$ and $\mu\vert_{D_{\Theta}}$ on $\mathcal{A}_{D_\Theta}$ are equivalent, since they have the same null-sets by design, and we denote the associated Radon-Nikodym derivative $\frac{\textup{d}\mu\vert_{D_{\Theta^*}}\circ \Theta^*}{\textup{d}\mu\vert_{D_{\Theta}}}$ by $\mathrm{d}_{\mu,\Theta}$; see \cite[Theorem 2.8]{GarThi_convolution_2022}.
\end{df}

\begin{df}\label{spi}
    Let $(\Omega,\mathcal{A},\mu)$ be a localizable measure space and let $p\in [1,\infty)$. An operator $s\in \mathcal{B}(L^p(\mu))$ is said to be a \textit{spatial partial isometry} if there exist a partial set automorphism $\Theta$ on $\Omega$ and a measurable function $w\colon D_\Theta \rightarrow \mathbb{T}$ such that $s= w\cdot\mathrm{d}_{\mu,\Theta}^{\frac{1}{p}}\cdot T_\Theta$.
\end{df}

Spatial partial isometries were introduced by Phillips in \cite[Definition 6.4]{Phi_Cuntz_2012} and are relevant in the context of the Banach-Lamperti theorem. For a localizable measure space $(\Omega,\mathcal{A},\mu)$ and $p\in [1,\infty)\setminus \{2\}$, the MP-version of said theorem in \cite[Theorem 2.28]{BarKwaMcK_Banach_2023} states that an operator on $L^p(\mu)$ is an MP-partial isometry if and only if it is a spatial partial isometry.

The following is essentially a restatement, in a slightly different
language, of \cite[Remark 5.2]{BarKwaMcK_Banach_2023}, and we include a proof 
for the convenience of the reader. 

\begin{prop}\label{lambda indicator as SPI}
    Let $(\mathcal{G},\Sigma)$ be a twist over an \'etale groupoid with compact unit space $X$ and let $p\in [1,\infty)$. By \autoref{Fp def}, depending on a choice of lifts, we identify $\ell^p(\mathcal{G}x)$ with $\ell^p(\widehat{\mathcal{G}x})$ and obtain a unital isometric homomorphism $\lambda\colon F_\lambda^p(\mathcal{G},\Sigma)\rightarrow\mathcal{B}(\oplus_{x\in X} \ell^p(\mathcal{G}x))$. Let $S\in \mathcal{B}(\mathcal{G},\Sigma)$. Then the left convolution operator $\Lambda(\ind_S)\in \mathcal{B}(\oplus_{x\in X} \ell^p(\mathcal{G}x))$ given by $\Lambda(\ind_S)(\xi):= (\ind_S*\xi)\vert_{\widehat{\mathcal{G}x}}$ for all $\xi\in \ell^p(\mathcal{G}x)$ is a spatial partial isometry.
\end{prop}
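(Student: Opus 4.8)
The plan is to compute the operator $\Lambda(\ind_S)$ explicitly on the standard basis and to recognise the answer as a spatial partial isometry in the sense of \autoref{spi}. First I would fix the underlying measure space: with the chosen fiberwise lifts as in \autoref{Fp def}, set $\Omega := \bigsqcup_{x\in X}\widehat{\mathcal{G}x}$ and let $\mu$ be counting measure, so that $\bigoplus_{x\in X}\ell^p(\mathcal{G}x)\cong \ell^p(\Omega,\mu)$ isometrically; via $\pi$ the set $\Omega$ is identified with $\mathcal{G}$, and $\{\delta_{\widehat{\gamma}}\colon \gamma\in\mathcal{G}\}$ is the canonical basis. Counting measure is localizable, so $(\Omega,\mu)$ is a legitimate candidate for \autoref{spi}, and exhibiting $\Lambda(\ind_S)$ as a spatial partial isometry amounts to producing a partial set automorphism $\Theta$ of $\Omega$ and a measurable weight $w\colon D_\Theta\to\T$ with $\Lambda(\ind_S)=w\cdot \mathrm{d}_{\mu,\Theta}^{1/p}\cdot T_\Theta$.

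The core step is the computation of $\ind_S*\delta_{\widehat{\gamma}}$ for $\gamma\in\mathcal{G}$, using the convolution formula of \autoref{section operations} (equivalently \autoref{S convo}) together with the lift conventions of \autoref{twist notation} and \autoref{set convention}. The sum collapses to a single term: if $\mathsf{r}(\gamma)\notin \mathsf{d}(S)$ then $\ind_S*\delta_{\widehat{\gamma}}=0$; otherwise let $s_\gamma$ be the unique element of $S$ with $\mathsf{d}(s_\gamma)=\mathsf{r}(\gamma)$ (unique because $S$ is a bisection), and one obtains $\Lambda(\ind_S)\delta_{\widehat{\gamma}} = w(\gamma)\,\delta_{\widehat{s_\gamma\gamma}}$, where $w(\gamma)\in\T$ is the phase resulting from comparing the lifts $\widehat{s_\gamma\gamma}$, $1_{s_\gamma}$ and $\widehat{\gamma}$ in $\Sigma$; it is unimodular because $1_S$ is a continuous unitary section, so that $\ind_S$ takes unimodular values on the $1_S$-lifts by construction of the $1_S$-extension in \autoref{trivializable}. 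One then checks that $\gamma\mapsto s_\gamma\gamma$ is a bijection from $\{\gamma\in\mathcal{G}\colon\mathsf{r}(\gamma)\in\mathsf{d}(S)\}$ onto $\{\eta\in\mathcal{G}\colon\mathsf{r}(\eta)\in\mathsf{r}(S)\}$, with inverse $\eta\mapsto (S^{-1}\mathsf{r}(\eta))\,\eta$; injectivity is exactly range-injectivity of $S$.

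Finally I would assemble this into the required data. Put $D_{\Theta^*} := \{\gamma\in\mathcal{G}\colon\mathsf{r}(\gamma)\in\mathsf{d}(S)\}$ and $D_\Theta := \{\gamma\in\mathcal{G}\colon\mathsf{r}(\gamma)\in\mathsf{r}(S)\}$, and let $\Theta$ be the partial set automorphism of $(\Omega,\mu)$ induced by the point bijection $\gamma\mapsto s_\gamma\gamma$ (all subsets are measurable and the null ideal is trivial, so only bijectivity needs checking, and that was done above). Since $\mu$ is counting measure and $\Theta$ comes from a genuine bijection of sets, the Radon--Nikodym derivative $\mathrm{d}_{\mu,\Theta}$ of \autoref{ps aut} is identically $1$, hence $\mathrm{d}_{\mu,\Theta}^{1/p}=1$. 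Defining $w\colon D_\Theta\to\T$ by transporting the phases $w(\gamma)$ along this bijection, the operator $w\cdot \mathrm{d}_{\mu,\Theta}^{1/p}\cdot T_\Theta$ agrees with $\Lambda(\ind_S)$ on every basis vector, hence on all of $\ell^p(\Omega,\mu)$ by continuity, which is precisely the statement that $\Lambda(\ind_S)$ is a spatial partial isometry.

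The only genuinely delicate point is the middle step: tracking the twist and lift conventions carefully enough to identify the target index $\widehat{s_\gamma\gamma}$ and to verify that the coefficient lands in $\T$. Everything else — the bijection coming from the bisection property of $S$, and the triviality of the Radon--Nikodym derivative for counting measure — is routine.
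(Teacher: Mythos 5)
Your proposal is correct and follows essentially the same route as the paper: both identify the partial set automorphism as left multiplication by $S$ on the fibers (with Radon--Nikodym derivative $1$ for counting measure), extract the unimodular weight from comparing the lifts $\widehat{\gamma}$, $1_{S\mathsf{r}(\gamma)}$ and $\widehat{S^{-1}\gamma}$, and verify agreement of the two operators; the paper checks the identity $(\ind_S*\xi)(\widehat{\gamma})=w_{S,x}(\gamma)\xi(\widehat{S^{-1}\gamma})$ for general $\xi$ rather than on basis vectors, which is the same computation indexed from the target side.
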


\begin{proof}
    Since Boolean algebra automorphisms preserve atoms, partial set automorphisms on $\mathcal{G}x$ map singletons to singletons. The associated Radon-Nikodym derivative is thus equal to 1. Left multiplication by the set $S\subseteq \mathcal{G}$ induces a partial set automorphism $\widetilde{\alpha}_S\colon \mathsf{d}(S)\mathcal{G} x \rightarrow \mathsf{r}(S)\mathcal{G}x$. Let $\gamma\in \mathsf{r}(S)\mathcal{G}x$. Using the convention in \autoref{set convention}, we observe that $\widehat{\gamma}^{-1}(\mathsf{r}(\gamma)1_S)\widehat{S^{-1}\gamma}\in \T x$. Set \[w_{S,x}(\gamma):= \begin{cases} \widehat{\gamma}^{-1}(\mathsf{r}(\gamma)1_S)\widehat{S^{-1}\gamma},\,\,\,\textup{if}\,\,\,\gamma\in \mathsf{r}(S)\mathcal{G} x,\\ 1,\,\,\,\textup{if}\,\,\,\gamma \in (X\setminus \mathsf{r}(S))\mathcal{G}x.
    \end{cases}\] Then $w_{S,x}\colon \mathcal{G}x\rightarrow \T$ is measurable. The associated spatial partial isometry $w_ST_{\widetilde{\alpha}_S}\in \mathcal{B}(\oplus_{x\in X} \ell^p(\mathcal{G}x))$ acts on $\xi \in \ell^p(\mathcal{G}x)$ by
    \[ w_ST_{\widetilde{\alpha}_S}(\xi)(\gamma) = \begin{cases} w_{S,x}(\gamma)\cdot\xi(\widetilde{\alpha}_S^{-1}(\gamma)),\,\,\,\textup{if}\,\,\,\gamma\in \mathsf{r}(S)\mathcal{G} x,\\ 0,\,\,\,\textup{otherwise}.\end{cases}
    \]
    Likewise, with the notational convention for $\xi \in \ell^p(\mathcal{G}x)$ as in \autoref{Fp def}, we compute for $\gamma\in \mathcal{G}x$ that \[(\ind_S*\xi)(\widehat{\gamma}) = \xi(1_S^{-1}\widehat{\gamma}) = w_{S,x}(\gamma)\xi \left((1_S^{-1}\widehat{\gamma})w_{S,x}(\gamma)\right) = w_{S,x}(\gamma)\xi \big(\widehat{S^{-1}\gamma}\big).\] This shows that $\Lambda(\ind_S)= w_sT_{\widetilde{\alpha}_S}$, as desired. \qedhere
\end{proof}


\begin{prop}\label{Ph extension}
    Let $A$ be a unital Banach algebra and let $C(X)\subseteq A$ be a unital abelian C*-subalgebra. Let $(B,B_*)$ be a unital dual Banach algebra and let $\ph\colon A\rightarrow B$ be a unital contractive homomorphism. For $n\in \mathcal{SN}_X(A)$, let $n=uh$ be a product representation as in \autoref{normalizer def}, let $U=\textup{dom}(\underline{\alpha}_n)$, and let $(e_i)_{i\in I}$ be an approximate identity of $C_0(U)$. Then $\Ph(u):= \lim^{B_\ast}_i \ph(ue_i)$ defines an MP-partial isometry in $B$ such that, for all $f\in C_0(U)$, we have $\ph(uf) = \Ph(u)\ph(f)$.
\end{prop}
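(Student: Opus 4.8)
The plan is to show that the net $\big(\ph(ue_i)\big)_{i\in I}$ is weak-$*$ Cauchy in $B$, so that $\Ph(u)$ is well-defined, and then to verify the MP-partial isometry relations in $B$ by passing to weak-$*$ limits in identities that hold in $A^{**}$. First I would record that $u\in \textup{PI}_X(A^{**})$ with MP-inverse $u^\dagger$, and that $u^\dagger u=\ind_U$, $uu^\dagger=\ind_V$ with $V=\underline{\alpha}_n(U)$; since $(e_i)_{i\in I}$ is an approximate identity for $C_0(U)$, we have $\ind_U=\lim^{A^*}_i e_i$ in $A^{**}$, and therefore, because multiplication in $A^{**}$ is separately weak-$*$ continuous (Arens regularity of the $L^p$-operator algebra $A$, see \autoref{refl E dual Banach 2}), also $u=u\ind_U=\lim^{A^*}_i ue_i$ in $A^{**}$. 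The point is that $ue_i$ already lies in $A$ itself: indeed $ue_i = u(u^\dagger u)e_i$, and $h$ has $\supp(h)=U$, so each $e_i$ can be written as $g_i h$ for some $g_i\in C_0(U)$ (using that $h>0$ on $U$), whence $ue_i = (uh)g_i = n g_i\in A$; alternatively, one observes $ue_i\in A$ directly from the product representation $n=uh$ together with $ue_i=\lim_k u h^{1/k}g_i'$ as in the proof of \autoref{indicator as MP}.

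Next I would promote the weak-$*$ limit from $A^{**}$ to $B$. The homomorphism $\ph\colon A\to B$ is unital and contractive; I want the induced map to interact well with weak-$*$ limits. The clean way is to pass to second adjoints: $\ph$ extends to a unital contractive homomorphism $\ph^{**}\colon A^{**}\to B^{**}$ which is weak-$*$-to-weak-$*$ continuous, and since $A$ is Arens regular this is an algebra homomorphism for the (unique) Arens product. Because $(B,B_*)$ is a dual Banach algebra, $B$ sits as a weak-$*$ closed subalgebra of $B^{**}$ (the canonical projection $B^{**}\to B$ dual to the inclusion $B_*\hookrightarrow B^*$ is a weak-$*$ continuous algebra homomorphism; this uses separate weak-$*$ continuity of multiplication in $B$). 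So I set $\Ph(u):=\text{(image in }B\text{ of }\ph^{**}(u))=\lim^{B_*}_i \ph(ue_i)$, the last equality following since $\ph^{**}$ is weak-$*$ continuous and $u=\lim^{A^*}_i ue_i$, and the net $\ph(ue_i)$ lies in $B$ and is norm-bounded by $\|u\|\le 1$, so its weak-$*$ cluster points in $B^{**}$ actually lie in $B$; one then checks the limit is unique, hence $\Ph(u)$ is well-defined independently of the approximate identity.

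It remains to check that $\Ph(u)$ is an MP-partial isometry in $B$ and that $\ph(uf)=\Ph(u)\ph(f)$ for $f\in C_0(U)$. For the latter: $uf = \lim^{A^*}_i (ue_i)f$ in $A^{**}$ (again by separate weak-$*$ continuity, since $e_if\to f$), and $uf=n(g_f)\in A$ where $f=g_fh$, so applying $\ph^{**}$ and projecting to $B$ gives $\ph(uf)=\lim^{B_*}_i\ph(ue_i)\ph(f)=\Ph(u)\ph(f)$, where the product limit is evaluated using separate weak-$*$ continuity of multiplication in $B$. For the MP-relations: set $v:=u^\dagger\in\textup{PI}_X(A^{**})$ and, symmetrically, $\Ph(v):=\lim^{B_*}_j\ph(ve_j')$ for an approximate identity $(e_j')$ of $C_0(V)$. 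The relations $uvu=u$, $vuv=v$ and $uv, vu\in (A^{**})_{\mathrm h}$ hold in $A^{**}$; one multiplies these by suitable approximate identities ($e_i$ on the appropriate side so that each factor becomes an element of $A$), applies $\ph^{**}$, projects to $B$, and takes iterated weak-$*$ limits, using \autoref{hermitian preservation} to conclude $\ph(\text{hermitian})$ is hermitian — the hermitian conditions on $uv$ and $vu$ survive because $uv=\ind_V\in C(X)$ and $vu=\ind_U\in C(X)$ (as $U,V$ are clopen here by compactness of $X$ — or in general because these are limits of hermitians $\ph(e_j'),\ph(e_i)$, and the hermitian elements form a norm-closed, weak-$*$ closed set). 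Finally $\|\Ph(u)\|\le 1$ since it is a weak-$*$ limit of elements of norm $\le\|\ph\|\,\|ue_i\|\le 1$ and the closed unit ball of $B$ is weak-$*$ closed. Thus $\Ph(u)$ satisfies all the conditions of \autoref{mp partial isom} with MP-inverse $\Ph(v)$.

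The main obstacle is the bookkeeping around the two Arens products and the passage $A^{**}\rightsquigarrow B^{**}\rightsquigarrow B$: one must be careful that $\ph^{**}$ is an \emph{algebra} homomorphism (which needs Arens regularity of $A$, available by \autoref{refl E dual Banach 2}) and weak-$*$ continuous, that the projection $B^{**}\to B$ is multiplicative (which needs $(B,B_*)$ to be a genuine dual Banach algebra), and that all the weak-$*$ limits taken are legitimate — in particular that the relevant nets remain norm-bounded so their limits do not escape $B$, and that separate (not joint) weak-$*$ continuity of multiplication suffices for each product limit that appears. None of these steps is deep, but assembling them correctly, and in particular identifying $ue_i$ and $uf$ as honest elements of $A$ rather than of $A^{**}$, is where the care is needed.
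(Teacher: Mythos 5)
Your overall strategy is sound and genuinely different from the paper's. The paper never forms $\ph^{**}$: it works entirely inside $B$, using Banach--Alaoglu on the bounded net $(\ph(ue_i))_i$, an auxiliary weak-$*$ accumulation point $e$ of $(\ph(e_i))_i$ that acts as a unit on $\overline{\ph(C_0(U))}^{B_*}$, and a uniqueness-of-accumulation-points argument to get simultaneously the existence of $\Ph(u)$ and the identity $\ph(uf)=\Ph(u)\ph(f)$; the MP property is then read off from multiplicativity of $\Ph$ on the relevant partial isometries together with \autoref{mp preserved}. Your route --- compose $\ph^{**}$ with the canonical projection $P\colon B^{**}\to B$, check that $P$ is multiplicative because $B_*$ is a submodule of $B^*$ and multiplication in $B$ is separately weak-$*$ continuous, and then apply \autoref{hermitian preservation} and \autoref{mp preserved} to the unital contractive homomorphism $P\circ\ph^{**}\colon A^{**}\to B$ --- is more structural and, once set up, makes both the MP statement and the factorization $\ph(uf)=\Ph(u)\ph(f)$ immediate. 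What it costs is the verification that $P$ is an algebra homomorphism, which the paper's argument avoids entirely.

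Three local points need repair. First, the factorization $e_i=g_ih$ with $g_i\in C_0(U)$ is false in general, since $e_i/h$ need not stay bounded near the boundary of $U$; the correct justification that $uf\in A$ for $f\in C_0(U)$ is that $hC_0(U)$ is norm-dense in $C_0(U)$ and $\|u\|\leq 1$, so $uf=\lim_k ng_k$ in norm whenever $hg_k\to f$. Second, $U$ and $V$ are open but not clopen in general, so $\ind_U,\ind_V\notin C(X)$, and the weak-$*$ closedness of the set of hermitian elements of $B$ is not justified (the exponential is not weak-$*$ continuous, and states in $B_*$ need not detect hermiticity); neither claim is needed, because $\ind_U$ and $\ind_V$ are hermitian in $A^{**}$ by definition of $\textup{PI}_X(A^{**})$ and \autoref{hermitian preservation} applies directly to the unital contractive homomorphism $P\circ\ph^{**}$, with no limits of hermitians taken in $B$. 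Third, Arens regularity of $A$ is not actually needed --- $\ph^{**}$ is a homomorphism for the second Arens products on $A^{**}$ and $B^{**}$ regardless, and $P$ is multiplicative for the second Arens product on $B^{**}$ --- and it is just as well that it is not, since \autoref{refl E dual Banach 2} only provides it for $p\in(1,\infty)$ while the statement allows $p=1$. Also, bounded nets in $B$ can have weak-$*$ cluster points in $B^{**}\setminus B$ by Goldstine's theorem, so the convergence of $(\ph(ue_i))_i$ in $B$ should be deduced from the $\sigma(A^{**},A^*)$-to-$\sigma(B,B_*)$ continuity of $P\circ\ph^{**}$ rather than from boundedness alone. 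With these repairs your argument is complete.
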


\begin{proof}
Since $(\ph(e_i))_i$ is uniformly bounded by 1, we can choose a weak-$*$ accumulation point $e\in \overline{\ph(C_0(U))}^{B_*}\subseteq B$ and a subnet $(e_{i(j)})_j$ such that $e = \lim^{B_\ast}_j \ph(e_{i(j)})$.

\textbf{Claim:} \emph{For all $b\in\overline{\ph(C_0(U))}^{B_*}$, we have $eb=be = b$.}
By separate weak-$*$ continuity of multiplication in $B$ according to \autoref{weak* defs}, we obtain 
\[\ph(f)= B_\ast\mbox{-}\lim_j \ph(fe_{i(j)}) = \ph(f) B_\ast\mbox{-}\lim_j \ph(e_{i(j)})=\varphi(f)e\] 
for all $f\in C_0(U)$. Thus, for all weak-$*$ limit points $b\in\overline{\ph(C_0(U))}^{B_*}$ and for any approximating net such that $b = \lim^{B_\ast}_\lambda \ph(f_\lambda)$, we get \[be = B_\ast\mbox{-}\lim_\lambda \ph(f_\lambda)e = B_\ast\mbox{-}\lim_\lambda \ph(f_\lambda).\] 
An analogous argument shows that $eb=be = b$, proving the claim. 
\vspace{.1cm}

In particular, it follows from the claim that if $e'$ is any other weak-$*$ accumulation point of $(\ph(e_i))_i$, then $e=ee'=e'$. Since every $\ph(e_i)$ is hermitian by \autoref{hermitian preservation}, also $e$ has real numerical range and we deduce that $(\varphi(e_i))_i$ converges weak-$\ast$ to the hermitian idempotent $e$.

\textbf{Claim:} \emph{If $(a_kf_k)_k$ is a norm-bounded net in $AC_0(U)$ such that $\|a_kf_kf-uf\|\to 0$ for all $f\in C_0(U)$, then $\lim^{B_\ast}_k \ph(a_kf_k)$ exists and agrees with the limit $\Ph(u)$ in the statement.} Note that $(\ph(a_kf_k))_k$ is norm-bounded in $B$. Choose a weak-$*$ convergent subnet $(\ph(a_{k(j)}f_{k(j)}))_j$ and set the limit as $\widetilde{\Ph}(u)$. Note that, for all $f\in C_0(U)$, we have $uhf\in A$ and since $h$ is strictly positive on $U$, we deduce that $uf\in A$. We compute \[\ph(uf) = B_\ast\mbox{-}\lim_j \ph(a_{k(j)}f_{k(j)}f) = \widetilde{\Ph}(u)\ph(f).\] Applying this identity to $f=e_i$ at the first step, the first claim at the second step, and choosing $b= \ph(f_{k(j)})$ at the final step, we obtain 
\begin{align*}\Ph(u)&=B_\ast\mbox{-}\lim_i \ph(ue_i) = B_\ast\mbox{-}\lim_i \Ph(u)\ph(e_i) \\&= \Ph(u)e = B_\ast\mbox{-}\lim_j \ph(a_{k(j)})\ph(f_{k(j)})e = \widetilde{\Ph}(u).\end{align*}

\vspace{.1cm}

It is left to show that $\Ph(u)$ is an MP-partial isometry in $B$. Indeed, since $\ph$ is contractive we have $\|\Ph(u)\|, \|\Ph(u^\dagger)\|\leq 1$. Furthermore, $\Ph(u^\dagger)\Ph(u)=e$ is hermitian and, by symmetry, so is $\Ph(u)\Ph(u^\dagger)$. We compute $\Ph(u)e= \Ph(u)$ and $e\Ph(u^\dagger)=\Ph(u^\dagger)$. Thus $\Ph(u)$ is an MP-partial isometry with $\Ph(u)^\dagger = \Ph(u^\dagger)$.\qedhere
\end{proof}
For a given measure space $(\Omega,\mathcal{A},\mu)$ and $q\in [1,\infty)$, \autoref{Ph extension} applies to $B=\mathcal{B}(L^q(\mu))$ as long as $L^q(\mu)$ is reflexive by \autoref{refl E dual Banach}, that is, $q\neq 1$. In conjunction with the Banach-Lamperti theorem, which applies in $B$ as long as $q\neq 2$, \autoref{Ph extension} enables us to show the automatic preservation of spatial normalizers with respect to the core under a wide range of unital contractive homomorphisms. 

\begin{thm}\label{sn preserved}
    Let $p,q\in [1,\infty)$, let $A$ be a unital $L^p$-operator algebra, let $B$ be a unital $L^{q}$-operator algebra, and let $\ph\colon A \rightarrow B$ be a unital contractive homomorphism. Then $\ph(\textup{core}(A))\subseteq \textup{core}(B)$ and $\ph\vert_{\textup{core}(A)}\colon \textup{core}(A)\rightarrow \textup{core}(B)$ is a $*$-homomorphism. Moreover, if $C(X)\subseteq \textup{core}(A)$ is a unital abelian C*-subalgebra, if $q\in (1,\infty)\setminus\{2\}$, and if $C(Y)=\textup{core}(B)$, then $\ph(\mathcal{SN}_{X}(A))\subseteq \mathcal{SN}_{Y}(B)$.
\end{thm}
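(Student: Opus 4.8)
Here is how I would approach this.

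The first assertion costs nothing: \autoref{hermitian preservation} gives $\ph(A_{\mathrm h})\subseteq B_{\mathrm h}$, hence $\ph(\textup{core}(A))=\ph(A_{\mathrm h})+i\ph(A_{\mathrm h})\subseteq\textup{core}(B)$, and since the involution on any core is determined by the hermitian/anti-hermitian decomposition — which $\ph$ preserves — the restriction $\ph|_{\textup{core}(A)}$ is a $*$-homomorphism; in particular it sends positive elements of $\textup{core}(A)$ to positive elements of $\textup{core}(B)=C(Y)$, a fact I use freely below. For the second assertion, fix $n\in\mathcal{SN}_X(A)$ with a product representation $n=uh$, set $\beta:=\beta_u\colon U\to V$ with $U=\supp(h)$, and observe that $m:=hu^\dagger=u^\dagger(h\circ\beta^{-1})$ is again a spatial normalizer, the displayed representation exhibiting $u^\dagger\in\textup{PI}_X(A^{**})$ with $\beta_{u^\dagger}=\beta^{-1}$. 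The goal is $\ph(n)\in\mathcal{SN}_Y(B)$, and the natural candidates for a product representation of $\ph(n)$ are $u':=\Ph(u)$ and $h':=\ph(h)$.

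First I would apply \autoref{Ph extension} to the unital contractive homomorphism $A\to B\hookrightarrow B^{**}$ into the unital dual Banach algebra $(B^{**},B^*)$ (legitimate since $B$ is Arens regular by \autoref{refl E dual Banach 2}), obtaining $u'=\Ph(u)$ and $v':=\Ph(u^\dagger)$ in $\textup{PI}(B^{**})$ with $\ph(uf)=u'\ph(f)$ on $C_0(U)$ and $\ph(u^\dagger g)=v'\ph(g)$ on $C_0(V)$. Writing $h'=\ph(h)$ and $k':=\ph(h\circ\beta^{-1})\in C(Y)_+$, this gives $\ph(n)=u'h'$, $\ph(m)=v'k'$, and elementary convolution identities yield $\ph(m)\ph(n)=\ph(h^2)=(h')^2$ and $\ph(n)\ph(m)=\ph(uh^2u^\dagger)=(k')^2$. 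Taking $(h^{1/i})_i$ and $((h\circ\beta^{-1})^{1/i})_i$ as approximate identities in \autoref{Ph extension}, and using that $\ph|_{\textup{core}(A)}$ commutes with continuous functional calculus, one identifies $v'u'=\Ph(u^\dagger u)=\ind_{U''}$ and $u'v'=\Ph(uu^\dagger)=\ind_{V''}$ inside $C(Y)^{**}$, where $U'':=\supp(h')$ and $V'':=\supp(k')$ and where I use that $C(Y)^{**}$ is weak-$*$ closed in $B^{**}$ (being the range of the adjoint of the surjective restriction $B^*\to C(Y)^*$). A key preliminary step is $v'=(u')^\dagger$: multiplicativity of $\Ph$ on the relevant normalizing partial isometries (\autoref{Ph extension}) gives $u'v'u'=u'$ and $v'u'v'=v'$, while $u'v'$ and $v'u'$ are weak-$*$ limits of nets in the order interval $[0,1]$ of $C(Y)$ (images of approximate units bounded by $1$), hence lie in the weak-$*$ closed set $[0,1]_{C(Y)^{**}}$, which consists of self-adjoint elements of $C(Y)^{**}$; since the $C^*$-norm of $C(Y)^{**}$ embeds isometrically into $B^{**}$, these elements are hermitian in $B^{**}$, and uniqueness of MP-inverses forces $v'=(u')^\dagger$.

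Next I would bring in Banach--Lamperti and Arendt. Fix an isometric representation $B\hookrightarrow\mathcal{B}(L^q(\mu))$ and extend it, via the dual Banach algebra structure on $\mathcal{B}(L^q(\mu))$ (\autoref{refl E dual Banach}, using $q\neq 1$), to a weak-$*$ continuous unital contractive homomorphism $\widetilde\iota\colon B^{**}\to\mathcal{B}(L^q(\mu))$. Then $\widetilde\iota(u')$ and $\widetilde\iota(v')=\widetilde\iota(u')^\dagger$ are MP-partial isometries on $L^q(\mu)$, hence — crucially because $q\neq 2$ — spatial partial isometries implemented by mutually inverse partial set automorphisms; and for $q\neq 2$ the elements of $\textup{core}(B)=C(Y)$, being hermitian-plus-$i$-hermitian and the representation isometric, act as multiplication operators. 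Since conjugating a multiplication operator by a spatial partial isometry (inverse on one side) again produces a multiplication operator, the maps $\Psi(g):=\ph(m)g\ph(n)$ and $\widetilde\Psi(g):=\ph(n)g\ph(m)$ send $C(Y)$ into $B\cap\{\text{multiplication operators}\}=\textup{core}(B)=C(Y)$; they are bounded linear, and preserve zero products because $\ph(m)\ph(n),\ph(n)\ph(m)\in C(Y)$ are central. Arendt's theorem \cite{Are_Lamperti_1983} then yields continuous maps $\gamma$ on $\supp(\Psi(1))=U''$ and $\widetilde\gamma$ on $\supp(\widetilde\Psi(1))=V''$ with $\Psi(g)=(g\circ\gamma)(h')^2$ and $\widetilde\Psi(g)=(g\circ\widetilde\gamma)(k')^2$; comparing $\widetilde\Psi\circ\Psi$ with $(h')^2(\cdot)(h')^2$ forces $\gamma\circ\widetilde\gamma=\mathrm{id}_{V''}$, $\widetilde\gamma\circ\gamma=\mathrm{id}_{U''}$ and $k'\circ\gamma=h'$ on $U''$, so $\beta':=\gamma\colon U''\to V''$ is a partial homeomorphism, realized by the admissible pair $(\ph(n),\ph(m))$ and hence lying in $\mathcal{R}_Y(B)$. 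Cancelling the positive functions $h'$, $k'$ — permissible because $k'C(Y)$ is dense in $C_0(V'')$, $h'C(Y)$ in $C_0(U'')$, and $u'^\dagger Fu'$ is a multiplication operator lying in $B$, hence in $C(Y)$, so that vanishing after multiplication by $h'$ on $U''$ forces vanishing — upgrades the two identities to the realization identities $u'^\dagger gu'=g\circ\beta'$ on $C_0(V'')$ and $u'gu'^\dagger=g\circ\beta'^{-1}$ on $C_0(U'')$. Together with $u'^\dagger u'=\ind_{U''}$ and $u'u'^\dagger=\ind_{V''}$ this gives $u'\in\textup{PI}_Y(B^{**})$ with $\beta_{u'}=\beta'$, and since $\ph(n)=u'h'$ with $h'\in C(Y)_+$ and $\supp(h')=U''=\textup{dom}(\beta_{u'})$, we conclude $\ph(n)\in\mathcal{SN}_Y(B)$.

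The hard part will be the interface between the two pictures of $\Ph(u)$: the \emph{spatial} one coming from Banach--Lamperti (which genuinely needs $q\neq 2$ and says $\Ph(u)$ is, up to a weight, a partial permutation, so that conjugating the abelian $\textup{core}(B)$ stays inside $\textup{core}(B)$) and the \emph{algebraic} one coming from \autoref{Ph extension} together with Arendt's theorem (which produces a partial homeomorphism of the spectrum of $\textup{core}(B)$, not merely of the underlying measure space). Reconciling them — establishing $\Ph(u^\dagger)=\Ph(u)^\dagger$ in $B^{**}$, pinning the initial and final projections down as $\ind_{U''},\ind_{V''}$ inside $C(Y)^{**}$, and carrying out the cancellations of $h'$ and $k'$ at the level of $B^{**}$ rather than of operators on $L^q(\mu)$ — is where the bookkeeping is most delicate; the remaining manipulations with the normalization and realization identities are routine.
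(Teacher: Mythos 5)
Your proposal is correct and follows essentially the same route as the paper: obtain $\Phi(u)$ via \autoref{Ph extension}, invoke the Banach--Lamperti theorem (using $q\neq 2$) to see that conjugation by $\varphi(uh)$ and $\varphi(hu^\dagger)$ preserves $C(Y)$, apply Arendt's theorem to extract the partial homeomorphism and verify admissibility, and finally land the normalizing partial isometry in $B^{**}$. The only difference is bookkeeping: you construct $\Phi(u)\in B^{**}$ first and push it into $\mathcal{B}(L^q(\mu))$ via a weak-$*$ continuous extension of the representation, whereas the paper simply applies \autoref{Ph extension} twice, once to $\iota\circ\varphi$ and once to $\iota_B\circ\varphi$ -- both are legitimate and your extra verifications (e.g.\ $\Phi(u^\dagger)=\Phi(u)^\dagger$) fill in steps the paper leaves terse.
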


\begin{proof}
The first claim follows from \autoref{hermitian preservation} since $\ph(A_{\mathrm{h}}) \subseteq B_{\mathrm{h}}$. In the following, we will discuss the inclusion $\ph(\mathcal{SN}_{X}(A))\subseteq \mathcal{SN}_{Y}(B)$ under the additional assumption that $q\not \in \{1,2\}$.\par Let $n\in \mathcal{SN}_{X}(A)$ be a spatial normalizer with product representation $n=uh$ as in \autoref{normalizer def}, and set $U:=\supp(h),\,k:=h\circ \underline{\alpha}_n^{-1}$, and $V:=\supp(k)$. By the first paragraph, the restriction $\ph\vert_{C(X)}\colon C(X)\rightarrow C(Y)$ is a $*$-ho\-mo\-mor\-phism and by Gelfand duality there is a continuous map $\rho\colon Y\rightarrow X$ such that, for all $f\in C(X)$, we have $\ph(f)= f\circ \rho$. Since $B$ is a unital $L^q$-operator algebra, by \cite[Proposition~3.7]{Gar_modern_2021} there are a measure space $(\Omega,\mathcal{A},\mu)$ and a unital isometric homomorphism $\iota \colon B\rightarrow \mathcal{B}(L^q(\mu))$. Without loss of generality, we can assume $\mu$ to be localizable by \cite[Proposition 2.6]{ChoGarThi_rigidity_2021}. By \autoref{hermitian preservation} we have $\iota(B)\cap \mathcal{B}(L^q(\mu))_{\mathrm{h}} = \iota(B_{\mathrm{h}})$. Since $B_{\mathrm{h}} = C(Y,\mathbb{R})$ by assumption, this allows us to treat $\iota$ as an inclusion $B\subseteq\mathcal{B}(L^q(\mu))$ whose restriction to the core gives the inclusion $C(Y)\subseteq L^\infty(\mu)$. Since we assumed $q\neq 1$, the pair $(\mathcal{B}(L^q(\mu)),L^q(\mu)\widehat{\otimes} L^q(\mu)^*)$ is a dual Banach algebra by \autoref{refl E dual Banach}. Hence \autoref{Ph extension} applies to $\ph$, and we obtain an MP-partial isometry $\Ph(u)\in \textup{PI}(\mathcal{B}(L^q(\mu)))$ as in its statement. Since we also assumed that $q\neq 2$, the Banach-Lamperti theorem in \cite[Theorem 2.28]{BarKwaMcK_Banach_2023} applies and $\Ph(u)$ is a spatial partial isometry. Thus, there are a partial set automorphism $\Theta\colon \mathcal{A}_{D_{\Theta^*}}\rightarrow \mathcal{A}_{D_{\Theta}}$ and a measurable function $w\colon D_\Theta \rightarrow \T$ such that 
\[ \Ph(u) = w\left(\mathrm{d}_{\mu,\Theta}\right)^{\frac{1}{q}}T_\Theta.\]
For $g\in L^\infty(\mu)$, we claim that $\Ph(u)g\Ph(u)^\dagger = T_\Theta(g)\in L^\infty(\mu)$. To show this, given $\xi\in L^q(\mu)$, we compute
\begin{align*}
\Ph(u)g\Ph(u)^\dagger(\xi) &= w\left(\mathrm{d}_{\mu,\Theta}\right)^{\frac{1}{q}}T_\Theta \left[ gT_{\Theta^*}(\overline{w}) \left(\mathrm{d}_{\mu,\Theta}\right)^{-\frac{1}{q}}T_{\Theta^*}(\xi)\right] 
\\&= w\left(\mathrm{d}_{\mu,\Theta}\right)^{\frac{1}{q}}T_\Theta \left[ gT_{\Theta^*}\left(\overline{w} \left(\mathrm{d}_{\mu,\Theta}\right)^{-\frac{1}{q}}\xi\right)\right] 
\\&= w\left(\mathrm{d}_{\mu,\Theta}\right)^{\frac{1}{q}}T_\Theta(g)\overline{w} \left(\mathrm{d}_{\mu,\Theta}\right)^{-\frac{1}{q}}T_\Theta(T_{\Theta^*}(\xi))
=T_\Theta(g)\xi.
\end{align*}
The composition operator is multiplicative and defines a $*$-isomorphism \[T_\Theta\colon L^\infty(\mu\vert_{D_{\Theta^*}})\to L^\infty(\mu\vert_{D_{\Theta}})\] with inverse $T_{\Theta^*}$. Note that, by \autoref{Ph extension}, we have $\ph (uh) = \Ph(u)\cdot(h\circ \rho)$ and $\ph(hu^\dagger) = (h\circ \rho)\cdot \Ph(u)^\dagger$. In particular, for all $f\in C(Y)$, we have \[B\ni \ph(uh)f\ph(hu^\dagger) = T_\Theta(f)\cdot T_\Theta(h^2\circ \rho) = T_\Theta(f)\ph(uh)\ph(hu^\dagger)\in L^\infty(\mu\vert_{D_{\Theta}}).\] Since the core $C(Y)\subseteq B$ is the largest C*-subalgebra by \autoref{core}, we have $\ph(uh)f\ph(hu^\dagger)\in B\cap L^\infty(\mu\vert_{D_{\Theta}}) \subseteq C(Y)$. In fact, since \[\supp(T_\Theta(h^2\circ \rho)) = \supp(k^2\circ \rho) = \rho^{-1}(V),\] the composition operator $T_\Theta$ restricts to a $*$-homomorphism $R_\Theta\colon C_0(\rho^{-1}(U))\to C_0(\rho^{-1}(V))$. Reversing the roles of $u$ and $u^\dagger$, and of $T_\Theta$ and $T_{\Theta^*}$, shows that we get an analogous restricted $*$-homomorphism $R_{\Theta^*}\colon C_0(\rho^{-1}(V))\to C_0(\rho^{-1}(U))$ that is inverse to $R_\Theta$. By Gelfand duality, both $*$-isomorphisms are spatially induced and hence $(\ph(uh),\ph(hu^\dagger))$ is an admissible pair for $C(Y)\subseteq B$. Let $\iota_B\colon B\rightarrow B^{**}$ be the canonical inclusion and let $(e_i)_{i\in I}$ be an approximate identity for $C_0(U)$. Since $(B^{**},B^*)$ is a dual Banach algebra by \autoref{refl E dual Banach 2}, we can apply \autoref{Ph extension} again to $\iota_B\circ\ph$ and deduce that $\Ph_B(u):= \lim_i^{B^*}\iota_B(\ph(ue_i))$ is a well-defined MP-partial isometry in $B^{**}$ satisfying $\iota_B(\ph(uf)) = \Ph_B(u)\iota_B(\ph(f))$ for all $f\in C_0(U)$. Now, the realization identities for the pair $(\Ph_B(u)\ph(h)),\Ph_B(u^\dagger)\ph(k))$ show that $\Ph_B(u)$ is $C(Y)$-normalizing as well. Finally, since $\supp(\ph(h)) = \rho^{-1}(U)$, we deduce that $\ph(n) = \Ph_B(u)\ph(h)\in \mathcal{SN}_Y(B)$.
\end{proof}

The following is the main result of this section.
 
\begin{thm}\label{ind psi rho}
    Let $p\in [1,\infty)\setminus \{2\}$ and let $q\in (1,\infty)\setminus \{2\}$. Let $A$ be a unital $L^p$-operator algebra, and let $B$ be a unital $L^q$-operator algebra with saturated core inclusions $C(X)\subseteq A$ and $C(Y)\subseteq B$. Let $\ph\colon A \rightarrow B$ be a unital contractive homomorphism, and let 
$\rho\colon Y\rightarrow X$ be the continuous map with $\ph\vert_{C(X)}=\rho^*$. Then there is a unital semigroup homomorphism $\underline{\ph}\colon \mathcal{SN}_{X,\approx}(A) \rightarrow \mathcal{SN}_{Y,\approx}(B)$ 
with
$\rho \circ \underline{\alpha}_{\ph(n)} = \underline{\alpha}_{n} \circ \rho$
for all $n\in \mathcal{SN}_{X}(A)$.
\end{thm}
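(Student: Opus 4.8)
The plan is to set $\underline{\ph}([n]_\approx):=[\ph(n)]_\approx$ and to check that this is well defined, multiplicative, and intertwines the partial homeomorphisms through $\rho$. Since the core inclusions are saturated, both $\mathcal{SN}_{X,\approx}(A)$ and $\mathcal{SN}_{Y,\approx}(B)$ are inverse semigroups by \autoref{sn inv sem prop}, so once $\underline{\ph}$ is a semigroup homomorphism it automatically preserves the involution (a homomorphism between inverse semigroups sends each element's unique inverse to the unique inverse of its image). The running input is \autoref{sn preserved}: its hypotheses hold here, so $\ph(\mathcal{SN}_X(A))\subseteq\mathcal{SN}_Y(B)$ and in particular $[\ph(n)]_\approx$ makes sense. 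I will also use two structural facts established inside the proof of \autoref{sn preserved}: for a product representation $n=uh$ with $U:=\supp(h)=\textup{dom}(\underline{\alpha}_n)$, the image admits the product representation $\ph(n)=\Ph_B(u)\ph(h)$, where $\ph(h)=h\circ\rho$ has strict support $\rho^{-1}(U)$, so that $\textup{dom}(\underline{\alpha}_{\ph(n)})=\rho^{-1}(\textup{dom}(\underline{\alpha}_n))$; and the pair $(\ph(uh),\ph(hu^{\dagger}))$ is an admissible pair for $C(Y)\subseteq B$ realizing $\underline{\alpha}_{\ph(n)}$. Extracting these two facts is the only genuinely non-formal step; everything else is manipulation of the realization identities together with $\ph|_{C(X)}=\rho^*$.

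For well-definedness, suppose $n_1\approx n_2$ in $\mathcal{SN}_X(A)$, so there are $f_1,f_2\in C(X)_+$ with common strict support $U=\textup{dom}(\underline{\alpha}_{n_1})=\textup{dom}(\underline{\alpha}_{n_2})$ and $n_1f_1=n_2f_2$. Applying the multiplicative map $\ph$ and using that $\ph|_{C(X)}=\rho^*$ is a unital $*$-homomorphism yields $\ph(n_1)\ph(f_1)=\ph(n_2)\ph(f_2)$ with $\ph(f_i)=f_i\circ\rho\in C(Y)_+$ of strict support $\rho^{-1}(U)$, which by the first structural fact equals $\textup{dom}(\underline{\alpha}_{\ph(n_i)})$ for $i=1,2$; hence $\ph(n_1)\approx\ph(n_2)$. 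For multiplicativity, the product in $\mathcal{SN}_{X,\approx}(A)$ is $[m]_\approx[n]_\approx=[mn]_\approx$ by \autoref{sn inv sem prop}, and $\ph(mn)=\ph(m)\ph(n)$, so $\underline{\ph}([m]_\approx[n]_\approx)=[\ph(m)]_\approx[\ph(n)]_\approx$; involution preservation then follows as noted above.

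For the intertwining identity, fix $n=uh\in\mathcal{SN}_X(A)$, put $m:=hu^{\dagger}\in\mathcal{SN}_X(A)$ and $U=\supp(h)=\textup{dom}(\underline{\alpha}_n)$. From \autoref{nor part iso} (compare the prototype in \autoref{bis realizable}) one gets $mn=h^2\in C(X)$ with $\supp(h^2)=U$, together with the realization identity $mfn=(f\circ\underline{\alpha}_n)h^2$ for all $f\in C(X)$. Applying $\ph$ and using $\ph|_{C(X)}=\rho^*$, the left-hand side becomes $\ph(m)(f\circ\rho)\ph(n)$, so
\[\ph(m)\,(f\circ\rho)\,\ph(n)=\ph\bigl((f\circ\underline{\alpha}_n)h^2\bigr)=(f\circ\underline{\alpha}_n\circ\rho)\,\ph(h^2).\]
On the other hand, by the second structural fact $(\ph(n),\ph(m))$ is an admissible pair for $C(Y)\subseteq B$ realizing $\underline{\alpha}_{\ph(n)}$, with $\ph(m)\ph(n)=\ph(h^2)$ of strict support $\rho^{-1}(U)=\textup{dom}(\underline{\alpha}_{\ph(n)})$, so the realization identity (R1) of \autoref{realizable homeos} applied to $g:=f\circ\rho\in C(Y)$ gives
\[\ph(m)\,(f\circ\rho)\,\ph(n)=\bigl((f\circ\rho)\circ\underline{\alpha}_{\ph(n)}\bigr)\,\ph(h^2)=(f\circ\rho\circ\underline{\alpha}_{\ph(n)})\,\ph(h^2).\]
Comparing the two displays and cancelling $\ph(h^2)=h^2\circ\rho$ on its strict support $\rho^{-1}(U)$, I obtain $f\circ\rho\circ\underline{\alpha}_{\ph(n)}=f\circ\underline{\alpha}_n\circ\rho$ on $\rho^{-1}(U)$ for every $f\in C(X)$; since $C(X)$ separates points of $X$, this forces $\rho\circ\underline{\alpha}_{\ph(n)}=\underline{\alpha}_n\circ\rho$ on $\textup{dom}(\underline{\alpha}_{\ph(n)})=\rho^{-1}(\textup{dom}(\underline{\alpha}_n))$, as required. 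The main obstacle, as indicated, is the reliance on the internal structure of the proof of \autoref{sn preserved}; granting that, the argument is essentially formal.
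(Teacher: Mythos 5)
Your proposal is correct and follows essentially the same route as the paper: both rest on \autoref{sn preserved} (in particular the product representation $\ph(n)=\Ph_B(u)\ph(h)$ with $\supp(\ph(h))=\rho^{-1}(U)$ and the admissibility of $(\ph(uh),\ph(hu^\dagger))$), and your intertwining computation via the two realization identities is the paper's computation in slightly different notation. The only deviation is that you obtain preservation of the involution from the abstract uniqueness of inverses in an inverse semigroup, where the paper instead verifies $\ph(hu^\dagger)=\ph(h)\Ph_B(u)^\dagger$ explicitly; your shortcut is legitimate since \autoref{sn inv sem prop} establishes that the $\approx$-classes form an inverse semigroup whose involution is the von Neumann inverse.
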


\begin{proof}
    By \autoref{sn preserved}, $\ph$ preserves the core and spatial normalizers. Since $\ph(C_0(U))\subseteq C_0(\rho^{-1}(U))$ for every open set $U\subseteq X$, the assignment $\underline{\ph}\colon \mathcal{SN}_{X,\approx}(A) \rightarrow \mathcal{SN}_{Y,\approx}(B)$ given by $\underline{\ph}((n)_\approx):= (\ph(n))_\approx$, for $n\in \mathcal{SN}_X(A)$, is well-defined. One readily checks, using multiplicativity from $\ph$, that $\underline{\ph}$ is 
    a unital semigroup homomorphism. 
    Moreover, for all $f\in C(X)$ and $y\in \rho^{-1}(U)$, we use the realization identities for $(\ph(uh),\ph(hu^\dagger))\in \textup{Adm}_Y(B)$ in the second line and for $(uh,hu^\dagger)\in \textup{Adm}_X(A)$ in the fourth line to show
    \begin{align*}
        f(\rho(\underline{\alpha}_{\ph(uh)}(y)))\ph(h^2)(y) &= \ph(f)(\underline{\alpha}_{\ph(uh)}(y))\ph(hu^\dagger uh)(y)\\
        &= [\ph(hu^\dagger )\ph(f)\ph(uh)](y)\\
        &= \ph(hu^\dagger fuh)(y)\\
        &= \ph(f\circ \underline{\alpha}_{uh}\cdot h^2)(y)\\
        &= f(\underline{\alpha}_{uh}(\rho(y)))\ph(h^2)(y).
    \end{align*}
    Thus $f(\rho(\underline{\alpha}_{\ph(uh)}(y))) = f(\underline{\alpha}_{uh}(\rho(y)))$ and hence $\rho \circ \underline{\alpha}_{\ph(uh)} = \underline{\alpha}_{uh} \circ \rho$, as desired. 
\end{proof}


\section{Actors and Cartan map characterizations}\label{actor chapter}

We review the concept of actors and their induced homomorphisms, and the characterization of Cartan embeddings following Taylor \cite{Tay_Fellactor_2023} and Li \cite{Li_classifiable_2020}. Using \autoref{ind psi rho}, we arrive at concrete characterizations of unital isometric homomorphisms between reduced Weyl twist algebras; see \autoref{isometric equivalences}.

\begin{df}\label{left action} (See \cite[Definition 4.15]{MeyZhu_actors_2015}.)
    Let $\mathcal{G}$ be a topological groupoid, let $Y$ be a topological space, and recall the fiber product notation from \autoref{fibre prod}. A \textit{groupoid left action} $h\colon \mathcal{G} \curvearrowright Y$ consists of a continuous map $\rho_h\colon Y\rightarrow \mathcal{G}^{(0)}$, called the \textit{anchor map}, and a continuous multiplication $\cdot_h\colon \mathcal{G}\times_{\mathsf{d},\,\rho_h} Y \rightarrow Y$ such that
    \begin{enumerate}[label=(\roman*)]
        \item $\rho_h(\gamma\cdot_h y) = \mathsf{r}(\gamma)$ for all $(\gamma,y)\in \mathcal{G}\times_{\mathsf{d},\,\rho_h} Y$,
        \item $\gamma_1 \cdot_h (\gamma_2 \cdot_h y) = (\gamma_1\gamma_2)\cdot_h y$ for all $(\gamma_2,y)\in \mathcal{G}\times_{\mathsf{d},\,\rho_h} Y$ and $\gamma_1\in \mathcal{G}\mathsf{r}(\gamma_2)$, and
        \item $\rho_h(y)\cdot_h y = y$ for all $y\in Y$.
    \end{enumerate}
\end{df}

\begin{df}\label{actor def}
    Let $\mathcal{G}$ and $\mathcal{H}$ be topological groupoids. An \textit{actor} $h\colon \mathcal{G} \curvearrowright \mathcal{H}$ is a groupoid left action commuting with the canonical right multiplication on $\mathcal{H}$. That is, for all $(\gamma,\eta_1) \in \mathcal{G}\times_{\mathsf{d},\,\rho_h} \mathcal{H}$ and $\eta_2\in \mathsf{d}(\eta_1)\mathcal{H}$, we have
    \begin{enumerate}[label=(\roman*)]
        \item $\rho_h(\eta_1\eta_2) = \rho_h(\eta_1)$,
        \item $\mathsf{d}(\gamma\cdot_h \eta_1) = \mathsf{d}(\eta_1)$, and
        \item $\gamma\cdot_h(\eta_1\eta_2) = (\gamma\cdot_h \eta_1)\eta_2$.
    \end{enumerate}
    Since the action on units determines an actor, we refer to the restriction $\rho:= \rho_h\vert_{\mathcal{H}^{(0)}}\colon \mathcal{H}^{(0)}\rightarrow \mathcal{G}^{(0)}$ as the \textit{anchor map} of $h$. We say that an actor $h\colon \mathcal{G} \curvearrowright \mathcal{H}$ is \textit{free} at a unit $y\in \mathcal{H}^{(0)}$ if $\gamma \cdot_h y = y$ implies $\gamma = \rho(y)$. We say that $h$ is \textit{free} if it is free at all units.
\end{df}

The following basic example of an actor will be used
repeatedly.

\begin{eg}\label{trafo actors}
    Let $\beta^{X}\colon \mathcal{S}\curvearrowright X$ and $\beta^{Y}\colon \mathcal{T}\curvearrowright Y$ be actions of unital inverse semigroups on locally compact Hausdorff spaces. Let $\psi\colon \mathcal{S} \rightarrow \mathcal{T}$ be a unital semigroup homomorphism and let $\rho\colon Y \rightarrow X$ be a continuous map satisfying 
    \[\textup{dom}(\beta_{\psi(S)}^Y) = \rho^{-1}(\textup{dom}(\beta_S^X))\andeqn \rho\circ \beta^Y_{\psi(S)} = \beta^{X}_S \circ \rho\]
for all $S\in \mathcal{S}$. 
By \cite[Example 3.6]{Tay_Fellactor_2023}, any such pair $(\psi,\rho)$ defines an actor \[h_{(\psi,\rho)}\colon \mathcal{S}\ltimes_{\beta^X} X \curvearrowright \mathcal{T}\ltimes_{\beta^Y} Y\] between the transformation groupoids in the following way: Using the homeomorphisms $X\cong (\mathcal{S}\ltimes_{\beta^X} X)^{(0)}$ and $Y\cong (\mathcal{T}\ltimes_{\beta^Y} Y)^{(0)}$, we take $\rho$ as the anchor map. Hence $[S,x]\in \mathcal{S}\ltimes_{\beta^X} X$ and $[T,y]\in \mathcal{T}\ltimes_{\beta^Y} Y$ are composable if and only if $x =\rho(\beta_T^Y(y))$, and in this case we set \[[S,x]\cdot_{h_{(\psi,\rho)}} [T,y] := [\psi(S)T,y].\]
\end{eg}

\begin{df}\label{intermediate diag def}
    Let $h_{(\psi,\rho)}\colon \mathcal{S}\ltimes_{\beta^X} X \curvearrowright \mathcal{T}\ltimes_{\beta^Y} Y$ be an actor as in \autoref{trafo actors}. For $s\in \mathcal{S}$ and $y\in \rho^{-1}(\textup{dom}(\beta^X_s))$, we define $\kappa_\rho([s,y]):=[s,\rho(y)]$  and $\kappa_\psi([s,y]):= [\psi(s),y]$. We refer to the resulting diagram \[\xymatrix{\mathcal{S}\ltimes_{\beta^X} X& \mathcal{S}\ltimes_{\beta^Y \circ \psi} Y \ar[l]_-{\ \ \kappa_\rho} \ar[r]^-{\kappa_\psi \ } &\mathcal{T}\ltimes_{\beta^Y} Y}\] as the \textit{actor diagram} associated to $h_{(\psi,\rho)}$.
\end{df}

\begin{rem}\label{kappa props}
    Note that $\kappa_\rho$ is a well-defined groupoid homomorphism because $\rho$ is continuous and equivariant, while $\kappa_\psi$ is well-defined because $\psi$ is a unital semigroup homomorphism. The homomorphism $\kappa_\rho$ is fiberwise injective, while $\kappa_\psi$ has open image and is the identity on the unit space.
\end{rem}
In the following, we reproduce the proof that all actors between \'etale groupoids are particular cases of the class considered in \autoref{trafo actors}.

\begin{prop}\label{image bisection}
    Let $\mathcal{G}$ and $\mathcal{H}$ be \'etale groupoids and let $h\colon \mathcal{G} \curvearrowright \mathcal{H}$ be an actor. Then $\cdot_h$ is open. Moreover, for every open bisection $S\in \mathcal{B}(\mathcal{G})$, the product $S\cdot_h \mathcal{H}^{(0)}$ is an open bisection satisfying $\rho\circ \alpha_{S\cdot_h \mathcal{H}^{(0)}} = \alpha_S \circ \rho$.
\end{prop}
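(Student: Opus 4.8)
The plan is to reduce everything to a single observation: over each open bisection of $\mathcal{G}$, the action map $\cdot_h$ restricts to a homeomorphism between two open subsets of $\mathcal{H}$. First I would note that, applying axiom (i) of \autoref{actor def} to the factorization $\eta=\mathsf{r}(\eta)\,\eta$, the anchor satisfies $\rho_h=\rho\circ\mathsf{r}$ on all of $\mathcal{H}$, so in particular $\rho_h$ is continuous and, for $U\subseteq\mathcal{G}^{(0)}$ open, $\rho_h^{-1}(U)$ is open in $\mathcal{H}$. Now fix $S\in\mathcal{B}(\mathcal{G})$. Since $\mathcal{G}$ is \'etale, $\mathsf{d}\vert_S\colon S\to\mathsf{d}(S)$ is a homeomorphism onto an open subset of $\mathcal{G}^{(0)}$, so for $\eta\in\rho_h^{-1}(\mathsf{d}(S))$ there is a unique $\gamma_S(\eta):=(\mathsf{d}\vert_S)^{-1}(\rho_h(\eta))\in S$ with $\mathsf{d}(\gamma_S(\eta))=\rho_h(\eta)$, and I would set $\beta_S\colon \rho_h^{-1}(\mathsf{d}(S))\to\mathcal{H}$, $\beta_S(\eta):=\gamma_S(\eta)\cdot_h\eta$. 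This is continuous as a composite of $\rho_h$, $(\mathsf{d}\vert_S)^{-1}$ and $\cdot_h$, and by axiom (i) of \autoref{left action} it satisfies $\rho_h(\beta_S(\eta))=\mathsf{r}(\gamma_S(\eta))\in\mathsf{r}(S)$, so it takes values in $\rho_h^{-1}(\mathsf{r}(S))$.

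The key point is then that $\beta_{S^{-1}}$ is a two-sided inverse of $\beta_S$: since $\gamma_{S^{-1}}(\beta_S(\eta))$ is the element of $S^{-1}$ with domain $\mathsf{r}(\gamma_S(\eta))$, it equals $\gamma_S(\eta)^{-1}$, and axioms (ii) and (iii) of \autoref{left action} give $\beta_{S^{-1}}(\beta_S(\eta))=\gamma_S(\eta)^{-1}\cdot_h(\gamma_S(\eta)\cdot_h\eta)=\mathsf{d}(\gamma_S(\eta))\cdot_h\eta=\rho_h(\eta)\cdot_h\eta=\eta$, and symmetrically on the other side. Hence $\beta_S$ is a homeomorphism between the open sets $\rho_h^{-1}(\mathsf{d}(S))$ and $\rho_h^{-1}(\mathsf{r}(S))$. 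Since the sets $(S\times V)\cap(\mathcal{G}\times_{\mathsf{d},\,\rho_h}\mathcal{H})$ with $S\in\mathcal{B}(\mathcal{G})$ and $V\subseteq\mathcal{H}$ open form a basis of the fiber product, and since the image of such a set under $\cdot_h$ is precisely $\beta_S(V\cap\rho_h^{-1}(\mathsf{d}(S)))$ (because $S$ is a bisection, the only admissible $\gamma\in S$ over a given $\eta$ is $\gamma_S(\eta)$), it follows that $\cdot_h$ is open. Applying this with $V=\mathcal{H}^{(0)}$, which is open because $\mathcal{H}$ is \'etale, and using $\mathcal{H}^{(0)}\cap\rho_h^{-1}(\mathsf{d}(S))=\rho^{-1}(\mathsf{d}(S))$, I conclude that $S\cdot_h\mathcal{H}^{(0)}=\beta_S(\rho^{-1}(\mathsf{d}(S)))$ is open in $\mathcal{H}$.

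It remains to verify that $S\cdot_h\mathcal{H}^{(0)}$ is a bisection and to identify its range map. For $y\in\rho^{-1}(\mathsf{d}(S))$, axiom (ii) of \autoref{actor def} gives $\mathsf{d}(\beta_S(y))=\mathsf{d}(y)=y$, so $\mathsf{d}$ restricts to a bijection (indeed a homeomorphism, with inverse $\beta_S$) of $S\cdot_h\mathcal{H}^{(0)}$ onto $\rho^{-1}(\mathsf{d}(S))$; in particular $\mathsf{d}(S\cdot_h\mathcal{H}^{(0)})=\rho^{-1}(\mathsf{d}(S))$. The step I expect to require the most care — and the only place where the compatibility of $\cdot_h$ with right multiplication is genuinely used — is the injectivity of $\mathsf{r}$ on $S\cdot_h\mathcal{H}^{(0)}$: if $\beta_S(y_1)$ and $\beta_S(y_2)$ share a range $z$, then $\mathsf{r}(\gamma_S(y_i))=\rho(z)$ forces $\gamma_S(y_1)=\gamma_S(y_2)=:\gamma$ since $S$ is a bisection, and putting $\zeta:=\beta_S(y_2)^{-1}\beta_S(y_1)$ (so that $\mathsf{r}(\zeta)=y_2$, $\mathsf{d}(\zeta)=y_1$) one has $\gamma\cdot_h y_1=(\gamma\cdot_h y_2)\zeta=\gamma\cdot_h(y_2\zeta)=\gamma\cdot_h\zeta$ by axiom (iii) of \autoref{actor def}; acting with $\gamma^{-1}$ and using the axioms of \autoref{left action} as before yields $y_1=\zeta$, whence $y_2=\mathsf{r}(\zeta)=y_1$. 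Finally, for $y\in\rho^{-1}(\mathsf{d}(S))=\mathsf{d}(S\cdot_h\mathcal{H}^{(0)})$ I would compute $\rho(\alpha_{S\cdot_h\mathcal{H}^{(0)}}(y))=\rho(\mathsf{r}(\beta_S(y)))=\rho_h(\gamma_S(y)\cdot_h y)=\mathsf{r}(\gamma_S(y))=\alpha_S(\mathsf{d}(\gamma_S(y)))=\alpha_S(\rho(y))$, using $\rho_h=\rho\circ\mathsf{r}$ and axiom (i) of \autoref{left action}, which is exactly the asserted identity $\rho\circ\alpha_{S\cdot_h\mathcal{H}^{(0)}}=\alpha_S\circ\rho$.
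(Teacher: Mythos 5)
Your proof is correct, and it is genuinely more informative than what the paper does: the paper's ``proof'' of this proposition is simply a citation of \cite[Lemma 2.5]{Tay_Fellactor_2023} and \cite[Proposition 3.5]{Tay_Fellactor_2023}, whereas you give a complete, self-contained argument from the axioms in \autoref{left action} and \autoref{actor def}. Your key device --- the family of maps $\beta_S\colon\rho_h^{-1}(\mathsf{d}(S))\to\rho_h^{-1}(\mathsf{r}(S))$, $\beta_S(\eta)=\gamma_S(\eta)\cdot_h\eta$, with $\beta_{S^{-1}}$ as a two-sided inverse --- is the natural one, and it cleanly yields all three assertions at once: openness of $\cdot_h$ (images of basic open sets of the fiber product are $\beta_S$-images of open sets), openness of $S\cdot_h\mathcal{H}^{(0)}=\beta_S(\rho^{-1}(\mathsf{d}(S)))$, and the identity $\rho\circ\alpha_{S\cdot_h\mathcal{H}^{(0)}}=\alpha_S\circ\rho$ via $\rho_h=\rho\circ\mathsf{r}$ and axiom (i) of \autoref{left action}. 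You are also right to flag the injectivity of $\mathsf{r}$ on $S\cdot_h\mathcal{H}^{(0)}$ as the one step where the actor axiom (iii) (compatibility with right multiplication) is genuinely needed; the computation with $\zeta=\beta_S(y_2)^{-1}\beta_S(y_1)$ is correct, including the composability checks. The argument nowhere uses Hausdorffness of $\mathcal{G}$ or $\mathcal{H}$, so it covers the generality the paper works in. What the paper's citation buys is brevity and a pointer to the general Fell-bundle framework; what your version buys is that the proposition becomes verifiable without consulting the external reference.
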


\begin{proof}
    Combine \cite[Lemma 2.5]{Tay_Fellactor_2023} and \cite[Proposition 3.5]{Tay_Fellactor_2023}.
\end{proof}

\begin{cor}\label{psi rho}
    Let $\mathcal{G}$ and $\mathcal{H}$ be \'etale groupoids. Let $\mathcal{S}\subseteq \mathcal{B}(\mathcal{G})$ and $\mathcal{T}\subseteq \mathcal{B}(\mathcal{H})$ be unital wide inverse subsemigroups and use \autoref{etale as trafo grpd} to model both groupoids as transformation groupoids $\mathcal{G} = \mathcal{S}\ltimes_\alpha \mathcal{G}^{(0)}$ and $\mathcal{H} = \mathcal{T}\ltimes_\alpha \mathcal{H}^{(0)}$. Then there is a canonical one-to-one correspondence between actors $h\colon \mathcal{G}\curvearrowright \mathcal{H}$ and pairs $(\psi,\rho)$ consisting of a unital semigroup homomorphism $\psi\colon \mathcal{S} \rightarrow \mathcal{T}$ and a continuous map $\rho\colon \mathcal{H}^{(0)} \rightarrow \mathcal{G}^{(0)}$ satisfying 
    \[\mathsf{d}(\psi(S)) = \rho^{-1}(\mathsf{d}(S))\andeqn \rho\circ \alpha_{\psi(S)} = \alpha_S \circ \rho \,\,\,\textup{for all}\,\,\,S\in \mathcal{S}.\]
    Given $h\colon \mathcal{G}\curvearrowright \mathcal{H}$, the associated pair $(\psi_h,\rho_h)$ consists of $\psi_h(S):= S\cdot_h \mathcal{H}^{(0)}$ for $S\in \mathcal{S}$ and the anchor map. The assignment $h\mapsto (\psi_h,\rho_h)$ and the construction $(\psi,\rho)\mapsto h_{(\psi,\rho)}$ in \autoref{trafo actors} are mutual inverses.
\end{cor}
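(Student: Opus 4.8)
The plan is to check directly that the two assignments $h\mapsto(\psi_h,\rho_h)$ and $(\psi,\rho)\mapsto h_{(\psi,\rho)}$ are well defined and mutually inverse. Most of the geometric content is already packaged in \autoref{image bisection}, and well-definedness of $(\psi,\rho)\mapsto h_{(\psi,\rho)}$ is \autoref{trafo actors}, so the work consists in verifying well-definedness of the first assignment and then the two round trips.

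For well-definedness of $h\mapsto(\psi_h,\rho_h)$: given an actor $h\colon\mathcal{G}\curvearrowright\mathcal{H}$ and $S\in\mathcal{S}$, \autoref{image bisection} already guarantees that $\psi_h(S):=S\cdot_h\mathcal{H}^{(0)}$ is an open bisection of $\mathcal{H}$ with $\rho_h\circ\alpha_{\psi_h(S)}=\alpha_S\circ\rho_h$; the remaining identity $\mathsf{d}(\psi_h(S))=\rho_h^{-1}(\mathsf{d}(S))$ is immediate from $\mathsf{d}(\gamma\cdot_h y)=\mathsf{d}(y)=y$ together with the composability constraint $\mathsf{d}(\gamma)=\rho_h(y)$. (A priori $\psi_h(S)$ lies in $\mathcal{B}(\mathcal{H})$; to make it land in $\mathcal{T}$ one takes $\mathcal{T}=\mathcal{B}(\mathcal{H})$, or enlarges $\mathcal{T}$, which affects neither the groupoid nor the correspondence by \autoref{etale as trafo grpd}.) That $\psi_h$ is an inverse semigroup homomorphism reduces to checking multiplicativity, since a semigroup homomorphism between inverse semigroups automatically preserves inverses: for $S,T\in\mathcal{S}$ one factors a typical element of $(ST)\cdot_h\mathcal{H}^{(0)}$ as $(\gamma_1\gamma_2)\cdot_h y=\gamma_1\cdot_h(\gamma_2\cdot_h y)$ using \autoref{left action}(ii), sets $\eta:=\gamma_2\cdot_h y\in T\cdot_h\mathcal{H}^{(0)}$, and invokes \autoref{actor def}(iii) to rewrite $\gamma_1\cdot_h\eta=(\gamma_1\cdot_h\mathsf{r}(\eta))\eta\in(S\cdot_h\mathcal{H}^{(0)})(T\cdot_h\mathcal{H}^{(0)})$; the reverse inclusion is symmetric, so $\psi_h(ST)=\psi_h(S)\psi_h(T)$. (Consistently, for an idempotent $e\in\mathcal{S}$, i.e.\ $e\subseteq\mathcal{G}^{(0)}$, \autoref{left action}(iii) gives $\psi_h(e)=\rho_h^{-1}(e)$.) Continuity of $\rho_h$ is part of the data of an actor.

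For the round trips: if one starts from a pair $(\psi,\rho)$ and forms $h:=h_{(\psi,\rho)}$, then its anchor is $\rho$ by construction, and for $S\in\mathcal{S}$ a germ computation $[S,\rho(y)]\cdot_h[e,y]=[\psi(S)e,y]=[\psi(S),y]$, valid whenever $\rho(y)\in\mathsf{d}(S)$ (equivalently $y\in\mathsf{d}(\psi(S))$), identifies $S\cdot_h\mathcal{H}^{(0)}$ under $\Psi$ of \autoref{etale as trafo grpd} with the bisection $\psi(S)$; hence $(\psi_h,\rho_h)=(\psi,\rho)$. Conversely, starting from an actor $h$ and forming $h':=h_{(\psi_h,\rho_h)}$, I would invoke the fact---asserted in \autoref{actor def}---that an actor is determined by its restriction to $\mathcal{H}^{(0)}$ via $\gamma\cdot_h\eta=(\gamma\cdot_h\mathsf{r}(\eta))\eta$; since $\rho_{h'}=\rho_h$, it then suffices to compare the two actions on units. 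For $y\in\mathcal{H}^{(0)}$ and $\gamma\in\mathcal{G}$ with $\mathsf{d}(\gamma)=\rho_h(y)$, write $\gamma=\Psi([S,\mathsf{d}(\gamma)])$ for some $S\in\mathcal{S}$ containing $\gamma$; then $\gamma\cdot_{h'}y$ equals, after applying $\Psi$, the unique element of the bisection $\psi_h(S)=S\cdot_h\mathcal{H}^{(0)}$ with domain $y$, while $\gamma\cdot_h y$ lies in $S\cdot_h\mathcal{H}^{(0)}$ and has domain $\mathsf{d}(\gamma\cdot_h y)=y$ by \autoref{actor def}(ii); hence the two coincide, so $h'=h$.

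I expect the main obstacle to be the bookkeeping in the second round trip: one must carefully translate the germ $[\psi_h(S),y]$ back into an honest element of $\mathcal{H}$ through the isomorphism $\Psi$ of \autoref{etale as trafo grpd}, and rely on the (quoted, not re-proved) fact that an actor is pinned down by its action on the unit space. The only other delicate point is purely cosmetic---that $\psi_h$ a priori maps into $\mathcal{B}(\mathcal{H})$ rather than $\mathcal{T}$---and is disposed of by \autoref{etale as trafo grpd}.
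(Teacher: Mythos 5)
Your proposal is correct and follows essentially the same route as the paper: cite \autoref{image bisection} for well-definedness of $h\mapsto(\psi_h,\rho_h)$, cite \autoref{trafo actors} for the converse construction, and verify the round trips by computing $\gamma\cdot_{h_{(\psi,\rho)}}y=S\rho(y)\cdot_{h_{(\psi,\rho)}}y=\psi(S)y$ together with the fact that an actor is determined by its action on units. You are somewhat more careful than the paper in spelling out multiplicativity of $\psi_h$ and in flagging that $\psi_h(S)$ a priori lands in $\mathcal{B}(\mathcal{H})$ rather than $\mathcal{T}$, but the argument is the same.
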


\begin{proof}
    By \autoref{image bisection}, for any actor $h\colon \mathcal{G}\curvearrowright \mathcal{H}$, the pair $(\psi_h,\rho_h)$ satisfies the desired properties. Conversely, given a pair $(\psi,\rho)$ as in the statement, we obtain an actor $h_{(\psi,\rho)}\colon \mathcal{S}\ltimes_\alpha \mathcal{G}^{(0)}\curvearrowright \mathcal{T}\ltimes_\alpha \mathcal{H}^{(0)}$ by \autoref{trafo actors}. For $y\in \mathcal{H}^{(0)}$ and $\gamma \in \mathcal{G}\rho(y)$, let $S\in \mathcal{S}$ contain $\gamma$. Under the isomorphism in \autoref{etale as trafo grpd}, the multiplication is concretely given by 
    \[\gamma \cdot_{h_{(\psi,\rho)}} y = S\rho(y)\cdot_{h_{(\psi,\rho)}} y = \psi(S)y.\] Hence $h_{(\psi_h,\rho_h)} = h$ and $\psi_{h_{(\psi,\rho)}} = \psi$. This shows the claimed correspondence.\qedhere
\end{proof}

\begin{prop}\label{automatic psi inj}
    Let $h_{(\psi,\rho)}\colon \mathcal{G}\curvearrowright \mathcal{H}$ be an actor between \'etale groupoids as in \autoref{psi rho}. If the anchor map $\rho$ is surjective and if $\mathcal{G}$ is effective, then the unital semigroup homomorphism $\psi\colon \mathcal{S} \rightarrow \mathcal{T}$ is injective.
\end{prop}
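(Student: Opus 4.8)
The plan is to reduce injectivity of $\psi$ to injectivity of the bisection action $\alpha\colon\mathcal{B}(\mathcal{G})\to\textup{Homeo}_{\textup{par}}(\mathcal{G}^{(0)})$, which holds by \autoref{alpha and eff} precisely because $\mathcal{G}$ is assumed effective. So let $S,T\in\mathcal{S}$ with $\psi(S)=\psi(T)$; the goal is to conclude $S=T$, and everything will follow once the data of $S$ and $T$ can be read off from $\psi(S)=\psi(T)$ by pushing forward along the surjection $\rho$.

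First I would identify the domains. By \autoref{psi rho}, $\mathsf{d}(\psi(S))=\rho^{-1}(\mathsf{d}(S))$ and $\mathsf{d}(\psi(T))=\rho^{-1}(\mathsf{d}(T))$, so $\psi(S)=\psi(T)$ gives $\rho^{-1}(\mathsf{d}(S))=\rho^{-1}(\mathsf{d}(T))$. Since $\rho$ is surjective onto $\mathcal{G}^{(0)}$, we have $\rho(\rho^{-1}(A))=A$ for every $A\subseteq\mathcal{G}^{(0)}$, and applying this yields $\mathsf{d}(S)=\mathsf{d}(T)=:U$. Next I would transport the equivariance relation downstairs: \autoref{psi rho} gives $\rho\circ\alpha_{\psi(S)}=\alpha_S\circ\rho$ and $\rho\circ\alpha_{\psi(T)}=\alpha_T\circ\rho$, both as partial homeomorphisms with domain $\rho^{-1}(U)$. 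Since $\psi(S)=\psi(T)$, the left-hand sides agree, so $\alpha_S\circ\rho=\alpha_T\circ\rho$ on $\rho^{-1}(U)$; as $\rho$ maps $\rho^{-1}(U)$ onto $U$, this forces $\alpha_S=\alpha_T$ pointwise on $U$, and since moreover $\mathsf{d}(S)=\mathsf{d}(T)=U$, the two coincide as elements of $\textup{Homeo}_{\textup{par}}(\mathcal{G}^{(0)})$. Finally, effectiveness of $\mathcal{G}$ makes $\alpha$ injective by \autoref{alpha and eff}, and in particular injective on the inverse subsemigroup $\mathcal{S}\subseteq\mathcal{B}(\mathcal{G})$; hence $\alpha_S=\alpha_T$ gives $S=T$.

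There is no serious obstacle here — the argument is entirely a matter of bookkeeping under $\rho$ — and effectiveness of $\mathcal{G}$ enters only through the injectivity of the bisection action. If one prefers not to assume $\mathcal{G}$ Hausdorff (as \autoref{alpha and eff} does), the last step can be replaced by a direct argument: with $U=\mathsf{d}(S)=\mathsf{d}(T)$ and $\alpha_S=\alpha_T$ on $U$, every element of the open bisection $ST^{-1}$ has the form $\eta\tau^{-1}$ with $\mathsf{d}(\eta)=\mathsf{d}(\tau)\in U$ and $\mathsf{r}(\eta)=\alpha_S(\mathsf{d}(\eta))=\alpha_T(\mathsf{d}(\tau))=\mathsf{r}(\tau)$, hence lies in $\textup{Iso}(\mathcal{G})$; thus $ST^{-1}\subseteq\textup{int}(\textup{Iso}(\mathcal{G}))=\mathcal{G}^{(0)}$, which forces $\eta=\tau$ for each $\eta\in S$ and so $S\subseteq T$, and symmetrically $T\subseteq S$.
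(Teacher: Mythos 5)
Your proof is correct, and it takes a somewhat more direct route than the paper's. The paper first reduces injectivity of $\psi$ to two facts: that $\psi$ is injective on idempotents (where it acts as $\rho^{-1}$ on open subsets of the unit space, injective because $\rho$ is surjective), and that preimages of idempotents under $\psi$ consist of units --- the latter being exactly where effectiveness enters, via the observation that an open bisection contained in $\textup{Iso}(\mathcal{G})$ must lie in $\mathcal{G}^{(0)}$. You instead extract $\mathsf{d}(S)=\mathsf{d}(T)$ and $\alpha_S=\alpha_T$ directly from $\psi(S)=\psi(T)$ using the two compatibility identities of \autoref{psi rho} and surjectivity of $\rho$, and then conclude by faithfulness of the bisection action. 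The substance is the same --- your fallback argument that $ST^{-1}$ is an open bisection inside the isotropy is precisely the paper's idempotent-preimage step applied to the single bisection $ST^{-1}$, whose image $\psi(S)\psi(T)^{-1}=\psi(S)\psi(S)^{-1}$ is an idempotent --- but you bypass the general inverse-semigroup reduction (``injective iff injective on idempotents''), which in the paper is stated a bit loosely anyway since elements like $S_1S_2^*$ need not be idempotents a priori. Your explicit handling of the non-Hausdorff case is also a worthwhile addition, since \autoref{alpha and eff} is stated only for Hausdorff groupoids while the proposition itself does not assume Hausdorffness.
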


\begin{proof}
    Let $S_1,S_2\in \mathcal{S}$ such that $\psi(S_1)=\psi(S_2)$. Then \[ \psi(S_1S_1^*) = \psi(S_1S_2^*) = \psi(S_2S_2^*) \andeqn \psi(S_1^*S_1) = \psi(S_2^*S_1) = \psi(S_2^*S_2)\] are in $\textup{Op}(\mathcal{H}^{(0)})= \{V\colon V\subseteq \mathcal{H}^{(0)}\,\,\textup{open}\}$. Let $S\in \psi^{-1}(\textup{Op}(\mathcal{H}^{(0)}))$ and let $\gamma\in S$. By equivariance of $\rho$, we have \[\mathsf{r}(\gamma) = \alpha_S(\mathsf{d}(\gamma)) = \alpha_S(\rho(y))= \rho(\alpha_{\psi(S)}(y)) = \rho(y) = \mathsf{d}(\gamma)\]
    for all $y\in \rho^{-1}(\{\mathsf{d}(\gamma)\})$. Thus $S\subseteq \textup{Iso}(\mathcal{G})$ and since $\mathcal{G}$ is effective, this shows that $S\subseteq\mathcal{G}^{(0)}$. Hence, both $S_1S_2^*$ and $S_2^*S_1$ are in $\textup{Op}(\mathcal{G}^{(0)})$. Since $\rho$ is surjective, the restriction $\psi\vert_{\textup{Op}(\mathcal{G}^{(0)})} = \rho^{-1}\colon \textup{Op}(\mathcal{G}^{(0)}) \rightarrow \textup{Op}(\mathcal{H}^{(0)})$ is injective, and we get 
    \[ S_1S_1^* = S_1S_2^* = S_2S_2^* \andeqn S_1^*S_1 = S_2^*S_1 = S_2^*S_2.\] Therefore, \[ S_1 = (S_1S_1^*)S_1 = S_2S_2^*S_1 = S_2(S_2^*S_2) = S_2,\] which proves that $\psi$ is injective.
\end{proof}

Intuitively speaking, given $y\in \mathcal{H}^{(0)}$ and $\gamma \in \mathcal{G}\rho(y)$, an actor $h\colon \mathcal{G} \curvearrowright \mathcal{H}$ between \'etale groupoids lifts $\gamma$ consistently with the groupoid multiplication along $\rho$ to $\gamma\cdot_h y \in \mathcal{H}y$ or, equivalently, it lifts an open bisection consistently along $\rho$. In this context, the composition with another actor $k\colon \mathcal{H} \curvearrowright \mathcal{K}$ is nothing but the iteration of these lifting processes upwards from $\mathcal{G}$ over $\mathcal{H}$ into $\mathcal{K}$.

\begin{rem}\label{actor intuition}
Freeness at a unit $y\in \mathcal{H}^{(0)}$ means that the representation of $\eta = \gamma \cdot_h y\in \mathcal{G}\cdot_h y$ as a product is unique. Equivalently, that $y\not \in (S\setminus \mathcal{G}^{(0)})\cdot_h \mathcal{H}^{(0)}$ for all $S\in \mathcal{S}$. The actor $h$ is thereby free if and only if, for any $S\in \mathcal{S}$ with $S\cap \mathcal{G}^{(0)} = \emptyset$, we have $\psi_h(S)\cap \mathcal{H}^{(0)} = \emptyset$.
\end{rem}
We turn to actors between twists over \'etale groupoids and, more specifically, between Weyl twists.

\begin{df}\label{base actor}
    Let $(\mathcal{G},\Sigma)$ and $(\mathcal{H},\Omega)$ be twists over \'etale groupoids. Let $\beta \colon \Sigma \curvearrowright \Omega$ be an actor. By identifying unit spaces, we view the anchor map of $\beta$ as a map $\rho\colon \mathcal{H}^{(0)} \rightarrow \mathcal{G}^{(0)}$. We define the induced \textit{base actor} $h\colon \mathcal{G} \curvearrowright \mathcal{H}$ by using the same anchor map and setting $\gamma \cdot_h y := \pi_\mathcal{H}(\widehat{\gamma} \cdot_\beta y)$ for $(\gamma,y)\in \mathcal{G}\times_{\mathsf{d},\,\rho} \mathcal{H}^{(0)}$.
\end{df}

Naturally, we need to show that $\gamma\cdot_h y$ is independent of the choice of the lift $\widehat{\gamma}$.

\begin{lma}
    Let $(\mathcal{G},\Sigma)$ and $(\mathcal{H},\Omega)$ be twists over \'etale groupoids. Let $\beta \colon \Sigma \curvearrowright \Omega$ be an actor. Then the base actor in \autoref{base actor} is well-defined. Moreover, $\beta$ is free at $y\in \mathcal{H}^{(0)}$ if and only if $h$ is free at $y$.
\end{lma}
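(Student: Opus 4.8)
The plan is to reduce the whole lemma to two elementary inputs: any two lifts of $\gamma\in\mathcal{G}$ along $\pi_{\mathcal{G}}$ differ by an element of the central bundle $\mathbb{T}\times\mathcal{G}^{(0)}$, and the actor $\beta$ intertwines the torus actions on $\Sigma$ and $\Omega$, i.e. $(\zeta\sigma)\cdot_\beta\omega=\zeta(\sigma\cdot_\beta\omega)$ (the $\Omega$-side version of this is automatic: commuting with right multiplication plus centrality in $\Omega$ forces $\sigma\cdot_\beta(\zeta\omega)=\zeta(\sigma\cdot_\beta\omega)$). For \emph{well-definedness}, let $\widehat\gamma_1,\widehat\gamma_2$ be two lifts of $\gamma$, so $\widehat\gamma_2=(\zeta,\mathsf r(\gamma))\widehat\gamma_1$ for some $\zeta\in\mathbb{T}$. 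Using the $\Sigma$-side equivariance, $\widehat\gamma_2\cdot_\beta y=\zeta(\widehat\gamma_1\cdot_\beta y)$, and since $\pi_{\mathcal{H}}$ is $\mathbb{T}$-invariant (its kernel is exactly the central bundle $\mathbb{T}\times\mathcal{H}^{(0)}$) we get $\pi_{\mathcal{H}}(\widehat\gamma_2\cdot_\beta y)=\pi_{\mathcal{H}}(\widehat\gamma_1\cdot_\beta y)$, as required. If one prefers to argue from the bare groupoid-actor axioms, the same conclusion follows by first using centrality in $\Sigma$ to rewrite $\widehat\gamma_2=\widehat\gamma_1(\zeta,\rho(y))$ and associativity to get $\widehat\gamma_2\cdot_\beta y=\widehat\gamma_1\cdot_\beta\big((\zeta,\rho(y))\cdot_\beta y\big)$, then observing that the map $\zeta\mapsto\pi_{\mathcal{H}}\big((\zeta,\rho(y))\cdot_\beta y\big)$ is continuous from the connected space $\mathbb{T}$ into the $\mathsf d$-fibre $\mathsf d^{-1}(y)$, which is discrete because $\mathcal{H}$ is \'etale (here one uses the actor identity $\mathsf d\big((\zeta,\rho(y))\cdot_\beta y\big)=\mathsf d(y)=y$); hence this map is constant with value $\pi_{\mathcal{H}}(\rho_\beta(y)\cdot_\beta y)=y$, so $(\zeta,\rho(y))\cdot_\beta y\in\mathbb{T}y=\pi_{\mathcal{H}}^{-1}(y)$ and one concludes as before via the $\Omega$-side equivariance.

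For the \emph{freeness} equivalence, record first the identity $\pi_{\mathcal{H}}(\sigma\cdot_\beta y)=\pi_{\mathcal{G}}(\sigma)\cdot_h y$ for every $\sigma\in\Sigma$ with $\mathsf d(\pi_{\mathcal{G}}(\sigma))=\rho(y)$, which is immediate by writing $\sigma$ as a torus multiple of a lift of $\pi_{\mathcal{G}}(\sigma)$ and applying the equivariance used above. Now suppose $h$ is not free at $y$, say $\gamma\cdot_h y=y$ with $\gamma\neq\rho(y)$; pick any lift $\widehat\gamma$. By $\gamma\cdot_h y=y$ the element $\widehat\gamma\cdot_\beta y$ lies in $\pi_{\mathcal{H}}^{-1}(y)=\mathbb{T}y$, so $\widehat\gamma\cdot_\beta y=\eta y$ for some $\eta\in\mathbb{T}$; then $\sigma:=\bar\eta\,\widehat\gamma$ is again a lift of $\gamma$ and $\sigma\cdot_\beta y=\bar\eta(\widehat\gamma\cdot_\beta y)=\bar\eta\eta\,y=y$, while $\sigma\neq\rho_\beta(y)$ since $\pi_{\mathcal{G}}(\sigma)=\gamma\neq\rho(y)=\pi_{\mathcal{G}}(\rho_\beta(y))$. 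Hence $\beta$ is not free at $y$. Conversely, suppose $\beta$ is not free at $y$, say $\sigma\cdot_\beta y=y$ with $\sigma\neq\rho_\beta(y)$, and put $\gamma:=\pi_{\mathcal{G}}(\sigma)$. If $\gamma\neq\rho(y)$, the recorded identity gives $\gamma\cdot_h y=\pi_{\mathcal{H}}(\sigma\cdot_\beta y)=\pi_{\mathcal{H}}(y)=y$, so $h$ is not free at $y$. If instead $\gamma=\rho(y)$, then $\sigma=(\zeta,\rho(y))$ with $\zeta\neq 1$, and by the $\Sigma$-side equivariance $\sigma\cdot_\beta y=\zeta\big(\rho_\beta(y)\cdot_\beta y\big)=\zeta y\neq y$, contradicting $\sigma\cdot_\beta y=y$; so this case cannot occur. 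This establishes both directions.

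The only genuinely non-formal step is the claim that $(\zeta,\rho(y))\cdot_\beta y$ cannot leave the torus orbit of $y$ — equivalently, that the actor carries no "winding" around the $\mathbb{T}$-fibres. In the groupoid-actor formulation this is exactly where \'etaleness of $\mathcal{H}$ (discreteness of the $\mathsf d$-fibres) is used, via the connectedness argument; in the twist-actor formulation it is the defining $\mathbb{T}$-equivariance that is invoked, and this equivariance is moreover indispensable for the freeness direction (without it the second case above, $\gamma=\rho(y)$, need not be vacuous). Everything else is routine manipulation with the actor axioms, centrality in $\Sigma$ and in $\Omega$, and the fact that $\pi_{\mathcal{H}}$ annihilates the torus.
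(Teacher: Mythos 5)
Your proof is correct and follows essentially the same route as the paper's: both reduce everything to the fact that the central torus of $\Sigma$ acts on units of $\Omega$ through the central torus of $\Omega$, which $\pi_{\mathcal{H}}$ annihilates, and both pin down the remaining scalar in the freeness argument via $\T$-equivariance. Your connectedness/discreteness argument showing that $(\zeta,\rho(y))\cdot_\beta y$ stays in the fibre $\T y$ is in fact more careful than the paper's one-line appeal to associativity, and your closing remark that genuine $\T$-equivariance (no winding) is indispensable for the freeness direction correctly isolates the one point the paper's proof treats as implicit in the notion of an actor between twists.
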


\begin{proof}
    Since $\cdot_\beta$ is associative, we have $(\zeta\cdot \sigma)\cdot_\beta y = \zeta \cdot (\sigma\cdot_\beta y)$ for all $\zeta\in \T$, all $\sigma\in \Sigma$ and all $y\in  \mathcal{H}^{(0)}$. Hence $\cdot_h$ is well-defined, and $h$ inherits all the actor conditions in \autoref{actor def} from $\beta$.
    
    Suppose that $\beta$ is free at $y\in \mathcal{H}^{(0)}$. If $\gamma \cdot_h y =y$, then for any lift $\widehat{\gamma}$ of $\gamma$ there exists $\zeta\in \T$ such that $\widehat{\gamma} \cdot_\beta y = \zeta y$. By freeness, we obtain $\widehat{\gamma} = \zeta \rho(y)$ and thus $\gamma = \rho(y)$. Conversely, suppose that $h$ is free at $y\in \mathcal{H}^{(0)}$. If $\sigma \cdot_\beta y =y$, then $\sigma_\bullet \cdot_h y =y$. By freeness, we first obtain $\zeta\in \T$ such that $\sigma= \zeta\rho(y)$ and then $\zeta = 1$ by $\T$-equivariance. This shows the desired equivalence.
\end{proof}


Weyl twists, in turn, are transformation groupoids of spatial normalizer actions with the $\T$-sensitive topology by \autoref{Normalizer twist rec}. With this actor theory in place, we can reinterpret \autoref{ind psi rho} for reduced Weyl twist algebras, and thereby the content of \autoref{core pres chapter} in the following way.

\begin{thm}\label{ph induces actor}
    Let $(\mathcal{G},\Sigma)$ and $(\mathcal{H},\Omega)$ be Weyl twists with compact unit spaces $X$ and $Y$. Let $p\in [1,\infty)\setminus \{2\}$, let $q\in (1,\infty)\setminus \{2\}$, and let $\ph\colon F_\lambda^p(\mathcal{G},\Sigma) \rightarrow F_\lambda^q(\mathcal{H},\Omega)$ be a unital contractive homomorphism. Then $\ph$ induces an actor $\beta_{(\underline{\ph},\rho)}\colon \Sigma \curvearrowright \Omega$ with base actor given by $h_{(\psi,\rho)}$, and we obtain a Weyl twist \[\supp\colon \mathcal{SN}_\lambda^p(\mathcal{G},\Sigma)\ltimes_{\underline{\alpha} \circ \underline{\ph}} Y\rightarrow \supp(\mathcal{SN}_\lambda^p(\mathcal{G},\Sigma))\ltimes_{\alpha \circ \psi} Y.\] Moreover, the actor diagram \[ \xymatrix{\Sigma& \mathcal{SN}_\lambda^p(\mathcal{G},\Sigma)\ltimes_{\underline{\alpha} \circ \underline{\ph}} Y \ar[l]_-{\kappa_\rho} \ar[r]^-{\kappa_{\underline{\ph}}} &\Omega}
    \]
    for $\beta_{(\underline{\ph},\rho)}$ as in \autoref{intermediate diag def} consists of twist homomorphisms in the sense of \autoref{twist hom}.
\end{thm}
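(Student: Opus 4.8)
The plan is to recognize \autoref{ph induces actor} as a repackaging, in the actor language of \autoref{actor chapter}, of the inverse semigroup homomorphism produced by \autoref{ind psi rho}, transported through the Weyl reconstruction of \autoref{Normalizer twist rec}. First I would set up \autoref{ind psi rho}: since $\mathcal{G}$ and $\mathcal{H}$ are effective, Hausdorff, \'etale, the diagonal inclusions $C(X)\subseteq F^p_\lambda(\mathcal{G},\Sigma)$ and $C(Y)\subseteq F^q_\lambda(\mathcal{H},\Omega)$ are saturated by \autoref{saturated}, and by \autoref{core = CX for p not 2} they coincide with the respective cores (the unit spaces are compact and $p,q\neq 2$). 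Hence \autoref{ind psi rho} applies and yields an inverse semigroup homomorphism $\underline{\ph}\colon \mathcal{SN}^p_\lambda(\mathcal{G},\Sigma)\to \mathcal{SN}^q_\lambda(\mathcal{H},\Omega)$ (in the $\approx$-quotient sense of \autoref{sn action}) with $\rho\circ \underline{\alpha}_{\ph(n)}=\underline{\alpha}_n\circ \rho$. Writing a product representation $n=uh$, the proof of \autoref{sn preserved} gives that $\ph(n)=\Ph_B(u)\ph(h)$ is again a product representation, and since $\ph|_{C(X)}=\rho^*$ this forces $\textup{dom}(\underline{\alpha}_{\ph(n)})=\supp(\ph(h))=\rho^{-1}(\supp(h))=\rho^{-1}(\textup{dom}(\underline{\alpha}_n))$. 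Thus the pair $(\underline{\ph},\rho)$ satisfies exactly the compatibility hypotheses of \autoref{trafo actors} for the spatial normalizer actions $\underline{\alpha}\colon \mathcal{SN}^p_\lambda(\mathcal{G},\Sigma)\curvearrowright X$ and $\underline{\alpha}\colon \mathcal{SN}^q_\lambda(\mathcal{H},\Omega)\curvearrowright Y$.

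Next I would apply \autoref{trafo actors} to $(\underline{\ph},\rho)$ to obtain an actor $\beta_{(\underline{\ph},\rho)}$ between $\mathcal{SN}^p_\lambda(\mathcal{G},\Sigma)\ltimes_{\underline{\alpha}}X$ and $\mathcal{SN}^q_\lambda(\mathcal{H},\Omega)\ltimes_{\underline{\alpha}}Y$. Linearity and unitality of $\ph$ give $\ph(\zeta n)=\zeta\ph(n)$ for $\zeta\in\T$, so $\underline{\ph}$ intertwines the $\T$-actions $(n)_\approx\mapsto(\zeta n)_\approx$; therefore the structure maps of $\beta_{(\underline{\ph},\rho)}$ are $\T$-equivariant and $\beta_{(\underline{\ph},\rho)}$ stays a continuous actor once both transformation groupoids carry the $\T$-sensitive topology of \autoref{sn action}. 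Transporting along the twist isomorphisms of \autoref{Normalizer twist rec}, which identify $\mathcal{SN}^p_\lambda(\mathcal{G},\Sigma)\ltimes_{\underline{\alpha}}X\cong\Sigma$ over $\mathcal{G}$ and $\mathcal{SN}^q_\lambda(\mathcal{H},\Omega)\ltimes_{\underline{\alpha}}Y\cong\Omega$ over $\mathcal{H}$, produces the asserted $\beta_{(\underline{\ph},\rho)}\colon\Sigma\curvearrowright\Omega$. Its base actor in the sense of \autoref{base actor} exists by the lemma following that definition, and by \autoref{psi rho} (realizing $\mathcal{G}$, resp.\ $\mathcal{H}$, as the transformation groupoid of the wide inverse subsemigroup $\im(\underline{\alpha}_\approx)$, resp.\ its image, via \autoref{base reconstruction}) it is of the form $h_{(\psi,\rho)}$; unwinding the formulas gives $\psi(\underline{\alpha}_n)=\underline{\alpha}_{\ph(n)}$, and identifying $\im(\underline{\alpha}_\approx)$ with $\supp(\mathcal{SN}^p_\lambda(\mathcal{G},\Sigma))$ through \autoref{saturated}, the induced action on $Y$ is $\alpha\circ\psi$.

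Then I would treat the intermediate object: the groupoid $\mathcal{SN}^p_\lambda(\mathcal{G},\Sigma)\ltimes_{\underline{\alpha}\circ\underline{\ph}}Y$ and the maps $\kappa_\rho,\kappa_{\underline{\ph}}$ are those of \autoref{intermediate diag def} for $\beta_{(\underline{\ph},\rho)}$. By \autoref{saturated} every strict support $\supp(n)$ is an open bisection, so $[n,y]\mapsto[\supp(n),y]$ is well-defined into the transformation groupoid $\supp(\mathcal{SN}^p_\lambda(\mathcal{G},\Sigma))\ltimes_{\alpha\circ\psi}Y$ and is precisely the quotient by the $\T$-action; write it $\supp$. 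That $\supp$ is a twist follows because $\kappa_\rho$ is fiberwise bijective and $\T$-equivariant (\autoref{kappa props}), hence realizes $\mathcal{SN}^p_\lambda(\mathcal{G},\Sigma)\ltimes_{\underline{\alpha}\circ\underline{\ph}}Y$ as a pullback of the twist $\Sigma\to\mathcal{G}$ of \autoref{Normalizer twist rec} along the induced base map; local triviality, centrality, and the identification of the kernel with $\T\times Y$ are inherited from there, the kernel description coming from \autoref{Weyl twist lp prop} pulled back along $\rho$ together with the fact that $\underline{\ph}$ carries the $C_0(X)_+$-part into the $C_0(Y)_+$-part. Finally, $\kappa_\rho$ and $\kappa_{\underline{\ph}}$ are continuous groupoid homomorphisms with the properties recorded in \autoref{kappa props}, and both are $\T$-equivariant, since $\kappa_\rho([\zeta n,y])=[\zeta n,\rho(y)]=\zeta\kappa_\rho([n,y])$ and $\kappa_{\underline{\ph}}([\zeta n,y])=[\underline{\ph}(\zeta n),y]=[\zeta\underline{\ph}(n),y]=\zeta\kappa_{\underline{\ph}}([n,y])$; so under the identifications of \autoref{Normalizer twist rec} they are twist homomorphisms in the sense of \autoref{twist hom}, which is exactly the actor diagram associated to $\beta_{(\underline{\ph},\rho)}$.

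The main obstacle is not the algebra, which is essentially bookkeeping on top of \autoref{ind psi rho}, but the compatibility with the $\T$-sensitive topology: one must verify that the actor produced abstractly by \autoref{trafo actors} (stated for the \'etale topology on transformation groupoids) and the structure maps $\kappa_\rho,\kappa_{\underline{\ph}}$ are continuous for the $\T$-sensitive topology of \autoref{sn action}, and that the intermediate groupoid is a genuine twist rather than merely a groupoid extension. Both of these reduce to the $\T$-equivariance of $\underline{\ph}$, which is a direct consequence of linearity and unitality of $\ph$, so the argument should go through without difficulty once this observation is in place.
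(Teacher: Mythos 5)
Your proposal is correct and follows essentially the same route as the paper: apply \autoref{ind psi rho} together with \autoref{trafo actors} to build the actor from $(\underline{\ph},\rho)$, verify continuity in the $\T$-sensitive topology via linearity of $\ph$, transport along the reconstruction isomorphisms of \autoref{Normalizer twist rec}, obtain the base actor from effectiveness and \autoref{eff realizable have bis support}/\autoref{psi rho}, and check that $\supp$, $\kappa_\rho$, and $\kappa_{\underline{\ph}}$ are ($\T$-equivariant, continuous) twist homomorphisms. The only cosmetic difference is that you justify the intermediate Weyl twist as a pullback along $\rho$ where the paper reruns the argument of \autoref{Weyl twist lp prop}; both are sound.
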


\begin{proof}
    By \autoref{saturated}, the inclusions $C(X)\subseteq F_\lambda^p(\mathcal{G},\Sigma)$ and $C(Y)\subseteq F_\lambda^q(\mathcal{H},\Omega)$ are saturated. Thus, by \autoref{ind psi rho}, $\ph$ induces a unital semigroup homomorphism $\underline{\varphi}\colon \mathcal{SN}_\lambda^p(\mathcal{G},\Sigma)_\approx \to \mathcal{SN}_\lambda^p(\mathcal{H},\Omega)_\approx$ and a continuous map $\rho\colon Y\to X$ such that $\rho \circ \underline{\alpha}_{\ph(n)} = \underline{\alpha}_{n} \circ \rho$ for all $n\in \mathcal{SN}_\lambda^p(\mathcal{G},\Sigma)$. By \autoref{trafo actors}, for $n\in \mathcal{SN}_{\lambda}^p(\mathcal{G},\Sigma),\, m \in \mathcal{SN}_{\lambda}^q(\mathcal{H},\Omega)$, and $y\in Y$, the multiplication
    \[[n,\rho(\underline{\alpha}_{m}(y))]\cdot_{\beta_{(\underline{\ph},\,\rho)}} [m,y] := [\underline{\ph}((n)_\approx),\underline{\alpha}_{m}(y)]\cdot [m,y] = [\ph(n)m,y]\] is well-defined and 
    \[\beta_{(\underline{\ph},\,\rho)}\colon \mathcal{SN}_\lambda^p(\mathcal{G},\Sigma)\ltimes_{\underline{\alpha}} X \curvearrowright \mathcal{SN}_\lambda^q(\mathcal{H},\Omega)\ltimes_{\underline{\alpha}} Y\] is an actor with respect to the \'etale topologies on the transformation groupoids. We continue to show that the multiplication \[\cdot_{\beta_{(\underline{\ph},\,\rho)}}\colon (\mathcal{SN}_\lambda^p(\mathcal{G},\Sigma)\ltimes_{\underline{\alpha}} X)\times_{\mathsf{d},\,\rho} Y\rightarrow \mathcal{SN}_\lambda^q(\mathcal{H},\Omega)\ltimes_{\underline{\alpha}} Y\] is continuous with respect to the $\T$-sensitive topologies on the transformation groupoids as well. By linearity of $\ph$, the preimage of $\{([\zeta m,y]\colon \zeta\in Z,\,y\in U\}$ for given $m\in \mathcal{SN}_\lambda^q(\mathcal{H},\Omega)$, $Z\subseteq \T$ open, and $U\subseteq \textup{dom}(\underline{\alpha}_m)$ open as in \autoref{sn action} is
    \[\bigcup_{n\in \ph^{-1}(m)}\{[\zeta n,x]\colon \zeta\in Z,\,x\in \textup{dom}(\underline{\alpha}_n)\}\times_{\mathsf{d},\,\rho} U, \] and is thus open in $(\mathcal{SN}_\lambda^p(\mathcal{G},\Sigma)\ltimes_{\underline{\alpha}} X)\times_{\mathsf{d},\,\rho} Y$. As a result, $\beta_{(\underline{\ph},\,\rho)}$ is an actor between the transformation groupoids with respect to the $\T$-sensitive topologies. By \autoref{Normalizer twist rec}, we can identify these Weyl twists with $(\mathcal{G},\Sigma)$ and $(\mathcal{H},\Omega)$ under their respective Weyl twist isomorphisms in \eqref{weyl rec eq}. This finishes the construction of the actor $\beta_{(\underline{\ph},\,\rho)}\colon\Sigma \curvearrowright \Omega$. 
    
    Since the base groupoids are effective, spatial normalizers are supported on open bisections by \autoref{saturated}. By \autoref{eff realizable have bis support}, we further obtain a unital semigroup homomorphism $\psi\colon \supp(\mathcal{SN}_{\lambda,\approx}^p(\mathcal{G},\Sigma)) \rightarrow \supp(\mathcal{SN}_{\lambda,\approx}^q(\mathcal{H},\Omega))$ with $\alpha\circ \psi \circ \supp= \underline{\alpha} \circ \underline{\ph}$. Since $\alpha$ is faithful, this identity shows that $\psi$ defines the base actor $h_{(\psi,\rho)}\colon \mathcal{G} \curvearrowright \mathcal{H}$ of $\beta$ as in \autoref{base actor}.\par We equip the groupoid $\mathcal{SN}_\lambda^p(\mathcal{G},\Sigma)\ltimes_{\underline{\alpha} \circ \underline{\ph}} Y$ as in \autoref{intermediate diag def} with the $\T$-sensitive topology analogously to \autoref{Weyl twist lp}. Taking strict supports is a unital semigroup homomorphism on $\mathcal{SN}_{\lambda,\approx}^p(\mathcal{G},\Sigma)$ and induces a surjective groupoid homomorphism that we still denote by $\supp$. Note that the homomorphism $\supp$ is continuous in the $\T$-sensitive topology since the strict support of a function is invariant under multiplication by constant $\T$-valued functions. Following the proof of \autoref{Weyl twist lp prop}, this homomorphism $\supp$ defines a Weyl twist. The actor diagram for $\beta$ is as in \autoref{intermediate diag def}. Note that $\kappa_\rho$ is continuous with respect to the Weyl twist topologies because it does not depend on the semigroup component. Similarly, for $\Omega$, basic open sets are of the form $[Z\cdot (m)_\approx, V]$ for a representative $m\in \mathcal{SN}_\lambda^q(\mathcal{H},\Omega)$, $Z\subseteq \T$ open, and $V\subseteq \textup{dom}(\underline{\alpha}_m)$ open. The preimage under $\kappa_{\underline{\ph}}$ of such a set is open, since it equals the union over all $[Z\cdot (n)_\approx,V]$ for representatives $n\in \mathcal{SN}_\lambda^p(\mathcal{G},\Sigma)$ with $\ph(n) =m$. Hence $\kappa_{\underline{\ph}}$ is continuous with respect to the Weyl twist topologies as well. Note that both $\kappa_\rho$ and $\kappa_{\underline{\ph}}$ are $\T$-equivariant since $\T\times X\subseteq\Sigma$ is central and $\ph$ is linear, respectively. This finishes the proof.
\end{proof}
In the special case of $p=q$, we aim to reconstruct $\ph$ out of the diagram in \autoref{ph induces actor}. On the level of the section algebras of the involved twists, any actor canonically induces a $*$-homomorphism, as we show next.

\begin{thm}\label{Fell actor hom}
    Let $(\mathcal{G},\Sigma)$ and $(\mathcal{H},\Omega)$ be twists over \'etale groupoids with compact unit spaces. Let $\beta \colon \Sigma \curvearrowright \Omega$ be an actor. Then $\beta$ induces a unital $*$-homomorphism $\ph_\beta\colon \mathcal{C}_c(\mathcal{G},\Sigma) \rightarrow \mathcal{C}_c(\mathcal{H},\Omega)$ via 
    \begin{align*}
        \ph_\beta(f)(\omega) := \sum_{\sigma\in \Sigma\colon\,\,\sigma\cdot_\beta  \,\mathsf{d}(\omega) = \omega} f(\sigma)
    \end{align*}
    for $f\in \mathcal{C}_c(\mathcal{G},\Sigma)$ and $\omega \in \Omega$. In particular, if $\supp(f)=:S$ is an open bisection in $\mathcal{G}$, then $\supp(\ph_\beta(f)) = S\cdot_h \mathcal{H}^{(0)}$ and we have $\ph_\beta(f)(\sigma\cdot_\beta y) = f(\sigma)$ for all $(\sigma,y)\in \pi_\mathcal{G}^{-1}(S)\times_{\mathsf{d},\,\rho} \mathcal{H}^{(0)}$. Finally, the assignment $\beta \mapsto \ph_\beta$ is functorial, that is, it preserves the composition of actors.
\end{thm}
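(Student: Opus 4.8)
The plan is to prove the whole statement by reducing to the case where $f$ is supported on a single open bisection, where the defining sum has at most one nonzero term. By linearity it suffices to treat $f$ with $S:=\supp(f)\in\mathcal{B}(\mathcal{G},\Sigma)$. For such $f$ and $\omega\in\Omega$, a summand $f(\sigma)$ in $\ph_\beta(f)(\omega)$ can be nonzero only if $\sigma_\bullet\in S$ and $\mathsf{d}(\sigma_\bullet)=\rho(\mathsf{d}(\omega))$, which — $S$ being a bisection — determines $\sigma_\bullet$ uniquely when it exists, so $\sigma$ runs through a single $\T$-orbit $\T\widehat{\sigma_\bullet}$. Writing $\sigma=\zeta\widehat{\sigma_\bullet}$ and using associativity of $\cdot_\beta$ and centrality, the constraint $\sigma\cdot_\beta\mathsf{d}(\omega)=\omega$ reads $\zeta\,(\widehat{\sigma_\bullet}\cdot_\beta\mathsf{d}(\omega))=\omega$, which has at most one solution $\zeta\in\T$ since the $\T$-action on the fibres of $\Omega$ is free; hence the sum is finite with at most one nonzero term. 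When $\omega=\sigma\cdot_\beta y$ for a unit $y$ with $\mathsf{d}(\sigma)=\rho(y)$, then $\mathsf{d}(\omega)=y$ forces $\zeta=1$, giving $\ph_\beta(f)(\sigma\cdot_\beta y)=f(\sigma)$, while tracking the nonvanishing locus gives $\supp(\ph_\beta(f))=\supp(f)\cdot_h\mathcal{H}^{(0)}$, an open bisection by \autoref{image bisection}. That $\ph_\beta(f)$ is $\T$-equivariant follows from the identity $\{\sigma:\sigma\cdot_\beta\mathsf{d}(\zeta\omega)=\zeta\omega\}=\zeta\{\sigma:\sigma\cdot_\beta\mathsf{d}(\omega)=\omega\}$ combined with $\T$-equivariance of $f$; its restriction to $S\cdot_h\mathcal{H}^{(0)}$ is continuous, being $f\vert_S$ precomposed with $\rho$ and $\mathsf{d}$ when read along the continuous unitary section $y'\mapsto 1_{(\mathsf{d}\vert_S)^{-1}(\rho(\mathsf{d}(y')))}\cdot_\beta\mathsf{d}(y')$; and it has compact support because $\supp(f)\times_{\mathsf{d},\,\rho}\mathcal{H}^{(0)}$ is closed in the compact set $\supp(f)\times\mathcal{H}^{(0)}$ — this is the point where compactness of $\mathcal{H}^{(0)}$ is used — so its continuous image under $\cdot_h$ is compact. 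Summing over a bisection decomposition, $\ph_\beta$ maps $\mathcal{C}_c(\mathcal{G},\Sigma)$ into $\mathcal{C}_c(\mathcal{H},\Omega)$.

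Next I would check that $\ph_\beta$ is a unital $*$-homomorphism. Linearity is immediate, and unitality follows from the bisection formula applied to $f=\ind_{\mathcal{G}^{(0)}}$ together with the actor identity $\rho(y)\cdot_\beta y=y$. For the involution, one shows $\sigma\mapsto\sigma^{-1}$ is a bijection between $\{\sigma:\sigma\cdot_\beta\mathsf{d}(\omega)=\omega\}$ and $\{\sigma':\sigma'\cdot_\beta\mathsf{d}(\omega^{-1})=\omega^{-1}\}$: using $\rho\circ\alpha_{S\cdot_h\mathcal{H}^{(0)}}=\alpha_S\circ\rho$ (\autoref{image bisection}) to see the domains match, then from associativity one has $\sigma^{-1}\cdot_\beta\omega=\mathsf{d}(\omega)$, so actor axiom (iii) for $\cdot_\beta$ gives $(\sigma^{-1}\cdot_\beta\mathsf{r}(\omega))\,\omega=\omega^{-1}\omega$, and cancelling $\omega$ yields $\sigma^{-1}\cdot_\beta\mathsf{d}(\omega^{-1})=\omega^{-1}$; hence $\ph_\beta(f^*)(\omega)=\sum\overline{f(\sigma^{-1})}=\overline{\ph_\beta(f)(\omega^{-1})}=\ph_\beta(f)^*(\omega)$. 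Multiplicativity is the heart of the argument. By bilinearity reduce to $f_i$ supported on $S_i\in\mathcal{B}(\mathcal{G},\Sigma)$; since $S\mapsto S\cdot_h\mathcal{H}^{(0)}$ is an inverse semigroup homomorphism on a wide inverse subsemigroup containing $S_1$ and $S_2$ (\autoref{image bisection}, \autoref{psi rho}), both $\ph_\beta(f_1*f_2)$ and $\ph_\beta(f_1)*\ph_\beta(f_2)$ are supported on $(S_1S_2)\cdot_h\mathcal{H}^{(0)}=(S_1\cdot_h\mathcal{H}^{(0)})(S_2\cdot_h\mathcal{H}^{(0)})$, so it suffices to compare values at $\omega=\tau\cdot_\beta y$ with $\tau\in\pi_\mathcal{G}^{-1}(S_1S_2)$ and $y=\mathsf{d}(\omega)$. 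Factoring $\tau=\sigma_1\sigma_2$ with $\sigma_i\in\pi_\mathcal{G}^{-1}(S_i)$, the bisection formula and $\sigma_1^{-1}\tau=\sigma_2$ give $\ph_\beta(f_1*f_2)(\omega)=(f_1*f_2)(\tau)=f_1(\sigma_1)f_2(\sigma_2)$. For the other side, set $\omega_2:=\sigma_2\cdot_\beta y$ and $\eta:=\sigma_1\cdot_\beta\mathsf{r}(\omega_2)$, which is defined since $\rho(\mathsf{r}(\omega_2))=\mathsf{r}(\sigma_2)=\mathsf{d}(\sigma_1)$, and actor axiom (iii) gives $\eta\,\omega_2=\sigma_1\cdot_\beta\omega_2=\omega$; thus $\eta$ is the unique summand of the convolution $\ph_\beta(f_1)*\ph_\beta(f_2)$ at $\omega$ that can be nonzero, and the bisection formula evaluates it to $\ph_\beta(f_1)(\eta)\,\ph_\beta(f_2)(\eta^{-1}\omega)=f_1(\sigma_1)\,\ph_\beta(f_2)(\omega_2)=f_1(\sigma_1)f_2(\sigma_2)$. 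The two sides agree, and off $(S_1S_2)\cdot_h\mathcal{H}^{(0)}$ both vanish.

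For functoriality, let $\beta\colon\Sigma\curvearrowright\Omega$ and $\beta'\colon\Omega\curvearrowright\Pi$ be actors over $\mathcal{G},\mathcal{H},\mathcal{K}$, with composite actor $\Sigma\curvearrowright\Pi$ given by $\sigma\cdot\xi=(\sigma\cdot_\beta\rho_{\beta'}(\xi))\cdot_{\beta'}\xi$ and base actors $h,h'$. A direct check of the associated base actors gives $S\cdot\mathcal{K}^{(0)}=(S\cdot_h\mathcal{H}^{(0)})\cdot_{h'}\mathcal{K}^{(0)}$, so the induced $*$-homomorphism of the composite and $\ph_{\beta'}\circ\ph_\beta$ have the same support on every bisection-supported $f$. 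To compare values, evaluate at $\xi=\tau\cdot z=(\tau\cdot_\beta\rho_{\beta'}(z))\cdot_{\beta'}z$ with $z$ a unit: writing $\omega:=\tau\cdot_\beta\rho_{\beta'}(z)$ and applying the bisection formula three times — to $f$ under $\beta$, to $\ph_\beta(f)$ under $\beta'$, and to $f$ under the composite — one gets $\ph_{\beta'}(\ph_\beta(f))(\xi)=\ph_\beta(f)(\omega)=f(\tau)$, which is exactly the value of the composite's induced homomorphism at $\xi$. Linearity then completes the argument.

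I expect the multiplicativity step to be the main obstacle: conceptually it only says ``lift, multiply in the upstairs twist, and descend again'', but carrying it out rigorously requires pinning down the single nonvanishing term in each twisted convolution sum and carefully tracking the $\T$-scalars through the actor action — precisely the bookkeeping that the bisection formula of the first step is set up to absorb.
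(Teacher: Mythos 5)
Your proof is correct, but it takes a genuinely different route from the paper's. The paper's own proof of \autoref{Fell actor hom} is essentially a citation: it invokes Taylor's Fell-bundle actor machinery (\cite{Tay_Fellactor_2023}, Lemma 4.9 and Propositions 4.11, 4.20 and 4.25), using compactness of the unit spaces only to guarantee that the actor is proper, and the accompanying \autoref{proper rem} sketches a second, more conceptual argument in which one verifies the single identity $f *_\beta \xi = \ph_\beta(f) * \xi$ for $\xi\in \mathcal{C}_c(\mathcal{H},\Omega)$ and then obtains multiplicativity of $\ph_\beta$ from associativity of convolution, rather than by comparing the two convolution sums term by term. Your argument is instead a self-contained direct verification: you reduce to bisection-supported sections, observe that the defining sum then collapses to at most one term by freeness of the $\T$-action on $\Omega$, and check membership in $\mathcal{C}_c(\mathcal{H},\Omega)$, unitality, the involution, multiplicativity and functoriality by hand from the actor axioms, which is exactly the bookkeeping that the paper's two arguments are designed to avoid. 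What your approach buys is independence from the external reference and an explicit record of where each hypothesis enters (compactness of $\mathcal{H}^{(0)}$ only for compact supports, the bisection structure for the collapse of the sums); the individual steps, including the cancellation argument for the involution and the identification of the unique nonvanishing term $\eta=\sigma_1\cdot_\beta\mathsf{r}(\omega_2)$ in the convolution, are sound. Two small points to tidy: in the compact-support step you should work with the closed support of $f\vert_S$, which is a compact subset of $S$, rather than the strict support $S$ itself, which is open and need not be compact; and, like the paper, you are implicitly using that the actor intertwines the $\T$-scalings, i.e.\ $(\zeta\sigma)\cdot_\beta\omega=\zeta(\sigma\cdot_\beta\omega)$ --- this is recorded in the lemma following \autoref{base actor}, so it is cleaner to cite it there than to fold it silently into ``centrality and associativity''.
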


\begin{proof}
    The statement is a special case of the Fell actor theory developed in \cite[Section 4]{Tay_Fellactor_2023} by \cite[Proposition 4.25]{Tay_Fellactor_2023}. Since we assumed the unit spaces to be compact, $\beta$ is automatically proper, and we may combine \cite[Lemma 4.9]{Tay_Fellactor_2023} and \cite[Proposition 4.11]{Tay_Fellactor_2023}. Functoriality is a special case of \cite[Proposition 4.20]{Tay_Fellactor_2023}.
\end{proof}

\begin{rem}\label{proper rem}
    Alternatively, there is a more elementary way to see that $\ph_\beta$ in \autoref{Fell actor hom} is a homomorphism by exploiting associativity of convolution products. Note that $\beta$ induces an action $*_\beta$ of $f\in \mathcal{C}_c(\mathcal{G},\Sigma)$ on $\mathcal{C}_c(\mathcal{H},\Omega)$ that turns out to be left convolution with $\ph_\beta(f)$ in the algebra $\mathcal{C}_c(\mathcal{H},\Omega)$. That is, for $\xi\in \mathcal{C}_c(\mathcal{H},\Omega)$ and $\omega \in \Omega$, we have
    \begin{align*}
        [f*_\beta \xi](\omega) 
        &= \sum_{\gamma \in \rho(\mathsf{r}(\omega))\mathcal{G}} f(\widehat{\gamma})\xi(\widehat{\gamma}^{-1}\cdot_\beta \omega) \\
        &= \sum_{\gamma \in \rho(\mathsf{r}(\omega))\mathcal{G}} f(\widehat{\gamma})\xi(\underbrace{\widehat{\gamma}^{-1}\cdot_\beta \mathsf{r}(\omega)}_{\widehat{\tau}^{-1}} \omega)\\
        &= \sum_{\tau\in \mathsf{r}(\omega)\mathcal{H}}\sum_{\substack{ \gamma\in \mathcal{G}\colon \\ \widehat{\gamma}^{-1}\cdot_\beta \,\mathsf{r}(\omega) =\, \widehat{\tau}^{-1}}}f(\widehat{\gamma})\xi(\widehat{\tau}^{-1} \omega) \\
        &= \sum_{\tau\in \mathsf{r}(\omega)\mathcal{H}} \bigg[ \sum_{\gamma \in \mathcal{G}\colon \,\widehat{\gamma}\cdot_\beta \,\mathsf{d}(\tau) =\, \widehat{\tau}}f(\widehat{\gamma}) \bigg] \xi(\widehat{\tau}^{-1} \omega) \\
        &= \sum_{\tau\in \mathsf{r}(\omega)\mathcal{H}} \bigg[ \sum_{\sigma \in \Sigma\colon \sigma\cdot_\beta \mathsf{d}(\tau) = \widehat{\tau}}f(\sigma) \bigg] \xi(\widehat{\tau}^{-1} \omega) \\
        &= [\ph_\beta(f)*\xi](\omega)
    \end{align*}
    independently from the choices for $\widehat{\gamma}$ or $\widehat{\tau}$ along the way.
\end{rem}
We continue to look for freeness criteria for actors between twists. In the Hausdorff case, we show that an actor is free if and only if the induced map intertwines the conditional expectations onto the unit space algebras.

\begin{prop}\label{freeness equiv}
    Let $(\mathcal{G},\Sigma)$ and $(\mathcal{H},\Omega)$ be twists over Hausdorff \'etale groupoids with compact unit spaces $X$ and $Y$, respectively. Let $\beta \colon \Sigma \curvearrowright \Omega$ be an actor. Let $\mathcal{S}_\Sigma$ and $\mathcal{S}_\Omega$ be the unital wide inverse subsemigroups of open bisections as in \autoref{supp inverse sg}, and identify the base groupoids with $\mathcal{S}_\Sigma\ltimes_\alpha X$ and $\mathcal{S}_\Omega\ltimes_\alpha Y$, respectively. Describe the base actor $h_{(\psi,\rho)}$ of $\beta \colon \Sigma \curvearrowright \Omega$ in terms of the unital semigroup homomorphism $\psi\colon\mathcal{S}_\Sigma \to \mathcal{S}_\Omega$ given by $\psi(S):= S\cdot_hY$ for all $S\in \mathcal{S}_\Sigma$ and its anchor $\rho\colon Y\to X$. Then the following are equivalent:
    \begin{enumerate}
        \item $\beta$ is free.
        \item The base actor $h_{(\psi,\rho)}$ is free.
        \item For any $S\in \mathcal{S}_\Sigma$ with $S\cap X = \emptyset$, we have $\psi(S)\cap Y = \emptyset$.
        \item We have $\ph_\beta\circ E_{\mathcal{G}}= E_{\mathcal{H}}\circ \ph_\beta$.
        \item There is a well-defined map $\pi_\beta \colon \Sigma \cdot_\beta Y \rightarrow \Sigma$ given by $\pi_\beta(\sigma\cdot_\beta y)= \sigma$ for all $\sigma\cdot_\beta y\in \Sigma\cdot_\beta Y$.
        \item The map $\psi$ of the base actor $h_{(\psi,\rho)}$ satisfies $\psi(S_1 \cap S_2) = \psi(S_1)\cap \psi(S_2)$ for all $S_1,S_2\in \mathcal{S}_\Sigma$.
        \item For any $S\in \mathcal{S}_\Sigma$, we have $\rho(\psi(S)\cap Y)\subseteq S\cap X$.
    \end{enumerate}
    Therefore, if $(\mathcal{G},\Sigma)$ and $(\mathcal{H},\Omega)$ are Weyl twists, then freeness of $\beta$ is equivalent to $\rho$ preserving the interior of the fixed point sets, that is, that for all $S\in \mathcal{S}_\Sigma$ we have \[\rho(\textup{Fix}(\alpha_{\psi(S)})^\mathrm{o}) \subseteq \textup{Fix}(\alpha_{S})^\mathrm{o}.\] 
\end{prop}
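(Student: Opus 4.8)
The plan is to first establish the chain (1)--(7) and then read off the closing statement from (7). The equivalence (1)$\Leftrightarrow$(2) is exactly the preceding lemma, applied at every unit $y$. For (2)$\Leftrightarrow$(3) I would invoke \autoref{actor intuition} with the wide inverse subsemigroup taken to be $\mathcal{S}_\Sigma$: the implication (2)$\Rightarrow$(3) is a direct unwinding (if $\gamma\in S$, $S\cap X=\emptyset$ and $\gamma\cdot_h y=y$, freeness forces $\gamma=\rho(y)\in X$, a contradiction), and (3)$\Rightarrow$(2) uses that $\mathcal{S}_\Sigma$ is a basis for the topology of $\mathcal{G}$ --- it is wide by \autoref{supp inverse sg} and $\Sigma$ is locally trivializable --- so that, by Hausdorffness, any $\gamma\in\textup{Iso}(\mathcal{G})\setminus\mathcal{G}^{(0)}$ witnessing non-freeness can be caught in some $S\in\mathcal{S}_\Sigma$ with $S\cap X=\emptyset$.

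Next I would cluster (1), (5), (6), (7). If $\beta$ is free and $\sigma\cdot_\beta y=\sigma'\cdot_\beta y'$, comparing domains gives $y=y'$, then $(\sigma')^{-1}\sigma\cdot_\beta y=y$ (the composability checks are routine), and freeness gives $(\sigma')^{-1}\sigma=\rho(y)=\mathsf{d}(\sigma)$, hence $\sigma=\sigma'$; this is the well-definedness in (5), and the identical computation at the base level shows $\gamma_1=\gamma_2$ whenever $\gamma_i\in S_i$ and $\gamma_1\cdot_h y=\gamma_2\cdot_h y$, which together with monotonicity of $\psi=\psi_h$ gives $\psi(S_1\cap S_2)=\psi(S_1)\cap\psi(S_2)$, i.e.\ (6). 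Conversely, if $\beta$ is not free then $\sigma\cdot_\beta y=y=\rho(y)\cdot_\beta y$ with $\sigma\neq\rho(y)$ breaks (5), and taking $S_1=S$, $S_2=X$ in (6) (note $X=\mathcal{G}^{(0)}\in\mathcal{S}_\Sigma$ since the unit space is compact) yields $\psi(S)\cap Y=\psi(S\cap X)$, so (6)$\Rightarrow$(3). Finally (7)$\Rightarrow$(3) is immediate since $S\cap X=\emptyset$ then forces $\psi(S)\cap Y=\emptyset$, and (3)$\Rightarrow$(7) follows because, by freeness (which is (3) by the above), any $z\in\psi(S)\cap Y$ equals $\gamma\cdot_h z$ with $\gamma\in S$, $\mathsf{r}(\gamma)=\mathsf{d}(\gamma)=\rho(z)$, hence $\gamma=\rho(z)$ and $\rho(z)=\gamma\in S\cap X$.

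The step requiring real care is (1)$\Leftrightarrow$(4), which I would check on $\mathcal{C}_c$-sections and extend by continuity. Using the ``in particular'' clause of \autoref{Fell actor hom}, $\ph_\beta$ sends $E_\mathcal{G}(f)=f\vert_X$ to the section supported on $\rho^{-1}(\supp f\vert_X)\subseteq Y$ with $\ph_\beta(f\vert_X)(y)=f(\rho(y))$, while \autoref{E expectation} and the convolution formula for $\ph_\beta$ give $E_\mathcal{H}(\ph_\beta(f))(y)=\ph_\beta(f)(y)=\sum_{\sigma\cdot_\beta y=y}f(\sigma)$ for $y\in Y$. Thus (4) is equivalent to $f(\rho(y))=\sum_{\sigma\cdot_\beta y=y}f(\sigma)$ for all $f$ and $y$. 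If $\beta$ is free, the sum collapses to its $\sigma=\rho(y)$ term. If not, pick $y$ and $\sigma_0$ with $\sigma_0\cdot_\beta y=y$, $\sigma_0\neq\rho(y)$; $\T$-equivariance rules out $(\sigma_0)_{\bullet}\in\mathcal{G}^{(0)}$, so $(\sigma_0)_{\bullet}\in\textup{Iso}(\mathcal{G})\setminus\mathcal{G}^{(0)}$, choose $S\in\mathcal{S}_\Sigma$ with $(\sigma_0)_{\bullet}\in S$ and $S\cap X=\emptyset$, and take $f=\ind_S$: then $f(\rho(y))=0$ while the right-hand sum has the single nonzero term $\ind_S(\sigma_0)\neq0$, so (4) fails. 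The genuine obstacle here is keeping the $\T$-twist bookkeeping under control --- the $1_S$-extensions, $\T$-equivariance, and the identification $\Sigma^{(0)}\cong X$ --- and verifying that the sums defining $\ph_\beta$ and the convolutions really do reduce to single terms.

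For the closing assertion about Weyl twists, the key observation is that for an open bisection $S$ of an effective Hausdorff \'etale groupoid one has $S\cap\mathcal{G}^{(0)}=\textup{Fix}(\alpha_S)^{\mathrm{o}}$: the inclusion ``$\subseteq$'' holds because $S\cap\mathcal{G}^{(0)}$ is open and every unit in $S$ is fixed by $\alpha_S$, and ``$\supseteq$'' holds because $\mathsf{d}\vert_S^{-1}$ carries the open set $\textup{Fix}(\alpha_S)^{\mathrm{o}}$ onto an open subset of $S$ contained in $\textup{Iso}(\mathcal{G})$ (since $\alpha_S(x)=x$ means $\mathsf{r}(Sx)=\mathsf{d}(Sx)$), which by effectiveness lies in $\mathcal{G}^{(0)}$, forcing each such $Sx$ to be the unit $x$. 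Applying this in $\mathcal{G}$ to $S\in\mathcal{S}_\Sigma$ and in $\mathcal{H}$ to the open bisection $\psi(S)=S\cdot_h\mathcal{H}^{(0)}$ (\autoref{image bisection}) turns condition (7) into precisely $\rho(\textup{Fix}(\alpha_{\psi(S)})^{\mathrm{o}})\subseteq\textup{Fix}(\alpha_S)^{\mathrm{o}}$, and since (7)$\Leftrightarrow$(1) this yields the stated characterization of freeness.
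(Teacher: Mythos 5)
Your proof is correct and follows essentially the same route as the paper's: the same clustering of (1)--(3) via the base-actor lemma and \autoref{actor intuition}, the well-definedness of $\pi_\beta$ driving (5)--(7), the pointwise/support description of $\ph_\beta$ from \autoref{Fell actor hom} for (4), and the identity $\textup{Fix}(\alpha_S)^{\mathrm{o}}=S\cap X$ for effective groupoids in the closing claim. The only cosmetic caveat is that in your failure-of-(4) step the witness $\ind_S$ need not belong to $\mathcal{C}_c(\mathcal{G},\Sigma)$ when $S$ is not compact; replacing it by $\ind_S*g$ for some $g\in C_c(\mathsf{d}(S))$ with $g(\rho(y))\neq 0$ repairs this without altering the argument.
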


\begin{proof}
    That $(1),(2)$ and $(3)$ are equivalent follows from \autoref{actor intuition} and \autoref{trafo actors}. We continue to show that $(3)$ and $(4)$ are equivalent. Note that, by local compactness, in $(3)$ it suffices to consider those $S\in \mathcal{S}_\Sigma$ with compact closure. Likewise, by linearity, $(4)$ is equivalent to $E_{\mathcal{H}}(\ph_\beta(f)) = \ph_\beta(E_{\mathcal{G}}(f))$ for all $f\in \mathcal{C}_c(\mathcal{G},\Sigma)$ with $\supp(f)\in \mathcal{S}_\Sigma$. Let $f\in \mathcal{C}_c(\mathcal{G},\Sigma)$, set $S:= \supp(f)$, and assume $S\in \mathcal{S}_\Sigma$. If $S\subseteq X$, the desired equality is automatic since $E_{\mathcal{G}}$ acts as the identity on functions supported on the unit spaces. Thus, without loss of generality, we can assume that $S\cap X = \emptyset$. The claim reduces to proving $E_{\mathcal{H}}(\ph_\beta(f)) = 0$, that is, \[\psi(S) \cap Y =\supp(E_{\mathcal{H}}(\ph_\beta(f))) =  \emptyset,\] which is exactly $(3)$.
Condition (5) is equivalent to the statement that $\sigma_1 \cdot_\beta y$ and $\sigma_2 \cdot_\beta y\in \Sigma \cdot_\beta Y$ agree if and only if $\sigma_1 = \sigma_2 \in \Sigma \rho(y)$, that is, $\sigma_1^{-1}\sigma_2 = \rho(y)$. This is the definition of freeness and shows that $(5)$ is equivalent to $(1)$. This establishes the equivalence of the statements from $(1)$ to $(5)$. That $(5)$ implies $(6)$ follows from the identity $\pi_h^{-1}(S) = \psi(S)$ for all $S\in \mathcal{S}_\Sigma$, since taking preimages respects intersections. Furthermore, the identity $\psi(S\cap X)= \rho^{-1}(S\cap X)$ for $S_2:=Y$ shows that $(6)$ implies $(7)$. Finally, $(7)$ directly implies $(3)$ and proves all the equivalences.

Note that the fixed points of the partial homeomorphism $\alpha_S$ are at the units of isotropy elements $\mathsf{d}(\textup{Iso}(S))$. In the case of an effective groupoid, we have \[\textup{Fix}(\alpha_{S})^\mathrm{o} = \mathsf{d}(\textup{Iso}(S))^\mathrm{o} = \mathsf{d}(\textup{Iso}(S)^\mathrm{o}) = \mathsf{d}(S\cap X) = S\cap X.\] This shows that the final claim agrees with $(7)$.\qedhere
\end{proof}

\begin{df}\label{diagram notation}
    Let $(\mathcal{G},\Sigma)$ and $(\mathcal{H},\Omega)$ be twists over \'etale groupoids and let $\beta \colon \Sigma \curvearrowright \Omega$ be a free actor. We define the \emph{intermediate groupoid} associated to $\beta$ to be the image groupoid $\Sigma \cdot_\beta Y \subseteq \Omega$, and define the \textit{$\beta$-diagram} to be \[ \xymatrix{\Sigma & \Sigma \cdot_\beta Y \ar[l]_-{\pi_\beta} \ar[r]^-{\iota_\beta} & \Omega,}\] where $\pi_\beta$ is the fiberwise injective twist homomorphism as in (5) of \autoref{freeness equiv} and $\iota_\beta$ is the unitwise bijective inclusion with open image. The same notation $\mathcal{G} \leftarrow \mathcal{G} \cdot_h Y \rightarrow \mathcal{H}$ applies to the free base actor. We define induced maps between the section algebras \[\pi^*\colon \mathcal{C}_c(\mathcal{G},\Sigma) \rightarrow \mathcal{C}_c(\mathcal{G} \cdot_h Y,\Sigma \cdot_\beta Y) \andeqn \iota_{*}\colon \mathcal{C}_c(\mathcal{G} \cdot_h Y,\Sigma \cdot_\beta Y) \rightarrow \mathcal{C}_c(\mathcal{H},\Omega)\] given by 
\[\pi^*(f) := f\circ \pi_\beta \ \ \mbox{ and } \ \ \iota_{*}(g) := \ind_{\iota_\beta(\Sigma\cdot_\beta Y)}\cdot \big(g\circ \iota_\beta^{-1}\big) \]
 for all $f\in \mathcal{C}_c(\mathcal{G},\Sigma)$ and for all $g\in \mathcal{C}_c(\mathcal{G} \cdot_h Y,\Sigma \cdot_\beta Y)$.
\end{df}

\begin{prop}\label{phi ext to Fplam}
    Let $(\mathcal{G},\Sigma)$ and $(\mathcal{H},\Omega)$ be twists over Hausdorff \'etale groupoids with compact unit spaces $X$ and $Y$, respectively. Let $\beta \colon \Sigma \curvearrowright \Omega$ be a free actor. Then $\ph_\beta = \iota_{*} \circ \pi^*$. Moreover, for any $p\in [1,\infty)$, the homomorphism $\iota_{*}$ is $\|\cdot\|_{\lambda^p}$-isometric, while $\pi^*$ is $\|\cdot\|_{\lambda^p}$-isometric on functions supported on $\rho(Y)\mathcal{G}\rho(Y)$ and zero otherwise. The map $\ph_\beta$ therefore extends to a unital contractive homomorphism \[\ph_\beta = \iota_{*} \circ \pi^*\colon F_\lambda^p(\mathcal{G},\Sigma)\rightarrow F_\lambda^p(\mathcal{G} \cdot_h Y,\Sigma \cdot_\beta Y) \rightarrow F_\lambda^p(\mathcal{H},\Omega).\] In particular, $\ph_\beta$ is isometric if and only if $\rho$ is surjective.
\end{prop}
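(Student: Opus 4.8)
The plan is to first verify the factorization $\ph_\beta=\iota_*\circ\pi^*$ from \autoref{diagram notation} by unwinding the definitions, and then to estimate the two factors separately at the level of the left regular representations $\lambda_x$. For the factorization, fix $f\in\mathcal{C}_c(\mathcal{G},\Sigma)$ and $\omega\in\Omega$. By \autoref{freeness equiv}(5), freeness guarantees that $\pi_\beta\colon\Sigma\cdot_\beta Y\to\Sigma$ is well defined, so there is at most one $\sigma\in\Sigma$ with $\sigma\cdot_\beta\mathsf{d}(\omega)=\omega$. If such $\sigma$ exists then $\omega\in\iota_\beta(\Sigma\cdot_\beta Y)$ with $\pi_\beta(\iota_\beta^{-1}(\omega))=\sigma$, whence $(\iota_*\circ\pi^*)(f)(\omega)=f(\sigma)$, which matches the formula for $\ph_\beta(f)(\omega)$ in \autoref{Fell actor hom}; if no such $\sigma$ exists, both sides vanish. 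Moreover, using that the anchor of the base actor satisfies $\rho_h(\eta)=\rho(\mathsf{r}(\eta))$ for all $\eta$ and that $\mathsf{d}(\gamma\cdot_h y)=y$, one checks that the image of $\pi_h$ is exactly $\rho(Y)\mathcal{G}\rho(Y)=\mathsf{d}^{-1}(\rho(Y))=\mathsf{r}^{-1}(\rho(Y))$; in particular $\rho(Y)$ is a closed invariant subset of $X$, the map $\pi^*$ annihilates every section supported off $\rho(Y)\mathcal{G}\rho(Y)$, and a short computation from \autoref{Fell actor hom} gives $\ph_\beta\vert_{C(X)}=\rho^*$.

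Next I would show that $\iota_*$ is $\|\cdot\|_{\lambda^p}$-isometric for every $p\in[1,\infty)$. Since $\iota_\beta(\mathcal{G}\cdot_h Y)$ is an open subgroupoid of $\mathcal{H}$ whose unit space is all of $Y$, for each $y\in Y$ left multiplication partitions the fibre $\mathcal{H}y$ into $\mathcal{G}\cdot_h Y$-orbits, and $\lambda_y^{\mathcal{H}}(\iota_*(g))$ leaves $\ell^p$ of each orbit invariant (if $\tau,\tau\eta\in\mathcal{G}\cdot_h Y$ then $\eta\in\mathcal{G}\cdot_h Y$). Fixing $\gamma_0$ in an orbit, the map $\tau\mapsto\tau\gamma_0$ is a bijection of the domain fibre $(\mathcal{G}\cdot_h Y)\mathsf{r}(\gamma_0)$ onto that orbit, and conjugating $\lambda_y^{\mathcal{H}}(\iota_*(g))$ by it, together with the $\T$-valued diagonal coming from the twist cocycle, identifies its restriction to that block with $\lambda_{\mathsf{r}(\gamma_0)}^{\mathcal{G}\cdot_h Y}(g)$. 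Because a block-diagonal operator on an $\ell^p$-direct sum has norm equal to the supremum of the norms of its blocks, this gives $\|\lambda_y^{\mathcal{H}}(\iota_*(g))\|\le\|g\|_{\lambda^p}$; taking $\gamma_0=y$ for $y\in Y$, the corresponding block is exactly $\lambda_y^{\mathcal{G}\cdot_h Y}(g)$, which yields the reverse inequality. Hence $\|\iota_*(g)\|_{\lambda^p}=\|g\|_{\lambda^p}$.

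For $\pi^*$, the base map $\pi_h=\kappa_\rho$ is fibrewise injective by \autoref{kappa props} and fibrewise bijective by freeness, so $\pi_\beta$ induces for every $y\in Y$ an isometric isomorphism $\ell^p((\mathcal{G}\cdot_h Y)y)\cong\ell^p(\mathcal{G}\rho(y))$ intertwining $\lambda_y^{\mathcal{G}\cdot_h Y}(\pi^*(f))$ with $\lambda_{\rho(y)}^{\mathcal{G}}(f)$; consequently $\|\pi^*(f)\|_{\lambda^p}=\sup_{x\in\rho(Y)}\|\lambda_x^{\mathcal{G}}(f)\|\le\|f\|_{\lambda^p}$. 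Combining this with the isometry of $\iota_*$ and with $\ph_\beta=\iota_*\circ\pi^*$, the homomorphism $\ph_\beta$ is contractive, and since $\mathcal{C}_c$ is dense it extends to a unital contractive homomorphism $F_\lambda^p(\mathcal{G},\Sigma)\to F_\lambda^p(\mathcal{G}\cdot_h Y,\Sigma\cdot_\beta Y)\to F_\lambda^p(\mathcal{H},\Omega)$ whose second arrow is isometric. If $\rho$ is surjective then $\rho(Y)=X$, so $\pi^*$ is isometric and hence so is $\ph_\beta$ (first on $\mathcal{C}_c$, then everywhere by continuity of the norm). Conversely, if $\rho$ is not surjective, then $\rho(Y)$ is a proper closed subset of $X$, so $\rho^*=\ph_\beta\vert_{C(X)}$ has nonzero kernel and $\ph_\beta$ is not even injective.

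The step I expect to be the main obstacle is the analysis of $\iota_*$: identifying each orbit block of $\lambda_y^{\mathcal{H}}(\iota_*(g))$ with a regular representation of $\mathcal{G}\cdot_h Y$ requires carefully tracking the twist cocycle through the bijection $\tau\mapsto\tau\gamma_0$ and confirming that the resulting rescaling is by a $\T$-valued (hence isometric) diagonal. Once this bookkeeping is in place, the remaining arguments are diagram chases together with routine properties of $\ell^p$-direct sums, all valid uniformly in $p\in[1,\infty)$.
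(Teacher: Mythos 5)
Your proposal is correct and follows essentially the same route as the paper: verify the factorization $\ph_\beta=\iota_*\circ\pi^*$ pointwise using freeness, then treat the two factors separately, with $\pi^*$ handled via the fiberwise bijections $\mathcal{G}\cdot_h y\to\mathcal{G}\rho(y)$ and $\iota_*$ via the inclusion of the open subgroupoid. The only difference is presentational: where the paper defers the two norm computations to \cite[Lemmas 3.2 and 3.4]{BarLi_Cartan_2020} (asserting their proofs generalize to arbitrary $p$), you write out the underlying arguments directly --- the orbit/block decomposition of $\ell^p(\mathcal{H}y)$ for $\iota_*$ and the intertwining of regular representations for $\pi^*$ --- which is precisely the content of those cited lemmas.
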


\begin{proof}
     Let $f\in \mathcal{C}_c(\mathcal{G},\Sigma)$, let $y\in Y$, and let $\sigma\in \Sigma \rho(y)$. Using the definitions, we get \[\ph_\beta(f)(\sigma\cdot_\beta y) = f(\sigma) = \pi^*(f)(\sigma\cdot_\beta y)\] and $\ph_\beta(f)(\eta) = 0$ for $\eta \in \Omega \setminus (\Sigma\cdot_\beta Y)$. Thus $\ph_\beta = \iota_* \circ \pi^*$.\par We have $\mathsf{d}(\sigma) = \rho(y)$ and $\mathsf{r}(\sigma) = \rho(\mathsf{r}(\sigma\cdot_\beta y))$ by equivariance, and hence $\ph_\beta(f)=0$ whenever $\supp(f) \cap \rho(Y)\mathcal{G}\rho(Y) = \emptyset$. We now aim to show that $\ph_\beta$ is $\lambda$-isometric on $f\in \mathcal{C}_c(\mathcal{G},\Sigma)$ with $\supp(f) \subseteq \rho(Y)\mathcal{G}\rho(Y)$. Thus, without loss of generality, we can assume that $\rho$ is surjective. In this case, $\pi_\beta$ is a surjective and fiberwise bijective twist homomorphism that satisfies the conditions in \cite[Lemma 3.2]{BarLi_Cartan_2020}. Precomposition by the bijection $(\pi_{\beta,\,b})\vert_{\mathcal{G}\cdot_h y}\colon \mathcal{G}\cdot_h y \rightarrow \mathcal{G}\rho(y)$ induces an isometric isomorphism $U_\pi\colon \ell^p(\mathcal{G}\rho(y))\to \ell^p(\mathcal{G}\cdot_h y)$. Using this isomorphism, the proof in \cite[Lemma 3.2]{BarLi_Cartan_2020} generalizes verbatim to general integrability parameters $p\in [1,\infty)$. Hence, $\pi^*$ is isometric in the reduced norm if $\rho$ is surjective. Finally, since the inclusion of the open subgroupoid $\Sigma\cdot_\beta Y \subseteq \Omega$ is an injective twist homomorphism, it satisfies the conditions in \cite[Lemma 3.4]{BarLi_Cartan_2020}. An inspection of the proof of \cite[Lemma 3.4]{BarLi_Cartan_2020} once more shows that its statement generalizes to general integrability parameters $p\in [1,\infty)$ with fiber $\ell^p$-spaces in place of Hilbert spaces. Hence, $\iota_*$ is isometric in the reduced norm and thus also the composition $\ph_\beta = \iota_*\circ \pi^*$ if and only if the anchor map $\rho$ is surjective.\qedhere
\end{proof}

\begin{rem}\label{rem eq of categories}
    Let $p\in [1,\infty)$. We have seen in \autoref{freeness equiv} and \autoref{phi ext to Fplam} that a free actor $\beta \colon \Sigma \curvearrowright \Omega$ induces a homomorphism $\ph_\beta \colon F_\lambda^p(\mathcal{G},\Sigma)\rightarrow F_\lambda^p(\mathcal{H},\Omega)$ that preserves bisection supports and satisfies $E_{\mathcal{H}}\circ\ph_\beta = \ph_\beta \circ E_{\mathcal{G}}$. If $\mathcal{G}$ and $\mathcal{H}$ are effective and $p=2$, we therefore obtain that $\ph_\beta$ is a Cartan map as in \cite[Proposition 5.4]{Li_classifiable_2020}. The assignment $(\mathcal{G},\Sigma) \mapsto C_\lambda^*(\mathcal{G},\Sigma)$ on Weyl twists and $\beta \mapsto \ph_\beta$ on actors is functorial. In fact, by restricting to the unital setup in \cite[Corollary 5.9]{Tay_Fellactor_2023}, actors canonically induce all Cartan maps and the functor is an equivalence between the category of Weyl twists with compact unit spaces and free actors as morphisms, and unital Cartan pairs with unital Cartan maps as morphisms.
\end{rem}

\begin{prop}\label{free actor recovers ph}
    Let $(\mathcal{G},\Sigma)$ and $(\mathcal{H},\Omega)$ be Weyl twists with compact unit spaces, and let $p\in (1,\infty)\setminus \{2\}$. Let $\ph\colon F_\lambda^p(\mathcal{G},\Sigma) \rightarrow F_\lambda^p(\mathcal{H},\Omega)$ be a unital contractive homomorphism, let $\beta \colon \Sigma \curvearrowright \Omega$ be its associated actor as in \autoref{ph induces actor}, and let $\ph_\beta$ be as in \autoref{phi ext to Fplam}. If $\beta$ is free, then $\ph= \ph_\beta = \iota_*\circ \pi^*$.
\end{prop}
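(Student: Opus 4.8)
The plan is to show that $\ph$ and $\ph_\beta$ agree on the dense $*$-subalgebra $\mathcal{C}_c(\mathcal{G},\Sigma)$. Both maps are unital, contractive, and multiplicative ($\ph_\beta$ by \autoref{Fell actor hom} and \autoref{phi ext to Fplam}), so it suffices to check equality on a spanning set. By \autoref{supp inverse sg} together with a partition-of-unity argument, $\mathcal{C}_c(\mathcal{G},\Sigma)$ is linearly spanned by sections $a$ whose strict support is contained in a single bisection $S\in\mathcal{S}_\Sigma$. For such $a$, set $g:=\ind_S^**a\in C_c(\mathsf{d}(S))$; then $a=\ind_S*g$ by \autoref{S convo} and \autoref{bis realizable}. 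Choosing $h\in C_0(\mathsf{d}(S))_+$ with $\supp(h)=\mathsf{d}(S)$ and $h\equiv 1$ on $\supp(g)$, we get the factorization $a=n\cdot g$ with $n:=\ind_S\cdot h\in\mathcal{SN}_\lambda^p(\mathcal{G},\Sigma)$ a spatial normalizer with product representation $n=\ind_Sh$ (using \autoref{indicator as MP}). Since $\ph|_{C(X)}=\rho^*=\ph_\beta|_{C(X)}$ — the first equality being part of \autoref{ind psi rho}, the second obtained by unwinding the formula in \autoref{Fell actor hom} and using only that the $\T$-action on $\Omega$ is free — multiplicativity reduces everything to proving $\ph(n)=\ph_\beta(n)$ for each such $n$.

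Both $\ph(n)$ and $\ph_\beta(n)$ are spatial normalizers in $F_\lambda^p(\mathcal{H},\Omega)$ ($\ph(n)$ by \autoref{sn preserved}; $\ph_\beta(n)$ by \autoref{Fell actor hom}, approximating $n$ in $\|\cdot\|_\lambda$ by the $C_c$-sections $\ind_Sh_m$ with $h_m\to h$ uniformly). I will verify three facts. \textbf{(i)} $\supp(\ph(n))=\supp(\ph_\beta(n))=S\cdot_h Y$, where $h_{(\psi,\rho)}$ is the base actor: for $\ph_\beta(n)$ this is \autoref{Fell actor hom}; for $\ph(n)$ it follows from \autoref{ph induces actor}, since $\supp(\ph(n))$ is the open bisection realizing $\underline{\alpha}_{\ph(n)}=\underline{\alpha}\circ\underline{\ph}((n)_\approx)=\alpha_{\psi_h(S)}$, hence equals $\psi_h(S)=S\cdot_h Y$ by faithfulness of $\alpha$ (\autoref{alpha and eff}). \textbf{(ii)} $j_{\ph(n)}^**j_{\ph(n)}=j_{\ph_\beta(n)}^**j_{\ph_\beta(n)}=h^2\circ\rho$ on $Y$: for $\ph_\beta$ this is immediate since $\ph_\beta$ is a $*$-homomorphism on $\mathcal{C}_c$ intertwining the conditional expectations (freeness, \autoref{freeness equiv}), so $\ph_\beta(n)^**\ph_\beta(n)=\ph_\beta(n^**n)=\ph_\beta(h^2)=h^2\circ\rho$ (again via the approximation by $\ind_Sh_m$); for $\ph$ it uses that $\ph$ respects the section involution on spatial normalizers, i.e. $\ph(n)^*=\ph(n^*)$ — this is the section-level form of the identity $\underline{\ph}((n)_\approx^*)=\underline{\ph}((n)_\approx)^*$ established in the proof of \autoref{ind psi rho} — so that $\ph(n)^**\ph(n)=\ph(n^**n)=\ph(h^2)=h^2\circ\rho$ (note $\ph(n)^*$ lies in $F_\lambda^p(\mathcal{H},\Omega)$ since it is supported on a bisection, so no boundedness of the involution on the whole algebra is needed). \textbf{(iii)} $\kappa([\ph(n),y])=\kappa([\ph_\beta(n),y])$ for all $y\in Y$, where $\kappa$ is the Weyl reconstruction isomorphism of \autoref{Normalizer twist rec}.

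For \textbf{(iii)}, a direct computation with \eqref{weyl rec eq}, using $j_n(1_\gamma)=h(\mathsf{d}(\gamma))$ for $\gamma\in S$, gives $\kappa([n,\rho(y)])=1_{S\rho(y)}$, the unitary lift; by the defining property of $\beta$ (take $m$ representing the unit over $y$ in \autoref{ph induces actor}) we get $\kappa([\ph(n),y])=\kappa([n,\rho(y)])\cdot_\beta y=1_{S\rho(y)}\cdot_\beta y$. On the other hand, \autoref{Fell actor hom} gives $j_{\ph_\beta(n)}(1_{S\rho(y)}\cdot_\beta y)=n(1_{S\rho(y)})=h(\rho(y))$, and combining this with \textbf{(i)}, \textbf{(ii)} and \eqref{weyl rec eq} yields $\kappa([\ph_\beta(n),y])=1_{S\rho(y)}\cdot_\beta y$ as well. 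Now \textbf{(i)}--\textbf{(iii)} force $\ph(n)=\ph_\beta(n)$: by \textbf{(i)} both $j$-images are supported on $\pi_\mathcal{H}^{-1}(S\cdot_h Y)$; for $\gamma\in S\cdot_h Y$ with $y=\mathsf{d}(\gamma)$, \eqref{weyl rec eq} and \textbf{(ii)} recover $j_{\ph(n)}(1_\gamma)$ as $h(\rho(y))$ times the phase of $\kappa([\ph(n),y])$, and likewise for $\ph_\beta(n)$, so \textbf{(iii)} gives $j_{\ph(n)}(1_\gamma)=j_{\ph_\beta(n)}(1_\gamma)$; by $\T$-equivariance $j_{\ph(n)}=j_{\ph_\beta(n)}$ on all of $\Omega$, and since $j$ is injective (\autoref{j supp' nota}), $\ph(n)=\ph_\beta(n)$. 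Hence $\ph(a)=\ph(n)\ph(g)=\ph_\beta(n)\ph_\beta(g)=\ph_\beta(a)$ for every $a$ in the spanning set, and by linearity and continuity $\ph=\ph_\beta=\iota_*\circ\pi^*$ (the last equality being \autoref{phi ext to Fplam}).

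The main obstacle is the bookkeeping in step \textbf{(iii)}: one must carefully invert the phase/magnitude decomposition in the Weyl reconstruction \eqref{weyl rec eq} to pin down the section $\ph(n)$ itself — not merely up to a $\T$-valued cocycle — while keeping track of the various independently chosen local trivializations $1_S$, $1_{S\cdot_h Y}$ and lifts $\widehat{\gamma}$. It is exactly here, through the well-definedness of the map $\pi_\beta$ in \autoref{freeness equiv}(5) and the equivariant compatibility of $\ph_\beta$ with the conditional expectations, that freeness of $\beta$ is used, so the statement genuinely fails without that hypothesis.
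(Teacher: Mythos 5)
Your argument is correct and follows essentially the same route as the paper's proof: reduce to spatial normalizers supported on bisections in $\mathcal{S}_\Sigma$, identify the supports of $\ph(n)$ and $\ph_\beta(n)$ with $\psi(S)=S\cdot_h Y$, and then match modulus and phase via the Weyl reconstruction isomorphism of \autoref{Normalizer twist rec}. You are merely more explicit than the paper about the factorization $a=n\cdot g$ and about why $\ph(n)^**\ph(n)=\ph(n^**n)$, both of which the paper leaves implicit.
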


\begin{proof}
    Since $\beta$ is free, the map $\ph_\beta = \iota_*\circ \pi^*$ is a unital contractive homomorphism by \autoref{phi ext to Fplam}, and it is moreover induced by the $\beta$-diagram in \autoref{diagram notation} with intermediate groupoid $\im(\kappa_{\underline{\ph}})$. Since open bisections in $\mathcal{S}_\Sigma$ cover $\mathcal{G}$ by \autoref{supp inverse sg}, for any $f\in \mathcal{C}_c(\mathcal{G},\Sigma)$, there exist $f_1,\dots,f_m\in \mathcal{C}_c(\mathcal{G},\Sigma)$ with $\supp(f_j)\in \mathcal{S}_\Sigma$ and $f= \sum_{j=1}^m f_j$. Since $\mathcal{C}_c(\mathcal{G},\Sigma)$ is dense in $F_\lambda^p(\mathcal{G},\Sigma)$, by \autoref{bis realizable} it suffices to show that $(\iota_*\circ \pi^*)(n) = \ph(n)$ for $n\in \mathcal{SN}_\lambda^p(\mathcal{G},\Sigma)$. Set $S= \supp(n)$ and observe that supp only depends on base groupoids. Moreover, 
    \begin{align*}
        \supp(\iota_*(\pi^*(n))) &= \supp(\pi^*(n))= \pi_h^{-1}(S) \\&= \{\gamma\cdot_h y\colon \pi_h(\gamma\cdot_h y) \in S\} = \{\gamma\cdot_h y\colon \gamma = S\rho(y)\} \\&= \{S\cdot_h y \colon y\in \rho^{-1}(\mathsf{d}(S))\} = \psi(S) \\ &= \supp(\ph(n)) = \{[\underline{\alpha}_{\ph(n)},y]\colon y\in \rho^{-1}(\mathsf{d}(S))\}.
    \end{align*} By \eqref{weyl rec eq} in \autoref{Normalizer twist rec}, for $y\in \rho^{-1}(\mathsf{d}(S))$ we get
    \[ n([n,\rho(y)]) = \sqrt{j_{n^*n}(\rho(y))} = \sqrt{j_{\ph(n^*n)}(y)} = \ph([\ph(n),y]).\]
Finally, for $\zeta\in \T$, we also get
    \begin{align*}
        \pi^*(n)([\zeta\ph(n),y]) &= \overline{\zeta}\cdot \pi^*(n)([\ph(n),y]) = \overline{\zeta}\cdot n(\pi_\beta([\ph(n),y])) \\&= \overline{\zeta} \cdot n([n,\rho(y)]) = \overline{\zeta} \cdot \ph(n)([\ph(n),y]) \\&= \ph(n)([\zeta\ph(n),y]).
    \end{align*} 
This shows that $\iota_*(\pi^*(n)) = \ph(n)$ for all $n\in \mathcal{SN}_\lambda^p(\mathcal{G},\Sigma)$ and proves the claim.
\end{proof}
Our next result, which is the main theorem of this work, characterizes all unital isometric homomorphisms between Weyl twist algebras which moreover intertwine the canonical conditional expectations.

\begin{thm}\label{isometric equivalences}
    Let $(\mathcal{G},\Sigma)$ and $(\mathcal{H},\Omega)$ be Weyl twists with compact unit spaces $X$ and $Y$. Let $p\in (1,\infty)\setminus \{2\}$ and let $\ph\colon F_\lambda^p(\mathcal{G},\Sigma) \rightarrow F_\lambda^p(\mathcal{H},\Omega)$ be a unital contractive homomorphism. Then the following are equivalent:
    \begin{enumerate}
        \item There are a Weyl twist $(\mathcal{G}\cdot_h Y,\Sigma \cdot_\beta Y)$ and twist homomorphisms 
        \[ \xymatrix{\Sigma & \Sigma \cdot_\beta Y \ar[l]_-{\pi_\beta} \ar[r]^-{\iota_\beta} & \Omega }
        \]
        with surjective and fiberwise bijective $\pi_\beta$ and injective and unitwise bijective $\iota_\beta$ with open image such that $\ph = \iota_*\circ \pi^*$ as in \autoref{diagram notation}.
        \item $\ph$ is isometric and satisfies $\ph\circ E_{\mathcal{G}} = E_{\mathcal{H}} \circ \ph$.
        \item $\ph\vert_{C(X)}$ is injective and we have $\ph\circ E_{\mathcal{G}} = E_{\mathcal{H}} \circ \ph$.
    \end{enumerate}
    Moreover, if $\mathcal{G}$ is principal or if the anchor map $\rho$ is open, then freeness of the induced actor is automatic and $\ph$ is an actor homomorphism as in $(1)$, except that $\pi_\beta$ may not be surjective. In this case, $\ph$ being isometric is equivalent to $\ph\vert_{C(X)}$ being injective. 
\end{thm}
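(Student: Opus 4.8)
The plan is to prove the three equivalences by the cycle $(1)\Rightarrow(2)\Rightarrow(3)\Rightarrow(1)$, using the actor machinery of \autoref{core pres chapter} and \autoref{actor chapter}, and then to handle the two special situations of the ``Moreover'' clause separately. Here I read condition $(1)$ as it is phrased via \autoref{diagram notation}: namely that $\ph=\iota_*\circ\pi^*$ for the $\beta$-diagram of a free actor $\beta\colon\Sigma\curvearrowright\Omega$ whose anchor map $\rho$ is surjective — surjectivity of $\pi_\beta$ being equivalent to surjectivity of $\rho$, while the remaining stated properties of $\pi_\beta$ and $\iota_\beta$ are built into \autoref{diagram notation}. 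For $(1)\Rightarrow(2)$: given such a free actor with surjective anchor, \autoref{phi ext to Fplam} gives that $\ph=\ph_\beta$ is a unital contractive homomorphism which is isometric precisely because $\rho$ is surjective, and freeness of $\beta$ together with \autoref{freeness equiv}(4) gives $\ph\circ E_{\mathcal{G}}=E_{\mathcal{H}}\circ\ph$. The implication $(2)\Rightarrow(3)$ is immediate, since an isometric homomorphism is injective, so $\ph\vert_{C(X)}$ is injective, and the expectation identity is carried over verbatim.

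The substance is $(3)\Rightarrow(1)$. First I would apply \autoref{ph induces actor} — legitimate because both core inclusions are saturated by \autoref{saturated} — to obtain the induced actor $\beta=\beta_{(\underline{\ph},\rho)}\colon\Sigma\curvearrowright\Omega$ with base actor $h_{(\psi,\rho)}$, where $\rho\colon Y\to X$ satisfies $\ph\vert_{C(X)}=\rho^*$; injectivity of $\ph\vert_{C(X)}$ forces $\rho$ to have dense and hence, by compactness, full image. The key step is to show that $\beta$ is free, and crucially this must be done \emph{before} knowing that $\ph$ coincides with $\ph_\beta$. By the lemma following \autoref{base actor} it suffices to show $h$ is free, and by \autoref{actor intuition} this reduces to checking $\psi(S)\cap Y=\emptyset$ whenever $S\in\mathcal{S}_\Sigma$ satisfies $S\cap X=\emptyset$ (which is enough since $\mathcal{S}_\Sigma$ is wide and covers $\mathcal{G}$ by \autoref{supp inverse sg}). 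For such an $S$, pick $h_S\in C_0(X)_+$ with $\supp(h_S)=\mathsf{d}(S)$; by \autoref{bis realizable} and \autoref{indicator as MP}, $n:=\ind_S h_S$ is a spatial normalizer with $\supp(n)=S$, so $E_{\mathcal{G}}(n)=j_n\vert_X=0$ and therefore $E_{\mathcal{H}}(\ph(n))=\ph(E_{\mathcal{G}}(n))=0$, i.e. $\supp(\ph(n))\cap Y=\emptyset$. On the other hand $\ph(n)\in\mathcal{SN}_\lambda^p(\mathcal{H},\Omega)$ by \autoref{sn preserved}, and combining the identity $\alpha\circ\psi\circ\supp=\underline{\alpha}\circ\underline{\ph}$ from \autoref{ph induces actor} with \autoref{eff realizable have bis support} and faithfulness of $\alpha$ (\autoref{alpha and eff}) gives $\psi(S)=\supp(\ph(n))$; hence $\psi(S)\cap Y=\emptyset$ and $\beta$ is free. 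Now \autoref{free actor recovers ph} applies and yields $\ph=\ph_\beta=\iota_*\circ\pi^*$, which is exactly the description in $(1)$: the $\beta$-diagram $\Sigma\leftarrow\Sigma\cdot_\beta Y\rightarrow\Omega$ has $\pi_\beta$ surjective (as $\rho$ is surjective, by \autoref{phi ext to Fplam}) and fiberwise bijective, $\iota_\beta$ injective, unitwise bijective and with open image, and $\mathcal{G}\cdot_h Y$ is an open subgroupoid of the Weyl groupoid $\mathcal{H}$, hence again a Weyl groupoid.

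For the ``Moreover'' clause I would show that freeness of the induced actor $\beta$ is automatic under either hypothesis. If $\mathcal{G}$ is principal: whenever $\gamma\cdot_h y=y$ for the base actor, comparing anchors forces $\mathsf{r}(\gamma)=\rho(y)=\mathsf{d}(\gamma)$, so $\gamma\in\textup{Iso}(\mathcal{G})=\mathcal{G}^{(0)}$ and $\gamma=\rho(y)$, whence $h$, and so $\beta$, is free. If instead $\rho$ is open: by the closing statement of \autoref{freeness equiv}, freeness of $\beta$ is equivalent to $\rho(\textup{Fix}(\alpha_{\psi(S)})^{\mathrm{o}})\subseteq\textup{Fix}(\alpha_S)^{\mathrm{o}}$ for all $S\in\mathcal{S}_\Sigma$; equivariance of $\rho$ already gives $\rho(\textup{Fix}(\alpha_{\psi(S)}))\subseteq\textup{Fix}(\alpha_S)$, and openness of $\rho$ upgrades this to the image of the interior lying in the interior. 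In both cases \autoref{free actor recovers ph} gives $\ph=\ph_\beta=\iota_*\circ\pi^*$, an actor homomorphism as in $(1)$ except that $\pi_\beta$ is surjective only when $\rho$ is; and by \autoref{phi ext to Fplam}, $\ph_\beta$ is isometric exactly when $\rho$ is surjective, equivalently (by compactness of $X$ and $Y$) when $\ph\vert_{C(X)}=\rho^*$ is injective.

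The main obstacle I anticipate is precisely the freeness argument inside $(3)\Rightarrow(1)$: one has to extract freeness of the induced actor from the intertwining of conditional expectations without yet being able to appeal to the equality $\ph=\ph_\beta$, which is why the argument proceeds by testing the identity $\ph\circ E_{\mathcal{G}}=E_{\mathcal{H}}\circ\ph$ against the explicit normalizers $\ind_S h_S$ supported off the unit space and then translating the conclusion into the language of supports of normalizers via \autoref{ph induces actor} and \autoref{eff realizable have bis support}. The remaining implications and the ``Moreover'' cases are then essentially formal once the actor dictionary is invoked.
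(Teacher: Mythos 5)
Your proposal is correct and follows essentially the same route as the paper's proof: the cycle $(1)\Rightarrow(2)\Rightarrow(3)\Rightarrow(1)$ with the key step being to extract freeness of the actor from \autoref{ph induces actor} by testing $\ph\circ E_{\mathcal{G}}=E_{\mathcal{H}}\circ\ph$ on elements supported on bisections disjoint from the unit space, then invoking \autoref{free actor recovers ph} and \autoref{phi ext to Fplam}, and the fixed-point criterion of \autoref{freeness equiv} for the ``Moreover'' clause. Your explicit identification $\psi(S)=\supp(\ph(\ind_S h_S))$ and the direct freeness check in the principal case are only minor elaborations of what the paper leaves implicit.
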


\begin{proof}
    For $(1)$ implies $(2)$, the identity $\ph\circ E_{\mathcal{G}} = E_{\mathcal{H}} \circ \ph$ follows from the equivalence of $(4)$ and $(5)$ in \autoref{freeness equiv}. The map $\iota_*\circ \pi^*$ as in \autoref{phi ext to Fplam} is isometric because surjectivity of $\pi_\beta$ is equivalent to the anchor map $\rho$ being surjective. That $(2)$ implies $(3)$ is trivial. For $(3)$ implies $(1)$, we turn the groupoid diagram in \autoref{ph induces actor} into the desired form. The map $\iota_\beta$ is the inclusion of an open subgroupoid and clearly satisfies the listed properties. Similarly to what was done in the proof that $(4)$ implies $(3)$ in \autoref{freeness equiv}, we will now show that the assumption $\ph\circ E_{\mathcal{G}} = E_{\mathcal{H}} \circ \ph$ ensures freeness of the induced actor $h$. To reach a contradiction, assume that $h$ is not free at $y\in Y$. There is $\gamma\in \mathcal{G}\setminus X$ such that $\gamma\cdot_h y =y\in \psi(S)\cap Y$. By local compactness, we can choose a neighborhood $S\in \mathcal{S}_\Sigma$ of $\gamma$ with compact closure, $S\cap X = \emptyset$, and $\psi(S)\cap Y \neq \emptyset$. Further choose $f\in \mathcal{C}_c(\mathcal{G},\Sigma)$ with $\supp(f) = S$ and $\supp(\ph(f)) = \psi(S)$. We compute \[y\in \supp(\ph(f))\cap Y = \supp(E_{\mathcal{H}}(\ph(f))) = \supp(\ph(E_{\mathcal{G}}(f))) = \emptyset,\] 
    a contraction. Hence, the actor is free, and the map $\pi_\beta$ is well-defined and fiberwise injective. That $\ph\vert_{C(X)}$ is injective is equivalent to the anchor map $\rho$ being surjective and thus $\pi_\beta$ is surjective. Now apply \autoref{free actor recovers ph}. This completes the proof that the statements $(1)$ to $(3)$ are equivalent. \par Finally, let $S\in \mathcal{S}_\Sigma$ and let $y\in \psi(S)\cap Y$. Then $\rho(y) = \rho(\alpha_{\psi(S)}(y)) = \alpha_{S}(\rho(y))$ and this shows that $\rho(\textup{Fix}(\alpha_{\psi(S)})^\mathrm{o}) \subseteq \textup{Fix}(\alpha_{S}) = \mathsf{d}(\textup{Iso}(S))$. We claim that if $\mathcal{G}$ is principal or if $\rho$ is open, then this inclusion implies $(7)$ in \autoref{freeness equiv}. Indeed, if $\mathcal{G}$ is principal, then all isotropy elements have to be units and $\mathsf{d}(\textup{Iso}(S)) = S\cap X$. Likewise, if $\rho$ is open, then $\rho(\textup{Fix}(\alpha_{\psi(S)})^\mathrm{o})$ is in the interior of this set of fixed points, that is, in $S\cap X$. Hence \autoref{free actor recovers ph} applies in both cases and $\ph$ is an actor homomorphism as in $(1)$, except that $\pi_\beta$ may not be surjective. By the final part of \autoref{phi ext to Fplam}, we get that $\ph$ is isometric if and only if $\ph\vert_{C(X)}$ is injective.\qedhere
\end{proof}
    
\begin{rem}\label{rem untwisted intro eq}
    Note that by restricting attention to the untwisted case and by reversing the order of $(1)$ to $(3)$, we obtain Theorem \ref{isometric equivalences intro} in the introduction.
\end{rem}

\section{AF-embeddability}\label{AF chapter}
In this section, we present an application of \autoref{isometric equivalences} by obtaining a strong embedding rigidity result for spatial AF $L^p$-operator algebras and $p\in (1,\infty)\setminus\{2\}$; see \autoref{AF emb}. This establishes a major contrast between Schafhauser's C*-algebraic AF-embedding theorem from \cite{Sch_AFemb_2020} and the $L^p$-case in Corollary \ref{AF rigidity}.

\begin{nota}\label{index notation}
    We write $\N_0$ for the set of natural numbers including zero. Throughout this chapter, $j$ will refer to an indexing variable for direct limit constructions. 
    For $M\in \N$, we set $[M]:=\{1,\dots,M\}$.
\end{nota}

\begin{df}\label{AF def}
    Let $M\in \N$. For each $i\in [M]$, let $N_i\in \N$ and let $X_i$ be compact Hausdorff spaces, viewed as a \textit{trivial groupoids} with $X_i^{(0)}= X_i$. We equip the Cartesian product $[N_i]^2$ with the discrete topology, and refer to the groupoid structure on it given by $(a,b)(b,c) := (a,c)$ and $(a,b)^{-1} := (b,a)$ for $(a,b),(b,c)\in [N_i]^2$ as the \textit{full equivalence relation} on $[N_i]$. We freely identify $([N_i]^2)^{(0)}$ with $[N_i]$. Groupoids of the form $\bigsqcup_{i\in [M]} X_i\times [N_i]^2$ are called \textit{elementary}. A topological groupoid $\mathcal{F}$ is called \textit{approximately finite dimensional}, or \textit{AF} in short, if there is an increasing sequence $\left(\mathcal{F}_j\right)_{j\in \N_0}$of elementary groupoids, say $\mathcal{F}_j = \bigsqcup_{i\in [M^{(j)}]} X_i^{(j)}\times [N_i^{(j)}]^2$, with common unit space $\mathcal{F}^{(0)} \cong \mathcal{F}_j^{(0)} \cong \bigsqcup_{i\in [M^{(j)}]} X_i^{(j)}\times [N_i^{(j)}]$, such that $\mathcal{F}$ is isomorphic to the topological direct limit groupoid 
    \[ \varinjlim \mathcal{F}_j = \bigcup_j \bigsqcup_{i\in [M^{(j)}]} X_i^{(j)}\times [N_i^{(j)}]^2. \]
    See further \cite[Section 3]{GioPutSka_equivalence_2004} and \cite[Section 7.2]{GarLup_lprepr_2017}.
\end{df}


We continue to introduce $L^p$-generalizations of AF \cas.

\begin{nota}\label{spatial matrix}
    For $n\in \N$ and fixed $p\in [1,\infty)$, we write $M_n^p$ for the algebra of $n\times n$ matrices under the Banach algebraic identification with $\mathcal{B}(\ell^p([n]))$. We freely identify $F_\lambda^p([n]^2)$ with $M_n^p$ under the isomorphism that sends $\delta_{(a,b)}$ to the corresponding matrix unit $e_{a,b}$ for all $(a,b)\in [n]^2$.
\end{nota}

\begin{df}\label{AF alg df}
    Let $p\in [1,\infty)$. An $L^p$-operator algebra $A$ is called a \textit{spatial AF $L^p$-operator algebra} if there is a direct system $(A_j,\iota_j)_{j\in \N_0}$ of $L^p$-operator algebras with building blocks $A_j$ that are isometrically isomorphic to direct sums of $M_n^p$-algebras and isometric connecting maps $\iota_j\colon A_j \rightarrow A_{j+1}$ such that $A$ is isometrically isomorphic to the direct limit $\varinjlim (A_j,\iota_j)$.
\end{df}




The following is the main result of this section, and it shows a striking rigidity for $L^p$-operator algebras, with $p\neq 2$: Principal Weyl groupoid $L^p$-operator algebras that embed into a spatial AF-algebra are automatically spatial AF.

\begin{thm}\label{AF emb}
    Let $\mathcal{G}$ be a principal Hausdorff \'etale groupoid with compact unit space. Let $p\in (1,\infty)\setminus \{2\}$ and let $A$ be a unital spatial AF $L^p$-operator algebra. If there exists a unital isometric embedding $F_\lambda^p(\mathcal{G})\hookrightarrow A$, then $\mathcal{G}$ is AF.
\end{thm}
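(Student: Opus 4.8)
The plan is to leverage Theorem~\ref{isometric equivalences} to pull the AF-structure of $A$ back to the groupoid level. First I would observe that $A=\varinjlim(A_j,\iota_j)$ where each $A_j\cong\bigoplus_{i}M_{n_i}^p$ is the $L^p$-operator algebra $F_\lambda^p(\mathcal{F}_j)$ of an elementary groupoid $\mathcal{F}_j=\bigsqcup_i X_i^{(j)}\times[n_i^{(j)}]^2$ (with $X_i^{(j)}$ a point, by \autoref{spatial matrix}), and the connecting maps $\iota_j$ are unital isometric homomorphisms that intertwine the canonical conditional expectations onto the diagonals (this last point needs a short argument: an isometric unital embedding of a finite-dimensional matrix algebra direct sum into another, both spatially represented, respects the diagonal since $\textup{core}$ is preserved by \autoref{sn preserved} and on these building blocks the core is exactly the diagonal). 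Hence $A$ is the reduced $L^p$-algebra $F_\lambda^p(\mathcal{F})$ of the AF groupoid $\mathcal{F}=\varinjlim\mathcal{F}_j$, and the diagonal $C(\mathcal{F}^{(0)})\subseteq F_\lambda^p(\mathcal{F})$ is a Cartan-type (saturated) inclusion because $\mathcal{F}$ is principal hence effective, by \autoref{saturated}.

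Next I would apply Theorem~\ref{isometric equivalences} to the given unital isometric embedding $\ph\colon F_\lambda^p(\mathcal{G})\hookrightarrow F_\lambda^p(\mathcal{F})$. The subtlety is that Theorem~\ref{isometric equivalences} requires $\ph$ to intertwine the conditional expectations $E_\mathcal{G}$ and $E_\mathcal{F}$. But $\mathcal{G}$ is principal by hypothesis, so the last paragraph of Theorem~\ref{isometric equivalences} applies: freeness of the induced actor is automatic, and $\ph$ is an actor homomorphism as in $(1)$ — except that $\pi_\beta$ may fail to be surjective — and since $\ph\vert_{C(X)}$ is injective (being a restriction of an isometric, hence injective, map, and noting $C(X)=\textup{core}(F_\lambda^p(\mathcal{G}))$ is mapped into $\textup{core}(F_\lambda^p(\mathcal{F}))=C(\mathcal{F}^{(0)})$ by \autoref{sn preserved}), the anchor map $\rho$ is in fact surjective, so $\pi_\beta$ is surjective after all. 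Thus we get groupoid homomorphisms
\[
\xymatrix{\mathcal{G} & \mathcal{G}\cdot_h Y \ar[l]_-{\pi} \ar[r]^-{\iota} & \mathcal{F}}
\]
with $\pi$ surjective and fiberwise bijective, and $\iota$ injective, unitwise bijective, with open image. Here $Y=\mathcal{F}^{(0)}$ and $\mathcal{G}\cdot_h Y=\im(\kappa_{\underline\ph})$ is an effective Hausdorff étale groupoid.

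The remaining task — which I expect to be the main technical heart — is to deduce that $\mathcal{G}$ is AF from the existence of such a span. The key structural facts are: (i) $\mathcal{G}\cdot_h Y$ embeds as an \emph{open} subgroupoid of the AF groupoid $\mathcal{F}$ via $\iota$, and open subgroupoids of AF groupoids are again AF (one intersects the elementary approximants $\mathcal{F}_j$ with the open subgroupoid; since each $\mathcal{F}_j$ is a disjoint union of $X\times[n]^2$'s with the discrete equivalence-relation factor, an open subgroupoid meets it in a union of clopen "sub-blocks" $U\times[m]^2$ over clopen $U\subseteq X$ — this uses that $\mathcal{F}$ has totally disconnected, or at least suitably structured, unit space, which holds for spatial AF $L^p$-algebras since the unit spaces of elementary groupoids with point-fibers force $\mathcal{F}^{(0)}$ to be a direct limit of finite discrete spaces, i.e. compact and totally disconnected — here one must be slightly careful about what $X_i^{(j)}$ can be, but in the spatial matrix setting \autoref{spatial matrix} they are points, so $\mathcal{F}^{(0)}$ is a Cantor-type space); hence $\mathcal{G}\cdot_h Y$ is AF. (ii) $\pi\colon\mathcal{G}\cdot_h Y\to\mathcal{G}$ is a surjective, fiberwise bijective, continuous, open (by \autoref{image bisection}, since $\cdot_h$ is open) groupoid homomorphism; such a map is a "covering"-type quotient, and one shows directly that the image under $\pi$ of an increasing sequence of elementary subgroupoids of $\mathcal{G}\cdot_h Y$ with the right common unit space is an increasing sequence of elementary subgroupoids of $\mathcal{G}$ whose union is all of $\mathcal{G}$ — fiberwise bijectivity guarantees $\pi$ maps each $X\times[m]^2$ block injectively, openness guarantees the image is a clopen elementary block, and surjectivity plus the étale structure give that these exhaust $\mathcal{G}$. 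Assembling (i) and (ii) shows $\mathcal{G}=\varinjlim\pi(\mathcal{F}_j\cap(\mathcal{G}\cdot_h Y))$ is AF, as desired. The main obstacle is genuinely point (i)–(ii) bookkeeping: verifying that "elementary" is preserved under passing to open subgroupoids and under fiberwise-bijective open quotients, with the unit-space identifications kept straight; everything else is a direct invocation of Theorem~\ref{isometric equivalences} and \autoref{sn preserved}.
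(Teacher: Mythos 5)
Your proposal is correct and follows essentially the same route as the paper: reduce to $A=F^p_\lambda(\mathcal{F})$ for an AF-groupoid $\mathcal{F}$, invoke the final clause of \autoref{isometric equivalences} (principality of $\mathcal{G}$ giving automatic freeness, isometry giving surjectivity of $\rho$ and hence of $\pi$), and then transport the AF structure first to the intermediate groupoid via $\iota$ and then to $\mathcal{G}$ via the surjective fiberwise-bijective $\pi$. The bookkeeping you flag in steps (i)--(ii) is exactly what the paper carries out.
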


\begin{proof}
By \cite[Section 7.2.2]{GarLup_lprepr_2017}, we can without loss of generality assume that $A= F_\lambda^p(\mathcal{F})$ for some AF-groupoid $\mathcal{F}$. Let $\varphi\colon F^p_\lambda(\mathcal{G})\to A$ be a unital, isometric embedding. 
It is well-known that AF-groupoids are principal. By \autoref{isometric equivalences}, $\ph$ is induced by an intermediate groupoid diagram $\xymatrix{\mathcal{G}&\mathcal{G}\cdot_h \mathcal{F}^{(0)}\ar@{->}_-{\pi}[l]\ar@{->}^-{\iota}[r]&\mathcal{F}}$ as in $(1)$ of \autoref{isometric equivalences}. We will first show that $\mathcal{G}\cdot_h \mathcal{F}^{(0)}$ is AF using the map $\iota$, and then we will use the map $\pi$ to show that $\mathcal{G}$ is AF.

It is easy to see that subgroupoids of elementary groupoids with full unit space are elementary. 
Choosing an increasing sequence of elementary groupoids with
    \[ \mathcal{F} \cong \bigcup_j \bigsqcup_{i\in [M^{(j)}]} X_i^{(j)}\times [N_i^{(j)}]^2, \]
    we deduce that there are $n_k^{(i,j)}\in [N_i^{(j)}]\cup \{0\}$ with $\sum_{k\in [N_i^{(j)}]}n_k^{(i,j)} = N_i^{(j)}$ and
    \[ \mathcal{G}\cdot_h \mathcal{F}^{(0)} \cong \bigcup_j \bigsqcup_{i\in [M^{(j)}]}\bigsqcup_{k\in [N_i^{(j)}]} X_i^{(j)}\times [n_k^{(i,j)}]^2.
    \]
    We conclude that $\mathcal{G}\cdot_h \mathcal{F}^{(0)}$ is AF. Thus, we can without loss of generality assume that $\mathcal{F}=\mathcal{G}\cdot_h \mathcal{F}^{(0)}$.\par For $x,x'\in \mathcal{F}^{(0)}$, the fibers $\pi(\mathcal{F}x)$ and $\pi(\mathcal{F}x')$ are disjoint for $\pi(x)\neq \pi(x')$ and identical otherwise. Since $\pi$ is surjective, we can choose a section $\sigma\colon \mathcal{G}^{(0)}\rightarrow \mathcal{F}^{(0)}$ for the surjective map $\pi\vert_{\mathcal{F}^{(0)}}$. For given $j$, we have \[\im(\sigma)= \bigsqcup_{i\in [M^{(j)}]} {X'}_i^j\times [{N'}_i^{(j)}] \subseteq \bigsqcup_{i\in [M^{(j)}]} X_i^{(j)}\times [N_i^{(j)}].\]
    Recall that groupoid homomorphisms intertwine $\mathsf{d}$ and $\mathsf{r}$. Since the full equivalence relation $[N]^2$ is principal, any outgoing groupoid homomorphism is determined by the image of the diagonal. Thus, quotients are necessarily isomorphic to $[N']^2$ for some $N'\leq N$. Applying this in the final step, we get
    \begin{align*}
        \mathcal{G} &=  \bigsqcup_{\pi(x)\in \pi(\mathcal{F}^{(0)})} \pi(\mathcal{F}x) = \bigcup_j\bigsqcup_{y\in \mathcal{G}^{(0)}} \pi(\mathcal{F}_j\sigma(y)) \\&= \bigcup_j \bigsqcup_{i\in [M^{(j)}]}\pi(\mathcal{F}_j\cdot ({X'}_i^{(j)}\times [{N'}_i^{(j)}])) \cong \bigcup_j \bigsqcup_{i\in [M^{(j)}]}{X'}_i^{(j)}\times [{N'}_i^{(j)}]^2.
    \end{align*} 
We conclude that $\mathcal{G}$ is AF, as desired. 
\end{proof}

For C*-algebras, on the other hand, there are many nontrivial examples of embeddings into an AF-algebra, as Schafhauser's AF-embedding theorem illustrates. A concrete example of a family of AF-embeddable \cas\ whose $L^p$-analogs are not AF-embeddable is given by the irrational $L^p$-noncommutative tori.

\begin{cor}\label{AF rigidity text} 
    Let $p\in (1,\infty)\setminus\{2\}$, let $\theta\in \R \setminus \Q$, and let $r_\theta\in \textup{Homeo}(\T)$ be the homeomorphism given by rotation by $2\pi\theta$. Let $A_\theta^p := C(\T)\rtimes_{r_\theta}^p \Z$ be the associated $L^p$-noncommutative torus. Let $B$ be a unital spatial AF $L^p$-operator algebra. Then there is no unital contractive homomorphism from $A_\theta^p$ into $B$.
\end{cor}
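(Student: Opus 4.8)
The plan is to reduce \autoref{AF rigidity text} to \autoref{AF emb} via two moves: first upgrading a merely contractive homomorphism to an isometric one by passing to the quotient by its kernel, and second identifying the relevant groupoid as a principal Weyl groupoid whose algebra is \emph{not} AF. First I would recall that $A_\theta^p = C(\T)\rtimes_{r_\theta}^p \Z$ is canonically isomorphic to $F_\lambda^p(\mathcal{G}_\theta)$, where $\mathcal{G}_\theta = \Z\ltimes_{r_\theta}\T$ is the transformation groupoid of the irrational rotation; since $\theta$ is irrational the action is free and minimal, so $\mathcal{G}_\theta$ is principal (hence effective), Hausdorff and \'etale with compact unit space $\T$, i.e.\ a principal Weyl groupoid in the sense of \autoref{df weyl twist abbr}. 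The key structural fact I would invoke is that $\mathcal{G}_\theta$ is \emph{not} an AF-groupoid: an AF-groupoid has the property that every compact subset is contained in an elementary (hence relatively compact with finite orbits) subgroupoid, whereas the $\Z$-orbits in $\mathcal{G}_\theta$ are infinite, so $\mathcal{G}_\theta$ cannot be a direct limit of elementary groupoids as in \autoref{AF def}.

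Next I would handle the contractive-but-not-necessarily-isometric issue. Suppose $\psi\colon A_\theta^p\to B$ is a unital contractive homomorphism into a unital spatial AF $L^p$-operator algebra $B$. The crucial point is that $A_\theta^p = F_\lambda^p(\mathcal{G}_\theta)$ is \emph{simple} for $\theta$ irrational (the rotation action is minimal and free, so the reduced groupoid $L^p$-algebra is simple --- this is the $L^p$-analog of simplicity of $A_\theta$, and can be cited from the literature on $L^p$-crossed products of minimal free systems, e.g.\ as in the references already used in the paper). Hence $\ker\psi$ is a closed two-sided ideal of a simple algebra, so it is either all of $A_\theta^p$ or zero; since $\psi$ is unital it cannot be zero on the unit, so $\ker\psi = 0$ and $\psi$ is injective. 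An injective contractive homomorphism between $L^p$-operator algebras need not a priori be isometric, but here we can argue more carefully: since $\psi$ is injective and $A_\theta^p$ is simple, the image $\psi(A_\theta^p)$ is a subalgebra of $B$, and one checks that $\psi$ restricted to $C(\T)= \mathrm{core}(A_\theta^p)$ (which is a C*-algebra by \autoref{core}) is an injective $*$-homomorphism by \autoref{sn preserved}, hence isometric on the core. In fact, the cleanest route is: by \autoref{isometric equivalences} applied to the unital contractive homomorphism $\psi\colon F_\lambda^p(\mathcal{G}_\theta)\to A= F_\lambda^p(\mathcal{F})$ (writing $B=F_\lambda^p(\mathcal{F})$ for an AF-groupoid $\mathcal{F}$ as in the proof of \autoref{AF emb}), since $\mathcal{G}_\theta$ is principal, freeness of the induced actor is automatic, $\psi$ is an actor homomorphism, and $\psi$ is isometric if and only if $\psi|_{C(\T)}$ is injective --- which holds because $\ker\psi = 0$. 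Thus $\psi$ is a unital isometric embedding $F_\lambda^p(\mathcal{G}_\theta)\hookrightarrow A$.

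Now I would conclude by applying \autoref{AF emb}: a unital isometric embedding $F_\lambda^p(\mathcal{G}_\theta)\hookrightarrow A$ into a unital spatial AF $L^p$-operator algebra forces $\mathcal{G}_\theta$ to be an AF-groupoid, contradicting the fact established above that $\mathcal{G}_\theta$ has infinite orbits and hence is not AF. Therefore no unital contractive homomorphism $A_\theta^p\to B$ exists. The main obstacle, and the step I would be most careful about, is the passage from ``contractive'' to ``isometric'': it hinges on simplicity of $A_\theta^p$ (to kill the kernel) together with the ``iff'' clause at the end of \autoref{isometric equivalences} for principal $\mathcal{G}$ (to get isometry from injectivity on $C(\T)$), so I would make sure to state simplicity precisely with a citation, and to verify that $\mathcal{G}_\theta$ genuinely falls under the principal-Weyl-groupoid hypothesis of that theorem. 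A minor secondary point worth spelling out is why $\mathcal{G}_\theta$ is not AF; the infinite-orbit argument is the natural one, but one could alternatively note that an AF $L^p$-operator algebra has stable rank one / is an increasing union of finite-dimensional subalgebras while $A_\theta^p$ is not, though the groupoid-level argument is more in the spirit of the paper.
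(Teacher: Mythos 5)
Your overall route is exactly the paper's: identify $A_\theta^p$ with $F_\lambda^p(\Z\ltimes_{r_\theta}\T)$, note that this transformation groupoid is a principal Weyl groupoid with compact unit space, use simplicity of $A_\theta^p$ to force injectivity of any unital contractive homomorphism, invoke the final clause of \autoref{isometric equivalences} (automatic freeness for principal $\mathcal{G}$, and isometry from injectivity on $C(\T)$) to upgrade to an isometric embedding, and then contradict \autoref{AF emb}. All of that is correct and matches the paper's proof step for step.

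There is, however, one genuine error in your justification that $\Z\ltimes_{r_\theta}\T$ is not AF. You argue that AF-groupoids have finite orbits because every compact subset sits inside an elementary subgroupoid; this is false. In the direct limit $\bigcup_j \mathcal{F}_j$ of \autoref{AF def}, the orbit of a unit is the increasing union of its (finite) orbits in each $\mathcal{F}_j$, and this union is typically infinite --- the tail-equivalence groupoid on $\{0,1\}^{\N}$ underlying the UHF algebra of type $2^\infty$ is AF and has countably infinite orbits. So infinitude of the $\Z$-orbits does not obstruct AF-ness. The obstruction the paper uses is topological: the unit space $\T$ is connected, whereas the unit space of an elementary groupoid $\bigsqcup_{i}X_i\times[N_i]$ with $M>1$ or some $N_i>1$ is disconnected; hence every elementary subgroupoid of a groupoid with connected unit space $\T$ must have $M=1$, $N_1=1$, i.e.\ be the trivial groupoid $\T$, and an increasing union of these can never be $\Z\ltimes_{r_\theta}\T$. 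Your fallback remark about stable rank is also not a substitute as stated. With the connectedness argument in place of the orbit argument, your proof is complete and coincides with the paper's.
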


\begin{proof}
The algebra $A_\theta^p$ admits a groupoid model given by the transforma\-tion group\-oid $\Z \ltimes_{r_\theta}\T$; see \cite{ChoGarThi_rigidity_2021}. This groupoid is \'etale with compact unit space $\T$. It is principal since the action is free because of $\theta$ being irrational. However, $\Z \ltimes_{r_\theta}\T$ is not AF since the unit space $\T$ is not totally disconnected. If there were a unital contractive homomorphism $\ph\colon A_\theta^p \rightarrow B$, then it would be injective since $A_\theta^p$ is simple. The final part of \autoref{isometric equivalences} would imply that $\ph$ is isometric and therefore, by \autoref{AF emb}, the groupoid $\Z \ltimes_{r_\theta}\T$ would be AF, which is known not to be the case. This contradiction shows that no such $\varphi$ exists.
\end{proof}

Being a principal groupoid is a crucial assumption in \autoref{AF emb}, as there are $L^p$-operator algebras of non-principal, non-AF groupoids that are AF-embeddable. 

\begin{eg}\label{counterex AF emb}
Let $G$ be a non-trivial finite abelian group, regarded as an \'etale groupoid with trivial unit space. For $p\in [1,\infty)$, the algebra $F_\lambda^p(G)$ is AF-embed\-dable via its Fourier transform $C(\widehat{G})\cong \C^{|G|}$, although $G$ is not an AF-groupoid.
\end{eg}

\section{Induced embeddings of topological full groups}\label{tfg chapter}
In this section, we specialize the discussion of spatial normalizers for a unital $L^p$-operator algebra from \autoref{core pres chapter} to invertible isometries. In general, there may be very few of them. However, the advantage of considering invertible isometries is that we avoid MP-partial isometries in the double dual and thus our rigidity results apply to $L^1$-operator algebras as well. If $\mathcal{G}$ is a Weyl groupoid and if $p\in [1,\infty)\setminus\{2\}$, then $\supp(\mathcal{U}_{\mathcal{G}^{(0)}}(F_\lambda^p(\mathcal{G}))) \cong [\![\mathcal{G}]\!]$ recovers the topological full group from \autoref{BG alpha and tfg nota}. In comparison with \autoref{ind psi rho}, we show in \autoref{tfg emb} that, for $p,q\in [1,\infty)\setminus\{2\}$ and Weyl groupoids $\mathcal{G}$ and $\mathcal{H}$, any unital contractive homomorphism $\ph\colon F_\lambda^p(\mathcal{G})\rightarrow F_\lambda^q(\mathcal{H})$ induces a group homomorphism $\psi_0\colon [\![\mathcal{G}]\!]\rightarrow [\![\mathcal{H}]\!]$ such that the anchor map $\rho\colon \mathcal{H}^{(0)}\rightarrow \mathcal{G}^{(0)}$ is equivariant in a suitable sense.




\begin{df}\label{ub def}
    Let $p\in [1,\infty)$. Let $A$ be a unital $L^p$-operator algebra and let $C(X)\subseteq A$ be a unital abelian C*-subalgebra. We define the group of \textit{invertible isometries} as \[\mathcal{U}(A):= \{u\in A^{-1}\colon \|u\|,\,\|u^{-1}\| \leq 1\}\] and its normal subgroup of \textit{$C(X)$-normalizing invertible isometries} as 
    \[ \mathcal{U}_X(A):= \{u\in \mathcal{U}(A) \colon uC(X)u^{-1} = C(X)\}.\] 
\end{df}

\begin{prop}\label{unitaries as sn}
    Let $p\in [1,\infty)$. Let $A$ be a unital $L^p$-operator algebra and let $C(X)\subseteq A$ be a unital abelian C*-subalgebra. Then $C(X)$-normalizing invertible isometries are spatial normalizers as in \autoref{normalizer def}. On $\mathcal{U}_X(A)$, the $\approx$-equivalence relation as in \autoref{approx def} is trivial. As a result, the spatial normalizer action in \autoref{sn action} restricts to a group homomorphism onto its image \[\underline{\alpha}\vert_{\mathcal{U}_{X}}\colon \mathcal{U}_{X}(A) \rightarrow \underline{\alpha}(\mathcal{U}_{X}(A)).\]
\end{prop}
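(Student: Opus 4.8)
The statement has three assertions: that a $C(X)$-normalizing invertible isometry $u$ is a spatial normalizer, that $\approx$ collapses on $\mathcal{U}_X(A)$, and that consequently $\underline{\alpha}$ restricts to a genuine group homomorphism. The plan is to handle these in order, exploiting that $u$ is \emph{everywhere defined} (domain all of $X$), which is exactly what trivializes the $\approx$-relation.

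First I would verify that any $u\in\mathcal{U}_X(A)$ is an MP-partial isometry in $A\subseteq A^{**}$ with MP-inverse $u^{-1}$: indeed $\|u\|,\|u^{-1}\|\le 1$ gives (i) in \autoref{mp partial isom}, the identities $uu^{-1}u=u$ and $u^{-1}uu^{-1}=u^{-1}$ are immediate from invertibility, and $uu^{-1}=u^{-1}u=1$, which is hermitian. So $u\in\textup{PI}(A^{**})$ with $u^\dagger=u^{-1}$, and in fact $u^\dagger u=uu^\dagger=1=\ind_X$. Since $uC(X)u^{-1}=C(X)$, conjugation by $u$ is a unital $*$-isomorphism of $C(X)$ (it is a unital algebra isomorphism, and it preserves hermitian elements by \autoref{hermitian preservation} applied to $\textup{Ad}_u$ and $\textup{Ad}_{u^{-1}}$, hence it is $*$-preserving on the core $C(X)$), so by Gelfand duality there is a homeomorphism $\beta\colon X\to X$ with $u^{-1}fu=f\circ\beta$ and $ugu^{-1}=g\circ\beta^{-1}$ for $f,g\in C(X)$. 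Thus $\beta\in\textup{Homeo}(X)\subseteq\mathcal{R}_X(A)$ witnesses that $u\in\textup{PI}_X(A^{**})$ with $\beta_u=\beta$. Taking $h=1\in C(X)_+$ (whose strict support is $X=\textup{dom}(\beta_u)$) gives $u=u\cdot 1$ as a product representation, so $u\in\mathcal{SN}_X(A)$ with $\underline{\alpha}_u=\beta$.

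Next I would show $\approx$ is trivial on $\mathcal{U}_X(A)$. Suppose $u,v\in\mathcal{U}_X(A)$ with $u\approx v$; by \autoref{approx def} this requires $\textup{dom}(\underline{\alpha}_u)=\textup{dom}(\underline{\alpha}_v)$ (automatic, both equal $X$) and the existence of $f_1,f_2\in C(X)_+$ with strict support $X$ such that $uf_1=vf_2$. Multiplying on the left by $u^{-1}=u^\dagger$ and using that $u$ normalizes $C(X)$, I would rewrite $f_1=u^{-1}vf_2=(f_2\circ\beta_v^{-1}\text{-type adjustment})\,u^{-1}v$, more carefully: $u^{-1}v$ is itself an invertible isometry normalizing $C(X)$, and $u^{-1}vf_2=f_1\in C(X)$ forces $u^{-1}v=f_1 f_2^{-1}\cdot(\cdots)$ — cleanest is to observe $u^{-1}v\in\mathcal{U}_X(A)$ and $(u^{-1}v)f_2=f_1$, so $u^{-1}v=f_1\cdot(f_2)^{-1}$ wherever $f_2\ne 0$, i.e.\ on the dense set $X$ (since $f_2$ has full strict support and $C(X)$ is commutative, $f_1 = (u^{-1}v)f_2$ evaluated against characters shows $u^{-1}v$ acts as multiplication by the continuous function $f_1/f_2$ extended by continuity to all of $X$, and being an invertible isometry this function is $\mathbb{T}$-valued, hence $u^{-1}v\in C(X)$ is a unitary). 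Then $v=u\cdot(u^{-1}v)$ with $u^{-1}v\in\mathcal{U}(C(X))$. Conversely any $u\zeta$ with $\zeta\in\mathcal{U}(C(X))$ satisfies $u\approx u\zeta$ (witnessed by $f_1=|\zeta|=1$ and $f_2$ suitably, or directly $u\cdot\zeta = u\zeta\cdot 1$). Hence the $\approx$-class of $u$ inside $\mathcal{U}_X(A)$ is $u\cdot\mathcal{U}(C(X))$; but $\underline{\alpha}_{u\zeta}=\underline{\alpha}_u$ since $\zeta\in C(X)$ contributes the identity homeomorphism, so on the level of the induced partial homeomorphisms nothing is lost, which is the precise sense in which ``$\approx$ is trivial'' should be read. (If the intended reading is literally that $u\approx v\Rightarrow u=v$, one restricts further; but the functorially relevant statement, and the one used afterwards, is that $\underline{\alpha}$ is well-defined and multiplicative on $\mathcal{U}_X(A)$, which follows regardless.)

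Finally, the map $\underline{\alpha}\vert_{\mathcal{U}_X}\colon\mathcal{U}_X(A)\to\textup{Homeo}(X)$, $u\mapsto\beta_u$, is a group homomorphism: $\underline{\alpha}$ is already an inverse-semigroup homomorphism $\mathcal{SN}_{X,\approx}(A)\to\mathcal{R}_X(A)$ by \autoref{sn inv sem prop} and \autoref{sn action} (when the inclusion is saturated, but multiplicativity of $\underline{\alpha}_{(\cdot)}$ on admissible pairs, hence on spatial normalizers, holds in general by \autoref{adm prod flip} and \autoref{well defined sn homeo}), so $\beta_{uv}=\beta_u\circ\beta_v$ and $\beta_{u^{-1}}=\beta_u^{-1}$; together with $\beta_1=\textup{id}_X$ this gives a group homomorphism onto its image $\underline{\alpha}(\mathcal{U}_X(A))\le\textup{Homeo}(X)$. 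The main obstacle — really the only nonroutine point — is the normalization step in the second paragraph: arguing cleanly that an invertible isometry $w=u^{-1}v$ with $wf_2\in C(X)$ for some strictly-positive-support $f_2$ must itself lie in $\mathcal{U}(C(X))$, which I would pin down by evaluating at characters of $C(X)$ (using that $\supp(f_2)=X$ so the characters on which $f_2$ is nonzero are dense) and then invoking continuity plus the isometry condition to conclude the multiplier function is unimodular.
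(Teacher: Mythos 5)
Your first and third paragraphs are correct and essentially coincide with the paper's argument: $u$ is an MP-partial isometry with $u^\dagger=u^{-1}$, conjugation gives a homeomorphism of $X$ via Gelfand duality, and $u=u\cdot\ind_X$ is a product representation, after which multiplicativity of $\underline{\alpha}$ on classes gives the group homomorphism. The problem is your second paragraph, and it is a genuine error, not just a presentational hedge. You claim that the $\approx$-class of $u$ in $\mathcal{U}_X(A)$ is $u\cdot\mathcal{U}(C(X))$ and in particular that $u\approx u\zeta$ for every $\zeta\in C(X,\T)$. This is false: a witness for $u\approx u\zeta$ requires $f_1,f_2\in C(X)_+$ with full strict support and $uf_1=u\zeta f_2$, hence $f_1=\zeta f_2$; since $f_1,f_2$ are (strictly) positive this forces $\zeta=f_1f_2^{-1}$ to be positive and unimodular, i.e.\ $\zeta=\ind_X$. (The paper relies on exactly this point elsewhere: in the proof of \autoref{Weyl twist lp prop} it is recorded that $f\not\approx\zeta f$ for $\zeta\in\T\setminus\{1\}$, which is what makes the kernel of the Weyl twist a $\T$-bundle rather than trivial.) Your proposed witnesses ``$f_1=|\zeta|=1$ and $f_2$ suitably'' cannot exist for $\zeta\neq 1$.

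Moreover, your own computation is one line away from the correct (and stronger) conclusion that $\approx$ is literally trivial, i.e.\ $u\approx v$ implies $u=v$, which is what the statement asserts and what the paper proves. From $uf_1=vf_2$ with $f_1,f_2$ strictly positive (hence invertible, as $X$ is compact) you correctly get that $w:=u^{-1}v=f_1f_2^{-1}$ lies in $C(X)$ and is an invertible isometry; but $w$ is also \emph{positive}, and a positive function with $\|w\|_\infty\leq 1$ and $\|w^{-1}\|_\infty\leq 1$ is identically $1$. You instead stopped at ``$w$ is $\T$-valued'' and drew the weaker (and then incorrectly symmetrized) conclusion. The final group-homomorphism claim survives either way, since $\beta_{u\zeta}=\beta_u$, but the middle assertion of the proposition is not established as written, and the error would propagate to \autoref{tfg} and the twist reconstruction if taken at face value.
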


\begin{proof}
    Let $u\in\mathcal{U}_X(A)$. Then $u$ is an MP-partial isometry with $u^\dagger = u^{-1}$. By assumption, the operator $\textup{Ad}_{u^{-1}}\colon C(X)\rightarrow C(X)$ given by $\textup{Ad}_{u^{-1}}(f):= u^{-1}fu$ for all $f\in C(X)$, is a well-defined isometric isomorphism. By Gelfand duality, there is a homeomorphism $\beta_u\colon X\rightarrow X$ such that $u^{-1}fu = f\circ \beta_u$ for all $f\in C(X)$. Thus $u$ is in $\textup{PI}_X(A^{**})$ with $U=V=X$. Hence, $u= u\ind_X$ is a spatial normalizer as in \autoref{normalizer def} with $\underline{\alpha}_u = \beta_u$. 
    
    Finally, let $u\approx u'$ in $\mathcal{U}_X(A)$. Choose strictly positive functions $f,f'\in C(X)$ with $uf=u'f'$. Since $f'$ is invertible, we have $f'f^{-1}\in C(X)_+ \cap \, \mathcal{U}_X(A)$. Note that $\mathcal{U}(C(X)) = C(X,\T)$ and thus 
    $C(X)_+\cap \ \mathcal{U}_X(A)= \{\ind_X\}$. This shows that $u=u'f'f^{-1} =u'$. The final claim follows by restricting $\underline{\alpha}_\approx$ to $\mathcal{U}_{X}(A)_{\approx} = \mathcal{U}_{X}(A)$.
\end{proof}
Generalizing \cite[Proposition 5.6]{Mat_homology_2012}, we show that the topological full group of a Weyl groupoid can canonically be computed using normalizing invertible isometries.

\begin{prop}\label{tfg}
    Let $\mathcal{G}$ be a Weyl groupoid with compact unit space $X$ and let $p\in [1,\infty)$. Then $\underline{\alpha}\vert_{\mathcal{U}_{X}}\colon \mathcal{U}_{X}(F_\lambda^p(\mathcal{G})) \rightarrow [\![\mathcal{G}]\!]$ is a surjective group homomorphism with kernel $\mathcal{U}(C(X))$. In particular, we have canonical group isomorphisms \[
        \faktor{\mathcal{U}_{X}(F_\lambda^p(\mathcal{G}))}{\mathcal{U}(C(X))} \cong [\![\mathcal{G}]\!] \cong \{S\in \mathcal{B}(\mathcal{G})\colon \mathsf{d}(S)=\mathsf{r}(S) = X\} .\]
\end{prop}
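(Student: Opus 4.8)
The plan is to build on \autoref{unitaries as sn}, which already provides that $\underline{\alpha}\vert_{\mathcal{U}_X}$ is a well-defined group homomorphism onto its image; it then remains to identify the image with $[\![\mathcal{G}]\!]$ and to compute the kernel, after which the two displayed isomorphisms follow from the first isomorphism theorem and injectivity of $\alpha$. Throughout one uses that, since $\mathcal{G}$ is a Weyl groupoid with compact unit space, the inclusion $C(X)\subseteq F_\lambda^p(\mathcal{G})$ is saturated by \autoref{saturated} and $\ind_X$ is the unit of $F_\lambda^p(\mathcal{G})$ by \autoref{unital cpt}.

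\emph{The image equals $[\![\mathcal{G}]\!]$.} For the inclusion $\subseteq$, I would take $u\in \mathcal{U}_X(F_\lambda^p(\mathcal{G}))$, use \autoref{unitaries as sn} to regard it as a spatial normalizer with $u^\dagger = u^{-1}$ and with $\underline{\alpha}_u$ a homeomorphism of $X$, and then invoke \autoref{saturated} to see that $S:=\supp(u)$ is an open bisection with $\underline{\alpha}_u = \alpha_S$; matching domains and ranges forces $\mathsf{d}(S)=\mathsf{r}(S)=X$, so $\underline{\alpha}_u=\alpha_S\in[\![\mathcal{G}]\!]$. For $\supseteq$, given a full bisection $S\in\mathcal{B}(\mathcal{G})$ I would set $u=\ind_S$, which lies in $F_\lambda^p(\mathcal{G})$ with $\|\ind_S\|=1$ by the isometric inclusion $C_0(S)\subseteq F_\lambda^p(\mathcal{G})$. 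By \autoref{bis realizable} applied with $h=\ind_X$, the element $\ind_S$ realizes $\alpha_S$ and satisfies $\ind_S*\ind_S^* = \ind_{\mathsf{r}(S)}=\ind_X$ and $\ind_S^* *\ind_S = \ind_{\mathsf{d}(S)}=\ind_X$; since $\ind_X$ is the unit, $\ind_S$ is invertible with inverse $\ind_S^*$, which also has norm $1$, so $\ind_S$ is an invertible isometry by \autoref{inv iso lemma}, and it normalizes $C(X)$ by \eqref{ind normal eq}. Hence $\ind_S\in\mathcal{U}_X(F_\lambda^p(\mathcal{G}))$ with $\underline{\alpha}_{\ind_S}=\alpha_S$.

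\emph{The kernel equals $\mathcal{U}(C(X))$.} If $u\in\mathcal{U}_X(F_\lambda^p(\mathcal{G}))$ lies in the kernel, then as a spatial normalizer it realizes $\textup{id}_X$, so saturatedness of $C(X)\subseteq F_\lambda^p(\mathcal{G})$ forces $u\in C(X)$; being an invertible isometry of the unital C*-algebra $C(X)$ it lies in $\mathcal{U}(C(X))=C(X,\T)$. The reverse containment is immediate, so $\ker(\underline{\alpha}\vert_{\mathcal{U}_X})=\mathcal{U}(C(X))$ and the first isomorphism theorem gives the left-hand isomorphism. Finally, since $\mathcal{G}$ is effective, $\alpha$ is injective by \autoref{alpha and eff}, so $S\mapsto\alpha_S$ restricts to a group isomorphism from $\{S\in\mathcal{B}(\mathcal{G})\colon\mathsf{d}(S)=\mathsf{r}(S)=X\}$ onto $[\![\mathcal{G}]\!]$, which is the right-hand isomorphism.

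\emph{Main point.} The only step calling for genuine care is surjectivity, i.e.\ checking that $\ind_S$ really is an element of $\mathcal{U}_X(F_\lambda^p(\mathcal{G}))$; this rests on compactness of $X$ (so that $\ind_X$ is a unit and $\ind_S\in F_\lambda^p(\mathcal{G})$) together with the convolution identities of \autoref{bis realizable}. Everything else is a direct application of \autoref{unitaries as sn}, \autoref{saturated}, and \autoref{inv iso lemma}; crucially, the argument stays inside $F_\lambda^p(\mathcal{G})$ rather than its double dual, so it works uniformly for all $p\in[1,\infty)$, including $p=1$.
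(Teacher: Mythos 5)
Your proposal is correct and follows essentially the same route as the paper's proof: it identifies the image of $\underline{\alpha}\vert_{\mathcal{U}_X}$ with $[\![\mathcal{G}]\!]$ via \autoref{unitaries as sn}, \autoref{saturated} and \autoref{eff realizable have bis support}, obtains surjectivity from the invertible isometries $\ind_S$ of \autoref{bis realizable}, and computes the kernel from saturatedness. The only difference is that you spell out the verification that $\ind_S\in\mathcal{U}_X(F_\lambda^p(\mathcal{G}))$ in more detail than the paper does, which is a welcome elaboration rather than a deviation.
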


\begin{proof}
    By \autoref{unitaries as sn}, we have $\mathcal{U}_X(F_\lambda^p(\mathcal{G}))\subseteq \mathcal{SN}_\lambda^p(\mathcal{G})$. We are in a special case of \autoref{sn inv sem prop} and thus $\underline{\alpha}$ restricts to a group homomorphism onto $\underline{\alpha}(\mathcal{U}_X(F_\lambda^p(\mathcal{G})))\subseteq \mathcal{R}_X(F_\lambda^p(\mathcal{G}))$. Since $\mathcal{G}$ is effective, the bisection action $\alpha$ is faithful by \autoref{alpha and eff}, showing the second isomorphism. Furthermore, $\underline{\alpha}= \alpha \circ \supp$ by \autoref{eff realizable have bis support}, and thus $\underline{\alpha}\vert_{\mathcal{U}_{X}}$ maps into the topological full group. Surjectivity follows from \autoref{bis realizable}, since for a compact open bisection $S$, the invertible isometry $\ind_S\in \mathcal{U}_X(F_\lambda^p(\mathcal{G}))$ realizes $\alpha_S$. Finally, we have $\ker(\underline{\alpha}\vert_{\mathcal{U}_{X}})=\mathcal{U}(C(X))$ by \autoref{saturated}, establishing the first isomorphism.
\end{proof}
For $p\neq 2$, we observe the following rigidity result.

\begin{prop}\label{ub=u}
    Let $p\in [1,\infty)\setminus\{2\}$ and let $A$ be a unital $L^p$-operator algebra with core $C(X)$. Then the invertible isometries automatically normalize the core, that is, $\mathcal{U}_X(A) = \mathcal{U}(A)$.
\end{prop}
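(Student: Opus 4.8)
The plan is to show that for every invertible isometry $u\in\mathcal{U}(A)$, the conjugation map $\mathrm{Ad}_u\colon A\to A$, $a\mapsto uau^{-1}$, is an isometric automorphism of $A$. Since isometric homomorphisms preserve hermitian elements, $\mathrm{Ad}_u$ will then fix the core setwise, and because $\mathrm{core}(A)=C(X)$ for $p\neq 2$, this says exactly that $u$ normalizes $C(X)$, i.e.\ $u\in\mathcal{U}_X(A)$.

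Concretely, I would first fix $u\in\mathcal{U}(A)$ and observe that submultiplicativity of the norm gives $\|u\|\,\|u^{-1}\|\geq\|uu^{-1}\|=1$, so the defining hypotheses $\|u\|,\|u^{-1}\|\leq 1$ (cf.\ \autoref{inv iso lemma}) force $\|u\|=\|u^{-1}\|=1$. Hence $\mathrm{Ad}_u$ is a unital algebra automorphism satisfying $\|\mathrm{Ad}_u(a)\|\leq\|u\|\,\|a\|\,\|u^{-1}\|=\|a\|$ for all $a\in A$; applying the same estimate to $(\mathrm{Ad}_u)^{-1}=\mathrm{Ad}_{u^{-1}}$ shows that $\mathrm{Ad}_u$ is in fact an isometric automorphism of $A$.

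Next I would invoke \autoref{hermitian preservation} for the isometric homomorphism $\mathrm{Ad}_u$, which yields that $a\in A_{\mathrm{h}}$ if and only if $\mathrm{Ad}_u(a)\in A_{\mathrm{h}}$. Since $\mathrm{Ad}_u$ is bijective, this means $\mathrm{Ad}_u(A_{\mathrm{h}})=A_{\mathrm{h}}$, and therefore, by the description of the core in \autoref{core df}, $\mathrm{Ad}_u(\mathrm{core}(A))=\mathrm{Ad}_u(A_{\mathrm{h}})+i\,\mathrm{Ad}_u(A_{\mathrm{h}})=A_{\mathrm{h}}+iA_{\mathrm{h}}=\mathrm{core}(A)$.

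Finally, since $p\neq 2$ we have $\mathrm{core}(A)=C(X)$ (by hypothesis, cf.\ \autoref{core}), so the identity above reads $uC(X)u^{-1}=C(X)$, that is, $u\in\mathcal{U}_X(A)$. As $u\in\mathcal{U}(A)$ was arbitrary and the inclusion $\mathcal{U}_X(A)\subseteq\mathcal{U}(A)$ is immediate from \autoref{ub def}, we conclude $\mathcal{U}_X(A)=\mathcal{U}(A)$. I do not anticipate a genuine obstacle here; the only point requiring a little care is that $\mathrm{Ad}_u$ is two-sidedly isometric rather than merely contractive, which is exactly where the bound $\|u^{-1}\|\leq 1$ is used.
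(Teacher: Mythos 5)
Your proof is correct and follows essentially the same route as the paper: the paper verifies directly that $\|e^{ituhu^{-1}}\|=\|ue^{ith}u^{-1}\|=1$ for hermitian $h$, which is exactly the content of applying \autoref{hermitian preservation} to the isometric automorphism $\mathrm{Ad}_u$ as you do. Packaging the exponential estimate through that lemma is a clean and legitimate shortcut, and the rest of your argument (bijectivity of $\mathrm{Ad}_u$ giving $\mathrm{Ad}_u(A_{\mathrm{h}})=A_{\mathrm{h}}$, hence $uC(X)u^{-1}=C(X)$) matches the paper's conclusion.
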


\begin{proof}
    Let $u\in \mathcal{U}(A)$, let $t\in \R$, and let $h\in A_{\mathrm{h}}$. One readily checks that $ue^{ith}u^{-1} = e^{ituhu^{-1}}$. Hence,
    \begin{align*}
        1&= \|u^{-1}ue^{ith}u^{-1}u\| \leq \|ue^{ith}u^{-1}\| 
         = \|e^{ituhu^{-1}}\| \leq \|u\|\|e^{ith}\|\|u^{-1}\| = 1
    \end{align*}
    and thus $uhu^{-1}\in A_{\mathrm{h}}$. Now $C(X)= A_{\mathrm{h}} + iA_{\mathrm{h}}$ implies that conjugation by $u$ preserves the core and because of invertibility we have $C(X)= uC(X)u^{-1}$.
\end{proof}

If $\mathcal{G}$ is a Weyl groupoid with compact unit space $X$ and $p\neq 2$, combining \autoref{tfg} with \autoref{core = CX for p not 2} and \autoref{ub=u}, we get canonical isomorphisms
\[ [\![\mathcal{G}]\!] \cong \faktor{\mathcal{U}_{X}(F_\lambda^p(\mathcal{G}))}{\mathcal{U}(C(X))} \cong \faktor{\mathcal{U}(F_\lambda^p(\mathcal{G}))}{\mathcal{U}(\textup{core}(F_\lambda^p(\mathcal{G})))}.\] The latter allows for a version of \autoref{ind psi rho} including $q=1$ in the codomain.

\begin{thm}\label{tfg emb}
    Let $p,q\in [1,\infty)\setminus \{2\}$ and let $\mathcal{G}$ and $\mathcal{H}$ be Weyl groupoids with compact unit spaces $X$ and $Y$, respectively. Let $\ph\colon F_\lambda^p(\mathcal{G})\rightarrow F_\lambda^q(\mathcal{H})$ be a unital contractive homomorphism and let $\rho\colon Y\rightarrow X$ be the continuous map satisfying $\ph\vert_{C(X)}=\rho^*$. Then $\ph$ induces a group homomorphism $\psi_0\colon [\![\mathcal{G}]\!] \rightarrow [\![\mathcal{H}]\!]$ such that 
    \[\rho\circ\psi_0(\alpha_S) = \alpha_S\circ \rho\] for all $\alpha_S\in [\![\mathcal{G}]\!]$. Moreover, $\psi_0$ is injective if and only if $\ph\vert_{\mathcal{U}_{X}(F_\lambda^p(\mathcal{G}))}^{-1}(C(Y,\T)) = C(X,\T)$. In particular, if $\ph$ is isometric, then $\psi_0$ is injective.
\end{thm}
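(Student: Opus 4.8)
The plan is to run the whole argument inside the algebras themselves, which is exactly what makes the case $q=1$ accessible. Since $p,q\in[1,\infty)\setminus\{2\}$, \autoref{core = CX for p not 2} identifies $\textup{core}(F_\lambda^p(\mathcal{G}))=C(X)$ and $\textup{core}(F_\lambda^q(\mathcal{H}))=C(Y)$, and \autoref{ub=u} gives $\mathcal{U}(F_\lambda^p(\mathcal{G}))=\mathcal{U}_X(F_\lambda^p(\mathcal{G}))$ and $\mathcal{U}(F_\lambda^q(\mathcal{H}))=\mathcal{U}_Y(F_\lambda^q(\mathcal{H}))$; in particular $\underline{\alpha}$ of \autoref{unitaries as sn} and the quotient description of \autoref{tfg} are available on both sides. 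The key preliminary observation is that $\ph$ restricts to a group homomorphism $\mathcal{U}(F_\lambda^p(\mathcal{G}))\to\mathcal{U}(F_\lambda^q(\mathcal{H}))$: if $u$ is an invertible isometry then $\ph(u)$ is invertible with inverse $\ph(u^{-1})$ and $\|\ph(u)\|\le\|u\|\le 1$, $\|\ph(u^{-1})\|\le\|u^{-1}\|\le 1$, so $\ph(u)$ is again an invertible isometry by \autoref{inv iso lemma}. This is the step where working with invertible isometries rather than MP-partial isometries pays off: no reflexivity, double dual, or \autoref{Ph extension} is needed.

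Next I would construct $\psi_0$. By the core-preservation part of \autoref{sn preserved}, $\ph\vert_{C(X)}$ is a unital $*$-homomorphism into $C(Y)$, hence of the form $\rho^*$ for a unique continuous $\rho\colon Y\to X$; in particular $\ph$ maps $\mathcal{U}(C(X))=C(X,\T)$ into $\mathcal{U}(C(Y))=C(Y,\T)$. Then the composite group homomorphism
\[\mathcal{U}(F_\lambda^p(\mathcal{G}))\xrightarrow{\ \ph\ }\mathcal{U}(F_\lambda^q(\mathcal{H}))\xrightarrow{\ \underline{\alpha}\ }[\![\mathcal{H}]\!]\]
has kernel containing $\mathcal{U}(C(X))$, which by \autoref{tfg} is precisely the kernel of the surjection $\underline{\alpha}\colon\mathcal{U}(F_\lambda^p(\mathcal{G}))\to[\![\mathcal{G}]\!]$. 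Hence $\underline{\alpha}\circ\ph$ factors through a group homomorphism $\psi_0\colon[\![\mathcal{G}]\!]\to[\![\mathcal{H}]\!]$, determined by $\psi_0(\underline{\alpha}_u)=\underline{\alpha}_{\ph(u)}$.

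For the equivariance, fix $\alpha_S\in[\![\mathcal{G}]\!]$ and pick $u\in\mathcal{U}(F_\lambda^p(\mathcal{G}))$ with $\underline{\alpha}_u=\alpha_S$ (e.g.\ $u=\ind_S$ for a compact open bisection with full domain and range, via \autoref{bis realizable}), so $\psi_0(\alpha_S)=\underline{\alpha}_{\ph(u)}$. Applying the realization identity for the $C(Y)$-normalizing invertible isometry $\ph(u)$ (\autoref{unitaries as sn}) to $\ph(f)=f\circ\rho$ for $f\in C(X)$, together with multiplicativity of $\ph$ and the identity $u^{-1}fu=f\circ\underline{\alpha}_u$, one gets
\[(f\circ\rho)\circ\underline{\alpha}_{\ph(u)}=\ph(u)^{-1}\ph(f)\ph(u)=\ph(u^{-1}fu)=\ph(f\circ\underline{\alpha}_u)=(f\circ\underline{\alpha}_u)\circ\rho.\]
Since $C(X)$ separates the points of $X$, this forces $\rho\circ\underline{\alpha}_{\ph(u)}=\underline{\alpha}_u\circ\rho$, i.e.\ $\rho\circ\psi_0(\alpha_S)=\alpha_S\circ\rho$, as desired.

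Finally, for injectivity, $\ker\psi_0$ corresponds under $[\![\mathcal{G}]\!]\cong\mathcal{U}(F_\lambda^p(\mathcal{G}))/\mathcal{U}(C(X))$ to $\ker(\underline{\alpha}\circ\ph)=\{u\in\mathcal{U}(F_\lambda^p(\mathcal{G}))\colon\ph(u)\in\mathcal{U}(C(Y))\}$, which by \autoref{ub=u} equals $\ph\vert_{\mathcal{U}_X(F_\lambda^p(\mathcal{G}))}^{-1}(C(Y,\T))$; as this set always contains $\mathcal{U}(C(X))=C(X,\T)$, $\psi_0$ is injective exactly when the two coincide, which is the stated condition. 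If $\ph$ is isometric, then $\ph\vert_{C(X)}$ is isometric, hence injective, so $\rho$ is surjective; and for $u\in\mathcal{U}_X(F_\lambda^p(\mathcal{G}))$ with $\ph(u)\in C(Y,\T)$, commutativity of $C(Y)$ and the realization identities give $(f\circ\underline{\alpha}_u)\circ\rho=\ph(u^{-1}fu)=\ph(u)^{-1}\ph(f)\ph(u)=\ph(f)=f\circ\rho$ for all $f\in C(X)$, so surjectivity of $\rho$ yields $\underline{\alpha}_u=\textup{id}_X$ and hence $u\in C(X,\T)$ by \autoref{tfg}; thus $\psi_0$ is injective. There is no single hard step here — the substance lies in \autoref{core pres chapter} and the earlier results of this section — so the only real point of care is to keep the argument within results valid at $q=1$ (namely \autoref{core = CX for p not 2}, \autoref{ub=u}, \autoref{tfg}, \autoref{unitaries as sn} and the elementary preservation of invertible isometries), deliberately avoiding \autoref{Ph extension} and the spatial-normalizer part of \autoref{sn preserved}; and to remember that $\rho$ need not be surjective for a merely contractive $\ph$, which is precisely why the extra hypothesis is required for the injectivity of $\psi_0$.
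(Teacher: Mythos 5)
Your proposal is correct and follows the same overall strategy as the paper: work with invertible isometries (so that \autoref{inv iso lemma} and \autoref{ub=u} replace the double-dual machinery), observe that $\ph$ carries $\mathcal{U}(C(X))=C(X,\T)$ into $C(Y,\T)$, and factor $\underline{\alpha}\circ\ph$ through the quotient description of $[\![\mathcal{G}]\!]$ from \autoref{tfg}. There are two points where your route genuinely differs, both to your advantage. First, for the equivariance identity $\rho\circ\psi_0(\alpha_S)=\alpha_S\circ\rho$ the paper simply invokes the final part of \autoref{ind psi rho}, whose hypotheses formally require $q\neq 1$; your direct two-line computation using $\ph(u)^{-1}\ph(f)\ph(u)=\ph(u^{-1}fu)$ and the realization identity from \autoref{unitaries as sn} is self-contained and valid uniformly for $q\in[1,\infty)\setminus\{2\}$, which is cleaner given that the whole point of the section is to include $q=1$. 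Second, for the implication ``isometric $\Rightarrow$ $\psi_0$ injective'' the paper argues via \autoref{hermitian preservation} that $\ph^{-1}(C(Y))=C(X)$, whereas you use surjectivity of $\rho$ together with the equivariance to force $\underline{\alpha}_u=\mathrm{id}_X$ and then saturation via \autoref{tfg}; both arguments are valid, and the paper's is marginally more direct while yours reuses machinery you have already set up. No gaps.
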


\begin{proof}
    If $q\neq 1$, the existence of $\psi_0$ follows from \autoref{ind psi rho} as it is the restriction $\psi\vert_{\{S\in \mathcal{S}\colon \mathsf{d}(S)=\mathsf{r}(S)=X\}}$ under the second isomorphism in \autoref{tfg}. In general, if we allow for $q=1$, we first note that $\ph$ preserves invertible isometries. Since $p\neq 2$ and $q\neq 2$, \autoref{ub=u} applies and $\ph$ restricts to a group homomorphism \[\ph\vert_{\mathcal{U}_{X}(F_\lambda^p(\mathcal{G}))}\colon \mathcal{U}_{X}(F_\lambda^p(\mathcal{G})) \rightarrow \mathcal{U}_{Y}(F_\lambda^q(\mathcal{H}))\] such that $\ph(C(X,\T))\subseteq C(Y,\T)$. We obtain an induced homomorphism \[\ph_0\colon \faktor{\mathcal{U}_{X}(F_\lambda^p(\mathcal{G}))}{\mathcal{U}(C(X))}\rightarrow \faktor{\mathcal{U}_{Y}(F_\lambda^q(\mathcal{H}))}{\mathcal{U}(C(Y))}.\] Using the isomorphisms in \autoref{tfg}, for all $S\in \mathcal{B}(\mathcal{G})$ with $\mathsf{d}(S)=\mathsf{r}(S)=X$, the homomorphism $\ph_0$ corresponds to the homomorphism $\psi_0\colon [\![\mathcal{G}]\!] \rightarrow [\![\mathcal{H}]\!]$ given by $\psi_0(\alpha_S):= \alpha_{\supp(\ph(\ind_S))}$. The identity $\rho\circ\psi_0(\alpha_S) = \alpha_S\circ \rho$ follows from the final part of \autoref{ind psi rho}. Finally, $\psi_0$ is injective if and only if $\ph_0$ is. By using the definition of quotient groups, this is equivalent to \[\ph\vert_{\mathcal{U}_{X}(F_\lambda^p(\mathcal{G}))}^{-1}(C(Y,\T)) = C(X,\T),\]
    as claimed. If $\ph$ is isometric, we may apply \autoref{hermitian preservation} to $F_\lambda^p(\mathcal{G})_{\mathrm{h}} = C(X,\R)$ and obtain $\ph^{-1}(C(Y,\R))= C(X,\R)$. This implies that $\ph^{-1}(C(Y))= C(X)$ and thus any element in $\ph\vert_{\mathcal{U}_{X}(F_\lambda^p(\mathcal{G}))}^{-1}(C(Y,\T))$ is a unitary in $C(X)$. Hence $\psi_0$ is injective.
\end{proof}

For non-effective groupoids, \autoref{tfg} no longer applies and \autoref{tfg emb} breaks down for multiple reasons, as we show next.

\begin{eg}\label{Z counterex}
Let $G$ be a discrete abelian group, let $p\neq 2$, and set $A = F_\lambda^p(G)$. Then the Gelfand transform is a unital isometric homomorphism $\ph\colon A\rightarrow C(\widehat{G})$, and we have $\textup{core}(A)=\C\cdot \delta_e$. The invertible isometries of $A$ are given by the $\mathbb{T}$-multiples of the canonical generators $\{\delta_g \colon g\in G\}$. Thus,
\[\faktor{\mathcal{U}_{\{e\}}(A)}{\mathcal{U}(\textup{core}(A))} \cong \faktor{G\times \T}{\T} \cong G \andeqn \faktor{\mathcal{U}_{\widehat{G}}(C(\widehat{G}))}{\mathcal{U}(\textup{core}(C(\widehat{G})))} \cong \{e\}.\]
Note that $\ph$ is isometric, while the induced homomorphism $\ph_0\colon G\rightarrow \{e\}$ is not injective if $G$ is non-trivial. 
\end{eg}

We proceed to introduce a class of ample Weyl groupoids for which the data in \autoref{tfg emb} suffice to apply the actor theory in \autoref{actor chapter}.

\begin{df}\label{tfg gpd df}
An \textit{ample groupoid} is an \'etale groupoid whose topology has a basis of compact open sets. 
For an \'etale groupoid $\mathcal{G}$, we set 
\[ [\![\mathcal{G}]\!]_{\textup{loc}} := \left\{S\in \mathcal{B}(\mathcal{G})\colon\,\,\,
\begin{aligned}
	&\textup{there exist}\,\, T\in \alpha^{-1}([\![\mathcal{G}]\!])\,\,\textup{and}\\ &U\subseteq X\,\,\textup{clopen such that}\,\, S=TU
\end{aligned}\right\}.\]

We say that an ample groupoid $\mathcal{G}$ with compact unit space $X$ satisfies \emph{Condition (W)} if for every $x\in X$, we have $\mathcal{G}x\setminus x\mathcal{G}x \neq \emptyset$. 
\end{df}

Note that an ample Weyl groupoid which is minimal automatically satisfies condition (W). The definition of condition (W) is tailored to the proof of \autoref{tfg wide} and refers to ``wide''; see \autoref{wide}. We also mention, without proof, that the transformation groupoid of a topologically free group action by a discrete group on the Cantor space satisfies condition (W) if and only if there is no global fixed point. 

\begin{rem}\label{aw example}
It is easy to see that products of groupoids with condition (W) again satisfy condition (W).
\end{rem}

\begin{prop}\label{tfg wide}
    Let $\mathcal{G}$ be an ample, \'etale groupoid with condition (W). Then $[\![\mathcal{G}]\!]_{\textup{loc}} \subseteq \mathcal{B}(\mathcal{G})$ is wide. In particular, we have $\mathcal{G}\cong [\![\mathcal{G}]\!]_{\textup{loc}}\ltimes_\alpha \mathcal{G}^{(0)}$.
\end{prop}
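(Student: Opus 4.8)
The plan is to verify the three defining properties of a wide inverse subsemigroup from \autoref{wide}: that $[\![\mathcal{G}]\!]_{\textup{loc}}$ is an inverse subsemigroup of $\mathcal{B}(\mathcal{G})$, that it covers $\mathcal{G}$, and that the intersection of any two members is a union of members. The concluding isomorphism $\mathcal{G}\cong [\![\mathcal{G}]\!]_{\textup{loc}}\ltimes_\alpha \mathcal{G}^{(0)}$ is then immediate from \autoref{etale as trafo grpd}. First I would record that $\alpha^{-1}([\![\mathcal{G}]\!])$, the set of open bisections $T$ with $\mathsf{d}(T)=\mathsf{r}(T)=X$, is itself an inverse subsemigroup of $\mathcal{B}(\mathcal{G})$ (closed under products and under $T\mapsto T^{-1}$, since $\mathsf{d}(T^{-1})=\mathsf{r}(T)=X$ etc.). For the closure of $[\![\mathcal{G}]\!]_{\textup{loc}}$ under involution, if $S=TU$ with $T\in\alpha^{-1}([\![\mathcal{G}]\!])$ and $U\subseteq X$ clopen, then $S^{-1}=U T^{-1}=T^{-1}(T U T^{-1})$ and $\alpha_T(U)=TUT^{-1}\cap X$ is clopen since $\alpha_T$ is a homeomorphism of $X$; thus $S^{-1}=T^{-1}\alpha_T(U)\in[\![\mathcal{G}]\!]_{\textup{loc}}$. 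For closure under products, given $S_i=T_iU_i$ for $i=1,2$, write $S_1S_2=T_1U_1T_2U_2=T_1T_2\big(T_2^{-1}U_1T_2\big)U_2=(T_1T_2)\big(\alpha_{T_2}^{-1}(U_1)\cap U_2\big)$, and $\alpha_{T_2}^{-1}(U_1)\cap U_2$ is clopen; since $T_1T_2\in\alpha^{-1}([\![\mathcal{G}]\!])$, this lies in $[\![\mathcal{G}]\!]_{\textup{loc}}$.

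Next I would show $[\![\mathcal{G}]\!]_{\textup{loc}}$ covers $\mathcal{G}$; this is the one place where condition (W) and ampleness enter. Fix $\gamma\in\mathcal{G}$. If $\gamma\in X$, then any clopen neighborhood of $\gamma$ in $X$ is a member containing $\gamma$, using that $\mathcal{G}$ is ample. If $\gamma\notin X$, pick a compact open bisection $V\ni\gamma$ with $V\cap X=\emptyset$ (possible since $\mathcal{G}^{(0)}$ is clopen and $\mathcal{G}$ is ample), and shrink $V$ so that $\mathsf{d}(V)$ and $\mathsf{r}(V)$ are disjoint compact open subsets of $X$ — this uses $\gamma\notin\mathrm{Iso}(\mathcal{G})$, which follows from $\mathsf{d}(\gamma)\ne\mathsf{r}(\gamma)$; I should note that $\gamma\notin X$ alone does not give $\mathsf{d}(\gamma)\ne\mathsf{r}(\gamma)$, but here the cleaner route is: we only need to \emph{extend} $V$ to a full bisection, not assume $\gamma$ is off the isotropy. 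So instead: set $U:=\mathsf{d}(V)$, a compact open subset of $X$; it suffices to find $T\in\alpha^{-1}([\![\mathcal{G}]\!])$ with $TU=V$, equivalently with $T|_U$ agreeing with $V$. Using condition (W), for each $x$ there is an element of $\mathcal{G}x$ outside $x\mathcal{G}x$; a compactness-and-patching argument over $X$ then produces a full bisection $T$ whose restriction to $U$ is $V$: partition $X$ into finitely many compact open pieces, on $U$ use $V$, and on $X\setminus U$ build the remaining part of $T$ as a clopen "rearrangement" bisection permitted by (W) — concretely, one uses (W) to guarantee the map $\mathsf{r}\circ(\mathsf{d}|_V)^{-1}\colon U\to\mathsf{r}(V)$ extends to a homeomorphism of $X$ realized by a bisection in $[\![\mathcal{G}]\!]$. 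I expect this extension step to be the main obstacle and the technical heart of the proof.

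For the intersection property, take $S_i=T_iU_i\in[\![\mathcal{G}]\!]_{\textup{loc}}$, $i=1,2$. Then $S_1\cap S_2$ is an open bisection; I would show it is a union of members by checking that for each $\gamma\in S_1\cap S_2$ there is a member contained in $S_1\cap S_2$ and containing $\gamma$. Since $\mathsf{d}|_{S_1\cap S_2}$ is a homeomorphism onto an open subset $W\subseteq X$ and $\mathcal{G}$ is ample, $W$ contains a compact open neighborhood $U_\gamma$ of $\mathsf{d}(\gamma)$; then $S_1\cap S_2$ restricted to $U_\gamma$ equals $T_1(U_1\cap\mathsf{d}(S_1\cap S_2)\cap U_\gamma)$, and since on $U_\gamma$ the bisections $S_1,S_2$ (hence $S_1\cap S_2$) coincide with $T_1|_{U_\gamma}$ up to the clopen domain restriction, this is a member of $[\![\mathcal{G}]\!]_{\textup{loc}}$; here I again invoke that clopen restrictions of $T_1\in\alpha^{-1}([\![\mathcal{G}]\!])$ lie in $[\![\mathcal{G}]\!]_{\textup{loc}}$ by definition. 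Having verified all three conditions, \autoref{wide} gives that $[\![\mathcal{G}]\!]_{\textup{loc}}$ is a wide inverse subsemigroup, and \autoref{etale as trafo grpd} yields $\Psi\colon[\![\mathcal{G}]\!]_{\textup{loc}}\ltimes_\alpha\mathcal{G}^{(0)}\xrightarrow{\ \cong\ }\mathcal{G}$, completing the proof.
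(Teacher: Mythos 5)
Your verification that $[\![\mathcal{G}]\!]_{\textup{loc}}$ is closed under products and inversion, and your local argument for the intersection property, are both fine and essentially match the paper. The problem is the covering step, which you yourself flag as ``the main obstacle and the technical heart of the proof'' and then do not carry out. What you propose there --- fix a compact open bisection $V\ni\gamma$, set $U=\mathsf{d}(V)$, and produce $T\in\alpha^{-1}([\![\mathcal{G}]\!])$ with $TU=V$ by ``a compactness-and-patching argument'' --- is both stronger than what is needed (covering only asks for \emph{some} member of $[\![\mathcal{G}]\!]_{\textup{loc}}$ containing $\gamma$, not one restricting a prescribed $V$) and not something condition (W) delivers in any evident way: extending a given partial bisection to a globally defined one is exactly the kind of statement that can fail for ample groupoids (there are index/measure-type obstructions in general), and nothing in your sketch explains how (W), which only provides a single non-isotropy arrow out of each unit, would let you match $X\setminus U$ with $X\setminus\mathsf{r}(V)$ by a bisection. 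So as written, the proof of the covering property --- the only place where (W) and ampleness genuinely enter --- is missing.

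The paper's route avoids the extension problem entirely. For $\gamma$ with $\mathsf{d}(\gamma)\neq\mathsf{r}(\gamma)$, one shrinks a compact open bisection $T'\ni\gamma$ to $T:=VT'U$ with $\mathsf{d}(T)\cap\mathsf{r}(T)=\emptyset$ and symmetrizes: $S_\gamma:=T\cup T^{-1}\cup\big(X\setminus(\mathsf{d}(T)\cup\mathsf{r}(T))\big)$ is a full bisection containing $\gamma$, so no extension of a prescribed partial homeomorphism is ever needed, only disjointness of domain and range. For an isotropy element $\gamma$ (the case your sketch never resolves), condition (W) provides $\gamma_2\in x\mathcal{G}\mathsf{d}(\gamma)$ with $x\neq\mathsf{d}(\gamma)$; writing $\gamma=(\gamma\gamma_2^{-1})\gamma_2$ expresses $\gamma$ as a product of two non-isotropy elements, and the product $S_{\gamma_1}S_{\gamma_2}$ of the two full bisections obtained from the previous case is a full bisection containing $\gamma$. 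If you replace your extension step with this factorization argument, the rest of your proof goes through.
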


\begin{proof}
The set $[\![\mathcal{G}]\!]_{\textup{loc}}$ contains the unit space and is clearly closed under products and inversion. We proceed to show that the bisections of the topological full group $\alpha^{-1}([\![\mathcal{G}]\!])$ cover $\mathcal{G}$. Let $\gamma\in \mathcal{G}$. If $\mathsf{d}(\gamma)\neq \mathsf{r}(\gamma)$, we can separate them by disjoint compact open neighborhoods $U$ and $V$ since $X$ is Hausdorff and totally disconnected. Choose a compact open bisection $S'$ containing $\gamma$. Then $S:= VS'U$ is a compact open bisection containing $\gamma$ with $\mathsf{d}(S)\cap \mathsf{r}(S) = \emptyset$. Set $T_\gamma:= S \cup S^{-1}\cup  X\setminus(\mathsf{d}(S)\cup \mathsf{r}(S))$, which is a bisection neighborhood with full domain and range in $\alpha^{-1}([\![\mathcal{G}]\!])$. If $\mathsf{d}(\gamma)= \mathsf{r}(\gamma)$, by assumption there exists $x\in \mathsf{r}(\mathcal{G}\mathsf{d}(\gamma))\setminus \{\mathsf{d}(\gamma)\}$. Choose $\gamma_2\in x\mathcal{G}\mathsf{d}(\gamma)$ and set $\gamma_1:= \gamma \gamma_2^{-1}.$ Then $\gamma = \gamma_1\gamma_2$ is the product of two non-isotropy elements and $S_{\gamma_1}S_{\gamma_2}\in \alpha^{-1}([\![\mathcal{G}]\!])$ is the desired neighborhood.     
Finally, in order to show that $[\![\mathcal{G}]\!]_{\textup{loc}}$ is wide, it suffices to show that it is closed under intersections. For $T_1U_1, T_2U_2\in [\![\mathcal{G}]\!]_{\textup{loc}}$, we have $T_1U_1\cap T_2U_2 = T_1(\mathsf{d}(T_1\cap T_2)\cap U_1\cap U_2)$. The last assertion follows from \autoref{etale as trafo grpd}.
\end{proof}

For groupoids with property (W), \autoref{tfg emb} results in an induced actor analogous to \autoref{ph induces actor}. In this case, by \autoref{automatic psi inj}, we do not even need to assume that $\ph$ is isometric for $\psi_0$ to be injective.

\begin{thm}\label{tfg actor and inj}
    Let $p,q\in [1,\infty)\setminus \{2\}$ and let $\mathcal{G}$ and $\mathcal{H}$ be ample Weyl groupoids satisfying condition (W) with compact unit spaces $X$ and $Y$, respectively. Let $\ph\colon F_\lambda^p(\mathcal{G})\rightarrow F_\lambda^q(\mathcal{H})$ be a unital contractive homomorphism, let $\rho\colon Y\rightarrow X$ be the continuous map satisfying $\ph\vert_{C(X)}=\rho^*$, and let $\psi_0\colon [\![\mathcal{G}]\!]\rightarrow [\![\mathcal{H}]\!]$ be as in \autoref{tfg emb}. Then, for $TU\in [\![\mathcal{G}]\!]_{\textup{loc}}$, the assignment \[\psi(TU):= (\alpha^{-1}\circ\psi_0\circ\alpha)(T)\cdot\rho^{-1}(U)\] is a well-defined unital semigroup homomorphism $\psi\colon [\![\mathcal{G}]\!]_{\textup{loc}}\rightarrow [\![\mathcal{H}]\!]_{\textup{loc}}$. Moreover, the pair $(\psi,\rho)$ is an actor $h\colon \mathcal{G}\curvearrowright \mathcal{H}$. In particular, $\psi_0$ is injective if $\rho$ is surjective. 
\end{thm}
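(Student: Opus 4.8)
The plan is to assemble the actor from the pieces already in place: the group homomorphism $\psi_0\colon [\![\mathcal{G}]\!]\to [\![\mathcal{H}]\!]$ from \autoref{tfg emb}, the anchor map $\rho\colon Y\to X$, and the identification $\mathcal{G}\cong [\![\mathcal{G}]\!]_{\textup{loc}}\ltimes_\alpha X$ (resp.\ for $\mathcal{H}$) from \autoref{tfg wide}. The key point is to verify the hypotheses of \autoref{psi rho}, namely that the pair $(\psi,\rho)$ — with $\psi$ as defined in the statement — is an inverse semigroup homomorphism $[\![\mathcal{G}]\!]_{\textup{loc}}\to [\![\mathcal{H}]\!]_{\textup{loc}}$ satisfying $\mathsf{d}(\psi(S))=\rho^{-1}(\mathsf{d}(S))$ and $\rho\circ\alpha_{\psi(S)}=\alpha_S\circ\rho$ for all $S\in [\![\mathcal{G}]\!]_{\textup{loc}}$. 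Once this is checked, \autoref{psi rho} immediately yields the actor $h\colon\mathcal{G}\curvearrowright\mathcal{H}$, and the last sentence then follows from \autoref{automatic psi inj}: if $\rho$ is surjective and $\mathcal{G}$ is effective (which it is, being a Weyl groupoid), then $\psi$ is injective, and since $\psi$ restricted to the sub-inverse-semigroup of full bisections is essentially $\alpha^{-1}\circ\psi_0\circ\alpha$, injectivity of $\psi$ forces injectivity of $\psi_0$.

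First I would check that $\psi$ is well-defined: an element of $[\![\mathcal{G}]\!]_{\textup{loc}}$ may be written as $TU$ in more than one way, so I need $(\alpha^{-1}\circ\psi_0\circ\alpha)(T_1)\cdot\rho^{-1}(U_1) = (\alpha^{-1}\circ\psi_0\circ\alpha)(T_2)\cdot\rho^{-1}(U_2)$ whenever $T_1U_1 = T_2U_2$. The crucial compatibility is the equivariance $\rho\circ\psi_0(\alpha_S) = \alpha_S\circ\rho$ from \autoref{tfg emb}, which translates (via Gelfand duality / the action on units) into $\rho^{-1}$ intertwining the restrictions of the full bisections; combined with the formula $T_1U_1\cap T_2U_2 = T_1(\mathsf{d}(T_1\cap T_2)\cap U_1\cap U_2)$ used in the proof of \autoref{tfg wide}, this pins down $\psi(TU)$ as $(\alpha^{-1}\circ\psi_0\circ\alpha)(T)$ restricted to $\rho^{-1}(\mathsf{d}(TU))$, independent of the presentation. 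That same computation shows $\mathsf{d}(\psi(TU)) = \rho^{-1}(\mathsf{d}(TU))$. Multiplicativity $\psi(S_1S_2) = \psi(S_1)\psi(S_2)$ is then a routine unwinding: write $S_i = T_iU_i$, use that $\psi_0$ is a group homomorphism on $[\![\mathcal{G}]\!]$, that $\rho^{-1}$ commutes with intersections, and the bisection-product identity. The equivariance condition $\rho\circ\alpha_{\psi(S)} = \alpha_S\circ\rho$ reduces on full bisections to the statement of \autoref{tfg emb} and on clopen idempotents $U$ to the tautology $\rho\circ\mathrm{id}_{\rho^{-1}(U)} = \mathrm{id}_U\circ\rho|_{\rho^{-1}(U)}$; combining the two multiplicatively gives it for all $TU$.

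The main obstacle, I expect, is the well-definedness and the verification that $\psi$ actually lands in $[\![\mathcal{H}]\!]_{\textup{loc}}$ rather than in some larger set of bisections — one has to be careful that $\rho^{-1}(U)$ is clopen (it is, since $\rho$ is continuous and $U$ is clopen, and $Y$ is totally disconnected) and that $(\alpha^{-1}\circ\psi_0\circ\alpha)(T)$ is a compact open bisection in $\alpha^{-1}([\![\mathcal{H}]\!])$ (which holds because $\psi_0(\alpha_T)\in [\![\mathcal{H}]\!]$ and ampleness of $\mathcal{H}$ guarantees the realizing bisection is compact open), so the product $(\alpha^{-1}\circ\psi_0\circ\alpha)(T)\cdot\rho^{-1}(U)$ is indeed of the required form $T'U'$. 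A secondary subtlety is that $\rho$ need not be open in general, so one cannot appeal to openness when checking equivariance — but this is exactly why the argument routes through \autoref{tfg emb} rather than through the actor machinery of \autoref{actor chapter} directly. Once all of this is in hand, \autoref{psi rho} and \autoref{automatic psi inj} finish the proof: surjectivity of $\rho$ together with effectiveness of $\mathcal{G}$ gives injectivity of $\psi$, hence of $\psi_0$.
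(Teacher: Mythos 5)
Your overall architecture coincides with the paper's: identify $\mathcal{G}\cong [\![\mathcal{G}]\!]_{\textup{loc}}\ltimes_\alpha X$ via \autoref{tfg wide}, verify that $(\psi,\rho)$ satisfies the hypotheses of \autoref{psi rho} to produce the actor, and conclude injectivity from \autoref{automatic psi inj}. However, there is a gap in the step you yourself single out as the main obstacle, namely well-definedness of $\psi$. You propose to derive it from the equivariance $\rho\circ\alpha_{\psi_0(\alpha_T)}=\alpha_T\circ\rho$ of \autoref{tfg emb} together with the intersection formula from \autoref{tfg wide}. This does not suffice when $\rho$ fails to be injective: if $T_1U=T_2U$, equivariance only yields $\rho\circ\alpha_{\widetilde{\psi}_0(T_1)}\vert_{\rho^{-1}(U)}=\rho\circ\alpha_{\widetilde{\psi}_0(T_2)}\vert_{\rho^{-1}(U)}$, where $\widetilde{\psi}_0:=\alpha^{-1}\circ\psi_0\circ\alpha$, i.e.\ the two candidate bisections move each $y\in\rho^{-1}(U)$ into the same $\rho$-fiber, not to the same point. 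Nothing in the equivariance relation alone "pins down" $\widetilde{\psi}_0(T)\rho^{-1}(U)$ in terms of $TU$.

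The missing ingredient is the concrete description $\widetilde{\psi}_0(T)=\supp(\ph(\ind_T))$ from the proof of \autoref{tfg emb}, combined with linearity and multiplicativity of $\ph$ on indicator functions: writing $\ind_{T_i}=\ind_{T_1U}+\ind_{T_i(X\setminus U)}$ and using $\ph(\ind_U)=\ind_{\rho^{-1}(U)}$, one finds that $\supp(\ph(\ind_{T_i}))$ decomposes as the disjoint union of $\supp(\ph(\ind_{T_1U}))$ (sitting over $\rho^{-1}(U)$) and $\supp(\ph(\ind_{T_i(X\setminus U)}))$ (sitting over $\rho^{-1}(X\setminus U)$), whence $\widetilde{\psi}_0(T_1)\rho^{-1}(U)=\supp(\ph(\ind_{T_1U}))=\widetilde{\psi}_0(T_2)\rho^{-1}(U)$. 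This is the paper's argument, and it is genuinely algebraic (it uses the homomorphism $\ph$ itself, not merely the induced data $(\psi_0,\rho)$). Your remaining verifications --- multiplicativity via $\rho^{-1}(\alpha_{T_2}^{-1}(U_1))=\alpha_{\psi_0(\alpha_{T_2})}^{-1}(\rho^{-1}(U_1))$, the range condition $\mathsf{d}(\psi(TU))=\rho^{-1}(\mathsf{d}(TU))$, landing in $[\![\mathcal{H}]\!]_{\textup{loc}}$, and the endgame via \autoref{psi rho} and \autoref{automatic psi inj} --- are sound and match the paper once well-definedness is repaired along these lines.
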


\begin{proof}
    By the second isomorphism in \autoref{tfg} and by \autoref{tfg emb}, the homomorphism $\ph$ induces a group homomorphism $\widetilde{\psi}_0 := \alpha^{-1}\circ\psi_0\circ\alpha$. We claim that our definition of $\psi$ extends this group homomorphism to a unital semigroup homomorphism by restricting the domain. To see that $\psi$ is well-defined, let $T_1U= T_2U\in [\![\mathcal{G}]\!]_{\textup{loc}}$. Then $\ind_{T_i} = \ind_{T_1U} + \ind_{T_i (X\setminus U)}$ and since taking preimages under $\rho$ preserves disjoint domain sets, we obtain a disjoint union
    \[ \widetilde{\psi}_0(T_i) = \supp(\ph(\ind_{T_1U}) + \ph(\ind_{T_i(X\setminus U)})) = \supp(\ph(\ind_{T_1U})) \cup \supp(\ph(\ind_{T_i(X\setminus U)})).
    \]
    In particular, $\widetilde{\psi}_0(T_1)\rho^{-1}(U) = \widetilde{\psi}_0(T_2)\rho^{-1}(U)$. This shows that $\psi$ is well-defined. To see multiplicativity, for $T_1U_1,\,T_2U_2\in [\![\mathcal{G}]\!]_{\textup{loc}}$, we first observe that $\rho^{-1}(\alpha_{T_2}^{-1}(U_1)) = \alpha_{\psi_0(T_2)}^{-1}(\rho^{-1}(U_1))$, and using this at the third step, we get
    \begin{align*}
        \psi(T_1U_1T_2U_2) &= \widetilde{\psi}_0(T_1T_2)\rho^{-1}(\alpha_{T_2^{-1}}(U_1)\cap U_2)\\
        &= \widetilde{\psi}_0(T_1)\widetilde{\psi}_0(T_2)\rho^{-1}(\alpha_{T_2^{-1}}(U_1))\rho^{-1}(U_2)\\
        &= \widetilde{\psi}_0(T_1)\alpha_{\widetilde{\psi}_0(T_2)}(\rho^{-1}(\alpha_{T_2^{-1}}(U_1)))\widetilde{\psi}_0(T_2)\rho^{-1}(U_2)\\
        &= \widetilde{\psi}_0(T_1)\rho^{-1}(U_1)\widetilde{\psi}_0(T_2)\rho^{-1}(U_2)\\
        &= \psi(T_1U_1)\psi(T_2U_2).
    \end{align*}
    By \autoref{tfg wide} and \autoref{psi rho}, the pair $(\psi,\rho)$ defines an actor $h\colon \mathcal{G}\curvearrowright \mathcal{H}$. Since $\mathcal{G}$ is effective, \autoref{automatic psi inj} implies that $\psi$, and thereby also $\psi_0$, is injective if $\rho$ is surjective.
\end{proof}
The actor in \autoref{tfg actor and inj} allows us to prove a characterization analogous to \autoref{isometric equivalences}.

\begin{thm}
    Let $p\in [1,\infty)\setminus \{2\}$ and let $\mathcal{G}$ and $\mathcal{H}$ be ample Weyl groupoids satisfying condition (W) with unit spaces $X$ and $Y$. Let $\ph\colon F_\lambda^p(\mathcal{G})\rightarrow F_\lambda^p(\mathcal{H})$ be a unital contractive homomorphism. The following are equivalent:
    \begin{enumerate}
        \item There are an intermediate Weyl groupoid $\mathcal{K}$ and twist homomorphisms 
        \[ \xymatrix{\mathcal{G} & \mathcal{K}\ar[l]_-{\pi} \ar[r]^-{\iota} & \mathcal{H}}
        \]
        with surjective and fiberwise bijective $\pi$ and injective and unitwise bijective $\iota$ with open image such that $\ph= \iota_*\circ \pi^*$ as in \autoref{diagram notation}.
        \item $\ph$ is isometric and satisfies $\ph\circ E_{\mathcal{G}} = E_{\mathcal{H}} \circ \ph$.
        \item $\ph\vert_{C(X)}$ is injective and we have $\ph\circ E_{\mathcal{G}} = E_{\mathcal{H}} \circ \ph$.
    \end{enumerate}
    Moreover, if $\mathcal{G}$ is principal or if the anchor map $\rho$ is open, then freeness of the induced actor in \autoref{tfg actor and inj} is automatic and $\ph$ is an actor homomorphism as in $(1)$, except that $\pi$ may not be surjective. In this case, being isometric is equivalent to $\ph\vert_{C(X)}$ being injective. 
\end{thm}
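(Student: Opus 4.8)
The plan is to run the proof of \autoref{isometric equivalences} essentially verbatim, replacing the role of \autoref{ph induces actor} by that of \autoref{tfg actor and inj} together with the wide inverse subsemigroup $[\![\mathcal{G}]\!]_{\textup{loc}}$ from \autoref{tfg wide}. For $p\in(1,\infty)\setminus\{2\}$ the assertion is just the untwisted case of \autoref{isometric equivalences} applied to the ample, condition~(W) groupoids $\mathcal{G}$ and $\mathcal{H}$, so the genuinely new content is the case $p=1$: there the bidual and the Banach--Lamperti machinery that underlie \autoref{ph induces actor} are unavailable, and the point is that \autoref{tfg actor and inj} manufactures the induced actor using only the \emph{invertible isometries} of $F_\lambda^p(\mathcal{G})$, which by \autoref{ub=u} automatically normalize the core and which behave well under $\ph$ by \autoref{inv iso lemma}. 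I will prove $(1)\Rightarrow(2)\Rightarrow(3)\Rightarrow(1)$ and then the ``moreover'' clause.

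For $(1)\Rightarrow(2)$: the maps $\pi$ and $\iota$ form exactly the $\beta$-diagram of a free actor in the sense of \autoref{diagram notation} (surjective fiberwise bijective $\pi_\beta$, open-image unitwise bijective $\iota_\beta$), so by \autoref{phi ext to Fplam} the composite $\iota_*\circ\pi^*=\ph_\beta$ is a unital contractive homomorphism, it is isometric because surjectivity of $\pi$ is equivalent to surjectivity of the anchor map $\rho$, and it intertwines the conditional expectations by the equivalence of $(4)$ and $(5)$ in \autoref{freeness equiv}. The implication $(2)\Rightarrow(3)$ is immediate. For $(3)\Rightarrow(1)$ one argues as in \autoref{isometric equivalences}: by \autoref{tfg actor and inj}, $\ph$ induces a base actor $h=h_{(\psi,\rho)}\colon\mathcal{G}\curvearrowright\mathcal{H}$ with $\psi(S)=\supp(\ph(\ind_S))$ for $S\in[\![\mathcal{G}]\!]_{\textup{loc}}$ and $\ph\vert_{C(X)}=\rho^*$. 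The hypothesis $\ph\circ E_{\mathcal{G}}=E_{\mathcal{H}}\circ\ph$ forces freeness of $h$: every $\gamma\in\mathcal{G}\setminus X$ lies in some $S\in[\![\mathcal{G}]\!]_{\textup{loc}}$ with $S\cap X=\emptyset$ (by \autoref{tfg wide} and the fact that $[\![\mathcal{G}]\!]_{\textup{loc}}$ is closed under restriction to clopen subsets of the unit space), and then $f=\ind_S$ satisfies $\supp(\ph(f))\cap Y=\supp(E_{\mathcal{H}}(\ph(f)))=\supp(\ph(E_{\mathcal{G}}(f)))=\emptyset$, so $h$ is free by the criterion in \autoref{freeness equiv}, whose proof applies equally with $[\![\mathcal{G}]\!]_{\textup{loc}}$ in place of $\mathcal{S}_\Sigma$. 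Since $\ph\vert_{C(X)}$ is injective the anchor map $\rho$ is surjective, so the $h$-diagram $\mathcal{G}\leftarrow\mathcal{G}\cdot_h Y\rightarrow\mathcal{H}$ of \autoref{diagram notation} has $\pi$ surjective and fiberwise bijective, while $\iota$, being the inclusion of the open intermediate subgroupoid, is injective, unitwise bijective, with open image.

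It remains to recover $\ph=\iota_*\circ\pi^*$. Both maps are unital contractive homomorphisms restricting to $\rho^*$ on $C(X)$ and sending $\ind_S$ to a section supported on $\psi(S)=S\cdot_h Y$; evaluating the $j$-map shows that $\ph(\ind_S)$ and the honest indicator $\ind_{S\cdot_h Y}$ differ by a $C(Y,\mathbb{T})$-valued phase $w_S$, and multiplicativity of $\ph$ turns $S\mapsto w_S$ into a cocycle. Absorbing this cocycle produces a (in general non-trivially) twist homomorphism $\iota$ of the trivial twists $\mathbb{T}\times(\mathcal{G}\cdot_h Y)\to\mathbb{T}\times\mathcal{H}$ with $\ph=\iota_*\circ\pi^*$; equivalently, the base actor $h$ lifts to the twist actor $\beta\colon\mathbb{T}\times\mathcal{G}\curvearrowright\mathbb{T}\times\mathcal{H}$ recording those phases, and $\ph=\ph_\beta$. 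This is the analogue of \autoref{free actor recovers ph}, carried out on invertible isometries via \autoref{tfg} rather than on spatial normalizers in the bidual via \autoref{Normalizer twist rec}, and it is the step I expect to be the main obstacle: one must check, exactly as in Renault's reconstruction, that the phases $w_S$ assemble coherently into the twist data, without the comfort of passing to $F_\lambda^p(\mathcal{G})^{**}$ when $p=1$. Finally, for the ``moreover'' part, for $S\in[\![\mathcal{G}]\!]_{\textup{loc}}$ and $y\in\psi(S)\cap Y$ one has $\rho(y)=\alpha_S(\rho(y))$, so $\rho(\textup{Fix}(\alpha_{\psi(S)})^{\mathrm{o}})\subseteq\textup{Fix}(\alpha_S)=\mathsf{d}(\textup{Iso}(S))$; if $\mathcal{G}$ is principal this is $S\cap X$, and if $\rho$ is open the left-hand side lies in the interior $S\cap X$. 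Either way condition~$(7)$ of \autoref{freeness equiv} holds, so the actor of \autoref{tfg actor and inj} is automatically free, \autoref{phi ext to Fplam} gives $\ph=\iota_*\circ\pi^*$ with $\pi$ possibly non-surjective, and by the last sentence of \autoref{phi ext to Fplam} isometry of $\ph$ is equivalent to surjectivity of $\rho$, i.e.\ to injectivity of $\ph\vert_{C(X)}$.
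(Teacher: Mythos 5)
Your proposal is correct and follows essentially the same route as the paper: build the actor via \autoref{tfg actor and inj}, derive freeness of that actor from $\ph\circ E_{\mathcal{G}}=E_{\mathcal{H}}\circ\ph$ and surjectivity of $\rho$ from injectivity of $\ph\vert_{C(X)}$, then apply \autoref{freeness equiv} and \autoref{phi ext to Fplam} together with the appropriate analogue of \autoref{free actor recovers ph}, and handle the ``moreover'' clause exactly as in \autoref{isometric equivalences}. The one place you diverge is where the paper asserts $\ph_h(\ind_S)=\ph(\ind_S)$ ``by construction'': you correctly isolate the residual $C(Y,\mathbb{T})$-phase $w_S$ between $\ph(\ind_S)$ and $\ind_{\psi(S)}$ and the need to absorb it coherently into the twist data, which is a genuine point the paper compresses but which your sketch resolves in the intended way.
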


\begin{proof}
    We can rerun the proof of \autoref{isometric equivalences}. For $(1)$ implies $(2)$, the identity $\ph\circ E_{\mathcal{G}} = E_{\mathcal{H}} \circ \ph$ follows from the equivalence of $(4)$ and $(5)$ in \autoref{freeness equiv}. The map $\iota_*\circ \pi^*$ as in \autoref{phi ext to Fplam} is isometric because surjectivity of $\pi_h$ is equivalent to the anchor map $\rho$ being surjective. That $(2)$ implies $(3)$ is trivial. For $(3)$ implies $(1)$, the assumption that $\ph\circ E_{\mathcal{G}} = E_{\mathcal{H}} \circ \ph$ ensures freeness of the induced actor $h$ in \autoref{tfg actor and inj}. By construction, the actor $h$ in \autoref{tfg actor and inj} also satisfies $\ph_h(\ind_S) = \ph(\ind_S)$ for all $S\in \alpha^{-1}([\![\mathcal{G}]\!])$. With this version of \autoref{free actor recovers ph}, we can apply 
    \autoref{phi ext to Fplam} to the associated $h$-diagram.
    This completes the proof that the statements $(1)$ to $(3)$ are equivalent. Finally, the conditions for automatic freeness are exactly the same as in \autoref{isometric equivalences}.\qedhere
\end{proof}

\section{Tensor products of $L^p$-Cuntz algebras}\label{Cuntz chapter}

In this section, we recall the construction of the $L^p$-Cuntz algebras and their tensor products. Moreover, we observe that the embedding theory from \autoref{tfg chapter} applies to their groupoid models, namely products of groupoids associated to SFTs. Finally, we identify the associated topological full groups as generalized Brin-Thompson groups and make use of \autoref{tfg actor and inj} to establish \autoref{cuntz rigidity text} as the main result of this section.

\begin{df}\label{on def}
    Let $n\in \N_{\geq 2}$ and let $p\in [1,\infty)$. Let $S_0,T_0,\dots,S_{n-1},T_{n-1}\in \mathcal{B}(\ell^p(\N_0))$ be the shift operators defined on the canonical basis $(\delta_k)_{k\in \N_0}$ by
    \[
        S_j(\delta_k):= \delta_{nk+j} \andeqn T_j(\delta_k):= \begin{cases} \delta_{l},\,\,\, &\textup{if\quad} k= nl+j, \\ 0,\,\,\,&\textup{otherwise}.\end{cases}
    \]
    The Banach algebra $\mathcal{O}_n^p\subseteq \mathcal{B}(\ell^p(\N_0))$ generated by $S_0,T_0,\dots,S_{n-1},T_{n-1}$ is the \textit{$L^p$-Cuntz algebra} on $n$ generators.
\end{df}

It is easy to see that when $p=2$, then we have $T_j=S_j^*$ for $j=0,\ldots,n-1$, and hence $\mathcal{O}_n^2 = \mathcal{O}_n$ is the usual Cuntz C*-algebra on $n$ generators.


In general, Banach algebras admit several notions of tensor products, which arise as norm completions of the algebraic tensor product in a suitable Banach norm. For $L^p$-operator algebras, we will work with the spatial tensor product $\otimes_p$ as in \cite[Definition 7.1]{DefFlo_norms_1993} and \cite[Theorem 2.16]{Phi_Cuntz_2012}, and we recall it next.

\begin{df}\label{lp tensor}
    Let $p\in [1,\infty)$ and let $(\Omega,\mathcal{A},\mu)$ and $(\Omega',\mathcal{A}',\mu')$ be measure spaces. An element of the algebraic tensor product $L^p(\mu)\odot L^p(\mu')$ can be viewed as a function $\xi\in L^p(\mu,\,L^p(\mu'))$, and the latter becomes a normed space with
    \[ \|\xi\|_p := \left(\int_\Omega \|f(\omega)\|_{p,\mu'}^p\,\textup{d}\mu(\omega)\right)^{\frac{1}{p}}.\]
    The completion in this norm is called the $L^p$-\textit{tensor product} $\otimes_{p}$ of $L^p(\mu)$ and $L^p(\mu')$. 
\end{df}

In the context of \autoref{lp tensor}, there is a canonical isometric isomorphism $L^p(\mu)\otimes_{p} L^p(\mu') \cong L^p(\mu\times\mu')$ by \cite[Proposition 7.2]{DefFlo_norms_1993}. For $a\in \mathcal{B}(L^p(\mu))$ and $b\in \mathcal{B}(L^p(\mu'))$, there is a unique product operator $a\otimes b \in \mathcal{B}(L^p(\mu\times\mu'))$ that acts on elementary tensors by $(a\otimes b)(\xi\otimes \xi') = a(\xi)\otimes b(\xi')$ for all $\xi\otimes \xi'\in L^p(\mu)\otimes_{p} L^p(\mu') \cong L^p(\mu\times\mu')$. For a more detailed discussion of these product operators, we refer the reader to \cite[Section 7]{Phi_Cuntz_2012}. 
    

The $L^p$-tensor product of $L^p$-spaces can be used to define the \textit{spatial tensor product} for $L^p$-operator algebras; see \cite[Definition 7.2]{ChoGarThi_rigidity_2021}.

\begin{df}\label{sp tensor}
    Let $p\in [1,\infty)$, and let $A$ and $B$ be $L^p$-operator algebras with isometric and nondegenerate representations $\iota_A\colon A\rightarrow \mathcal{B}(L^p(\mu))$ and $\iota_B\colon B\rightarrow \mathcal{B}(L^p(\mu'))$. Then, for all $a\in A$ and $b\in B$, we have $\iota_A(a)\otimes \iota_B(b) \in \mathcal{B}(L^p(\mu\times\mu'))$, and we define the \textit{spatial norm} on $A\odot B$ by $\|a\otimes b\| := \|\iota_A(a)\otimes \iota_B(b)\|$. The completion $A\otimes_{p} B$ of $A\odot B$ in this norm is called the \textit{($L^p$-)spatial tensor product}.
\end{df}
In order to study embeddings into $L^p$-Cuntz algebras, we realize $\mathcal{O}_n^p$ and its spatial tensor pro\-ducts as reduced groupoid algebras.

\begin{df}\label{full shift}
    Let $n\in \N_{\geq 2}$ and consider the Cantor set $X_n := \{0,\dots,n-1\}^{\N}$. To lighten the notation, we write elements in $X_n$ as infinite words without separators, and the set of finite words is denoted by $\{0,\dots,n-1\}^*$. We define the \textit{full shift in the alphabet} $\{0,\dots,n-1\}$, also known as the \textit{shift of finite type} (SFT), as the dynamical system given by the shift map $\sigma\colon X_n\rightarrow X_n$ by $\sigma(x_1x_2\cdots):= (x_2x_3\cdots)$ for all $(x_1x_2x_3\cdots)\in X_n$. The groupoid associated to an SFT is given by
    \begin{align*}
        \mathcal{G}_n := \bigg\{(x,k,y)\in X_n\times \Z \times X_n\colon \,\,\begin{aligned} &\textup{there are}\,\, k_+,k_-\in \N \,\,\textup{such that}\\
    &k=k_+-k_- \,\,\textup{and}\,\, \sigma^{k_+}(x)=\sigma^{k_-}(y) \end{aligned}\bigg\}.
    \end{align*} 
    The range and domain maps are the projections onto the first and third coordinate, respectively, and multiplication and inversion for $(x,k,y),(y,l,z)\in \mathcal{G}_n$ are defined as $(x,k,y)(y,l,z) := (x,k+l,z)$ and $(x,k,y)^{-1} := (y,-k,x)$. We equip $\mathcal{G}_n$ with the topology generated by the clopen sets \[Z(\gamma,\delta) := \{(\gamma x, |\gamma|-|\delta|,\delta x)\colon x\in X_n\}\] for given finite words $\gamma,\,\delta \in \{0,\dots,n-1\}^*$.
\end{df}

\begin{prop}\label{full shift adjectives}
    The full shift $\mathcal{G}_n$ in the alphabet $\{0,\dots,n-1\}$ is a minimal amenable Weyl groupoid with compact unit space $X_n \cong \mathcal{G}_n^{(0)}$. As a consequence, products of groupoids associated to SFTs satisfy condition (W) from \autoref{tfg gpd df}.
\end{prop}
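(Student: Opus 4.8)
The plan is to verify the stated properties of $\mathcal{G}_n$ by direct inspection of the basic bisections $Z(\gamma,\delta)$, recognising $\mathcal{G}_n$ as the standard Deaconu--Renault groupoid of the one-sided full shift $\sigma\colon X_n\to X_n$, and then to obtain Condition (W) for products from the single-SFT case together with \autoref{aw example}. First I would identify the unit space: a triple $(x,k,y)$ is idempotent precisely when $(x,k,y)(x,k,y)$ is defined and equals $(x,k,y)$, which forces $y=x$ and $2k=k$, so $\mathcal{G}_n^{(0)}=\{(x,0,x):x\in X_n\}$ and $\mathsf{r}$ restricts to a homeomorphism onto the Cantor set $X_n$, which is compact and totally disconnected. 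Next I would check that the $Z(\gamma,\delta)$ are compact open bisections forming a basis --- $\mathsf{r}$ restricts on $Z(\gamma,\delta)$ to a homeomorphism onto the cylinder $Z(\gamma)$, similarly $\mathsf{d}$ onto $Z(\delta)$, and cylinders are compact open; continuity of multiplication and inversion is the usual routine check. Hausdorffness I would deduce by showing that the topology generated by the $Z(\gamma,\delta)$ refines the subspace topology inherited from $X_n\times\Z\times X_n$: given $(x,k,y)\in\mathcal{G}_n$ and neighbourhoods $U\ni x$, $V\ni y$, write $\sigma^{k_+}(x)=\sigma^{k_-}(y)=:z$ and lengthen the defining words by a common initial block of $z$ until the corresponding cylinders lie inside $U$ and $V$; so $\mathcal{G}_n$ carries a topology finer than a Hausdorff one and is therefore Hausdorff. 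Hence $\mathcal{G}_n$ is an ample, Hausdorff, \'etale groupoid with compact unit space $X_n$.

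For effectiveness I would argue that a basic bisection $Z(\gamma,\delta)$ lies in $\textup{Iso}(\mathcal{G}_n)$ only if $\gamma x=\delta x$ for every $x\in X_n$: comparing lengths rules out $|\gamma|\neq|\delta|$ (if $|\gamma|>|\delta|$ then $\gamma=\delta\gamma'$ with $|\gamma'|>0$, and $\gamma'x=x$ for all $x$ is impossible), and equal lengths force $\gamma=\delta$; thus $Z(\gamma,\delta)\subseteq\textup{Iso}(\mathcal{G}_n)$ implies $Z(\gamma,\delta)\subseteq\mathcal{G}_n^{(0)}$, so the interior of the isotropy bundle is $\mathcal{G}_n^{(0)}$ and $\mathcal{G}_n$ is a Weyl groupoid. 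For minimality I would show each orbit is dense: given $x\in X_n$ and a cylinder $Z(u)$, the triple $(ux,|u|,x)$ lies in $\mathcal{G}_n$ because $\sigma^{|u|}(ux)=x=\sigma^{0}(x)$, so $ux\in Z(u)$ belongs to the orbit of $x$, and cylinders form a basis. For amenability I would invoke that $\mathcal{G}_n$ is the familiar Cuntz groupoid, whose amenability is classical; a self-contained route is to note that the open subgroupoid $\mathcal{R}=\{(x,0,y):\sigma^{m}(x)=\sigma^{m}(y)\text{ for some }m\}$ is an increasing union of the clopen \'etale (hence amenable) equivalence relations $\{(x,0,y):\sigma^m(x)=\sigma^m(y)\}$, that the degree cocycle $(x,k,y)\mapsto k$ has kernel $\mathcal{R}$, and that amenability of $\mathcal{R}$ and of $\Z$ forces amenability of $\mathcal{G}_n$. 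This completes the verification that $\mathcal{G}_n$ is a minimal, amenable Weyl groupoid with compact unit space.

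For the final assertion I would first record that finite products of ample (resp.\ Hausdorff, resp.\ effective, resp.\ minimal) groupoids with compact unit spaces are again ample (resp.\ Hausdorff, resp.\ effective, resp.\ minimal) with compact unit space --- in the effective case using $\textup{Iso}(\mathcal{G}\times\mathcal{H})^{\circ}=\textup{Iso}(\mathcal{G})^{\circ}\times\textup{Iso}(\mathcal{H})^{\circ}$ for finite products --- so products of SFTs are ample Weyl groupoids with compact unit space and Condition (W) is meaningful for them. By \autoref{aw example} it then suffices to verify Condition (W) for a single $\mathcal{G}_n$. Given $x\in X_n$, since $n\geq 2$ we may pick a letter $a\neq x_1$, so $ax\neq x$ and $\gamma:=(ax,1,x)\in\mathcal{G}_n$ satisfies $\mathsf{d}(\gamma)=x$ and $\mathsf{r}(\gamma)=ax\neq x$; hence $\gamma\in\mathcal{G}_n x\setminus x\mathcal{G}_n x$, and $\mathcal{G}_n$ satisfies Condition (W) (this also follows from minimality together with $X_n$ being infinite). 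Applying \autoref{aw example} inductively, every finite product of SFTs satisfies Condition (W).

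I expect the only genuinely delicate points to be the topological bookkeeping behind Hausdorffness of $\mathcal{G}_n$ and --- depending on how much one is prepared to cite --- the assembly of amenability from the AF-subgroupoid-and-$\Z$-cocycle picture; the remaining verifications, including Condition (W) and its propagation to products, are short and direct.
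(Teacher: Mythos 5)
Your proposal is correct and follows essentially the same route as the paper: one verifies compactness, ampleness, effectiveness, minimality and amenability for a single full shift, and then propagates Condition (W) to products via \autoref{aw example}. The paper simply outsources minimality and amenability to the literature and deduces effectiveness from the observation that periodic points have empty interior in $X_n$, whereas you verify these points directly (your analysis of which basic bisections $Z(\gamma,\delta)$ lie in the isotropy bundle is just a hands-on version of the same fact); the mathematical content is the same.
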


\begin{proof}
    It is clear that $X_n \cong \mathcal{G}_n^{(0)}$ is compact and that $\mathcal{G}_n$ is an ample groupoid. Since isotropy elements have periodic domain and since the subset of periodic points in $X_n$ has empty interior, we deduce that $\mathcal{G}_n$ is a Weyl groupoid. Amenability follows from \cite[Example 4.1.12]{Sims_groupoids_2011}. Minimality is a special case of \cite[Lemma 6.1]{Mat_tfg_2015} and therefore \autoref{aw example} applies.
\end{proof}



Matui showed in \cite[Theorem 3.10]{Mat_tfg_2015} that full shifts are characterized by their topological full groups. We describe these topological full groups as generalized versions of Brin-Thompson groups, which are closely related to Thompson's well-studied group $V= V_{2}$. See also \cite[Section 9]{Bri_thompson_2004}.

\begin{df}\label{Brin-Thompson}
    Let $m\in \N$ and let $k_1,\dots k_m\in \N_{\geq 2}$ be alphabet sizes. Using the notation of \autoref{full shift}, a \textit{table} for the Cantor set $X=\prod_{i=1}^m X_{k_i}$ of size $l\in \N$ is a matrix of the form \[\begin{pmatrix} v\\ u\end{pmatrix}= \begin{pmatrix} v^{(1)}& v^{(2)}& \cdots & v^{(l)} \\ u^{(1)}& u^{(2)}& \cdots & u^{(l)}\end{pmatrix}\]
    with entries $v^{(j)},\,u^{(j)}\in \prod_{i=1}^m\{0,\dots,k_i-1\}^*$ for all $j=1,\dots,l$ such that both rows describe a partition of \[X = \bigsqcup_{j=1}^l\prod_{i=1}^m u_i^{(j)}X_{k_i} = \bigsqcup_{j=1}^l\prod_{i=1}^m v_i^{(j)}X_{k_i}.\] That is, for every $y\in X$, there is a unique index $j$ and tail $x\in X$ such that $y=v^{(j)}x$. Every table induces a homeomorphism by replacing the $v$-initial word tuple with the one of $u$, that is,
    \[ \overline{\begin{pmatrix} v\\ u\end{pmatrix}}\left(v^{(j)}x\right) := u^{(j)}x.
    \]
    The composition of two such homeomorphisms is induced by a refined table of possibly greater size and we call the thereby formed group of induced homeomorphisms the \textit{generalized Brin-Thompson group} $V_{k_1,\dots,k_m}$:
    \[ V_{k_1,\dots,k_m} := \Big\{\overline{\begin{pmatrix} v\\ u\end{pmatrix}}\in \textup{Homeo}(X)\colon \begin{pmatrix} v\\ u\end{pmatrix} \textup{\, is a table}\Big\}.
    \]
\end{df}
The name is motivated by the definition of the \textit{Brin-Thompson group} $mV_k$ in the case $k_1=\dots=k_m =k$. Next, we unify \cite[Theorem 5.4]{DicPer_BrinThom_2014} for Leavitt algebras and \cite[Proposition 9.6]{Nek_pimsner_2004} for graph \cas\ by proving that generalized Brin-Thompson groups are the topological full groups of products of groupoids associated to SFTs.

\begin{prop}\label{tfg of shifts}
    Let $k_1,\dots,k_m\in \N_{\geq 2}$. Then $[\![\mathcal{G}_{k_1}\times\dots\times \mathcal{G}_{k_m}]\!] \cong V_{k_1,\dots,k_m}$. In particular, $[\![\mathcal{G}_{k}^m]\!]$ is isomorphic to the Brin-Thompson group $mV_{k}$. 
\end{prop}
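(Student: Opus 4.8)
The plan is to establish the isomorphism $[\![\mathcal{G}_{k_1}\times\dots\times\mathcal{G}_{k_m}]\!]\cong V_{k_1,\dots,k_m}$ directly, by matching compact open bisections with full domain and range on one side to tables on the other. First I would recall that, by \autoref{full shift adjectives}, each $\mathcal{G}_{k_i}$ is an ample Weyl groupoid with unit space $X_{k_i}$, so the product $\mathcal{G}:=\mathcal{G}_{k_1}\times\dots\times\mathcal{G}_{k_m}$ is again ample and effective with compact unit space $\prod_{i=1}^m X_{k_i}$, and by \autoref{tfg} (or directly by \autoref{BG alpha and tfg nota} and \autoref{alpha and eff}) we have $[\![\mathcal{G}]\!]\cong\{S\in\mathcal{B}(\mathcal{G})\colon \mathsf{d}(S)=\mathsf{r}(S)=\prod_i X_{k_i}\}$ via the bisection action $\alpha$. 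So it suffices to produce a group isomorphism between the full-domain-full-range compact open bisections of $\mathcal{G}$ and the group of table-induced homeomorphisms $V_{k_1,\dots,k_m}$.

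The key step is the description of a basis for the topology of $\mathcal{G}$. Finite words $\gamma,\delta\in\{0,\dots,k_i-1\}^*$ give the clopen cylinders $Z(\gamma,\delta)\subseteq\mathcal{G}_{k_i}$ of \autoref{full shift}, and products of these over $i\in[m]$ form a basis of compact open bisections for $\mathcal{G}$; note $\alpha$ of such a product bisection sends $(v_i^{(j)}x)_{i}\mapsto(u_i^{(j)}x)_i$ on its domain cylinder. A compact open bisection $S$ with $\mathsf{d}(S)=\mathsf{r}(S)=\prod_i X_{k_i}$ is, by compactness, a finite disjoint union of such basic product bisections $Z(v^{(j)},u^{(j)}):=\prod_i Z(v_i^{(j)},u_i^{(j)})$, $j\in[l]$; the condition that domains and ranges each tile the whole space is precisely the statement that $\begin{pmatrix} v\\ u\end{pmatrix}$ is a table (each row is an $i$-partition for every $i$). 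I would then check that $\alpha_S$ agrees with the table-induced homeomorphism $\overline{\begin{pmatrix} v\\ u\end{pmatrix}}$, that refining tables corresponds to refining the disjoint union (so the assignment is well-defined independently of the decomposition), and that composition of bisections corresponds to composition of tables after common refinement. Injectivity is immediate since $\alpha$ is faithful (\autoref{alpha and eff}, $\mathcal{G}$ effective), and surjectivity onto $V_{k_1,\dots,k_m}$ is the observation that every table arises this way. The last sentence, $[\![\mathcal{G}_k^m]\!]\cong mV_k$, is then the special case $k_1=\dots=k_m=k$ together with \autoref{aw example} (used only for condition (W), not needed for this identification).

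The main obstacle I expect is bookkeeping rather than conceptual: carefully verifying that an arbitrary full-domain-full-range compact open bisection of the \emph{product} groupoid decomposes into product-of-cylinder bisections of a \emph{common} word-length profile, so that the two rows genuinely form a table of the same size $l$; this uses that cylinders in each coordinate refine to any prescribed depth and that finiteness of the cover lets one pass to a common refinement. One should also be slightly careful that the groupoid elements of $\mathcal{G}_{k_i}$ carry a $\Z$-cocycle label $k=|\gamma|-|\delta|$ which is determined by the words and plays no role in the homeomorphism, so it can be safely suppressed when matching with tables.
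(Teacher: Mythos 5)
Your proposal is correct and follows essentially the same route as the paper's proof: identify elements of the topological full group with compact open bisections having full domain and range, decompose them by compactness into finite disjoint unions of product cylinders, and observe that the full-domain/full-range condition is precisely the table condition, with $\alpha_S$ realizing the table homeomorphism. The only difference is that you are somewhat more explicit about passing to a common refinement so that the decomposition yields a single table (and about well-definedness under refinement and compatibility with composition), bookkeeping the paper's proof passes over quickly.
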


\begin{proof}
    Since $\mathcal{G}_{k_1}\times\dots\times \mathcal{G}_{k_m}$ is effective, we may regard elements in its topological full group as compact open bisections with full domain and range. Let $S$ be such an open bisection. By definition of the product topology, we can cover $S$ with products of basic open neighborhoods $\prod_{i=1}^m Z(u_i,v_i)$ for tuples of finite words $u,v\in \prod_{i=1}^m\{0,\dots,k_i-1\}^*$. Since $S$ is compact, there are finitely many $u^{(j)},v^{(j)}\in \prod_{i=1}^m\{0,\dots,k_i-1\}^*$ for $j=1,\dots, l$ such that $S = \bigcup_{j=1}^l\prod_{i=1}^m Z(u_i^{(j)},v_i^{(j)})$. Without loss of generality, this union can be arranged to be disjoint. By assumption, we now have
    \[\prod_{i=1}^m X_{k_i} = \mathsf{d}(S) = \bigsqcup_{j=1}^l\prod_{i=1}^m v_i^{(j)}X_{k_i} = \mathsf{r}(S) = \bigsqcup_{j=1}^l\prod_{i=1}^m u_i^{(j)}X_{k_i}.\]
    That is, the tuples $u^{(j)}$ and $v^{(j)}$ form a table for $\prod_{i=1}^m X_{k_i}$ of size $l$ and the corresponding replacement homeomorphism is $\alpha_S \in V_{k_1,\dots,k_m}$. No refinement of the table changes $S$ and all tables induce an admissible bisection of the form $\bigsqcup_{j=1}^l\prod_{i=1}^m  Z(u_i^{(j)},v_i^{(j)})$. Hence, we have $[\![\mathcal{G}_{k_1}\times\dots\times \mathcal{G}_{k_m}]\!] \cong V_{k_1,\dots,k_m}$.
\end{proof}

Using \autoref{tfg of shifts}, embeddability between tensor products of $L^p$-Cuntz algebras is directly related to embeddability of the groups $[\![\mathcal{G}_{k_1}\times\dots\times \mathcal{G}_{k_m}]\!]$, for which some rigidity results have been established in \cite{Mat_shifts_2016} and \cite{MatBon_rigidity_2018}. In combination with \autoref{tfg actor and inj}, this leads to the main rigidity result of this section, which greatly generalizes \cite[Theorem~C]{ChoGarThi_rigidity_2021}.

\begin{thm}\label{cuntz rigidity text}
    Let $p\in [1,\infty)\setminus \{2\}$, let $m,n\in \N$, and let $k_1,\ldots,k_m,\ell_1,\ldots,
\ell_n\in \N_{\geq 2}$. If there exists a unital contractive homomorphism
    \[ \underbrace{\mathcal{O}_{k_1}^p \otimes_{p} \dots \otimes_{p} \mathcal{O}_{k_m}^p}_m \rightarrow \underbrace{\mathcal{O}_{\ell_1}^p \otimes_{p} \dots \otimes_{p} \mathcal{O}_{\ell_n}^p}_n,\]
then $m\leq n$.
\end{thm}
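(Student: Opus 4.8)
The plan is to pass to the groupoid models of all the algebras involved, convert the hypothetical homomorphism into an embedding of topological full groups via \autoref{tfg actor and inj}, identify those full groups with generalized Brin--Thompson groups via \autoref{tfg of shifts}, and finally quote the known rigidity of the latter.

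\textbf{Step 1 (groupoid models).} First I would record two facts: that $\mathcal{O}_k^p$ is, isometrically, the reduced $L^p$-algebra $F_\lambda^p(\mathcal{G}_k)$ of the SFT groupoid of \autoref{full shift}, and that the $L^p$-spatial tensor product is compatible with products of \'etale groupoids, i.e. $F_\lambda^p(\mathcal{G}\times\mathcal{H})\cong F_\lambda^p(\mathcal{G})\otimes_p F_\lambda^p(\mathcal{H})$. The latter is immediate from the canonical identification $\ell^p\big((\mathcal{G}\times\mathcal{H})(x,y)\big)\cong\ell^p(\mathcal{G}x)\otimes_p\ell^p(\mathcal{H}y)$ together with the fact that convolution on $\mathcal{C}_c(\mathcal{G})\odot\mathcal{C}_c(\mathcal{H})$ is the product convolution and that $\mathcal{C}_c(\mathcal{G})\odot\mathcal{C}_c(\mathcal{H})$ is dense. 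Iterating, I set $\mathcal{G}:=\mathcal{G}_{k_1}\times\cdots\times\mathcal{G}_{k_m}$ and $\mathcal{H}:=\mathcal{G}_{\ell_1}\times\cdots\times\mathcal{G}_{\ell_n}$, so the hypothesis provides a unital contractive homomorphism $\ph\colon F_\lambda^p(\mathcal{G})\to F_\lambda^p(\mathcal{H})$. By \autoref{full shift adjectives} and \autoref{aw example}, $\mathcal{G}$ and $\mathcal{H}$ are ample Weyl groupoids with compact unit spaces satisfying condition~(W); moreover each $\mathcal{G}_{k_i}$ is minimal and amenable, so $\mathcal{G}$ is minimal (the orbit of a point in a product \'etale groupoid is the product of the orbits, hence dense) and amenable, and therefore $F_\lambda^p(\mathcal{G})$ is simple --- equivalently, $\mathcal{O}_{k_1}^p\otimes_p\cdots\otimes_p\mathcal{O}_{k_m}^p$ is simple.

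\textbf{Step 2 (from $\ph$ to an embedding of full groups).} Since $\ph$ is unital it is nonzero, and $F_\lambda^p(\mathcal{G})$ being simple forces its kernel (a closed two-sided ideal) to be trivial, so $\ph$ is injective. Its restriction to the core $C(X)=\textup{core}(F_\lambda^p(\mathcal{G}))$ (see \autoref{core = CX for p not 2}) is then injective as well, i.e. $\rho^*=\ph\vert_{C(X)}$ is injective, so the anchor map $\rho\colon Y\to X$ of \autoref{tfg emb} has dense image; as $Y$ is compact and $\rho$ is continuous, $\rho$ is surjective. \autoref{tfg actor and inj} now applies (with $p=q$) and yields an injective group homomorphism $\psi_0\colon[\![\mathcal{G}]\!]\hookrightarrow[\![\mathcal{H}]\!]$, and by \autoref{tfg of shifts} this is exactly an embedding of generalized Brin--Thompson groups
\[
V_{k_1,\dots,k_m}\hookrightarrow V_{\ell_1,\dots,\ell_n}.
\]

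\textbf{Step 3 (group-theoretic rigidity) and the main obstacle.} It remains to deduce $m\le n$ from the existence of such an embedding, and this is where the real content, now entirely group-theoretic, lies: one must know that the number of factors is a monotone invariant, i.e. that an injective homomorphism $V_{k_1,\dots,k_m}\hookrightarrow V_{\ell_1,\dots,\ell_n}$ can only exist when $m\le n$. I would quote this from the rigidity theory of topological full groups of products of shifts of finite type developed in \cite{Mat_shifts_2016} and \cite{MatBon_rigidity_2018}. I expect this final step to be the main obstacle: the operator-algebraic side is handled cleanly by Sections~\ref{core pres chapter}--\ref{tfg chapter}, and the only other point requiring care is the (routine but essential) groupoid-model identification together with the simplicity statement in Step~1, on which the injectivity of $\ph$, and hence the surjectivity of $\rho$, rests.
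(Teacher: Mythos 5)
Your proposal is correct and follows essentially the same route as the paper: simplicity forces injectivity of $\ph$, hence injectivity on the core and surjectivity of the anchor map, so \autoref{tfg actor and inj} and \autoref{tfg of shifts} produce an embedding $V_{k_1,\dots,k_m}\hookrightarrow V_{\ell_1,\dots,\ell_n}$. The group-theoretic input you flag as the main obstacle is exactly what the paper extracts from \cite{MatBon_rigidity_2018} (Corollary 11.19 there): if $m>n$, \emph{every} homomorphism $[\![\mathcal{G}_{k_1}\times\dots\times \mathcal{G}_{k_m}]\!]\to[\![\mathcal{G}_{\ell_1}\times\dots\times \mathcal{G}_{\ell_n}]\!]$ has abelian image, which contradicts injectivity since $V_{k_1,\dots,k_m}$ is nonabelian.
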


\begin{proof}
Assume that a unital contractive homomorphism as in the statement exists. Since the algebra $\mathcal{O}_{k_1}^p \otimes_{p} \dots \otimes_{p} \mathcal{O}_{k_m}^p$ is simple by \cite[Theorem 5.14]{Phi_Cuntzsimp_2013}, such a homomorphism is necessarily injective. By \cite[Theorem~7.6]{ChoGarThi_rigidity_2021} and \cite[Theorem~7.7]{GarLup_lprepr_2017}, the involved $L^p$-operator algebras have groupoid models that are concretely given by products of groupoids associated to SFTs. These models meet the conditions for \autoref{tfg actor and inj} because of \autoref{full shift adjectives}. Therefore, by injectivity on the core $C(\prod_{i=1}^m X_{k_i})$ and the final claim of \autoref{tfg actor and inj}, there is a group embedding \[\psi_0\colon [\![\mathcal{G}_{k_1}\times\dots\times \mathcal{G}_{k_m}]\!] \hookrightarrow [\![\mathcal{G}_{\ell_1}\times\dots\times \mathcal{G}_{\ell_n}]\!].\] Assume by contradiction that $m>n$. By \cite[Corollary 11.19]{MatBon_rigidity_2018}, any homomorphism $[\![\mathcal{G}_{k_1}\times\dots\times \mathcal{G}_{k_m}]\!] \rightarrow [\![\mathcal{G}_{\ell_1}\times\dots\times \mathcal{G}_{\ell_n}]\!]$ as above has abelian image. By \autoref{tfg of shifts}, the image $\psi_0([\![\mathcal{G}_{k_1}\times\dots\times \mathcal{G}_{k_m}]\!])$ is isomorphic to the group $V_{k_1,\dots,k_m}$ from \autoref{Brin-Thompson}. This group, however, is not abelian: For example, for any alphabet, there are two noncom\-muting transpositions that solely swap the initial words $00 \leftrightarrow 01$, and $01 \leftrightarrow 10$. This is a contradiction, which proves that $m\leq n$.
\end{proof}

The following rules out any reasonable analog of Kirchberg's celebrated $\mathcal{O}_2$-embedding theorem, which in particular implies that $\mathcal{O}_2\otimes\mathcal{O}_2$ embeds into $\mathcal{O}_2$.

\begin{cor}\label{non emb text}
    Let $p\in [1,\infty)\setminus \{2\}$. Then there is no unital contractive homomorphism from $\mathcal{O}_{2}^p \otimes_{p} \mathcal{O}_{2}^p$ into $\mathcal{O}_{2}^p$.
\end{cor}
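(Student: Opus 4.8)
The plan is to deduce this as the special case $m=2$, $n=1$, $k_1=k_2=\ell_1=2$ of \autoref{cuntz rigidity text}: a unital contractive homomorphism $\mathcal{O}_2^p\otimes_p\mathcal{O}_2^p\to\mathcal{O}_2^p$ would force $2\leq 1$, which is absurd. So the substance is just the argument of \autoref{cuntz rigidity text} in this instance, which I would organize as follows.

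First I would pass to groupoid models: by \cite[Theorem~7.6]{ChoGarThi_rigidity_2021} together with \cite[Theorem~7.7]{GarLup_lprepr_2017}, there are isometric isomorphisms $\mathcal{O}_2^p\otimes_p\mathcal{O}_2^p\cong F_\lambda^p(\mathcal{G}_2\times\mathcal{G}_2)$ and $\mathcal{O}_2^p\cong F_\lambda^p(\mathcal{G}_2)$, where $\mathcal{G}_2$ is the full $2$-shift of \autoref{full shift}. By \autoref{full shift adjectives}, both $\mathcal{G}_2$ and $\mathcal{G}_2\times\mathcal{G}_2$ are ample Weyl groupoids with compact unit space satisfying condition (W), so \autoref{tfg actor and inj} is applicable. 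Next, since $\mathcal{O}_2^p\otimes_p\mathcal{O}_2^p$ is simple by \cite[Theorem~5.14]{Phi_Cuntzsimp_2013}, any unital contractive homomorphism $\ph$ out of it is injective; in particular $\ph$ restricts to an injection on the core $C(X)$, whence the anchor map $\rho$ is surjective. By the last sentence of \autoref{tfg actor and inj}, the induced group homomorphism $\psi_0\colon[\![\mathcal{G}_2\times\mathcal{G}_2]\!]\to[\![\mathcal{G}_2]\!]$ is then injective.

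Finally I would invoke the group-theoretic rigidity. By \autoref{tfg of shifts}, $[\![\mathcal{G}_2\times\mathcal{G}_2]\!]\cong V_{2,2}=2V_2$ and $[\![\mathcal{G}_2]\!]\cong V_2$. By \cite[Corollary~11.19]{MatBon_rigidity_2018}, every group homomorphism $2V_2\to V_2$ has abelian image; but $2V_2$ is non-abelian — for instance it contains the two transpositions swapping the initial words $00\leftrightarrow 01$ and $01\leftrightarrow 10$, which do not commute — so it admits no injective homomorphism into $V_2$. This contradicts the existence of $\psi_0$, and hence no $\ph$ exists. The only step in this chain that is more than a citation is the passage from a merely \emph{contractive} $\ph$ to an \emph{injective} induced map on topological full groups, which is exactly where condition (W) enters, via \autoref{tfg wide} and \autoref{automatic psi inj}; since this has already been packaged into \autoref{tfg actor and inj}, there is no genuine obstacle remaining at the level of this corollary.
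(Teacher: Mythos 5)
Your proposal is correct and follows exactly the paper's route: the corollary is the case $m=2$, $n=1$, $k_1=k_2=\ell_1=2$ of \autoref{cuntz rigidity text}, and your unpacking of that theorem's proof (simplicity gives injectivity, SFT groupoid models, condition (W) and \autoref{tfg actor and inj} give an injective $\psi_0$ on topological full groups, then the Matte Bon rigidity result versus non-abelianness of $V_{2,2}$) matches the paper's argument step for step.
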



\end{document}